\newtheorem{theorem}{Theorem}[section]
\newtheorem{lemma}[theorem]{Lemma}
\newtheorem{proposition}[theorem]{Proposition}
\newtheorem{corollary}[theorem]{Corollary}
\newtheorem{definition}[theorem]{Definition}
\newtheorem{assumption}[theorem]{Assumption}
\newtheorem{property}[theorem]{Property}
\theoremstyle{definition}
\newtheorem{remark}[theorem]{Remark}
\newtheorem{example}[theorem]{Example}
\numberwithin{equation}{section}
\newcommand{\Z}{\mathbb{Z}}
\newcommand{\R}{\mathbb{R}}
\newcommand{\Q}{\mathbb{Q}}
\newcommand{\C}{\mathbb{C}}
\newcommand{\K}{\mathbb{K}}
\newcommand{\CP}{\mathbb{CP}}
\newcommand{\PP}{\mathbb{P}}
\newcommand{\Log}{\mathrm{Log}}
\renewcommand{\O}{\mathcal{O}}
\newcommand{\F}{\mathcal{F}}
\newcommand{\m}{\mathfrak{m}}
\newcommand{\x}{\mathbf{x}}
\renewcommand{\v}{\mathbf{v}}
\title[Blowups and mirror symmetry for hypersurfaces]{Lagrangian fibrations 
on blowups of toric varieties and mirror symmetry for hypersurfaces}
\author{Mohammed Abouzaid}
\address{Department of Mathematics, Columbia University, New York NY 10027, USA}
\email{abouzaid@math.columbia.edu}
\author{Denis Auroux}
\address{Department of Mathematics, UC Berkeley, Berkeley CA 94720-3840, USA}
\email{auroux@math.berkeley.edu}
\author{Ludmil Katzarkov}
\address{
Fakult\"at f\"ur Mathematik, University of Vienna, 1090~Wien, Austria}
\email{ludmil.katzarkov@univie.ac.at}
\thanks{The first author was partially supported by a Clay Research 
Fellowship and by NSF grant DMS-1308179. The second author was partially 
supported by NSF grants DMS-1264662 and DMS-1406274 and by a Simons
Fellowship. The third author was partially supported by NSF grants 
DMS-1201475 and DMS-1265230, FWF grant P24572-N25, and ERC grant GEMIS}
\begin{document}
\begin{abstract} We consider mirror symmetry for (essentially arbitrary)
hypersurfaces 
in (possibly noncompact) toric varieties from the perspective of
the Strominger-Yau-Zaslow (SYZ) conjecture. Given a
hypersurface $H$ in a toric variety $V$
we construct a Landau-Ginzburg model which is SYZ mirror to the blowup 
of $V\times\C$ along $H\times 0$, under a positivity assumption. This construction also yields
SYZ mirrors to affine conic bundles, as well as a Landau-Ginzburg model which can be naturally 
viewed as a mirror to~$H$. The main applications concern affine
hypersurfaces of general type, for which our results provide a
geometric basis for various mirror symmetry statements that appear in
the recent literature. We also obtain analogous results for
complete intersections.
\end{abstract}

\maketitle

\section{Introduction}

A number of recent results \cite{KKOY,SeGenus2,Efimov,AAEKO,GKR} 
suggest that the phenomenon of mirror symmetry is not restricted to
Calabi-Yau or Fano manifolds. Indeed, while mirror symmetry
was initially formulated as a duality between Calabi-Yau manifolds,
it was already suggested in the early works of Givental and Batyrev
that Fano manifolds also exhibit mirror symmetry. The counterpart to
the presence of a nontrivial first Chern class is that the mirror
of a compact Fano manifold is not a compact manifold, but rather
a {\em Landau-Ginzburg model}, i.e.\ a (non-compact) K\"ahler manifold
equipped with a holomorphic function called {\em superpotential}.
A physical explanation of this phenomenon and a number of examples
have been given by Hori and Vafa \cite{HV}. From a mathematical point of
view, Hori and Vafa's construction amounts to a toric duality,
and can also be applied to varieties of general type
\cite{Clarke,Katzarkov,KKOY,GKR}. 

The Strominger-Yau-Zaslow (SYZ) conjecture
\cite{SYZ} provides a geometric interpretation of mirror symmetry for Calabi-Yau
manifolds as a duality between (special) Lagrangian torus fibrations. In the
language of Kontsevich's homological mirror symmetry \cite{KoICM}, the
SYZ conjecture reflects the expectation that the mirror can be realized
as a moduli space of certain objects in the Fukaya category of the given
manifold, namely, a family of Lagrangian tori equipped with 
rank 1 local systems. Note that this homological perspective eliminates the requirement
of finding special Lagrangian fibrations, at the cost of privileging one side of mirror symmetry: 
in the Calabi-Yau case, the framework we follow produces a degenerating family $Y^0$ of complex 
manifolds ($B$-side) starting with a Lagrangian torus fibration on a symplectic manifold $X^0$  ($A$-side). 

Outside of the Calabi-Yau situation, homological
mirror symmetry is still expected to hold \cite{KoENS}, but the Lagrangian
tori bound holomorphic discs, which causes their Floer theory to be
obstructed; the mirror superpotential can be interpreted as a weighted 
count of these holomorphic discs \cite{hori,Au1,Au2,FO3toric}. We call such a mirror
a \emph{$B$-side Landau-Ginzburg model}.

In the Calabi-Yau case, mirror symmetry is expected to be involutive; i.e 
when the symplectic form on $X^0$ is in fact a K\"ahler form for some 
degenerating family of complex structures then the mirror $Y$ should be 
equipped with its own K\"ahler form which is mirror to these complex 
structures. Involutivity should hold beyond the Calabi-Yau situation, but requires making sense of a class of potential functions on symplectic manifolds, called \emph{$A$-side Landau-Ginzburg models}, which have well defined Fukaya categories. The idea for such a definition goes back to Kontsevich  \cite{KoENS}, and was studied in great depth by Seidel in \cite{SeBook} in the special case of Lefschetz fibrations. 
\begin{remark} \label{rem:keep_ourselves_honest}
The general theory of  Fukaya  categories $\F(X, W^{\vee})$ of $A$-side 
Landau-Ginzburg models is still under development in different 
contexts \cite{A-Se,A-Au,A-Ga}; we shall specifically point out where 
it is being used in this paper. In fact, we will also need to consider {\em twisted}
versions of $A$-side Landau-Ginzburg models, 
where objects of the Fukaya category carry {\em relatively spin} structures 
with respect to a background class in $H^2(X,\Z/2)$ (rather than
spin structures); see Section \ref{s:backtoH}.
\end{remark}

On manifolds of general type (or more generally, whose first Chern
class cannot be represented by an effective divisor), the SYZ approach
to mirror symmetry seems to fail at first glance due to the lack of a
suitable Lagrangian torus fibration. The idea that allows one to overcome this 
obstacle is to replace the
given manifold with another closely related space which does carry an
appropriate SYZ fibration. Thus, we make the following definition:

\begin{definition} \label{def:SYZ-mirror}
We say that a $B$-side Landau-Ginzburg model $(Y,W)$ is {\em SYZ mirror} to 
a K\"ahler manifold $X$ $($resp.\ an $A$-side Landau-Ginzburg model $(X,
W^{\vee}))$\/ if there exists an open dense subset $X^0$ of $X$, 
and a Lagrangian torus fibration $\pi:X^0\to B$,
such that the following properties hold:
\begin{enumerate}
\item $Y$ is a completion of a moduli space of
unobstructed
torus-like objects of the Fukaya
category $\F(X^0)$ $($resp.\ $\F(X^0, W^{\vee}))$
containing those objects which are supported
on the fibers of $\pi$;
\item the function $W$ restricts to the superpotential induced by the
deformation of $\F(X^0)$ to $\F(X)$ $($resp.\ $\F(X^0, W^{\vee})$ to 
$\F(X, W^{\vee}))$ for these objects.
\end{enumerate}
We say that $(Y,W)$ is a {\em generalized SYZ mirror} of $X$ if (after
shifting $W$ by a suitable additive constant) it is
an SYZ mirror of a (suitably twisted) $A$-side Landau-Ginzburg model with Morse-Bott superpotential,
whose critical locus is isomorphic to~$X$.
\end{definition}
The last part of the definition is motivated by the expectation that the Fukaya category of a Morse-Bott 
superpotential, twisted by a background class which accounts for
the non-triviality of the normal bundle to the critical locus, is equivalent 
(up to an additive constant shift in the curvature term, which accounts for
exceptional curves through the critical locus) to the Fukaya category of the critical locus; see Corollary \ref{cor:functor}
and Proposition \ref{prop:m0shift}.

Definition \ref{def:SYZ-mirror} and the construction of moduli spaces of objects 
of the Fukaya category are clarified in Section~\ref{s:syz} and
Appendix~\ref{sec:moduli-objects-fukay}.  To understand the first
condition in the case of an $A$-side Landau-Ginzburg model, it is useful to note that every object of the Fukaya category $\F(X^0)$ of compact Lagrangians 
also defines an object of $\F(X^0,W^{\vee})$ since the objects of
the latter are Lagrangians satisfying admissibility properties outside a compact set
and such properties trivially hold for compact Lagrangians.
Hence the fibers of $\pi$ automatically define
objects of $\F(X^0,W^{\vee})$; we shall enlarge this space by considering certain non-compact Lagrangians in $X^0$ which can be seen as limits of compact Lagrangians.

\begin{remark}\label{rmk:morestuff}
It is important to note that, even in the absence of superpotentials, the 
assertion that $Y^0$ is SYZ mirror to $X^0$ does not imply that the Fukaya 
category of $X^0$ is equivalent to the derived category of
$Y^{0}$; indeed, the example of the Kodaira surface mentioned in \cite{A-ICM} shows that 
there may in general be an analytic gerbe on $Y^0$ so that the Fukaya 
category of $X^0$ is in fact mirror to sheaves twisted by this gerbe. 


Beyond the Calabi-Yau situation, a complete statement of homological mirror
symmetry for SYZ mirrors would have to consider further deformations of the
derived category of sheaves by (holomorphic) polyvector fields on $Y$.  The
superpotential $W$ should be thought of as the leading order term of
this deformation corresponding to discs of Maslov index $2$.
\end{remark}

In this paper we use this perspective to study mirror symmetry for
hypersurfaces (and complete intersections) in toric varieties. 
If $H$ is a smooth hypersurface in
a toric variety $V$, then one simple way to construct a closely related
K\"ahler manifold with effective first Chern class 
is to blow up the product $V\times\C$ along the codimension~2
submanifold $H\times 0$. By a result of Bondal and Orlov \cite{BO},
the derived category of coherent sheaves of the resulting manifold $X$
admits a semi-orthogonal decomposition into subcategories equivalent to
$D^b\mathrm{Coh}(H)$ and $D^b\mathrm{Coh}(V\times\C)$; and ideas similar to those of
\cite{Smith} can be used to study the Fukaya category of $X$, as we explain in
Section~\ref{s:backtoH} (cf.~Corollary \ref{cor:functor}).
Thus, finding a mirror to $X$ is, for many purposes, as good as finding a
mirror to $H$. 
Accordingly, our main results concern SYZ mirror symmetry for $X$ and,
by a slight modification of the
construction, for $H$. Along the way
we also obtain descriptions of SYZ mirrors to various related spaces.
These results provide a geometric foundation for mirror
constructions that have appeared in the recent literature
\cite{Clarke,Katzarkov,KKOY,SeGenus2,Sheridan,AAEKO,GKR}. 

We focus primarily on the case where $V$ is affine, and other cases which
can be handled with the same techniques.
The general case requires more
subtle arguments in enumerative geometry, which 
should be the subject of further investigation.


\subsection{Statement of the results}
Our main result can be formulated as follows (see \S \ref{s:notations} for
the details of the notations).

Let $H=f^{-1}(0)$ be a smooth nearly tropical hypersurface (cf.\ \S
\ref{ss:tropdeg}) in a (possibly noncompact) toric 
variety $V$ of dimension $n$, and let $X$ be the blow-up of $V\times \C$ 
along $H\times 0$, equipped with an 
$S^1$-invariant K\"ahler form $\omega_\epsilon$ for which the fibers of 
the exceptional divisor have sufficiently small area $\epsilon>0$
(cf.\ \S \ref{ss:blowup}).

Let $Y$ be the toric variety defined by the polytope $\{(\xi,\eta)\in
\R^n\times\R\,|\,\eta\ge \varphi(\xi)\}$, where $\varphi$ is the tropicalization
of $f$. Let $w_0=-T^{\epsilon}+T^{\epsilon}v_0\in \O(Y)$, where $T$ is the
Novikov parameter and $v_0$ is
the toric monomial with weight $(0,\dots,0,1)$, and set $Y^0=Y\setminus
w_0^{-1}(0)$. Finally, let $W_0=w_0+w_1+\dots+w_r\in \O(Y)$ be the 
{\em leading-order superpotential} of Definition \ref{def:LGmirror},
namely the sum of $w_0$ and one toric 
monomial $w_i$ ($1\le i\le r$) for each irreducible toric divisor of $V$ 
(see Definition \ref{def:LGmirror}). We assume:

\begin{assumption}\label{ass:affinecase}
$c_1(V)\cdot C>\max(0,H\cdot C)$ for every rational curve $C\simeq \PP^1$ in $V$.
\end{assumption}

\noindent This includes the case where $V$ is an affine toric variety as an important special case.
Under this assumption, our main result is the following:

\begin{theorem}\label{thm:main}
Under Assumption \ref{ass:affinecase}, 
the $B$-side Landau-Ginzburg model $(Y^0,W_0)$ \/is SYZ mirror to $X$.
\end{theorem}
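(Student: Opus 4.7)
The plan is to verify both conditions of Definition~\ref{def:SYZ-mirror} by explicit geometric construction. First I would build a Lagrangian torus fibration $\pi: X^0 \to B$ on an open dense subset $X^0 \subset X$. The $S^1$-action rotating the $\C$-factor of $V\times\C$ lifts to $X$ and preserves $\omega_\epsilon$; combining its moment map with a suitably deformed version of the standard toric moment map on $V$ produces the desired fibration. Because $H$ is nearly tropical (\S\ref{ss:tropdeg}), the singular fibers and discriminant locus are controlled: away from a tubular neighborhood of the proper transform of $V\times 0$ the fibration looks essentially like a product toric fibration, while near the exceptional divisor it is modeled on a standard local piece whose wall-and-chamber combinatorics is governed by the tropical polyhedron of $f$.

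Second, I would compute the moduli space of weakly unobstructed torus-like objects supported on the smooth fibers together with rank one unitary local systems. Action coordinates on $B$ combined with dual holonomy coordinates give, after Novikov completion, a toric-like space whose defining polytope is precisely $\{(\xi,\eta)\in \R^n\times\R : \eta\ge \varphi(\xi)\}$, identifying this moduli space with $Y$. Fibers lying over the discriminant of $\pi$ are obstructed; I would show that this obstructed locus is cut out by the function $w_0$, so that the unobstructed moduli space equals $Y^0=Y\setminus w_0^{-1}(0)$. This establishes condition (1).

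Third, to identify the superpotential $W_0$ I would enumerate Maslov-index-two holomorphic discs bounded by a smooth fiber in a generic chamber of $B$. Two classes contribute. Discs coming from the proper transforms of toric divisors of $V$ produce one monomial $w_i$ per toric divisor, giving the Hori--Vafa-type sum $w_1+\dots+w_r$. Discs contained in, or meeting, the exceptional divisor of the blowup give the two-term contribution $w_0=-T^\epsilon + T^\epsilon v_0$; the relative sign reflects the spin structure, and the shift by $v_0$ encodes both the exceptional fiber area $\epsilon$ and the winding of the boundary around the proper transform of $V\times 0$. Wall-crossing formulas across the codimension-one walls of $B$ then match the local disc counts so that they assemble into the globally defined function $W_0\in \O(Y)$, verifying condition (2).

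The principal obstacle is the enumerative step: excluding spurious Maslov-two disc classes that would add unwanted quantum corrections to $W_0$. Any such disc would arise by bubbling off a holomorphic sphere representing a rational curve $C$ in $V$, and its contribution is controlled by $c_1(V)\cdot C$ and $H\cdot C$. Assumption~\ref{ass:affinecase} is designed precisely to rule these out: the inequality $c_1(V)\cdot C > \max(0,H\cdot C)$ forces the relevant moduli spaces to have negative virtual dimension, hence be generically empty. When $V$ is affine this is automatic since $V$ contains no compact rational curves. Once such bubbling is excluded, the remaining analysis is a combinatorial bookkeeping of tropical chambers and a standard wall-crossing argument, following the techniques developed by the second author in earlier work.
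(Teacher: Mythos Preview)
Your overall architecture---torus fibration via the $S^1$-action, chambers indexed by tropical regions, disc counts, wall-crossing---matches the paper's. But two of your concrete claims are wrong and would not go through.

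First, $w_0^{-1}(0)$ is not the obstructed locus. The potentially obstructed smooth fibers of $\pi$ are those meeting $p^{-1}(H\times\C)$ (Proposition~\ref{prop:unobstructed}); they lie over \emph{walls} in $B$ separating the chambers $U_\alpha$, and after instanton-corrected gluing (Proposition~\ref{prop:gluing}) they are absorbed into $Y^0$, not excised. The coordinate $w_0=w_{\alpha,0}=T^\lambda\nabla(\gamma_0)$ records the $S^1$-moment value $\lambda>0$ and the orbit holonomy, so it never vanishes on any fiber in $X^0$; the divisor $\{w_0=0\}\subset Y$ lies outside the range swept by compact tori and has nothing to do with obstructions. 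Second, the superpotential term $w_0$ is the weight of a \emph{single} family of Maslov-$2$ discs, those meeting $\tilde V$, the proper transform of $V\times 0$ (Lemma~\ref{l:countw0})---not the exceptional divisor. For $b\in U_\alpha$ these discs are lifts of $\{\x\}\times D^2(r_0)$ with $\x\notin H$, hence disjoint from $E$. The formula $w_0=-T^\epsilon+T^\epsilon v_0$ is merely this one monomial rewritten in the toric coordinates of $Y$; there is no second disc family and no spin-structure sign here.

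The way Assumption~\ref{ass:affinecase} enters is also slightly different: one proves (Lemma~\ref{l:fano}) that every rational curve in $X$---not $V$---has $c_1(X)\cdot C>0$, by a case split on whether $C$ lies in $\tilde V$, in $E$, or in a fiber $p^{-1}(V\times\{y\})$. This forces sphere components to raise Maslov index by at least $2$, so Maslov-$0$ stable configurations in $X$ coincide with those in $X^0$. Hence the mirror variety is $Y^0$ directly by Theorem~\ref{thm:conicbundle}, and one only has to read off $W$ chamber-by-chamber from Lemmas~\ref{l:countw0} and~\ref{l:Wsigma}, which were already established in the course of proving that theorem.
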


In the general case, the mirror of $X$ differs from $(Y^0,W_0)$
by a correction term which is of higher order with respect to the 
Novikov parameter (see Remark \ref{rmk:borderline}).

Equipping $X$ with an appropriate
superpotential, given by the affine coordinate of the $\C$ factor,
yields an $A$-side Landau-Ginzburg model whose singularities are of Morse-Bott type.
Up to twisting by a class in $H^2(X,\Z/2)$, this $A$-side Landau-Ginzburg model
can be viewed as a stabilization of the sigma model with target~$H$.

\begin{theorem}\label{cor:main}
Assume $V$ is affine, and
let $W_0^H=-v_0+w_1+\dots+w_r\in \O(Y)$ (see~Definition \ref{def:LGmirror}).
Then the $B$-side Landau-Ginzburg model $(Y,W_0^H)$ is a generalized SYZ mirror of\/
$H$.
\end{theorem}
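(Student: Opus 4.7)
The plan is to deduce Corollary \ref{cor:main} from Theorem \ref{thm:main} by exhibiting $H$ as the Morse-Bott critical locus of a natural Landau-Ginzburg function on $X$, and then tracking this function through the SYZ correspondence provided by Theorem \ref{thm:main}.

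The first step is to analyze the Landau-Ginzburg structure $(X, W_X)$, where $W_X : X \to \C$ is the pullback of the affine coordinate on the $\C$-factor of $V \times \C$. Its zero locus $W_X^{-1}(0)$ is the union of the exceptional divisor $E$ (a $\PP^1$-bundle over $H$) and the proper transform $\widetilde{V}$ of $V \times \{0\}$, meeting transversally along $H$. In local coordinates with $E = \{u = 0\}$ and $\widetilde{V} = \{v = 0\}$, one has $W_X = uv$, so $W_X$ has a Morse-Bott critical locus equal to $H$ with nondegenerate quadratic transverse normal form. The Floer-theoretic Morse-Bott Knörrer periodicity (stabilization by a nondegenerate quadratic form) then identifies the Fukaya category of $H$ with a full subcategory of the category of admissible $A$-branes on $(X, W_X)$, twisted by a class $\tau \in H^2(X, \Z/2)$ that measures the spin discrepancy between the global Landau-Ginzburg model and its local product form $H \times (\C^2, uv)$. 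Consequently, any SYZ mirror of $(X, W_X)_\tau$ automatically qualifies as a generalized SYZ mirror of $H$ in the sense of Definition \ref{def:SYZ-mirror}.

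The next step matches an SYZ mirror of the twisted Landau-Ginzburg model $(X, W_X)_\tau$ with $(Y, W_0^H)$, starting from the mirror $(Y^0, W_0)$ supplied by Theorem \ref{thm:main}. The disk-counting construction of Section \ref{s:syz} splits the disks bounded by Lagrangian torus fibers of $\pi : X^0 \to B$ according to their intersection with $W_X^{-1}(0)$: disks projecting with degree one onto the $\C$-factor account for the toric monomial $T^\epsilon v_0$ in $w_0$; the small disks lying in exceptional $\PP^1$-fibers of $E$ produce the constant $-T^\epsilon$ of $w_0$; and disks disjoint from $W_X^{-1}(0)$ give the terms $w_1 + \dots + w_r$. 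Viewing $W_X$ itself as a Landau-Ginzburg superpotential on $X$ therefore identifies its mirror, after the twist $\tau$ and the toric rescaling $v_0 \mapsto -T^{-\epsilon} v_0$, with $-v_0$ on $Y$. Combining with the remaining disk contributions yields
\[
W_0^H = -v_0 + w_1 + \dots + w_r,
\]
as required. The extension from $Y^0$ to $Y$ is natural, since the divisor $w_0^{-1}(0) \subset Y$ excluded in Theorem \ref{thm:main} corresponds to the critical fiber of $W_X$, which after Knörrer stabilization no longer needs to be avoided on the mirror side.

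The principal obstacle is this second step: establishing the precise dictionary between $W_X$ and $v_0$ under the SYZ correspondence, and pinning down the class $\tau \in H^2(X, \Z/2)$ responsible for the sign $-v_0$ in $W_0^H$. Doing so requires a refinement of the SYZ mirror construction behind Theorem \ref{thm:main}, separating disk classes according to their intersection number with $W_X^{-1}(0)$ and carefully tracking the spin structure twist on the exceptional $\PP^1$'s. Once this dictionary is in place, the Novikov rescaling, the harmless additive constant $-T^\epsilon$, and the completion from $Y^0$ to $Y$ are formal, and the assertion that $(Y, W_0^H)$ is a generalized SYZ mirror of $H$ follows directly from Definition \ref{def:SYZ-mirror} applied to $(X, W_X)_\tau$.
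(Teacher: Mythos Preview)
Your broad strategy coincides with the paper's: realize $H$ as the Morse--Bott critical locus of $W^\vee=y$ on $X$, pass to a twisted Fukaya category $\F_s(X,W^\vee)$ via thimbles, and read off the mirror of $H$ from that of $(X,W^\vee)$. However, two of your intermediate steps contain genuine errors or gaps.

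First, your disk decomposition is wrong. The term $w_0$ in $W_0$ comes from a \emph{single} family of Maslov index $2$ discs hitting $\tilde V$ (Lemma~\ref{l:countw0}), with weight $T^\lambda\nabla(\gamma_0)$; the identity $w_0=-T^\epsilon+T^\epsilon v_0$ is a coordinate definition on $Y$, not a splitting into two disk families. Generic fibers of $\pi$ bound no ``small disks in exceptional $\PP^1$-fibers of $E$.'' The place where $\epsilon$-discs genuinely appear is a \emph{different} computation: the shift $\delta=T^\epsilon$ between the superpotential of $\ell\subset H$ and that of its thimble $L_\ell\subset X$ (Proposition~\ref{prop:m0shift}), which arises from normal discs to $H$ in $V$. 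You have conflated these, which is why your accounting of the sign and the constant is off; the twist $s$ flips the sign of $w_0$ (the discs hitting $\tilde V$), and one must separately subtract $\delta=T^\epsilon$ and adjust the K\"ahler class by $\epsilon[H]$ (Remark~\ref{rmk:deflate}), then rescale the last toric weight by $T^\epsilon$ to land on $W_0^H$.

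Second, the extension from $Y^0$ to $Y$ is not a formality. Theorem~\ref{thm:main} gives $(Y^0,W_0)$ as mirror to $X$, but you need $(Y,W_0)$ as mirror to the Landau--Ginzburg model $(X,W^\vee)$. The paper proves this separately (Theorem~\ref{thm:FScompact}) by enlarging the SYZ fibration with \emph{admissible noncompact} Lagrangians $p^{-1}(L'\times a_0)$, where $a_0\subset\C$ is an arc to $+\infty$, and checking that these fill in exactly the locus $\{w_0=0\}\subset Y$. Your claim that the missing divisor ``no longer needs to be avoided after Kn\"orrer stabilization'' is not an argument; without constructing these extra objects and computing their superpotential, you have no SYZ mirror for $(X,W^\vee)$ to start from.
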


Unlike the other results stated in this introduction, this theorem 
strictly speaking relies on the assumption that Fukaya categories of 
Landau-Ginzburg models satisfy certain properties for which we do not 
provide complete proofs. In Section \ref{s:backtoH}, we give sketches 
of the proofs of these results, and indicate the steps which are missing 
from our argument.

A result similar to Theorem \ref{cor:main} can also be obtained from the perspective of mirror duality 
between toric Landau-Ginzburg models \cite{HV,Clarke,KKOY,GKR}. However, the
toric approach is much less illuminating, because geometrically it works 
at the level of the open toric strata in the relevant toric varieties 
(the total space of $\mathcal{O}(-H)\to V$ on one hand, and $Y$ on the other
hand), whereas the interesting geometric features of these spaces lie 
entirely within the toric divisors. 

Theorem \ref{thm:main} relies on a mirror symmetry statement
for open Calabi-Yau manifolds which is of independent interest. Consider
the conic bundle $$X^0=\{(\mathbf{x},y,z)\in V^0\times \C^2\,|\,
yz=f(\mathbf{x})\}$$ over the open stratum $V^0\simeq (\C^*)^n$ of $V$,
where $f$ is again the defining equation of the hypersurface $H$.
The conic bundle $X^0$ sits as an open dense subset inside $X$, 
see Remark \ref{rmk:conicbundle}. Then we have:

\begin{theorem}\label{thm:conicbundle}
The open Calabi-Yau manifold $Y^0$ is SYZ mirror to $X^0$.
\end{theorem}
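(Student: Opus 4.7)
The plan is to realize the SYZ mirror of $X^0$ by: (i) constructing a Lagrangian torus fibration on $X^0$ from its natural torus symmetry, (ii) computing the Novikov-analytic moduli space of its smooth fibers equipped with unitary flat connections, and (iii) identifying the resulting instanton-corrected space with $Y^0$. Since $X^0$ is Calabi-Yau, no superpotential should appear on the mirror, consistent with the statement.

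The fibration is forced by symmetry: the torus $T^n$ rotates the phases of $\mathbf{x}\in(\C^*)^n$, and the $S^1$-action $(y,z)\mapsto(e^{i\theta}y,e^{-i\theta}z)$ preserves the conic $yz=f(\mathbf{x})$. For a $T^{n+1}$-invariant K\"ahler form compatible with these actions, the joint moment map
\[
\pi:X^0\longrightarrow\R^n\times\R,\qquad (\mathbf{x},y,z)\longmapsto\bigl(\log|x_1|,\dots,\log|x_n|,\tfrac12(|y|^2-|z|^2)\bigr)
\]
is a Lagrangian torus fibration. Its fibers $L_b\simeq T^{n+1}$ are smooth except over the discriminant $\Delta=\Log(H)\times\{0\}$, where the conic $yz=f(\mathbf{x})$ acquires a node at $(0,0)$ fixed by the conic $S^1$. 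Away from $\Delta$ the moduli space of pairs $(L_b,\nabla)$ is an $(n+1)$-dimensional rigid analytic torus over the Novikov field, with natural coordinates $X_1^\vee,\dots,X_n^\vee,v_0$, one per circle factor.

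The heart of the argument is to show that the instanton corrections glue these local charts into the toric variety $Y$ of the polytope $\{\eta\ge\varphi(\xi)\}$, with the zero locus of $w_0$ removed. All holomorphic discs bounded by $L_b$ have Maslov index $0$ by the Calabi-Yau condition, so they enter only through wall-crossing automorphisms and not through a superpotential. By $S^1$-equivariance in $(y,z)$, the relevant disc classes are small sections of the conic bundle attached to the nodes of its degenerate fibers, or equivalently lifts of holomorphic discs in $(\C^*)^n$ passing through $H$. Under the nearly tropical hypothesis the amoeba of $H$ concentrates onto the tropical hypersurface of $\varphi$, so $B\setminus\Delta$ splits into chambers indexed by the monomials of $f$, and the wall-crossing transformation across each wall is multiplication by the monomial in the $X_i^\vee$ whose exponent is the corresponding vertex of the Newton polytope of $f$. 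Summing these contributions reproduces the toric gluing of $Y$ in the $\eta$-direction dual to the conic $S^1$. The removal of $w_0^{-1}(0)$ reflects the absence in $X^0$ of the boundary stratum which, in the blow-up $X$, would be filled in by Lagrangians meeting the exceptional divisor.

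The main obstacle is the disc enumeration itself: verifying both that the relevant Maslov $0$ disc classes are exactly those predicted by the tropical combinatorics of $\varphi$, and that their contributions assemble coherently into the toric gluing of $Y$. The $S^1$-equivariance in $(y,z)$ should rule out most other potential disc classes by an index/equivariance calculation, and the nearly tropical hypothesis should reduce the remaining enumeration to a combinatorial problem indexed by the skeleton of the tropical hypersurface, amenable to a degeneration argument in the spirit of earlier SYZ analyses of conic bundles. This is the step where the bulk of the technical work will lie.
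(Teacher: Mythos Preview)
Your outline has the right shape (torus fibration, wall-crossing, identify with $Y^0$), but there are three concrete problems.

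First, the claimed $T^{n+1}$-symmetry does not exist. The $T^n$ rotating the phases of $\mathbf{x}$ does not preserve the equation $yz=f(\mathbf{x})$, since $f$ is not a monomial. Only the conic $S^1$ genuinely acts on $X^0$. The paper therefore builds the fibration differently: it uses the $S^1$-moment map, identifies each reduced space with $V$, and then applies the toric $T^n$-fibration on the reduced spaces (after a Moser-type diffeomorphism to make the reduced form toric). Your map $(\log|\mathbf{x}|,\tfrac12(|y|^2-|z|^2))$ is not a joint moment map and its fibers are not obviously Lagrangian for any natural K\"ahler form; getting an honest Lagrangian fibration requires the care of \S\ref{s:fibrations}.

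Second, your description of the wall-crossing is inverted. The chambers are indexed by $\alpha\in A$, and crossing from $U_\alpha$ to $U_\beta$ transforms the \emph{base} coordinates $\mathbf{v}_\alpha$ by powers of $(1+T^{-\epsilon}w_0)$, with exponent $\langle\beta-\alpha,\sigma\rangle$; see Proposition~\ref{prop:gluing}. The coordinate $w_0$ dual to the conic $S^1$ is invariant. It is not ``multiplication by the monomial whose exponent is the vertex of the Newton polytope,'' and there is no ``summing'' of contributions to produce the gluing of $Y$---the gluing is multiplicative in $(1+T^{-\epsilon}w_0)$.

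Third, and most importantly, you correctly identify the disc enumeration as the main obstacle but give no method for it. Directly counting Maslov-$0$ discs (including all multiple covers) bounded by the obstructed fibers is exactly the hard part. The paper avoids it entirely: for each primitive $\sigma\in\Z^n$ it partially compactifies $X^0$ to a space $X'_\sigma$ by adding one toric divisor, computes the Maslov-$2$ superpotential $W_{X'_\sigma}$ explicitly in each chamber (Lemmas~\ref{l:countw0}--\ref{l:Wsigma2}, an elementary Blaschke-product calculation), and then uses the fact that $W_{X'_\sigma}$ must glue to a global regular function to read off the wall-crossing transformation \eqref{eq:gluingsigma} uniquely. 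This is the key idea you are missing, and without it the ``degeneration argument in the spirit of earlier SYZ analyses'' remains a wish rather than a proof.
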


In the above statements, and in most of this paper, we view $X$ or $X^0$ 
as a symplectic manifold, and construct the SYZ mirror $Y^0$
(with a superpotential) as an algebraic moduli space of objects in the
Fukaya category of $X$ or $X^0$. This is the same direction considered
e.g.\ in \cite{SeGenus2,Efimov,AAEKO}. However, one can also work in the
opposite direction, starting from the symplectic geometry of $Y^0$ and 
showing that it admits $X^0$ (now viewed as a complex manifold) as an 
SYZ mirror. For completeness we describe this converse construction
in Section \ref{s:converse} (see Theorem \ref{thm:converse});
similar results have also been obtained independently by Chan, Lau and Leung
\cite{ChanLauLeung}.
\medskip

The methods we use apply in more general settings as well. In particular,
the assumption that $V$ be a toric variety is not strictly necessary -- it
is enough that SYZ mirror symmetry for $V$ be sufficiently well understood.
As an illustration, in Section~\ref{ss:completeint} 
we derive analogues of Theorems \ref{thm:main}--\ref{thm:conicbundle} for 
{\em complete intersections}.

\subsection{A reader's guide}
The rest of this paper is organized as follows. 

First we briefly review (in Section \ref{s:syz}) 
the SYZ approach to mirror symmetry, 
following~\cite{Au1,Au2}. Then in Section \ref{s:notations} we 
introduce notation and describe the protagonists of our main results, namely
the spaces $X$ and $Y$ and the superpotential~$W_0$. 

In Section \ref{s:fibrations} we construct
a Lagrangian torus fibration on $X^0$, similar to those previously considered 
by Gross \cite{GrossTop,GrossSlag} and by Casta\~no-Bernard and Matessi
\cite{CBM1,CBM2}. In Section \ref{s:opencase} we study the Lagrangian Floer
theory of the torus fibers, which we use to prove Theorem
\ref{thm:conicbundle}.
In Section \ref{s:generalcase} we consider the 
partial compactification of $X^0$ to $X$, and prove Theorem \ref{thm:main}.
Theorem \ref{cor:main} is then proved in Section
\ref{s:backtoH}.

In Section \ref{s:converse} we briefly consider the converse construction,
namely we start from a Lagrangian torus fibration on $Y^0$ and recover $X^0$ as its
SYZ mirror. 

Finally, some examples illustrating the main results are given in
Section \ref{s:examples}, 
while Sections~\ref{s:generalizations} and \ref{ss:completeint} discusses various
generalizations, including to hypersurfaces in abelian varieties
(Theorem \ref{thm:mainabelian}) and complete intersections in toric
varieties (Theorem \ref{thm:mainci}).

\subsection*{Acknowledgements} We would like to thank Paul Seidel and
Mark Gross for a number of illuminating discussions; Patrick 
Clarke and Helge Ruddat for explanations of their work; 
Anton Kapustin, Maxim Kontsevich, Dima Orlov and Tony Pantev for useful
conversations; and the anonymous referees for their numerous comments on
earlier versions of this manuscript.

The first author was supported by a Clay Research Fellowship
and by NSF grant DMS-1308179. The second author was partially 
supported by NSF grants DMS-1264662 and DMS-1406274 and by a
Simons Fellowship. The third author 
was partially supported by NSF grants 
DMS-1201475 and DMS-1265230, FWF grant P24572-N25, and ERC grant GEMIS.

\section{Review of SYZ mirror symmetry}\label{s:syz}

In this section, we briefly review SYZ mirror symmetry for K\"ahler 
manifolds with effective anticanonical class; the reader is referred to 
\cite{Au1,Au2} for basic ideas about SYZ, and to Appendix \ref{sec:moduli-objects-fukay}
for technical details.

\subsection{Lagrangian torus fibrations and SYZ mirrors}\label{ss:syz1}

In first approximation, the Strominger-Yau-Zaslow conjecture \cite{SYZ} states 
that mirror pairs of Calabi-Yau manifolds carry mutually dual Lagrangian 
torus fibrations (up to ``instanton corrections'').
A reformulation of this statement in the language of homological mirror 
symmetry \cite{KoICM} is that a mirror of a Calabi-Yau manifold can be
constructed as a moduli space of suitable objects in its
Fukaya category (namely, the fibers of an SYZ fibration,
equipped with rank 1 local systems); and vice versa. In Appendix \ref{sec:moduli-objects-fukay},
we explain how ideas of Fukaya \cite{Fcyclic} yield a precise construction of such a mirror space from local rigid analytic charts glued via the equivalence relation which identifies objects that are quasi-isomorphic in the Fukaya category.

We consider an open Calabi-Yau manifold of the
form $X^0=X\setminus D$, where $(X,\omega,J)$ is a K\"ahler manifold of complex
dimension $n$ and $D\subset X$
is an anticanonical divisor (reduced, with normal crossing singularities).
$X^0$ can be equipped with a holomorphic $n$-form $\Omega$ (with simple
poles along $D$), namely the inverse of the defining section of~$D$.
The restriction of $\Omega$ to an oriented Lagrangian submanifold 
$L\subset X^0$ is a nowhere vanishing complex-valued $n$-form on $L$;
the complex argument of this $n$-form determines the {\em phase} function
$\arg(\Omega_{|L}):L\to S^1$. Recall that $L$ is said to be {\em special
Lagrangian} if $\arg(\Omega_{|L})$ is constant; a weaker condition is to
require the vanishing of the {\em Maslov class} of $L$ in $X^0$, i.e.\ we 
require the existence of a lift of $\arg(\Omega_{|L})$ to a real-valued function.
(The choice of such a real lift then makes $L$ a {\em graded Lagrangian},
and yields $\Z$-gradings on Floer complexes.)

The main input of the construction of the SYZ mirror of the open Calabi-Yau
manifold $X^0$ is a Lagrangian torus
fibration $\pi:X^0\to B$ (with appropriate singularities) whose fibers have
trivial Maslov class. (Physical considerations suggest that one 
should expect the fibers of $\pi$ to be
special Lagrangian, but such fibrations are hard to produce.)

The base $B$ of the Lagrangian torus fibration $\pi$ carries a natural 
real affine structure (with singularities along the locus $B^{sing}$
of singular fibers), i.e.\ $B\setminus B^{sing}$ can be
covered by a set of coordinate charts with transition
functions in $GL(n,\Z)\ltimes \R^n$. A smooth fiber $L_0=\pi^{-1}(b_0)$
and a collection of loops $\gamma_1,\dots,\gamma_n$ forming a basis of 
$H_1(L_0,\Z)$ determine an
affine chart centered at $b_0$ in the following manner: given $b\in
B\setminus B^{sing}$ close enough to $b_0$, we can isotope $L_0$ to
$L=\pi^{-1}(b)$ among fibers of $\pi$. Under such an isotopy, each loop $\gamma_i$ traces a
cylinder $\Gamma_i$ with boundary in $L_0\cup L$; the affine coordinates
associated to $b$ are then the symplectic areas 
$(\int_{\Gamma_1}\omega,\dots,\int_{\Gamma_n}\omega)$.

In the examples we will consider, ``most'' fibers of $\pi$ do not bound  nonconstant holomorphic discs
in $X^0$; we call such Lagrangians \emph{tautologically unobstructed.}
Recall that a  (graded, spin) Lagrangian submanifold 
$L$ of $X^0$ together with a  unitary rank one local system $\nabla$ 
determines an object $(L,\nabla)$ of the Fukaya category $\F(X^0)$
\cite{FO3book} whenever certain counts of holomorphic discs cancel; this condition evidently holds if there are no non-constant discs. Thus, given an open subset 
$U\subset B\setminus B^{sing}$ such that all the fibers in $\pi^{-1}(U)$
are tautologically unobstructed, the moduli space of objects $(L,\nabla)$ where 
$L\subset \pi^{-1}(U)$ is a fiber of $\pi$ and $\nabla$ is a unitary 
rank 1 local system on $L$ yields an open subset $U^\vee\subset Y^0$ of the SYZ
mirror of $X^0$. 

A word is in order about the choice of coefficient field. A careful
definition of Floer homology involves working over the
{\em Novikov field}\/ (here over complex numbers),
\begin{equation}\label{eq:novikov}
\Lambda=\left\{\sum_{i=0}^\infty c_i T^{\lambda_i}\,\Big|\,c_i\in\C,\ 
\lambda_i\in\R,\ \lambda_i\to +\infty\right\}.
\end{equation}
Recall that the {\em valuation} of a non-zero element of $\Lambda$ is the
smallest exponent $\lambda_i$ that appears with a non-zero coefficient;
the above-mentioned local systems are required to have holonomy in
the multiplicative subgroup 
$$U_{\Lambda} =\textstyle \left\{c_0+\sum c_iT^{\lambda_i}\in\Lambda\,\big|\,
c_0\neq 0\ \text{and}\ \lambda_i>0\right\}$$ 
of {\em unitary elements} (or units) of the 
Novikov field, i.e.\ elements whose valuation is zero. The local system
$\nabla\in \hom(\pi_1(L),U_{\Lambda})$ enters into the definition of 
Floer-theoretic operations by contributing
holonomy terms to the weights of holomorphic discs: a rigid holomorphic 
disc $u$ with boundary on Lagrangians $(L_i,\nabla_i)$ is counted with a 
weight 
\begin{equation}\label{eq:weight}
T^{\omega(u)} \mathrm{hol}(\partial u),
\end{equation} 
where $\omega(u)$ is the symplectic area of the disc $u$, and 
$\mathrm{hol}(\partial u)\in U_{\Lambda}$ is the total holonomy of the 
local systems $\nabla_i$ along its boundary. (Thus, local systems are
conceptually an exponentiated variant of the ``bounding cochains'' used 
by Fukaya et al \cite{FO3book,FO3toric}). Gromov compactness ensures
that all structure constants of Floer-theoretic operations are well-defined
elements of $\Lambda$. 

Thus, in general the SYZ mirror of $X^0$ is naturally
an analytic space defined over $\Lambda$.
However, it is often possible to obtain a complex mirror
by treating the Novikov parameter $T$ as a numerical parameter
$T=e^{-2\pi t}$ with $t>0$ sufficiently large; 
of course it is necessary to assume the convergence of all the power series
encountered. The local systems are then taken to be unitary in the usual sense, i.e.\ $\nabla\in
\hom(\pi_1(L),S^1)$, and the weight of a rigid holomorphic disc, still
given by \eqref{eq:weight}, becomes a complex number.
The complex manifolds obtained by varying the parameter $t$
are then understood to be mirrors to the family of K\"ahler manifolds
$(X^0,t\omega)$.

To provide a unified treatment, we denote by $\K$ the coefficient
field ($\Lambda$ or $\C$), by $U_{\K}$ the subgroup of unitary
elements (either $U_{\Lambda}$ or $S^1$), and by $\mathrm{val}:\K\to \R$ the
valuation (in the case of complex numbers, $\mathrm{val}(z)=-\frac{1}{2\pi t}
\log |z|$).

Consider as above a contractible open subset $U\subset B\setminus B^{sing}$
above which all fibers of $\pi$ are tautologically unobstructed, a reference fiber
$L_0=\pi^{-1}(b_0)\subset \pi^{-1}(U)$, and
a basis $\gamma_1,\dots,\gamma_n$ of $H_1(L_0,\Z)$.
A fiber $L=\pi^{-1}(b)\subset \pi^{-1}(U)$ and a local system
$\nabla\in \hom(\pi_1(L),U_{\K})$ determine a point of the mirror,
$(L,\nabla)\in U^\vee\subset Y^0$.
Identifying implicitly $H_1(L,\Z)$ with $H_1(L_0,\Z)$, the local system
$\nabla$ is determined by its holonomies along the loops $\gamma_1,\dots,
\gamma_n$, while the fiber $L$ is determined by the symplectic areas of
the cylinders $\Gamma_1,\dots,\Gamma_n$. This yields natural coordinates
on $U^\vee\subset Y^0$, identifying it with an open subset of $(\K^*)^n$ via
\begin{equation}
\label{eq:localchart}
(L,\nabla)\mapsto (z_1,\dots,z_n)=\left(T^{\int_{\Gamma_1}\!\omega}\,\nabla(\gamma_1),\dots,
T^{\int_{\Gamma_n}\!\omega}\,\nabla(\gamma_n)\right).
\end{equation}
One feature of Floer theory that is conveniently captured
by this formula is the fact that, in the absence of instanton corrections,
the non-Hamiltonian isotopy from $L_0$ to $L$ is
formally equivalent to equipping $L_0$ with a {\em non-unitary} local
system for which $\mathrm{val}(\nabla(\gamma_i))=\int_{\Gamma_i}\omega$.

The various regions of $B$ over which the fibers are
tautologically unobstructed are separated by {\em walls} (real hypersurfaces in $B$, or
thickenings of real hypersurfaces) of {\em potentially obstructed} fibers (i.e. which bound non-constant holomorphic discs), across which the 
local charts of the mirror (as given by \eqref{eq:localchart}) need to be 
glued together in an appropriate manner to account for ``instanton
corrections''. 

The discussion preceding Equation \eqref{eq:analytic_space_glue_all_tori} makes precise
the idea that we can embed the moduli space of Lagrangians equipped with
unitary local systems in an analytic space obtained by gluing coordinate charts coming 
from non-unitary systems. This will be the first step in the construction of
the mirror manifold as a completion of the moduli space of Lagrangians.


Consider a potentially obstructed fiber $L=\pi^{-1}(b)$ of $\pi$, where $b\in
B\setminus B^{sing}$ lies in a wall that separates two tautologically unobstructed
chambers. By deforming this fiber to a nearby chamber, we obtain a bounding cochain (with respect to the Floer differential) for the moduli space of holomorphic discs with boundary on $L$.  While local systems on $L$ define objects of $\F(X^0)$, the quasi-isomorphism type of such objects depends
on the choice of bounding cochain, which in our case amounts to a choice of this isotopy.  In the special situation we are considering, we use this argument to prove in Lemma \ref{lem:conv-wall-cross}  that any unitary local system on $L$ can be represented by a non-unitary local system on a fiber lying in a tautologically unobstructed chamber. This implies that the space obtained by gluing the mirrors of the chambers contains the analytic space corresponding to all unitary local systems on smooth fibers of $\pi$.


The gluing maps for the mirrors of nearby chambers are given by wall-crossing
formulae, with instanton corrections accounting for the disc bubbling 
phenomena that occur as a Lagrangian submanifold is isotoped across a wall
of potentially obstructed Lagrangians (see \cite{Au1} for an informal discussion, and
Appendix \ref{sec:general-theory} for the relation with the invariance proof of Floer
cohomology in this setting \cite{FO3book,Fcyclic}). Specifically, consider a Lagrangian isotopy
$\{L_t\}_{t\in [0,1]}$ whose end points are tautologically unobstructed and lie in adjacent chambers. 
Assume that all nonconstant holomorphic discs bounded by the Lagrangians $L_t$
in $X^0$ represent a single relative homotopy class $\beta\in \pi_2(X^0,L_t)$
(we implicitly identify these groups with each other by means of the
isotopy), or its multiples (for non-simple discs). The weight
associated to the class $\beta$ defines a regular function 
$$z_\beta=T^{\omega(\beta)}
\nabla(\partial\beta)\in \O(U_i^\vee),$$ in fact a monomial in the
coordinates $(z_1,\dots,z_n)$ of \eqref{eq:localchart}. In this situation,
assuming its transversality, the
moduli space $\mathcal{M}_{1}(\{L_t\},\beta)$ of all holomorphic discs 
in the class $\beta$ bounded by $L_t$ as $t$ varies from $0$ to $1$, with
one boundary marked point, is a closed \hbox{$(n-1)$}-dimensional manifold,
oriented if we fix a spin structure on $L_t$. Thus,
evaluation at the boundary marked point (combined with identification of the
submanifolds $L_t$ via the isotopy) yields a cycle 
$C_\beta=\mathrm{ev}_*[\mathcal{M}_1(\{L_t\},\beta)]\in H_{n-1}(L_t)$.
The instanton corrections to the gluing of the local coordinate charts
\eqref{eq:localchart} are then of the form
\begin{equation}\label{eq:instcorr}
z_i\mapsto (h(z_\beta))^{C_\beta \cdot \gamma_i} z_i,
\end{equation}
where $h(z_\beta)=1+ z_{\beta} + \dots\in \mathbb{Q}[[z_\beta]]$ is a power 
series recording the (virtual) contributions of multiple covers of the discs
in the class $\beta$. In practice, we shall only use the weaker property that 
these transformations are of the form
\begin{equation}
z_i\mapsto h_i(z_\beta) z_i,
\end{equation}
where $h_i(z_\beta) \in 1+  z_\beta  \mathbb{Q}[[z_\beta]]$.

In the examples we consider in this paper, there are only finitely many
walls in $B$, and the above considerations are sufficient to construct the
SYZ mirror of $X^0$ out of instanton-corrected gluings of local charts. In general, 
intersections between walls lead, via a ``scattering'' 
phenomenon, to an infinite number of higher-order
instanton corrections; it is conjectured that these Floer-theoretic corrections can be determined using the machinery 
developed by Kontsevich-Soibelman \cite{KS1,KS2} and Gross-Siebert \cite{GS1,GS2}.

\begin{remark}\label{rmk:switchsides}
We have discussed how to construct the analytic space $Y^0$
(``B-model'') from the symplectic geometry of $X^0$ (``A-model'').
When $Y^0$ makes sense as a complex manifold (i.e., assuming convergence),
one also expects it to carry a natural K\"ahler structure for which
the A-model of $Y^0$ is equivalent to the B-model of $X^0$. We will however
not emphasize this feature of mirror symmetry.
\end{remark}

\subsection{The superpotential}\label{ss:syz2}
In the previous section we explained the construction of the SYZ mirror $Y^0$ of an open 
Calabi-Yau manifold $X^0=X\setminus D$, where $D$ is an anticanonical
divisor in a K\"ahler manifold $(X,\omega,J)$, equipped with a Lagrangian
torus fibration $\pi:X^0\to B$. We now turn to mirror symmetry for $X$
itself.

The Fukaya category of $X$ is a deformation of that of $X^0$: the Floer cohomology
of  Lagrangian submanifolds of $X^0$, when viewed as objects of $\F(X)$, is 
deformed by the presence of additional holomorphic discs 
that intersect the divisor $D$. Let $L$ be a Lagrangian fiber of
the SYZ fibration $\pi:X^0\to B$: since the Maslov class of $L$ in $X^0$
vanishes, the Maslov index of a holomorphic disc bounded by $L$ in $X$
is equal to twice its algebraic intersection number with $D$. Following
Fukaya, Oh, Ohta, and Ono \cite{FO3book} we associate to $L$ and
a rank 1 local system $\nabla$ over it the {\em obstruction}
\begin{equation}
\label{eq:m0}
\m_0(L,\nabla)=\sum_{\beta\in \pi_2(X,L)\setminus \{0\}}
z_\beta(L,\nabla)\, \mathrm{ev}_*[\mathcal{M}_1(L,\beta)]\in C^*(L;\K),
\end{equation}
where $z_\beta(L,\nabla)=T^{\omega(\beta)}\nabla(\partial\beta)$ is the weight
associated to the class $\beta$, and $\mathcal{M}_1(L,\beta)$ is the
moduli space of holomorphic discs with one boundary marked point in $(X,L)$ 
representing the class $\beta$. In the absence of bubbling, one can achieve regularity, 
and $[\mathcal{M}_1(L,\beta)]$ can be defined as the fundamental class of the manifold $\mathcal{M}_1(L,\beta)$.  To consider a more general situation, we appeal to the work of Fukaya, Oh, Ohta, and Ono who define  such a potential  for Lagrangian fibers in toric manifolds in \cite{FO3toric}. While the examples we consider are not toric, their construction applies more generally whenever the moduli spaces of stable holomorphic discs with non-positive Maslov
index contribute trivially to the Floer differential. The situation is therefore
simplest when the divisor $D$ is nef, or more generally when the following condition holds:

\begin{assumption}\label{ass:nef}
Every rational curve $C\simeq \PP^1$ in $X$ has non-negative intersection number 
$D\cdot C\ge 0$.
\end{assumption}

Consider first the case of a Lagrangian submanifold $L$ which is tautologically unobstructed in $X^0$. 
By positivity of intersections,
the minimal Maslov index of a non-constant holomorphic disc with
boundary on $L$ is~$2$ (when \hbox{$\beta\cdot D=1$}). 
Gromov compactness implies that the
chain $\mathrm{ev}_*[\mathcal{M}_1(L,\beta)]$ is actually a cycle, of dimension
$n-2+\mu(\beta)=n$, i.e.\ a scalar multiple $n(L,\beta)[L]$ of the
fundamental class of $L$; whereas the evaluation chains for $\mu(\beta)>2$ have
dimension greater than $n$ and we discard them. Thus $(L,\nabla)$ is
{\em weakly unobstructed}, i.e.\ $$\m_0(L,\nabla)=W(L,\nabla)\, e_{L}$$
is a multiple of the unit in $H^{0}(L, \K) $, which is Poincar\'e dual to 
the fundamental class of $L$. More generally, Assumption \ref{ass:nef}
excludes discs of negative Maslov index, while the vanishing of the contribution
of discs of Maslov index $0$ is explained in Appendix~\ref{sec:conv-wall-cross}.

Given an open subset $U\subset B\setminus B^{sing}$ over which the
fibers of $\pi$ are tautologically unobstructed in $X^0$, the coordinate chart
$U^\vee\subset Y^0$ considered in the previous section now parametrizes weakly 
unobstructed objects $(L=\pi^{-1}(b),\nabla)$ of $\F(X)$, and the
{\em superpotential} 
\begin{equation}\label{eq:superpot}
W(L,\nabla)=\sum_{\substack{\beta\in \pi_2(X,L)\\ \beta\cdot D=1}}
n(L,\beta) \,z_\beta(L,\nabla)
\end{equation}
is a regular function on $U^\vee$. The superpotential represents a
curvature term in Floer theory: the differential on the Floer complex
of a pair of weakly unobstructed objects $(L,\nabla)$ and $(L',\nabla')$ 
squares to $(W(L',\nabla')-W(L,\nabla))\,\mathrm{id}$. In particular, the
family Floer cohomology \cite{Ffamily} of an unobstructed Lagrangian 
submanifold of $X$  with the fibers 
of the SYZ fibration over $U$ is expected to yield no longer an object of the 
derived category of coherent sheaves over $U^\vee$ 
but rather a {\em matrix factorization} of the superpotential $W$.

In order to construct the mirror of $X$ globally, 
we again have to
account for instanton corrections across the walls of potentially obstructed fibers
of $\pi$. As before, these corrections are needed in order to account for the bubbling
of holomorphic discs of Maslov index~0 as one crosses a wall, and encode
weighted counts of such discs. Under Assumption~\ref{ass:nef}, 
positivity of intersection implies that all the holomorphic discs of 
Maslov index 0 are contained in $X^0$; therefore the instanton
corrections are exactly the same for $X$ as for $X^0$, i.e.\ the
moduli space of objects of $\F(X)$ that we construct out of the fibers of
$\pi$ is again $Y^0$ (the SYZ mirror of $X^0$).

A key feature of the instanton corrections is that the superpotential
defined by \eqref{eq:superpot} naturally glues to a regular function on $Y^0$;
this is because, by construction, the gluing via wall-crossing transformations identifies quasi-isomorphic
objects of $\F(X)$, for which the obstructions $\m_0$ have to match, as explained
in Corollary \ref{cor:potential_matched_by_gluing}.
Thus, the mirror of $X$ is the $B$-side Landau-Ginzburg model $(Y^0,W)$, where
$Y^0$ is the SYZ mirror of $X^0$ and $W\in \O(Y^0)$ is given by \eqref{eq:superpot}.
(However, see Remark \ref{rmk:morestuff}).

\begin{remark}\label{rmk:instcorrtrick}
The regularity of the superpotential $W$ is a useful feature for the 
construction  of the SYZ mirror of $X^0$. Namely,
rather than directly computing the instanton corrections by studying the
enumerative geometry of 
holomorphic discs in $X^0$, it is often easier to determine them 
indirectly, by considering either $X$ or some other partial compactification 
of $X^0$ (satisfying Assumption \ref{ass:nef}), computing the mirror
superpotential in each chamber of $B\setminus B^{sing}$, and
matching the expressions on either side of a wall via a coordinate change of
the form \eqref{eq:instcorr}. 
\end{remark}

When Assumption \ref{ass:nef} fails, the instanton corrections to the
SYZ mirror of $X$ might differ from those for $X^0$ (hence the difference
between the mirrors might be more subtle than simply equipping $Y^0$ with a
superpotential). However, this only happens if the (virtual) counts of 
Maslov index 0 discs bounded by potentially obstructed fibers of $\pi$ in $X$ differ
from the corresponding counts in $X^0$. Fukaya-Oh-Ohta-Ono have shown that
this issue never arises for toric varieties \cite[Corollary 11.5]{FO3toric}.
In that case, the deformation of the
Fukaya category which occurs upon (partially) compactifying $X^0$ to $X$ 
(due to the presence of additional holomorphic discs) is accurately
reflected by the deformation of the mirror B-model given by
the superpotential $W$ (i.e., considering matrix factorizations
rather than the usual derived category).

Unfortunately, the argument of \cite{FO3toric} does not adapt immediately to
our setting; thus for the time being we only consider settings in which Assumption
\ref{ass:nef} holds.  This will be the subject of further investigation.


\medskip

The situation is in fact symmetric: just as partially compactifying $X^0$
to $X$ is mirror to equipping $Y^0$ with a superpotential, equipping $X^0$
or $X$ with a superpotential is mirror to partially compactifying $Y^0$.
One way to justify this claim would be to switch to the other direction of
mirror symmetry, reconstructing $X^0$ as an SYZ mirror of $Y^0$ equipped
with a suitable K\"ahler structure (cf.\ Remark \ref{rmk:switchsides}). 
However, in simple cases this statement can also be understood directly.
The following example will be nearly sufficient for our purposes
(in Section \ref{s:backtoH} we will revisit and generalize it):

\begin{example}\label{ex:FScompact}
Let $X^0=\C^*$, whose mirror $Y^0\simeq \K^*$
parametrizes objects $(L,\nabla)$ of $\F(X^0)$, where $L$ is a simple 
closed curve enclosing the origin (up to Hamiltonian isotopy) and 
$\nabla$ is a unitary rank 1 local system on $L$. The natural coordinate
on $Y^0$, as given by \eqref{eq:localchart}, tends to zero as
the area enclosed by $L$ tends to infinity. Equipping $X^0$ with the
superpotential $W(x)=x$, the Fukaya category $\F(X^0,W)$ 
also contains ``admissible'' non-compact Lagrangian submanifolds, i.e.\
properly embedded Lagrangians whose image under $W$ is only allowed to tend 
to infinity in the direction of the positive real axis. Denote by $L_\infty$ a 
properly embedded arc which connects $+\infty$ to itself by passing around
the origin (and encloses an infinite amount of area). An easy calculation
in $\F(X^0,W)$ shows that $\mathrm{HF}^*(L_\infty,L_\infty)\simeq H^*(S^1;\K)$; so $L_\infty$ behaves
Floer cohomologically like a torus. In particular, $L_\infty$ admits a one-parameter family of deformations in $\F(X^0,W)$;
these are represented by equipping $L_{\infty}$ with a bounding cochain in $\mathrm{HF}^1(L_\infty,L_\infty) = \K$ of sufficiently large valuation (with our conventions, the valuation of $0$ is $+\infty$). Given a point $cT^{\lambda} \in \K $,  the corresponding Floer differential counts, in addition to the usual strips, triangles with one boundary puncture converging to a time $1$ chord of 
an appropriate Hamiltonian (equal to $\mathrm{Re}(x)$ near $+\infty$)  with ends on $L$ (this is the implementation of the Fukaya category $\F(X^0,W)$ appearing in \cite{SeLef}). 

Except for the case $c=0$, these additional objects of the Fukaya category turn out to be isomorphic to simple closed curves
(enclosing the origin) with rank 1 local systems.  
More precisely, letting
$L_{\lambda}$ be the fiber enclosing an additional amount of area $\lambda \in \R$ compared to
a suitable reference Lagrangian $L_0$,
and $\nabla_{c}$ the local system with holonomy $c$, an easy computation shows that the pairs $(L_{\infty},cT^{\lambda})$ and $(L_{\lambda},\nabla_{c}   )$ represent quasi-isomorphic objects of $\F(\C^*,W)$. Thus, in $\F(\C^*,W)$ the previously considered moduli space of objects
contains an additional point $L_\infty$; this naturally extends the mirror
from $Y^0\simeq \K^*$ to $Y\simeq \K$, and the coordinate coming from identifying bounding cochains on $L_\infty$ with local systems on closed curves
defines an analytic structure near this point.

Alternatively, one can geometrically recover the Lagrangians $L_\lambda$ 
as self-surgeries of the immersed Lagrangian obtained by deforming 
$L_\infty$ to a curve with one self-intersection,
enclosing the same amount of area as $L_\lambda$.
This self-intersection corresponds to a generator in $HF^1(L_\infty,L_\infty)$, 
giving rise to a bounding cochain.
The Floer-theoretic isomorphisms between bounding cochains on 
admissible Lagrangians and embedded Lagrangians then become an instance of 
the surgery formula of \cite{FO3surgery}.

\end{example}

\section{Notations and constructions}\label{s:notations}

\subsection{Hypersurfaces near the tropical limit}\label{ss:tropdeg}

Let $V$ be a (possibly non-compact) toric variety of complex dimension $n$,
defined by a fan $\Sigma_V\subseteq \R^n$. We denote by $\sigma_1,\dots,\sigma_r$ the
primitive integer generators of the rays of $\Sigma_V$.
We consider a family of smooth algebraic hypersurfaces $H_\tau\subset V$ 
(where $\tau\to 0$), transverse to the toric divisors in $V$, and degenerating to 
the ``tropical'' limit. 
Namely, in affine coordinates $\mathbf{x}=(x_1,\dots,x_n)$ 
over the open stratum $V^0\simeq (\C^*)^n\subset V$, $H_\tau$ is defined by an equation of 
the form 
\begin{equation}\label{eq:Ht}
f_\tau=\sum_{\alpha\in A} c_\alpha \tau^{\rho(\alpha)}\mathbf{x}^\alpha=0,
\end{equation} 
where
$A$ is a finite subset of the lattice $\Z^n$ of characters of the
torus $V^0$,  $c_\alpha\in \C^*$ are arbitrary constants, and $\rho:A\to\R$ satisfies a certain convexity 
property. 

More precisely, $f_\tau$ is a section of a certain line bundle
$\mathcal{L}$ over $V$, determined by a convex
piecewise linear function $\lambda:\Sigma_V\to\R$ with integer linear slopes.
(Note that $\mathcal{L}$ need not be ample; however the convexity assumption forces it
to be nef.) The polytope $P$ associated to $\mathcal{L}$ is the set 
of all $v\in\R^n$ such that $\langle v,\cdot\rangle+\lambda$ takes
everywhere non-negative values; more concretely,
$P=\{v\in\R^n\,|\,\langle \sigma_i,v\rangle+\lambda(\sigma_i)\ge 0\ 
\forall 1\le i\le r\}$. It is a classical fact that the integer points of 
$P$ give a basis of the space of sections of $\mathcal{L}$.
The condition that $H_\tau$ be transverse to each
toric stratum of $V$ is then equivalent to the requirement that 
$A\subseteq P\cap \Z^n$ intersects nontrivially the closure of each face 
of $P$ (i.e., in the compact case, $A$ should contain every vertex of $P$).

Consider a polyhedral decomposition $\mathcal{P}$ of the convex hull
$\mathrm{Conv}(A)\subseteq \R^n$, whose set of vertices is exactly
$\mathcal{P}^{(0)}=A$. We will mostly consider the case where 
the decomposition $\mathcal{P}$ is {\em regular}, i.e.\ every 
cell of $\mathcal{P}$ is congruent under the
action of $GL(n,\Z)$ to a standard simplex.
We say that $\rho:A\to\R$ is {\em adapted}\/ to the polyhedral
decomposition $\mathcal{P}$ if it is the restriction to $A$ of a
convex piecewise linear function
$\bar\rho:\mathrm{Conv}(A)\to\R$ whose maximal domains of
linearity are exactly the cells of $\mathcal{P}$.

\begin{definition}\label{def:maxdegeneration}
The family of hypersurfaces $H_\tau\subset V$ has a {\em maximal degeneration}
for\/ $\tau\to 0$ if it is given by equations of the form \eqref{eq:Ht} where
$\rho$ is adapted to a regular polyhedral decomposition $\mathcal{P}$
of\/ $\mathrm{Conv}(A)$.
\end{definition}

The logarithm map
$\Log_\tau:\mathbf{x}=(x_1,\dots,x_n)\mapsto \frac{1}{|\log \tau|}(\log |x_1|,\dots,
\log |x_n|)$ maps $H_\tau$ to its {\em amoeba}\/ $\Pi_\tau=\Log_\tau(H_\tau\cap
V^0)$; it is known \cite{Mikhalkin,Rullgard} that,
for $\tau\to 0$, the amoeba $\Pi_\tau\subset \R^n$ converges to the {\it tropical
hypersurface} $\Pi_0\subset\R^n$ defined by the tropical polynomial 
\begin{equation}\label{eq:Pi0}
\varphi(\xi)=\max\,\{\langle \alpha,\xi\rangle-\rho(\alpha)\,|\,\alpha\in A\}
\end{equation}
(namely, $\Pi_0$ is the set of points where the maximum is achieved more
than once). Combinatorially, $\Pi_0$ is the dual cell complex of 
$\mathcal{P}$; in particular the connected components of
$\R^n\setminus\Pi_0$ can be naturally labelled by the
elements of $\mathcal{P}^{(0)}=A$, according to which term
achieves the maximum in \eqref{eq:Pi0}.

\begin{example}\label{ex:genus2}
The toric variety $V=\PP^1\times\PP^1$ is defined by the fan
$\Sigma\subseteq \R^2$ whose rays are generated by 
$\sigma_1=(1,0)$, $\sigma_2=(0,1)$, $\sigma_3=(-1,0)$, $\sigma_4=(0,-1)$.
The piecewise linear function $\lambda:\Sigma\to\R$ with
$\lambda(\sigma_1)=\lambda(\sigma_2)=0$, $\lambda(\sigma_3)=3$,
and $\lambda(\sigma_4)=2$ defines the line bundle 
$\mathcal{L}=\mathcal{O}_{\PP^1\times\PP^1}(3,2)$, whose associated polytope 
is $P=\{(v_1,v_2)\in\R^2:\ 0\le v_1\le 3,\ 0\le v_2\le 2\}$. Let $A=
P\cap \Z^2$. The regular decomposition of $P$ shown in 
Figure~\ref{fig:Pi0-genus2} (left) is induced by the function 
$\rho:A\to\R$ whose values are given in the figure. The corresponding
tropical hypersurface $\Pi_0\subseteq \R^2$ is shown in Figure~\ref{fig:Pi0-genus2} (right); $\Pi_0$ is the limit of the amoebas of a 
maximally degenerating family of
smooth genus 2 curves $H_\tau\subset V$ as $\tau\to 0$.

When the toric variety $V$ is non-compact, $P$ is unbounded, and
the convex hull of $A$ is only a proper subset of $P$. For instance,
Figure \ref{fig:Pi0-genus2} also represents
a maximally degenerating family of smooth genus 2 curves in 
$V^0\simeq (\C^*)^2$
(where now $P=\R^2$).
\end{example}

\begin{figure}
\setlength{\unitlength}{1cm}\psset{unit=\unitlength}
\begin{picture}(3.5,2.5)(-0.25,-0.25)
\psframe(0,0)(3,2)
\psline(1,0)(1,2) \psline(2,0)(2,2)
\psline(0,1)(3,1) \psline(0,2)(2,0) \psline(1,2)(3,0)
\psline(0,1)(1,0) \psline(2,2)(3,1)
\put(0.1,0.1){\small 5}
\put(1.1,0.1){\small 2}
\put(2.1,0.1){\small 1}
\put(3.1,0.1){\small 2}
\put(0.1,1.1){\small 2}
\put(1.1,1.1){\small 0}
\put(2.1,1.1){\small 0}
\put(3.1,1.1){\small 2}
\put(0.1,2.1){\small 2}
\put(1.1,2.1){\small 1}
\put(2.1,2.1){\small 2}
\put(3.1,2.1){\small 5}
\end{picture}
\qquad
\setlength{\unitlength}{3.5mm}\psset{unit=\unitlength}
\begin{picture}(8,8)(-4,-4)
\psline(-4,-3)(-3,-3)
\psline(-4,0)(-2,0)
\psline(-3,-4)(-3,-3)
\psline(-1,-4)(-1,-2)
\psline(1,-4)(1,-1)
\psline(4,3)(3,3)
\psline(4,0)(2,0)
\psline(3,4)(3,3)
\psline(1,4)(1,2)
\psline(-1,4)(-1,1)
\pspolygon(-2,-2)(-1,-2)(0,-1)(0,1)(-1,1)(-2,0)
\psline(-3,-3)(-2,-2)
\pspolygon(2,2)(1,2)(0,1)(0,-1)(1,-1)(2,0)
\psline(3,3)(2,2)
\end{picture}
\caption{A regular decomposition of the polytope for
$\mathcal{O}_{\PP^1\times\PP^1}(3,2)$, and the corresponding tropical
hypersurface.}
\label{fig:Pi0-genus2}
\end{figure}
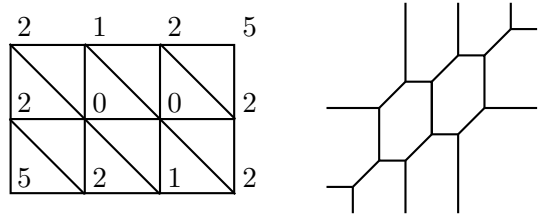

We now turn to the symplectic geometry of the situation we just considered.
Assume that $V$ is equipped with a complete toric K\"ahler metric, with
K\"ahler form $\omega_V$. The torus $T^n=(S^1)^n$ acts on
$(V,\omega_V)$ by Hamiltonian diffeomorphisms; we denote by
$\mu_V:V\to\R^n$ the corresponding moment map. It is well-known that
the image of $\mu_V$ is a convex 
polytope $\Delta_V\subset \R^n$, dual to the fan $\Sigma_V$. The
preimage of the interior of $\Delta_V$ is the open stratum $V^0\subset V$;
over $V^0$ the logarithm map $\Log_\tau$ and the moment map $\mu_V$ are
related by some diffeomorphism $g_\tau:\R^n\stackrel{\sim}{\to}\mathrm{int}(\Delta_V)$.

For a fixed K\"ahler form $\omega_V$, the
diffeomorphism $g_\tau$ gets rescaled by a factor of $|\log\tau|$ as $\tau$
varies;
in particular, the moment map images 
$\mu_V(H_\tau)=\overline{g_\tau(\Pi_\tau)}
\subseteq \Delta_V$ of a degenerating family of hypersurfaces collapse
towards the boundary of $\Delta_V$ as $\tau\to 0$. 
This can be avoided by considering a
varying family of K\"ahler forms $\omega_{V,\tau}$, obtained from the
given $\omega_V$ by symplectic inflation along all the toric divisors of
$V$, followed by a rescaling so that $[\omega_{V,\tau}]=
[\omega_V]$ is independent of~$\tau$. (To be more
concrete, one could e.g.\ consider a family of toric K\"ahler forms 
which are multiples of
the standard complete K\"ahler metric of
$(\C^*)^n$ over increasingly large open subsets of $V^0$.)
\medskip

Throughout this paper, we will consider smooth hypersurfaces that are close 
enough to the tropical limit, namely hypersurfaces of the form considered
above with $\tau$ sufficiently close to $0$. The key requirement we have
for ``closeness'' to the tropical limit is that the amoeba should lie in a 
sufficiently small neighborhood of the tropical hypersurface $\Pi_0$, so
that the complements have the same combinatorics. Since we consider a
single hypersurface rather than the whole family, we will omit $\tau$ from the notation.

\begin{definition}\label{def:neartrop}
A smooth hypersurface $H=f^{-1}(0)$ in a toric variety $V$ is {\em nearly 
tropical} if it is a member of a maximally degenerating family of 
hypersurfaces as above, with the property that
the amoeba $\Pi=\Log(H)\subset \R^n$ is entirely
contained inside a neighborhood of the tropical hypersurface
$\Pi_0$ which retracts onto $\Pi_0$.
\end{definition}

In particular, each element $\alpha\in A$ determines a non-empty open
component of $\R^n\setminus \Pi$; we will (abusively) refer
to it as the component over which the monomial of $f$ with weight $\alpha$
dominates.

We equip $V$ with a toric K\"ahler form $\omega_V$ of the form
discussed above, and denote by $\mu_V$ and $\Delta_V$ the moment map and its
image.  Let $\delta>0$ be a constant such that  a standard symplectic tubular neighborhood $U_H$ of $H$ of
size $\delta$ embeds into $V$ and  the complement
of the moment map image $\mu_V(U_H)$  has a non-empty 
component for each element of $A$ (i.e.\ for each monomial in $f$).


\begin{remark}
The assumption that the degeneration is maximal is made purely for 
convenience, and to ensure that the toric variety $Y$ constructed in \S
\ref{ss:mirror} below is smooth. However, all of our arguments work equally
well in the case of non-maximal degenerations. 
\end{remark}

\subsection{Blowing up}\label{ss:blowup}

Our main goal is to study SYZ mirror symmetry for the blow-up $X$ of
$V\times\C$ along $H\times 0$, equipped with a suitable K\"ahler form. 

Recalling that the defining equation $f$ of $H$ is a section of a line
bundle $\mathcal{L}\to V$, the normal bundle to $H\times 0$ in $V\times\C$
is the restriction of $\mathcal{L}\oplus \mathcal{O}$, and we can construct 
explicitly $X$ as a hypersurface in the total space of the 
$\PP^1$-bundle $\PP(\mathcal{L}\oplus \mathcal{O})
\to V\times \C$. Namely, the defining section of $H\times 0$ projectivizes to 
a section $\mathbf{s}(\mathbf{x},y)=(f(\mathbf{x}):y)$ of
$\PP(\mathcal{L}\oplus \mathcal{O})$
over the complement of $H\times 0$; and $X$ is the closure of the graph of
$\mathbf{s}$. In other terms,
\begin{equation}\label{eq:Xblowup}
X=\{(\mathbf{x},y,(u:v))\in \PP(\mathcal{L}\oplus\mathcal{O})\,|\,
f(\mathbf{x})v=yu\}.
\end{equation}
In this description it is clear that the projection $p:X\to V\times\C$
is a biholomorphism outside of the exceptional divisor $E=p^{-1}(H\times
0)$.

The $S^1$-action on $V\times \C$ by rotation of
the $\C$ factor preserves $H\times 0$ and hence lifts to an $S^1$-action on 
$X$. This action preserves the exceptional divisor $E$, and acts by rotation
in the standard manner on each fiber of the $\PP^1$-bundle $p_{|E}:E\to
H\times 0$. In coordinates, we can write this action in the form:
\begin{equation}\label{eq:s1action}
e^{i\theta}\cdot
(\mathbf{x},y,(u:v))=(\mathbf{x},e^{i\theta}y,(u:e^{i\theta}v)).
\end{equation}
Thus, the fixed
point set of the $S^1$-action on $X$ consists of two disjoint strata: the proper
transform $\tilde{V}$ of $V\times 0$ (corresponding to $y=0,\ v=0$ in the
above description), and the section $\tilde{H}$ of $p$ 
over $H\times 0$ given by the line subbundle $\mathcal{O}$ of the normal
bundle (i.e., the point $(0:1)$ in each fiber of $p_{|E}$).

The open stratum $V^0\times \C^*$ of the toric variety $V\times\C$ carries a
holomorphic $(n+1)$-form 
$\Omega_{V\times\C}=i^{n+1}\prod_j d\log x_j\wedge d\log y$, which has simple poles
along the toric divisor $D_{V\times\C}=(V\times 0)\cup (D_V\times \C)$ (where
$D_V=V\setminus V^0$ is the union of the toric divisors in $V$). The
pullback $\Omega=p^*(\Omega_{V\times\C})$ has
simple poles along the proper transform of $D_{V\times\C}$, namely the
anticanonical divisor $D=\tilde{V}\cup p^{-1}(D_V\times\C)$. The complement
$X^0=X\setminus D$, equipped with the $S^1$-invariant holomorphic $(n+1)$-form $\Omega$, is 
an open Calabi-Yau manifold.

\begin{remark}\label{rmk:conicbundle}
$X\setminus \tilde{V}$ corresponds to $v\neq 0$ in \eqref{eq:Xblowup},
so it is isomorphic to an affine conic bundle over $V$, namely the
hypersurface in the total space of $\mathcal{O}\oplus \mathcal{L}$
given~by
\begin{equation}\label{eq:conicbundle}
\{(\mathbf{x},y,z)\in \mathcal{O}\oplus \mathcal{L}\,|\,f(\mathbf{x})=yz\}.
\end{equation}
Further removing the fibers over $D_V$, we conclude that $X^0$ is a conic
bundle over the open stratum $V^0\simeq (\C^*)^n$, given again by the
equation $\{f(\mathbf{x})=yz\}$.
\end{remark}

We equip $X$ with an 
$S^1$-invariant K\"ahler form $\omega_\epsilon$ for which the fibers of 
the exceptional divisor have a sufficiently small area $\epsilon>0$.
Specifically, we require that $\epsilon\in (0,\delta/2)$, where $\delta$ is
the size of the standard tubular neighborhood of $H$ that embeds in
$(V,\omega_V)$. The most natural way to construct such a K\"ahler form
would be to equip $\mathcal{L}$ with a Hermitian metric, which determines a
K\"ahler form on $\PP(\mathcal{L}\oplus \mathcal{O})$ and, by restriction, on
$X$; on the complement of $E$ the resulting K\"ahler form is given by
\begin{equation}\label{eq:omegaeps0}
p^*\omega_{V\times\C}+\frac{i\epsilon}{2\pi}\partial\bar{\partial}
\log(|f(\mathbf{x})|^2+|y|^2),
\end{equation}
where $\omega_{V\times\C}$ is the product K\"ahler form on $V\times\C$
induced by the toric K\"ahler form $\omega_V$ on $V$ and the standard area
form of $\C$.

However, from a symplectic perspective the blowup operation
amounts to deleting from $V\times\C$
a standard symplectic tubular neighborhood of $H\times 0$ and
collapsing its boundary (an $S^3$-bundle over $H$) onto $E$ by the Hopf map.
Thus, $X$ and $V\times\C$ are symplectomorphic away from neighborhoods of 
$E$ and $H\times 0$; to take full advantage of this, we will choose 
$\omega_\epsilon$ in such a way that the projection
$p:X\to V\times\C$ is a symplectomorphism away from a neighborhood of the exceptional divisor.
Namely, we set
\begin{equation}\label{eq:omegaeps}
\omega_\epsilon=p^*\omega_{V\times\C}+\frac{i\epsilon}{2\pi}\partial
\bar{\partial}\left(\chi(\mathbf{x},y)\log(|f(\mathbf{x})|^2+|y|^2)\right),
\end{equation} 
where $\chi$ is a suitably chosen
$S^1$-invariant smooth cut-off function supported in a tubular neighborhood of
$H\times 0$, with $\chi=1$ near $H\times 0$. 
It is clear
that \eqref{eq:omegaeps} defines a K\"ahler form provided $\epsilon$ is small enough;
specifically, $\epsilon$ needs to be such that a standard symplectic
neighborhood of size $\epsilon$ of $H\times 0$ can be embedded
($S^1$-equivariantly) into the support of $\chi$. 
For simplicity, we assume that $\chi$ is chosen so that the
following property holds:

\begin{property}\label{ass:chi}
The support of $\chi$ is contained inside $p^{-1}(U_H\times B_\delta)$,
where $U_H\subset V$ is a standard symplectic $\delta$-neighborhood of $H$ 
and $B_\delta\subset\C$ is the disc of radius $\delta$.
\end{property}

\begin{remark}
$\omega_\epsilon$ lies in the same cohomology class
$[\omega_\epsilon]=p^*[\omega_{V\times\C}]-\epsilon[E]$ as the K\"ahler form
defined by \eqref{eq:omegaeps0}, and is equivariantly symplectomorphic to it.
\end{remark}

\subsection{The mirror $B$-side Landau-Ginzburg model}\label{ss:mirror}
Using the same notations as in the previous section, we now describe
a $B$-side Landau-Ginzburg model which we claim is SYZ mirror to $X$ (with
the K\"ahler form $\omega_\epsilon$, and relatively to the anticanonical
divisor~$D$).

Recall that the hypersurface $H\subset X$ has a defining equation of the
form \eqref{eq:Ht}, involving toric monomials whose weights range over a
finite subset $A\subset \Z^n$, forming the vertices of a polyhedral
complex $\mathcal{P}$ (cf.\ Definition \ref{def:maxdegeneration}).

We denote by $Y$ the (noncompact) $(n+1)$-dimensional toric variety defined
by the fan $\Sigma_Y=\R_{\ge 0}\cdot(\mathcal{P}\times \{1\})\subseteq 
\R^{n+1}=\R^n\oplus\R$. Namely, the integer generators of the rays of 
$\Sigma_Y$ are the vectors of the form $(-\alpha,1)$, $\alpha\in A$, and the vectors
$(-\alpha_1,1),\dots,(-\alpha_k,1)$ span a cone of $\Sigma_Y$ if and only if
$\alpha_1,\dots,\alpha_k$ span a cell of $\mathcal{P}$.

Dually, $Y$ can be described by a (noncompact) polytope $\Delta_Y\subseteq
\R^{n+1}$, defined in terms of the tropical polynomial $\varphi:\R^n\to\R$
associated to $H$ (cf.\ \eqref{eq:Pi0}) by
\begin{equation}
\label{eq:DeltaY}
\Delta_Y=\{(\xi,\eta)\in\R^n\oplus\R\,|\,\eta\ge \varphi(\xi)\}.
\end{equation}

\begin{remark}
The polytope $\Delta_Y$ also determines a K\"ahler class $[\omega_Y]$ on
$Y$. While in this paper we focus on the A-model of $X$ and the B-model of
$Y$, it can be shown that the family of complex structures on $X$ obtained
by blowing up $V\times \C$ along the maximally degenerating family 
$H_\tau\times 0$ (cf.\ \S \ref{ss:tropdeg}) corresponds to a family of
K\"ahler forms asymptotic to $|\log\tau|[\omega_Y]$ as $\tau\to 0$.
\end{remark}

\begin{remark}
Even though deforming the hypersurface $H$ inside $V$ does not modify
the symplectic geometry of $X$, the topology of $Y$ depends on the chosen
polyhedral decomposition $\mathcal{P}$ (i.e., on
the combinatorial type of the tropical hypersurface defined by $\varphi$). 
However, the various possibilities for $Y$ are related to each other by 
crepant birational transformations, and hence are
expected to yield equivalent B-models. (The A-model of $Y$, on the other 
hand, is affected by these birational transformations and does depend on the tropical
polynomial $\varphi$, as explained in the previous remark.)
\end{remark}

The facets of $\Delta_Y$ correspond to the maximal domains of linearity of
$\varphi$. Thus the irreducible toric divisors of $Y$ are in one-to-one
correspondence with the connected components of $\R^n\setminus \Pi_0$,
and the combinatorics of the toric strata of $Y$ can be immediately
read off the tropical hypersurface $\Pi_0$ (see Example \ref{ex:genus2part2}
below).
\medskip

It is advantageous for our purposes to introduce a collection of affine
charts on $Y$ indexed by the elements of $A$ (i.e., the facets of $\Delta_Y$,
or equivalently, the connected components of $\R^n\setminus \Pi_0$).

For each $\alpha\in A$, let $Y_\alpha=(\K^*)^n\times\K$,
with coordinates
$\mathbf{v}_\alpha=(v_{\alpha,1},\dots,v_{\alpha,n})\in(\K^*)^n$ and
$v_{\alpha,0}\in\K$ (as before, $\K$ is either $\Lambda$ or $\C$).
Whenever $\alpha,\beta\in A$ are connected by an edge in the polyhedral
decomposition $\mathcal{P}$ (i.e., whenever the corresponding components
of $\R^n\setminus \Pi_0$ share a top-dimensional facet, with primitive
normal vector $\beta-\alpha$), we glue $Y_\alpha$ to $Y_\beta$ by the
coordinate transformations
\begin{equation}\label{eq:toriccharts}
\begin{cases}
v_{\alpha,i}=v_{\beta,0}^{\beta_i-\alpha_i}\, v_{\beta,i}
\qquad (1\le i\le n),
\\ v_{\alpha,0}=v_{\beta,0}.
\end{cases}\end{equation}
These charts cover the complement in $Y$ of the codimension 2 strata
(as $Y_\alpha$ covers the open stratum of $Y$ and the open stratum of
the toric divisor corresponding to~$\alpha$).
In terms of the standard basis of toric monomials indexed by weights in
$\Z^{n+1}$, $v_{\alpha,0}$ is the monomial with weight $(0,\dots,0,1)$,
and for $i\ge 1$ $v_{\alpha,i}$ is the monomial with weight
$(0,\dots,-1,\dots,0,-\alpha_i)$ (the $i$-th entry is $-1$).

Denoting by $T$ the Novikov parameter (treated as
an actual complex parameter when $\K=\C$),
and by $v_0$ the common coordinate $v_{\alpha,0}$ for all charts, we set
\begin{equation}\label{eq:w0}
w_0=-T^{\epsilon}+T^{\epsilon}v_0.
\end{equation}
With this notation, the above coordinate transformations can be rewritten as
$$
v_{\alpha,i}=\left(1+T^{-\epsilon} w_0\right)^{\beta_i-\alpha_i} v_{\beta,i},\qquad 1\le i\le n.
$$
More generally, for $m=(m_1,\dots,m_n)\in\Z^n$ we set $\mathbf{v}_\alpha^m=
v_{\alpha,1}^{m_1}\dots v_{\alpha,n}^{m_n}$. Then
\begin{equation}\label{eq:gluing}
\mathbf{v}_\alpha^m=(1+T^{-\epsilon} w_0)^{\langle \beta-\alpha, m\rangle}
\mathbf{v}_\beta^m.
\end{equation}
We shall see that $w_0$ and the transformations \eqref{eq:gluing} have a
natural interpretation in terms of the enumerative geometry of holomorphic
discs in $X$.

Next, recall from \S \ref{ss:tropdeg} that the inward normal vectors to the facets
of the moment polytope $\Delta_V$ associated to $(V,\omega_V)$ are the
primitive integer generators 
$\sigma_1,\dots,\sigma_r$ of the rays of $\Sigma_V$. Thus, there exist
constants $\varpi_1,\dots,\varpi_r\in\R$ such that
\begin{equation}\label{eq:Delta_V}
\Delta_V=\{u\in \R^n\,|\,\langle \sigma_i,u\rangle + \varpi_i\ge 0
\ \ \forall 1\le i\le r\}.
\end{equation}
Then for $i=1,\dots,r$ we set
\begin{equation}\label{eq:wi}
w_i=T^{\varpi_i} {\mathbf v}_{\alpha_i}^{\sigma_i}
\end{equation}
where $\alpha_i\in A$ is chosen to lie on the facet of $P$ defined by $\sigma_i$, i.e. so that $\langle \sigma_i,\alpha_i\rangle$ is minimal.  Hence, by the conditions imposed in \S \ref{ss:tropdeg},
$\langle \sigma_i,\alpha_i\rangle+\lambda(\sigma_i)=0$, where $\lambda:
\Sigma_V\to\R$ is the piecewise linear function defining
$\mathcal{L}=\mathcal{O}(H)$. 
 By \eqref{eq:gluing}, the choice of $\alpha_i$ satisfying the required condition
is irrelevant: in all cases ${\mathbf v}_{\alpha_i}^{\sigma_i}$ is
simply the toric monomial with weight $(-\sigma_i,\lambda(\sigma_i))\in
\Z^n\oplus \Z$. Moreover, this weight pairs non-negatively with
all the rays of the fan $\Sigma_Y$, therefore $w_i$ defines a 
regular function on $Y$.

With all the notation in place, we can at last make the following
definition, which clarifies the statements of Theorems \ref{thm:main}
and \ref{cor:main}:

\begin{definition}\label{def:LGmirror}
We denote by $Y^0$ the complement of the hypersurface $D_Y=w_0^{-1}(0)$ in
the toric $(n+1)$-fold $Y$, and define the {\em leading-order superpotential} 
\begin{equation}\label{eq:W0}
W_0=w_0+w_1+\dots+w_r=-T^\epsilon+T^\epsilon v_0+
\sum_{i=1}^r T^{\varpi_i}\mathbf{v}^{\sigma_i}_{\alpha_i}\in\mathcal{O}(Y).
\end{equation}
We also define
\begin{equation}\label{eq:W0H}
W_0^H=-v_0+w_1+\dots+w_r=-v_0+
\sum_{i=1}^r T^{\varpi_i}\mathbf{v}^{\sigma_i}_{\alpha_i}\in\mathcal{O}(Y).
\end{equation}
\end{definition}

\begin{remark}
Since there are no convergence issues, we can think of $(Y^0,W_0)$ and $(Y,W_0^H)$
either as $B$-side Landau-Ginzburg models defined over the 
Novikov field or as one-parameter families of complex $B$-side Landau-Ginzburg models defined
over~$\C$. 
\end{remark}

\begin{example}\label{ex:genus2part2}
When $H$ is the genus 2 curve of Example \ref{ex:genus2}, 
the polytope $\Delta_Y$ has 12 facets (2 of them compact and the 10 others 
non-compact), corresponding to the 12 components of $\R^n\setminus \Pi_0$, and
intersecting exactly as pictured on Figure~\ref{fig:Pi0-genus2} right.
The edges of the figure correspond to the configuration of $\PP^1$'s and
$\mathbb{A}\!^1$'s along which the toric divisors of the 3-fold $Y$ intersect.

Label the irreducible toric divisors by $D_{a,b}$ ($0\le a\le 3$, $0\le b\le 2$),
corresponding to the elements $(a,b)\in A$. Then
the leading-order superpotential $W_0$ consists of five terms: $w_0=-T^\epsilon+
T^\epsilon v_0$, where $v_0$ is the toric monomial of weight $(0,0,1)$,
which vanishes with multiplicity 1 on each of the 12 toric divisors; and
up to constant factors, $w_1$ is the toric monomial with weight $(-1,0,0)$, 
which vanishes with multiplicity
$a$ on $D_{a,b}$; $w_2$ is the toric monomial with weight
$(0,-1,0)$, vanishing with multiplicity $b$ on $D_{a,b}$;
$w_3$ is the monomial with weight $(1,0,3)$, with multiplicity $(3-a)$ on
$D_{a,b}$; and $w_4$ is the monomial with weight $(0,1,2)$, with
multiplicity $(2-b)$ on $D_{a,b}$. In particular, the compact divisors $D_{1,1}$
and $D_{2,1}$ are components of the singular fiber
$\{W_0=-T^\epsilon\}\subset Y^0$ (which also has a third, non-compact
component); and similarly for $\{W_0^H=0\}\subset Y$.

(In general the order of vanishing of $w_i$ on a given divisor is equal to
the intersection number with $\Pi_0$ of a semi-infinite ray in the direction of 
$-\sigma_i$ starting from a generic point in the relevant component of $\R^n\setminus \Pi_0$.)

This example does not satisfy Assumption \ref{ass:affinecase}, and in this
case the actual mirror of $X$ differs from $(Y^0,W_0)$ by higher-order 
correction terms. On the other hand, if we consider the genus
2 curve with 10 punctures $H\cap V^0$ in the open toric variety 
$V^0\simeq (\C^*)^2$, which does fall within the scope of Theorem
\ref{thm:main}, the
construction yields the same toric 3-fold $Y$, but now 
we simply have $W_0=w_0$ (resp.\ $W_0^H=-v_0$).
\end{example}

\section{Lagrangian torus fibrations on blowups of toric varieties}
\label{s:fibrations}

As in \S \ref{ss:blowup}, we consider a smooth nearly tropical
hypersurface $H=f^{-1}(0)$ in a toric variety $V$ of dimension $n$,
and the blow-up $X$ of $V\times\C$ along $H\times 0$, equipped with
the $S^1$-invariant K\"ahler form $\omega_\epsilon$ given by
\eqref{eq:omegaeps}. Our goal in this section is to construct an $S^1$-invariant 
Lagrangian torus fibration $\pi:X^0\to B$
(with appropriate singularities) on the
open Calabi-Yau manifold $X^0=X\setminus D$, where $D$ is the proper
transform of the toric anticanonical divisor of $V\times\C$. (Similar
fibrations have been previously considered by Gross \cite{GrossTop,GrossSlag} 
and by Casta\~no-Bernard and Matessi \cite{CBM1,CBM2}.)
The key observation is that $S^1$-invariance forces the fibers of $\pi$ to
be contained in the level sets of the moment map of the 
$S^1$-action. Thus, we begin by studying the geometry of the reduced
spaces.

\subsection{The reduced spaces}\label{ss:redspaces}
The $S^1$-action \eqref{eq:s1action} on $X$ is Hamiltonian with respect
to the K\"ahler form $\omega_\epsilon$ given by \eqref{eq:omegaeps}, 
and its moment map $\mu_X:X\to\R$
can be determined explicitly. 
Outside of the exceptional divisor, we
identify $X$ with $V\times\C$ via the projection~$p$, and observe that
$\mu_X(\mathbf{x},y)=\int_{D(\x,y)} \omega_\epsilon$, where
$D(\x,y)$ is a disc bounded by the orbit of $(\mathbf{x},y)$, namely
the total transform of $\{\mathbf{x}\}\times D^2(|y|)\subset
V\times\C$. (We normalize $\mu_X$ so that it takes
the constant value $0$ over the proper transform of $V\times 0$; also,
our convention differs from the usual one by a factor of $2\pi$.)

Hence, for given $\x$ the quantity $\mu_X(\x,y)$ 
is a strictly increasing function of $|y|$. Moreover,
applying Stokes' theorem we find that
\begin{equation}\label{eq:muX}
\mu_X(\x,y)=\pi |y|^2+\frac{\epsilon}{2}|y|\frac{\partial}{\partial
|y|}\left(\chi(\x,y)\log(|f(\x)|^2+|y|^2)\right).
\end{equation}
In the regions where $\chi$ is constant  this simplifies to: 
\begin{equation}\label{eq:muX_explicit}
\mu_X(\x,y)=\begin{cases}\pi |y|^2+\epsilon\, \dfrac{|y|^2}{
|f(\x)|^2+|y|^2}&\mbox{where $\chi\equiv 1$ (near $E$),}\\
\pi |y|^2&\mbox{where $\chi\equiv 0$ (away from $E$).}\end{cases}
\end{equation}
(Note that the first expression extends naturally to a smooth function over $E$.)

The critical points of $\mu_X$ are the fixed points of the $S^1$-action.
Besides $\tilde{V}=\mu_X^{-1}(0)$, the fixed points occur along $\tilde{H}$,
which lies in the level set $\mu_X^{-1}(\epsilon)$; in
particular, all the other level sets of $\mu_X$ are smooth.
Since for any given $\x$ the moment map $\mu_X$ is a strictly increasing 
function of $|y|$, each level set of $\mu_X$ intersects $p^{-1}(\{\x\}\times\C)$ along
a single $S^1$-orbit. Hence,
for $\lambda>0$, the natural 
projection to $V$ (obtained by composing $p$ with projection to 
the first factor) yields a natural identification of the reduced 
space $X_{red,\lambda}=\mu_X^{-1}(\lambda)/S^1$ with $V$. 

For $\lambda\gg \epsilon$, $\mu_X^{-1}(\lambda)$ is disjoint from the
support of the cut-off function $\chi$, and the reduced K\"ahler
form $\omega_{red,\lambda}$ on $X_{red,\lambda}\cong V$ coincides with
the toric K\"ahler form $\omega_V$. As $\lambda$ becomes closer to
$\epsilon$, $\omega_{red,\lambda}$ differs from $\omega_V$ near $H$ but
remains cohomologous to it. At the critical level $\lambda=\epsilon$,
the reduced form $\omega_{red,\epsilon}$ is singular along $H$ (but
its singularities are fairly mild, see Lemma \ref{l:omegared}).
Finally, for $\lambda<\epsilon$ the
K\"ahler form $\omega_{red,\lambda}$ differs from $\omega_V$ in a
tubular neighborhood of $H$, inside which the normal direction to $H$ has
been symplectically {\em deflated}. In particular, one easily checks that
\begin{equation}
\label{eq:redclass}
[\omega_{red,\lambda}]=[\omega_V]-\max(0,\epsilon-\lambda)[H].
\end{equation}

Our goal is to exploit the toric structure of $V$ to construct families of
Lagrangian tori in $X_{red,\lambda}$. The K\"ahler form 
$\omega_{red,\lambda}$ on $X_{red,\lambda}\cong V$ is not $T^n$-invariant 
near $H$; in fact it isn't even smooth along $H$ for $\lambda=\epsilon$.
However, there exist (smooth) toric K\"ahler forms $\omega'_{V,\lambda}$,
depending piecewise smoothly on $\lambda$, with $[\omega'_{V,\lambda}]=
[\omega_{red,\lambda}]$; see \eqref{eq:omega'Vlambda} for an explicit
construction.
The following result will be proved in Appendix \ref{s:reducedappendix}.

\begin{lemma}\label{l:rectifyomegared}
There exists a family of homeomorphisms $(\phi_{\lambda})_{\lambda\in
\R_+}$ of\/ $V$ such that: 
\begin{enumerate}
\item $\phi_\lambda$ preserves the toric divisor
$D_V\subset V$;
\item the restriction of $\phi_\lambda$ to $V^0$ is a 
diffeomorphism for $\lambda\neq \epsilon$, and a diffeomorphism outside of $H$
for $\lambda=\epsilon$;
\item $\phi_\lambda$ intertwines the reduced
K\"ahler form $\omega_{red,\lambda}$ and the toric K\"ahler form
$\omega'_{V,\lambda}$;
\item $\phi_\lambda=\mathrm{id}$ at every point whose $T^n$-orbit is
disjoint from the support of $\chi$;
\item $\phi_\lambda$ depends on $\lambda$ in a continuous manner, and
smoothly except at $\lambda=\epsilon$.
\end{enumerate}
\end{lemma}

\noindent
The diffeomorphism (singular along $H$ for $\lambda=\epsilon$) $\phi_\lambda$ given by Lemma \ref{l:rectifyomegared}
yields a preferred Lagrangian torus fibration on the open stratum
$X_{red,\lambda}^0=(\mu_X^{-1}(\lambda)\cap X^0)/S^1$ of
$X_{red,\lambda}$ (naturally identified with $V^0$ under the canonical
identification $X_{red,\lambda}\cong V$), namely the preimage by
$\phi_\lambda$ of the standard fibration of
$(V^0,\omega'_{V,\lambda})$ by $T^n$-orbits:

\begin{definition}
We denote by $\pi_\lambda:X_{red,\lambda}^0\to \R^n$ the composition
$\pi_\lambda=\Log\circ \phi_\lambda$, where $\Log:V^0 \cong (\C^*)^n \to \R^n$ is
the logarithm map $(x_1,\dots,x_n)\mapsto
\frac{1}{|\log\tau|}(\log |x_1|,\dots,\log |x_n|)$,
and $\phi_\lambda:(X_{red,\lambda},\omega_{red,\lambda})\to
(V,\omega'_{V,\lambda})$ is as in Lemma \ref{l:rectifyomegared}.
\end{definition}

\begin{remark}
By construction, the natural affine structure (see \S \ref{ss:syz1}) on the
base of the Lagrangian torus fibration $\pi_\lambda$ identifies it with
the interior of the moment polytope $\Delta_{V,\lambda}$ associated to the cohomology class
$[\omega'_{V,\lambda}]=[\omega_{red,\lambda}]\in H^2(V,\R)$.
\end{remark}

\subsection{A Lagrangian torus fibration on $X^0$}\label{ss:Lagr-torus-fibration}
We now assemble the Lagrangian torus fibrations $\pi_\lambda$ on the reduced
spaces into a (singular) Lagrangian torus fibration on~$X^0$:

\begin{definition} \label{def:Lagr-torus-fibration}
We denote by $\pi:X^0\to B=\R^n\times \R_+$ the map which sends the point
$x\in \mu_X^{-1}(\lambda)\cap X^0$ to $\pi(x)=(\pi_\lambda(\bar{x}),\lambda)$,
where $\bar{x}\in X_{red,\lambda}^0$ is the $S^1$-orbit of $x$.
\end{definition}

The map $\pi$ is continuous, and smooth away from $\lambda=\epsilon$.
The fiber of $\pi$ above $(\xi,\lambda)\in B$ is obtained by lifting
the Lagrangian torus $\pi_\lambda^{-1}(\xi)\subset X_{red,\lambda}$ to
$\mu_X^{-1}(\lambda)$ and ``spinning'' it by the $S^1$-action. 

Away from the fixed points of  the $S^1$-action, $\mu_X^{-1}(\lambda)$ 
is a coisotropic manifold with isotropic foliation given by the
$S^1$-orbits. Moreover, the $S^1$-bundle $\mu_X^{-1}(\lambda)\to X_{red,\lambda}$
is topologically trivial for $\lambda>\epsilon$ (setting $y\in\R_+$ gives 
a global section), trivial over the complement of $H$ for
$\lambda=\epsilon$, and the circle bundle associated to the line bundle
$\O(-H)$ for $\lambda<\epsilon$; in any case, its restriction to
a fiber of $\pi_\lambda$ is topologically trivial.
The fibers of $\pi_\lambda$ are smooth Lagrangian tori 
(outside of $H$ when $\lambda=\epsilon$, which corresponds precisely to the 
$S^1$-fixed points); therefore, we conclude that the fibers of $\pi$ are 
smooth Lagrangian tori unless they contain fixed points of the $S^1$-action.

The only fixed points
occur for $\lambda=\epsilon$, when $\mu_X^{-1}(\lambda)$ contains the
stratum of fixed points $\tilde{H}$. The identification of the
reduced space with $V$ maps $\tilde{H}$ to the 
hypersurface $H$, so the singular fibers map to
\begin{equation} \label{eq:Bsing}
B^{sing}=\Pi'\times \{\epsilon\}\subset B,
\end{equation}
where $\Pi'=\pi_\epsilon(H\cap V^0)\subset \R^n$ is essentially the amoeba of the
hypersurface $H$ (up to the fact that $\pi_\epsilon$ differs from the
logarithm map by $\phi_\epsilon$).
The fibers above the points of $B^{sing}$ differ from the regular fibers
in that, where a smooth fiber $\pi^{-1}(\xi,\lambda)\simeq T^{n+1}$ 
is a trivial 
$S^1$-bundle over $\pi_\lambda^{-1}(\xi)\simeq T^n\subset V^0$, for
$\lambda=\epsilon$ some of the $S^1$ fibers (namely those which lie over
points of $H$) are collapsed to points.

Because the fibration $\pi$ has non-trivial monodromy around $B^{sing}$,
the only globally defined affine coordinate on $B$ is the last
coordinate $\lambda$ (the moment map of the $S^1$-action); other affine
coordinates are only defined over subsets of
$B\setminus B^{sing}$, i.e.\ in the complement of certain cuts. 
Our preferred choice for such a description relates
the affine structure on $B$ to the moment polytope $\Delta_V\times\R_+$ of
$V\times\C$. Namely, away from a tubular neighborhood of
$\Pi'\times (0,\epsilon)$ the Lagrangian torus fibration $\pi$ coincides
with the standard toric fibration on $V\times\C$:

\begin{proposition}\label{prop:pistandard}
Outside of the support of $\chi$ (a tubular neighborhood of the exceptional
divisor $E$), the K\"ahler form $\omega_{\epsilon}$ is equal to
$p^*\omega_{V\times\C}$, and the moment map of the $S^1$-action is the
standard one $\mu_X(\x,y)=\pi|y|^2$. Moreover, outside of
$\pi(\mathrm{supp}\,\chi)$, the fibers of the Lagrangian
fibration $\pi$ are standard product tori, i.e.\ they are the
preimages by $p$ of the orbits of the $T^{n+1}$-action in $V\times\C$.
\end{proposition}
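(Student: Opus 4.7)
The plan is to verify the three assertions by directly unwinding the definitions in the region where $\chi$ vanishes, relying crucially on part (3) of Lemma \ref{l:rectifyomegared}. The first two claims are essentially immediate from the formulas already in hand: on $X \setminus \mathrm{supp}\,\chi$ the correction term in \eqref{eq:omegaeps} vanishes pointwise together with its derivatives, giving $\omega_\epsilon = p^*\omega_{V\times\C}$; and in the same region the additional term in \eqref{eq:muX} vanishes, leaving $\mu_X(\x,y) = \pi|y|^2$. This is exactly the standard product K\"ahler form and $S^1$-moment map on $V\times \C$.

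For the third claim I would fix $(\xi,\lambda)\notin \pi(\mathrm{supp}\,\chi)$; then by definition of $\pi$, the full fiber $F=\pi^{-1}(\xi,\lambda)$ lies in $X\setminus \mathrm{supp}\,\chi$, and in particular avoids the exceptional divisor $E$ (on which $\chi\equiv 1$), so that $p$ identifies $F$ with a subset of $V\times \C$. By the first two parts, $F$ sits inside $\{|y|^2=\lambda/\pi\}$ with the $S^1$-action given by rotation of $y$, so it suffices to identify the reduced Lagrangian torus $\pi_\lambda^{-1}(\xi)\subset X_{red,\lambda}\cong V$ with the standard logarithm fiber $\mathrm{Log}^{-1}(\xi)$. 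On the open subset of $V$ arising from the reduction of $\mu_X^{-1}(\lambda)\setminus \mathrm{supp}\,\chi$, the reduced form $\omega_{red,\lambda}$ agrees with the toric K\"ahler form $\omega_V$ (hence with its $T^n$-average $\omega'_{V,\lambda}$), so the conclusion of Lemma \ref{l:rectifyomegared}(3) lets us take $\phi_\lambda=\mathrm{id}$ there; once $\mathrm{Log}^{-1}(\xi)$ lies entirely inside this subset, $\pi_\lambda^{-1}(\xi)=\phi_\lambda^{-1}(\mathrm{Log}^{-1}(\xi))=\mathrm{Log}^{-1}(\xi)$, and $F$ is identified with the standard product $T^{n+1}$-orbit in $V\times \C$.

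The main (mild) obstacle will be the combinatorial bookkeeping required to turn the hypothesis $(\xi,\lambda)\notin \pi(\mathrm{supp}\,\chi)$ into the disjointness of $\mathrm{Log}^{-1}(\xi)\subset V$ from the $T^n$-saturation of the reduced image of $\mathrm{supp}\,\chi\cap \mu_X^{-1}(\lambda)$, since that is the exact form in which Lemma \ref{l:rectifyomegared}(3) applies. The cleanest route is to arrange at the outset for $\chi$ to depend on $\x$ only through a $T^n$-invariant proxy for the distance from $\mathrm{Log}(\x)$ to the amoeba $\Pi$ (consistent with the only requirement in Property \ref{ass:chi}, namely containment in $p^{-1}(U_H\times B_\delta)$); then $\mathrm{supp}\,\chi$ is itself $T^n$-invariant, the saturation step is vacuous, and the maps $\pi$ and the standard $(\mathrm{Log},\pi|y|^2)$ on $V\times\C$ literally coincide on the complement of $\mathrm{supp}\,\chi$. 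With this setup in place, the third assertion reduces to the tautological equality of fibers of two coincident maps.
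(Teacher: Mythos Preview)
Your approach matches the paper's (terse) argument, and you have correctly isolated the one nontrivial point: Lemma \ref{l:rectifyomegared}(3) gives $\phi_\lambda=\mathrm{id}$ only at points whose $T^n$-orbit is disjoint from the reduced image of $\mathrm{supp}\,\chi$, not merely at points outside $\mathrm{supp}\,\chi$ itself.

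However, your proposed fix---choosing $\chi$ to be $T^n$-invariant in the $\mathbf{x}$-variable---is incompatible with Property \ref{ass:chi}. If $\chi$ depends on $\mathbf{x}$ only through $\mathrm{Log}(\mathbf{x})$ and satisfies $\chi\equiv 1$ near $H\times 0$, then $\chi\equiv 1$ on a neighborhood of the entire $T^n$-saturation $\mathrm{Log}^{-1}(\Pi)\times\{0\}$. But $\mathrm{Log}^{-1}(\Pi)$ is not contained in any standard symplectic tubular neighborhood $U_H$ of $H$: a point $\mathbf{x}$ with $\mathrm{Log}(\mathbf{x})\in\Pi$ can be far from $H$ (rotate each coordinate of a point of $H$ by an independent phase). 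So $\mathrm{supp}\,\chi\subset p^{-1}(U_H\times B_\delta)$ would fail.

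The cleaner resolution is to work directly with the condition that Lemma \ref{l:rectifyomegared}(3) actually delivers. Writing $S_\lambda\subset V$ for the reduced image of $\mathrm{supp}\,\chi\cap\mu_X^{-1}(\lambda)$, one has $\phi_\lambda=\mathrm{id}$ outside the $T^n$-invariant set $T^n\cdot S_\lambda$, and since $\phi_\lambda$ is a diffeomorphism fixing the complement pointwise it preserves $T^n\cdot S_\lambda$ as a set. Hence whenever $\xi\notin\mathrm{Log}(S_\lambda)=\mathrm{Log}(T^n\cdot S_\lambda)$, the $T^n$-orbit $\mathrm{Log}^{-1}(\xi)$ lies in the complement, so $\pi_\lambda^{-1}(\xi)=\phi_\lambda^{-1}(\mathrm{Log}^{-1}(\xi))=\mathrm{Log}^{-1}(\xi)$. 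Since $S_\lambda\subset U_H$ by Property \ref{ass:chi}, this yields standard fibers over all of $B\setminus(\mathrm{Log}(U_H)\times\R_+)$, which is precisely what the paper uses immediately afterward to describe $B^{reg}$. The literal hypothesis ``$(\xi,\lambda)\notin\pi(\mathrm{supp}\,\chi)$'' translates to $\xi\notin\mathrm{Log}(\phi_\lambda(S_\lambda))$, which is a priori weaker than $\xi\notin\mathrm{Log}(S_\lambda)$; the paper glosses over this distinction, and it is harmless for the sequel.
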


\proof
The first statement follows immediately from formulas \eqref{eq:omegaeps} and 
\eqref{eq:muX}. The second one is then a direct consequence of
the manner in which $\pi$ was constructed and condition (3) in 
Lemma \ref{l:rectifyomegared}.
\endproof

Recall that the support of $\chi$ is constrained by Property \ref{ass:chi}.
Thus, the fibration $\pi$ is standard (coincides with the standard toric
fibration on $V\times\C$) over a large subset $B^{std}=(\R^n\times \R_+)\setminus
(\Log(U_H)\times (0,\delta))$ of $B$. Since
$\omega_\epsilon=p^*\omega_{V\times\C}$ over $\pi^{-1}(B^{std})$, 
we conclude that over $B^{std}$ the affine structure of $B$
agrees with that for the standard toric fibration of $V\times \C$, i.e.\
as an affine manifold $B^{std}$ can be naturally identified with the complement of
$\mu_V(U_H)\times (0,\delta)$ inside $\mathrm{int}(\Delta_V)\times\R_+$.

This description of the affine structure on $B\setminus B^{sing}$ can be
extended from $B^{std}$ to the complement of a set of codimension 1 cuts.
Recall from \S \ref{ss:syz1} that the affine coordinates of
$b\in B\setminus B^{sing}$ relative to some reference point $b_0$ are
given by the symplectic areas of certain relative 2-cycles
$(\Gamma_1,\dots,\Gamma_{n+1})$ 
with boundary on $\pi^{-1}(b)\cup\pi^{-1}(b_0)$; the above identification
of $B^{std}$ with a subset of $\Delta_V\times\R_+$ arises from taking
the boundaries of $\Gamma_i$ to be (homologous to) orbits of the various
$S^1$ factors of the $T^{n+1}$-action on $V\times\C$.

When $b$ and $b_0$ have the same last coordinate $\lambda>\epsilon$,
we can choose $\Gamma_1,\dots,\Gamma_n$ to be contained in
$\mu_X^{-1}(\lambda)$, and obtained as the lifts
of relative 2-cycles $\Gamma_{i,red}$ in $X_{red,\lambda}$ with boundary 
on fibers of $\pi_\lambda$; we can fix
such lifts by requiring that $y\in \R_+$ on $\Gamma_i$. Since
$\int_{\Gamma_{i}}\omega_\epsilon=\int_{\Gamma_{i,red}}\omega_{red,\lambda}$,
the affine structure on the level 
set $\R^n\times\{\lambda\}
\subset B$ is the same as that on the base of the fibration $\pi_\lambda$
on the reduced space $X_{red,\lambda}$, which can be identified via the
diffeomorphism 
$\phi_\lambda$ with the standard toric fibration on
$(V,\omega'_{V,\lambda})$. For $\lambda>\epsilon$ we have
$[\omega_{red,\lambda}]=[\omega'_{V,\lambda}]=[\omega_V]$, 
so the base is naturally identified with
the interior of the moment polytope $\Delta_V$; moreover, this
identification is consistent with our previous description of the affine
structure over $B^{std}$, since in that region the various K\"ahler forms 
agree pointwise.

In other terms, over $\R^n\times (\epsilon,\infty)\subset B$, the affine
structure is globally a product $\mathrm{int}(\Delta_V)\times (\epsilon,\infty)$ of
the affine structure on the moment polytope of $(V,\omega_V)$ and the
interval $(\epsilon,\infty)$, in a manner that extends the previous
description over $B^{std}$.

For $\lambda<\epsilon$, the affine structure on $\R^n\times\{\lambda\}\subset
B$ can described similarly, by choosing relative 2-cycles $\Gamma_{i,red}$
in $X_{red,\lambda}$ with boundary on fibers of $\pi_\lambda$ 
and lifting them to relative 2-cycles $\Gamma'_i$ in $\mu_X^{-1}(\lambda)$.
Since the lifts may
intersect the exceptional divisor $E$, we cannot require $y\in \R_+$
as in the case $\lambda>\epsilon$. Instead, we use the monomial 
$\x^{\alpha_0}$ for some $\alpha_0\in A$ to fix a trivialization of
$\mathcal{L}=\O(H)$ over $V^0$, and choose the
lifts so that $\x^{-\alpha_0} z=\x^{-\alpha_0}f(\x)/y\in \R_+$ on $\Gamma'_i$.
Since $\int_{\Gamma'_i}\omega_\epsilon=\int_{\Gamma_{i,red}}
\omega_{red,\lambda}$, the affine structure on the level 
set $\R^n\times\{\lambda\}
\subset B$ is again identical to that on the base of the fibration $\pi_\lambda$
on $X_{red,\lambda}$, or equivalently via
$\phi_\lambda$, the standard toric fibration on
$(V,\omega'_{V,\lambda})$. Thus, the affine structure identifies
$\R^n\times\{\lambda\}\subset B$ with the interior of the moment
polytope $\Delta_{V,\lambda}$ associated to the K\"ahler class
$[\omega'_{V,\lambda}]=[\omega_{red,\lambda}]=[\omega_V]-\max(0,\epsilon-\lambda)[H]$.
However, this description is no longer consistent with that previously given
for $B^{std}$, because the boundary of $\Gamma'_i$ does
not represent the expected homology class in $\pi^{-1}(b)$.

Specifically, assume $b_0$ and $b\in (\R^n\setminus
\Log(U_H))\times\{\lambda\}$ lie in the connected components corresponding
to $\alpha_0$ and $\alpha\in A$ respectively. Then the boundary of
$\Gamma'_i$ in $\pi^{-1}(b_0)$ does represent the homology class of the
orbit of the $i$-th $S^1$-factor, while the boundary in $\pi^{-1}(b)$
differs from it by $\alpha_i-\alpha_{0,i}$ times the orbit of the last
$S^1$-factor. Moreover,
$$\int_{\Gamma_{i,red}}\omega_V-\int_{\Gamma_{i,red}}
\omega_{red,\lambda}=
(\epsilon-\lambda)(\Gamma_{i,red}\cdot H)=
(\epsilon-\lambda)(\alpha_i-\alpha_{0,i}).$$
This formula also gives the difference between the $\omega_{\epsilon}$-areas of the relative cycles $\Gamma'_i$
and the relative cycles $\Gamma_i\subset
\pi^{-1}(B^{std})$ previously used to determine affine coordinates over
$B^{std}$. Hence, the affine coordinates determined by the 
relative cycles $\Gamma'_i$ differ from those constructed previously over
$B^{std}$ by a shear 
\begin{equation}\label{eq:shear_affine}
(\zeta_1,\dots,\zeta_n,\lambda)\mapsto
\bigl(\zeta_1+(\epsilon-\lambda)(\alpha_1-\alpha_{0,1}),\dots,
\zeta_n+(\epsilon-\lambda)(\alpha_n-\alpha_{0,n}),\lambda\bigr)
\end{equation}
or more succinctly, $(\zeta,\lambda)\mapsto
\bigl(\zeta+(\epsilon-\lambda)(\alpha-\alpha_0),\lambda\bigr)$.

More globally, over $\R^n\times (0,\epsilon)\subset B$ the affine structure can be
identified (using the relative cycles $\Gamma'_i$ to define 
coordinates) with a piece of the moment polytope for the total space of the 
line bundle $\mathcal{O}(-H)$ over $V$ (equipped
with a toric K\"ahler form in the class $[\omega_V]-\epsilon[H]$),
consistent with the fact that the normal bundle to $\tilde{V}$
inside $X$ is $\mathcal{O}(-H)$; but this description is not consistent
with the one we have given over~$B^{std}$. 

\begin{figure}
\setlength{\unitlength}{2cm}
\begin{picture}(3.2,1.9)(-1.2,-2)
\psset{unit=\unitlength}
\newgray{ltgray}{0.8}
\pspolygon[fillstyle=solid,fillcolor=ltgray,linestyle=none](0,-1)(0,-2)(1,-2)
\psline(-1,-2)(2,-2)
\psline(-1,-2)(-1,-0.2)
\psline(2,-2)(2,-0.2)
\put(0,-1){\makebox(0,0)[cc]{\small $\times$}}
\psline[linestyle=dashed,dash=0.1 0.1](0,-1)(0,-2)
\psline[linestyle=dashed,dash=0.1 0.1](0,-1)(1,-2)
\psarc{<->}(0.3,0){1.65}{-108}{-72}
\psline{->}(-0.5,-1.7)(-0.5,-1.4)
\psline{->}(-0.5,-1.7)(-0.2,-1.7)
\psline{->}(1.1,-1.7)(1.4,-1.7)
\psline{->}(1.1,-1.7)(0.8,-1.4)
\psline{<->}(-1.12,-2)(-1.12,-1) \put(-1.3,-1.5){\small $\epsilon$}
\end{picture}
\qquad\qquad
\begin{picture}(3.3,1.9)(-0.2,0.2)
\psset{unit=\unitlength}
\newgray{vltgray}{0.95}
\newgray{ltgray}{0.9}
\newgray{dkgray}{0.85}
\pspolygon[linestyle=none,fillstyle=solid,fillcolor=vltgray](0,0.8)(1,1.8)(2.9,1.8)(1.9,0.8)
\psline[linestyle=dashed,dash=0.1 0.1](0,0.2)(0.5,0.7)
\pscustom[fillstyle=solid,fillcolor=gray,linewidth=0.3pt,curvature=.5 .1 0]{
  \psline(0.49,1.29)(0.9,1.27)
  \pscurve(0.9,1.27)(1.3,1.18)(0.98,0.8)
  \pscurve[liftpen=1](1.02,0.8)(1.4,1.1)(1.65,1.3)(2.9,1.8)
  \pscurve[liftpen=1](2.8,1.8)(1.55,1.4)(0.51,1.31)
}
\psline[fillstyle=solid,fillcolor=dkgray,linestyle=dashed,dash=0.06 0.06,linewidth=0.3pt](1,0.2)(1,0.8)(1.6,0.2)
\psline(0,0.2)(2,0.2)
\psline(0,0.2)(0,2)
\psline[linewidth=0.5pt](1,1.8)(0,0.8)(1.9,0.8)
\psline{<->}(-0.12,0.2)(-0.12,0.8) \put(-0.3,0.42){\small $\epsilon$}
\end{picture}
\caption{The base of the Lagrangian torus fibration $\pi:X^0\to B$.
Left: $H=\{\text{point}\}\subset
\CP^1$. Right: $H=\{x_1+x_2=1\}\subset \C^2$.}
\label{fig:affineB}
\end{figure}
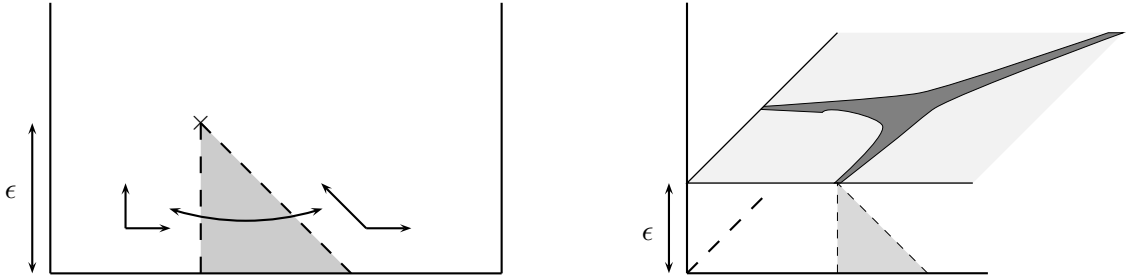

On the other hand, the shears \eqref{eq:shear_affine} map 
the complement of the amoeba of $H$ in
$\Delta_{V,\lambda}$ to the complement of a standard
$(\epsilon-\lambda)$-neighborhood of the amoeba of $H$ in $\Delta_V$.
Thus, making cuts along the projection of the exceptional divisor, 
we can extend the affine coordinates previously described over $B^{std}$, 
and identify the affine structure on $B\setminus (\Pi'\times (0,\epsilon])$ 
with an open subset of $\mathrm{int}(\Delta_V)\times\R_+$, obtained by 
deleting an $(\epsilon-\lambda)$-neighborhood of the amoeba of $H$ from
$\mathrm{int}(\Delta_V)\times\{\lambda\}$ for all $\lambda\in (0,\epsilon]$.

This is the picture of $B$ that we choose to emphasize, depicting it
as the complement of a set of ``triangular'' cuts inside $\Delta_V\times\R_+$; see
Figure \ref{fig:affineB}.

\begin{remark}
While the fibration we construct is merely Lagrangian, it is very reasonable
to conjecture that in fact $X^0$ carries an $S^1$-invariant {\em special Lagrangian} fibration over
$B$. The holomorphic $(n+1)$-form $\Omega=p^*\Omega_{V\times\C}$ on $X^0$ is
$S^1$-invariant, and induces a holomorphic $n$-form on the reduced space
$X_{red,\lambda}^0$, which turns out to coincide with the standard
toric form $\Omega_V=i^n\prod_j d\log x_j$. Modifying the construction of
the fibration $\pi_\lambda:X_{red,\lambda}^0\to \R^n$ so that its fibers 
are special Lagrangian with respect to $\Omega_V$ would then be sufficient
to ensure that the fibers of $\pi$ are special Lagrangian with respect to~$\Omega$.
In dimension 1 this is easy to accomplish by elementary methods.
In higher dimensions, making $\pi_\lambda$ special Lagrangian requires
the use of analysis, as the deformation of product tori in $V^0$
(which are special Lagrangian with respect to $\omega'_{V,\lambda}$ and 
$\Omega_V$) to tori which are special Lagrangian for $\omega_{red,\lambda}$
and $\Omega_V$ is governed by a first-order elliptic PDE \cite{McLean}
(see also \cite[\S 9]{joycenotes} or \cite[Prop.~2.5]{Au1}). If one were to
argue as in the proof of Lemma \ref{l:rectifyomegared} (cf.\ Appendix
\ref{s:reducedappendix}), the 1-forms 
used to construct $\phi_\lambda$ should be chosen not only to satisfy the usual condition
for Moser's trick, but also to be co-closed with respect to a suitable
rescaling of the K\"ahler metric induced by $\omega_{t,\lambda}$.
When $V=(\C^*)^n$ this does not seem to pose any major
difficulties, but in general it is not obvious that one can ensure
the appropriate behavior along the toric divisors.
\end{remark}

\section{SYZ mirror symmetry for $X^0$} \label{s:opencase}

In this section we apply the procedure described in \S \ref{s:syz} to
the Lagrangian torus fibration $\pi:X^0\to B$ of \S \ref{s:fibrations} in order
to construct the SYZ mirror to the open Calabi-Yau manifold $X^0$ and 
prove Theorem \ref{thm:conicbundle}. The key observation is that, 
by Proposition \ref{prop:pistandard}, most fibers
of $\pi$ are mapped under the projection $p$ to standard product 
tori in the toric variety $V\times\C$; therefore, the holomorphic discs
bounded by these fibers can be understood by reducing to the toric case,
which is well understood (see e.g.\ \cite{cho-oh}).

\begin{proposition}\label{prop:unobstructed}
The fibers of $\pi:X^0\to B$ which bound holomorphic discs in $X^{0}$ are those which intersect
the subset $p^{-1}(H\times\C)$. 

Moreover, the simple holomorphic discs in $X^0$
bounded by such a fiber contained in $\mu_X^{-1}(\lambda)$ have Maslov 
index 0 and symplectic area $|\lambda-\epsilon|$, and their boundary represents
the homology class of an $S^1$-orbit if $\lambda>\epsilon$ and its negative
otherwise.
\end{proposition}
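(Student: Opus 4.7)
The strategy is to analyze any holomorphic disc $u:(D^2,\partial D^2)\to(X^0,L)$ by composing with $q:=\mathrm{pr}_V\circ p:X\to V$, and to split cases according to whether $q\circ u$ is constant. Let $\bar L=\pi_\lambda^{-1}(\xi)\subset V$ denote the reduction of $L$; note $L\cap p^{-1}(H\times\C)\neq\emptyset$ iff $\bar L\cap H\neq\emptyset$, since $H\times\C$ is $S^1$-invariant.

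I would first exhibit the claimed discs. Given $\x_0\in\bar L\cap H$ (automatically in $V^0$, as $\bar L\subset V^0$), the fiber $q^{-1}(\x_0)\cap X^0$ is the union of the $\mathbb{P}^1$-fiber of $p|_E$ over $(\x_0,0)$ (with the $\tilde V$-point $(1:0)$ deleted) and the $\tilde H$-line $\{(\x_0,y,(0:1)):y\in\C\}$, glued at the single point $(\x_0,0,(0:1))$. Using~\eqref{eq:muX}, $\mu_X=\pi|y|^2+\epsilon$ on the $\tilde H$-line, while on the $\mathbb{P}^1$-fiber $\mu_X$ increases monotonically from $0$ at $\tilde V$ to $\epsilon$ at $\tilde H$. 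For $\lambda>\epsilon$, the disc $\{|y|\leq\sqrt{(\lambda-\epsilon)/\pi}\}$ in the $\tilde H$-line has boundary the $S^1$-orbit at level $\lambda$, area $\lambda-\epsilon$, and vanishing intersection with $D$ (hence Maslov index $0$); its boundary carries the $S^1$-orbit in the standard orientation. For $\lambda<\epsilon$, the analogous disc is the complementary half of the $\mathbb{P}^1$-fiber, which in the chart $s=u/v$ is $\{|s|\leq\rho\}$ for the unique $\rho$ with $\mu_X=\lambda$; it has area $\epsilon-\lambda$, Maslov index $0$, and boundary running the $S^1$-orbit in the reverse direction, since $s=u/v$ reverses orientation relative to $t=v/u$.

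For the classification, consider a non-constant simple disc $u$. If $q\circ u$ is constant at $\x_0$, then $u\subset q^{-1}(\x_0)\cap X^0$: for $\x_0\in V^0\setminus H$ this fiber is $\C^*$, on which maximum modulus rules out non-constant discs with $S^1$-orbit boundary, while for $\x_0\in H$ the geometry above identifies the discs of the previous paragraph as the unique simple non-constant ones, and both possibilities force $\bar L\cap H\neq\emptyset$. The main obstacle is ruling out non-constant $q\circ u$. In that case $q\circ u:D^2\to V^0$ (since $u$ avoids $p^{-1}(D_V\times\C)\subset D$) has boundary on $\bar L$. When the $T^n$-orbit of $\bar L$ misses the support of $\chi$, Lemma~\ref{l:rectifyomegared} gives $\phi_\lambda|_{\bar L}=\mathrm{id}$, so $\bar L$ is a standard product torus in $(\C^*)^n$, and componentwise maximum modulus applied to $x_i\circ q\circ u:D^2\to\C^*$ forces $q\circ u$ constant. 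To cover the remaining case, in which $\bar L$ meets the perturbation region while remaining disjoint from $H$, I would exploit Property~\ref{ass:chi} (which confines $\phi_\lambda-\mathrm{id}$ to a tubular neighborhood of $H$) together with the Moser family from Lemma~\ref{l:rectifyomegared} to isotope $\bar L$ to a standard product torus $T_{std}$ through submanifolds of $V\setminus H$, and then derive a contradiction by transporting the homology class of $q\circ u$ and applying Cho--Oh to $T_{std}$. This last step is the principal technical hurdle, and it is where both the nearly tropical hypothesis and the precise structure of $\phi_\lambda$ play essential roles.
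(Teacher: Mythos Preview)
Your overall structure matches the paper's, and your exhibition of the discs and their areas is correct. The gap is in your classification step, specifically in what you call the ``principal technical hurdle'': ruling out non-constant $q\circ u$ when $\bar L$ is not a standard product torus. You propose an isotopy through $V\setminus H$ combined with Cho--Oh and the nearly tropical hypothesis, but none of this is needed.

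The paper resolves this uniformly in one line, by pure topology. Since $\bar L=\phi_\lambda^{-1}(\text{product torus})$ is smoothly isotopic to a standard product torus in $V^0\simeq(\C^*)^n$, the inclusion $\bar L\hookrightarrow V^0$ is a homotopy equivalence, and therefore
\[
\pi_2(V^0,\bar L)\simeq \pi_2((\C^*)^n,(S^1)^n)=0.
\]
This holds for \emph{every} fiber of $\pi_\lambda$, whether or not it meets the perturbation region. The holomorphic disc $p_V\circ u$ thus represents the trivial relative class, so its symplectic area with respect to the reduced K\"ahler form $\omega_{red,\lambda}$ (for which $\bar L$ is Lagrangian and which is compatible with the complex structure on $V$) vanishes. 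Positivity of energy then forces $p_V\circ u$ to be constant.

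Your proposed route via ``transporting the homology class'' is actually this same argument in disguise: the isotopy identifies $H_2(V^0,\bar L)$ with $H_2(V^0,T_{std})=0$, and then the energy argument finishes. But you do not need the isotopy to avoid $H$, you do not need Cho--Oh (which concerns Maslov index~2 discs in compactified toric varieties and is irrelevant here), and you do not need the nearly tropical hypothesis or the fine structure of $\phi_\lambda$. Your case split on whether $\bar L$ is standard is likewise unnecessary: the topological argument subsumes your Case~1 as well.
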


\proof
Let $L\subset X^0$ be a smooth fiber of $\pi$, contained in $\mu_X^{-1}(\lambda)$
for some $\lambda\in \R_+$, and let $u:(D^2,\partial D^2)\to (X^0,L)$ be a 
holomorphic disc with boundary in $L$. Denote by $L'$ the projection of
$L$ to $V$ (i.e., the image of $L$ by the composition $p_V$ of $p$ and
the projection to the first factor). The restriction of $p_V$ to $\mu_X^{-1}(\lambda)$ 
coincides with the quotient map to the reduced space
$X_{red,\lambda}\simeq V$; thus, $L'$ is in fact a fiber of
$\pi_\lambda$, i.e.\ a Lagrangian torus in $(V^0,\omega_{red,\lambda})$,
smoothly isotopic to a product torus inside $V^0\simeq (\C^*)^n$.

Since the relative homotopy group $\pi_2(V^0,L')\simeq
\pi_2((\C^*)^n,(S^1)^n)$ vanishes, the holomorphic disc
$p_V\circ u:(D^2,\partial D^2)\to (V^0,L')$ is necessarily constant.
Hence the image of the disc $u$ is contained inside a fiber $p_V^{-1}(\x)$
for some $\x\in V^0$.

If $\x\not\in H$, then $p_V^{-1}(\x)\cap X^0=p^{-1}(\{\x\}\times\C^*)\simeq
\C^*$, inside which $p_V^{-1}(\x)\cap L$ is
a circle centered at the origin (an orbit of the $S^1$-action). The
maximum principle then implies that the map $u$ is necessarily constant.

On the other hand, when $\x\in H$, $p_V^{-1}(\x)\cap X^0$ is the union of
two affine lines intersecting transversely at one point: the proper transform of $\{\x\}\times\C$, 
and the fiber of $E$ above $\x$ (minus the point where it intersects
$\tilde{V}$). Now, $p_V^{-1}(\x)\cap L$ is again an $S^1$-orbit, i.e.\ a circle
inside one of these two components (depending on whether $\lambda>\epsilon$ 
or $\lambda<\epsilon$); either way, $p_V^{-1}(\x)\cap L$
bounds exactly one non-constant embedded holomorphic disc in $X^0$ (and all of its
multiple covers). The result follows.
\endproof

Denote by $B^{reg}\subset B$ the set of those fibers of 
$\pi$ which do not intersect $p^{-1}(U_H\times\C)$.  From Property \ref{ass:chi} and
Propositions \ref{prop:pistandard} and \ref{prop:unobstructed}, we
deduce:

\begin{corollary}
The fibers of $\pi$ above the points of $B^{reg}$ are tautologically unobstructed
in $X^0$, and project under $p$ to standard product tori in $V^0\times\C$.
\end{corollary}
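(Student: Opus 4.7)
The plan is to deduce both statements directly from the two preceding results, with essentially no additional geometric input. The key observation is that the definition of $B^{reg}$ (fibers disjoint from $p^{-1}(U_H\times \C)$) was crafted precisely to simultaneously control the cut-off function $\chi$ and the disc-bubbling obstruction, and these are exactly the two issues one needs to rule out.

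For tautological unobstructedness, I would invoke Proposition \ref{prop:unobstructed}: a fiber of $\pi$ bounds a non-constant holomorphic disc in $X^0$ if and only if it meets $p^{-1}(H\times \C)$. Since $H\subset U_H$, any fiber over a point of $B^{reg}$ is disjoint from $p^{-1}(U_H\times\C)\supseteq p^{-1}(H\times\C)$, and hence bounds no non-constant holomorphic discs. This is exactly the definition of tautological unobstructedness.

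For the product-torus claim, I would combine Property \ref{ass:chi} with Proposition \ref{prop:pistandard}. Property \ref{ass:chi} states that $\operatorname{supp}\chi\subset p^{-1}(U_H\times B_\delta)\subset p^{-1}(U_H\times\C)$, so any fiber in $B^{reg}$ is disjoint from $\operatorname{supp}\chi$; in particular the corresponding base point lies outside $\pi(\operatorname{supp}\chi)$. Proposition \ref{prop:pistandard} then identifies this fiber as the preimage under $p$ of an orbit of the standard $T^{n+1}$-action on $V\times\C$, i.e.\ a standard product torus in $V^0\times\C$ (the fact that the orbit lies in $V^0\times\C$ rather than in the toric boundary is automatic since $L\subset X^0$ and $p$ maps $X^0$ onto $(V\times\C)\setminus D_{V\times\C}$ away from $E$).

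I do not expect any real obstacle: the corollary is a bookkeeping consequence of Propositions \ref{prop:pistandard} and \ref{prop:unobstructed} together with Property \ref{ass:chi}. The only point worth highlighting in the write-up is that the enlargement from $H$ to its $\delta$-tubular neighborhood $U_H$ in the definition of $B^{reg}$ is what makes the two conditions line up; no separate argument is needed.
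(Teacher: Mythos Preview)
Your proposal is correct and matches the paper's approach exactly: the paper simply states that the corollary follows from Property~\ref{ass:chi} and Propositions~\ref{prop:pistandard} and~\ref{prop:unobstructed}, without writing out any further argument. Your write-up makes explicit precisely the two observations (that $H\subset U_H$ handles unobstructedness, and that $\operatorname{supp}\chi\subset p^{-1}(U_H\times\C)$ handles the standard-torus claim) that the paper leaves implicit.
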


With respect to the affine structure,
$B^{reg}=(\R^n\setminus \Log(U_H))\times\R_+$ is naturally isomorphic to
$(\Delta_V\setminus \mu_V(U_H))\times \R_+$.  
\begin{definition} \label{def:chamber}
The \emph{chamber} $U_\alpha$ is the connected component  of $B^{reg}$ 
over which the monomial of weight $\alpha$ dominates all other
monomials in the defining equation of $H$. 
\end{definition}

\begin{remark}\label{rmk:Bregretracts}
By construction, the complement of $\Log(U_H)$ is a deformation retract of
the complement of the amoeba of $H$ inside $\R^n$; so the set of tautologically
unobstructed fibers of $\pi$ retracts onto $B^{reg}=\bigsqcup U_\alpha$.
\end{remark}

As explained in \S \ref{ss:syz1},
$U_\alpha$ determines an affine coordinate chart
$U_\alpha^\vee$ for the SYZ mirror of $X^0$, with coordinates of the
form~\eqref{eq:localchart}.

Specifically, fix a reference point $b^0\in U_\alpha$, and
observe that, since $L^0=\pi^{-1}(b^0)$ is the lift of an orbit of the 
$T^{n+1}$-action on $V\times\C$, its first homology carries a preferred 
basis $(\gamma_1,\dots,\gamma_n,\gamma_0)$ consisting of orbits of the various $S^1$ factors.
Consider $b\in U_\alpha$, with coordinates $(\zeta_1,\dots,\zeta_n,\lambda)$
(here we identify $U_\alpha\subset B^{reg}$ with a subset of the moment
polytope  $\Delta_V\times\R_+\subset \R^{n+1}$ for the $T^{n+1}$-action on
$V\times\C$), and denote by $(\zeta_1^0,\dots,\zeta_n^0,\lambda^0)$ the
coordinates of $b^0$. Then the valuations of the coordinates given by
\eqref{eq:localchart}, i.e., the areas of the cylinders
$\Gamma_1,\dots,\Gamma_n,\Gamma_0$ bounded by
$L^0$ and $L=\pi^{-1}(b)$, are $\zeta_1-\zeta_1^0,\dots$, $\zeta_n-\zeta_n^0$,
and $\lambda-\lambda^0$ respectively.  In order to eliminate the dependence
on the choice of $L^0$, we rescale each coordinate by a suitable power of
$T$, and equip $U_\alpha^\vee$ with the coordinate system
\begin{equation}\label{eq:chartalpha}
(L,\nabla)\mapsto (v_{\alpha,1},\dots,v_{\alpha,n},w_{\alpha,0})=
\left(T^{\zeta_{1}}\/\nabla(\gamma_1),\,\dots,\,T^{\zeta_n}\/\nabla(\gamma_n),
\,T^{\lambda\,}\nabla(\gamma_0)\right).
\end{equation}
(Compare with \eqref{eq:localchart}, noting that 
$\zeta_i=\zeta_i^0+\int_{\Gamma_i}\omega_\epsilon$ and
$\lambda=\lambda^0+\int_{\Gamma_0}\omega_\epsilon$.)

As in \S \ref{ss:mirror}, we set $\mathbf{v}_\alpha=(v_{\alpha,1},\dots,v_{\alpha,n})$,
and for $m\in \Z^n$ we write $\mathbf{v}_\alpha^m=
v_{\alpha,1}^{m_1}\dots v_{\alpha,n}^{m_n}$. Moreover, 
 we write $w_0$ for $w_{\alpha,0}$; this is \emph{a priori}
ambiguous, but we shall see shortly 
that the gluings between the charts preserve the last
coordinate.
\medskip

The ``naive'' gluings between these coordinate charts (i.e., those which
describe the geometry of the space of $(L,\nabla)$ up to Hamiltonian 
isotopy without accounting for instanton corrections) are governed by the
global affine structure of $B\setminus B^{sing}$. Their description is
instructive, even though it is not necessary for our argument.

For $\lambda>\epsilon$ the affine structure is globally that of 
$\Delta_V\times (\epsilon,\infty)$. Therefore, \eqref{eq:chartalpha} makes sense and is
consistent with \eqref{eq:localchart} even when $b$ does not lie in
$U_\alpha$; thus, for $\lambda>\epsilon$ the naive gluing is the identity
map ($\v_\alpha=\v_\beta$, and $w_{\alpha,0}=w_{\beta,0}$).

On the other hand, for $\lambda\in (0,\epsilon)$ we argue as 
in \S \ref{ss:Lagr-torus-fibration} (cf.\ equation \eqref{eq:shear_affine} and the
preceding discussion).
When $b=(\zeta_1,\dots,\zeta_n,\lambda)$ lies in a different chamber
$U_\beta$ from that  containing the reference point $b^0$ (i.e.,
$U_\alpha$), the
intersection number of a cylinder $\Gamma'_i$ constructed as previously
with the exceptional
divisor $E$ is equal to $\beta_i-\alpha_i$, and its symplectic area 
differs from $\zeta_i-\zeta_i^0$
by $(\beta_i-\alpha_i)(\epsilon-\lambda)$. Moreover, due to the monodromy
of the fibration, the bases of first homology used in $U_\alpha$ and
$U_\beta$ differ by $\gamma_i\mapsto \gamma_i+(\beta_i-\alpha_i)\gamma_0$
for $i=1,\dots,n$.
Thus, for $\lambda<\epsilon$ the naive gluing between
the charts $U_\alpha^\vee$ and $U_\beta^\vee$ corresponds to setting
$$v_{\alpha,i}=T^{-(\beta_i-\alpha_i)(\epsilon-\lambda)}\nabla(\gamma_0)^{\beta_i-\alpha_i}
v_{\beta,i}=(T^{-\epsilon} w_0)^{\beta_i-\alpha_i} v_{\beta,i},\qquad
1\le i\le n.$$

The naive gluing formulas for the two cases ($\lambda>\epsilon$ and
$\lambda<\epsilon$) are inconsistent. As seen in \S \ref{ss:syz1},
this is not unexpected: the actual gluing between the coordinate 
charts $\{U_\alpha^\vee\}_{\alpha\in A}$ differs from these formulas by instanton corrections 
which account for the bubbling of holomorphic discs as $L$ is isotoped
across a wall of potentially obstructed fibers.

Given a potentially obstructed fiber $L\subset \mu_X^{-1}(\lambda)$, the
simple holomorphic discs bounded by $L$ are classified by Proposition
\ref{prop:unobstructed}.
For $\lambda>\epsilon$, the symplectic area of these discs is 
$\lambda-\epsilon$, and their boundary loop represents the class 
$\gamma_0\in H_1(L)$ (the orbit of the $S^1$-action), so the corresponding 
weight is $T^{\lambda-\epsilon}\nabla(\gamma_0)\ (=T^{-\epsilon}w_0)$; while for 
$\lambda<\epsilon$ the symplectic area is $\epsilon-\lambda$ and the
boundary loop represents $-\gamma_0$, so the weight is
$T^{\epsilon-\lambda}\nabla(\gamma_0)^{-1}\ (=T^\epsilon w_0^{-1})$. As explained in \S
\ref{ss:syz1}, we therefore expect the instanton corrections to the
gluings to be given by power series in $(T^{-\epsilon} w_0)^{\pm 1}$.

While the direct calculation of the multiple cover contributions to the
instanton corrections would require sophisticated machinery, Remark
\ref{rmk:instcorrtrick} provides a way to do so by purely elementary
techniques. Namely, we study the manner in which counts of Maslov index
2 discs in partial compactifications of $X^0$ vary between chambers.
The reader is referred to Example 3.1.2 of \cite{Au2} for a simple
motivating example (corresponding to the case where $H=\{\mathrm{point}\}$
in $V=\C$).
\medskip

Recall that a point of $U_\alpha^\vee$ corresponds to a pair $(L,\nabla)$,
where $L=\pi^{-1}(b)$ is the fiber of $\pi$ above some point 
$b\in U_\alpha$, and $\nabla$ is a unitary rank 1 local system on~$L$.
Given a partial compactification $X'$ of $X^0$ (satisfying Assumption
\ref{ass:nef}), $(L,\nabla)$ is a weakly unobstructed object
of $\F(X')$, i.e.\ $\m_0(L,\nabla)=W_{X'}(L,\nabla)\,e_L$,
where $W_{X'}(L,\nabla)$ is a weighted count of Maslov index 2
holomorphic discs bounded by $L$ in $X'$. Varying $(L,\nabla)$, these
weighted counts define regular functions on each chart $U_\alpha^\vee$,
and by Corollary \ref{cor:potential_matched_by_gluing}, they glue into a global regular
function on the SYZ mirror of $X^0$.

We first use this idea to verify that the coordinate $w_0=w_{\alpha,0}$ is
preserved by the gluing maps, by interpreting it as a weighted count of discs in the 
partial compactification $X^0_+$ of $X^0$ obtained
by adding  the open stratum $\tilde{V}{}^0$ of the divisor $\tilde{V}$.

\begin{lemma}\label{l:countw0}
Let $X^0_+=p^{-1}(V^0\times\C)=X^0\cup \tilde{V}{}^0\subset X$.
Then any point $(L,\nabla)$ of $U_\alpha^\vee$ defines a weakly unobstructed
object of $\F(X^0_+)$, with 
\begin{equation}\label{eq:WX0+}
W_{X^0_+}(L,\nabla)=w_{\alpha,0}.
\end{equation}
\end{lemma}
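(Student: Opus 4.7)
The plan is to classify the simple Maslov index $2$ holomorphic discs in $(X^0_+,L)$ by projecting to $V\times\C$, and then identify the resulting weighted count with the coordinate $w_{\alpha,0}$. Since $(L,\nabla)\in U_\alpha^\vee$, the fiber $L=\pi^{-1}(b)$ sits over a point $b\in U_\alpha\subset B^{reg}$, so by Proposition \ref{prop:pistandard} and Property \ref{ass:chi} the Lagrangian $L$ is disjoint from the support of $\chi$. Consequently $p$ is a symplectomorphism in a neighborhood of $L$, and $L=p^{-1}(L'\times S^1(r))$, where $L'\subset V^0$ is a standard product torus disjoint from $U_H$ and $r=\sqrt{\lambda/\pi}$ (using $\mu_X(\x,y)=\pi|y|^2$ off the support of $\chi$).

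First I would classify the discs, reusing the projection argument from the proof of Proposition \ref{prop:unobstructed}: for any holomorphic disc $u:(D^2,\partial D^2)\to(X^0_+,L)$, the composition $p_V\circ u$ has image in $V^0$ and boundary on $L'$, hence is constant since $\pi_2((\C^*)^n,(S^1)^n)=0$. Thus $u$ factors through $p^{-1}(\{x\}\times\C)$ for some $x\in L'$; as $x\notin H$, this fiber is identified by $p$ with $\{x\}\times\C$, and $u$ becomes a holomorphic disc in $(\C,S^1(r))$, which (up to reparametrization) must be of the form $z\mapsto ar z^k$ for some integer $k\ge 1$ and unit $a$. The algebraic intersection with $\tilde{V}^0=p^{-1}(V^0\times\{0\})$ equals $k$, giving Maslov index $2k$; hence the simple Maslov $2$ discs are exactly the family $\{x\}\times D^2(r)$, $x\in L'$.

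Next I would compute the weighted count. Each such disc has symplectic area $\pi r^2=\lambda$, and its boundary realizes the $S^1$-orbit class $\gamma_0\in H_1(L)$. The moduli space of such discs with one boundary marked point is diffeomorphic to $L'\times S^1\cong L$, and the evaluation map is a diffeomorphism of degree $+1$ (this is the standard Cho--Oh computation in $\C$, lifted to $X^0_+$ by the isomorphism $p$). Therefore $\mathrm{ev}_*[\mathcal{M}_1(L,\beta)]=[L]$, and by \eqref{eq:m0}, \eqref{eq:superpot}, \eqref{eq:chartalpha} the contribution of this family to $W_{X^0_+}$ is $T^\lambda\nabla(\gamma_0)=w_{\alpha,0}$.

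Finally, I need to verify weak unobstructedness and the absence of other contributions, which I expect to be the main (though fairly soft) obstacle. Any nonconstant disc in $X^0_+$ either lies entirely in $X^0$, in which case it is ruled out by tautological unobstructedness of $L$ in $X^0$, or meets $\tilde{V}^0$, which by positivity contributes at least $+2$ to the Maslov index; hence there are no Maslov index $\le 0$ discs, giving $\m_0(L,\nabla)=W_{X^0_+}(L,\nabla)\,e_L$ as required for Assumption \ref{ass:nef} applied in $X^0_+$. Sphere bubbling is excluded by the same projection argument: any rational curve in $X^0_+$ becomes constant after projection by $p_V$ and therefore lies in a single fiber $p^{-1}(\{x\}\times\C)\cong\C$, where it must itself be constant. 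Higher-multiplicity covers ($k\ge 2$) and other Maslov $\ge 4$ discs yield evaluation chains of dimension strictly greater than $n$ and are discarded. Combining these steps gives $W_{X^0_+}(L,\nabla)=w_{\alpha,0}$, as claimed.
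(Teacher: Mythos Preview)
Your proof is correct and follows essentially the same route as the paper: classify Maslov index~2 discs by projecting to $V^0\times\C$, use that the $V^0$-component is constant (the paper invokes the maximum principle, you cite the $\pi_2$-vanishing argument from Proposition~\ref{prop:unobstructed}), identify the remaining factor as a degree~1 disc in $(\C,S^1(r))$, and read off the weight $T^\lambda\nabla(\gamma_0)=w_{\alpha,0}$.

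One small slip in your last paragraph: the assertion that ``any rational curve in $X^0_+$ \ldots\ lies in a single fiber $p^{-1}(\{x\}\times\C)\cong\C$'' is not literally true, since for $x\in H\cap V^0$ that fiber contains the exceptional $\PP^1$. What you actually need is that any sphere component in a stable configuration with boundary on $L$ must share a node with a disc component and hence lie in the same $p_V$-fiber; since the disc components sit over points of $L'$, which is disjoint from $H$, those particular fibers are indeed isomorphic to $\C$ and the sphere is constant. With this clarification the argument goes through unchanged.
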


\proof 
Let $u:(D^2,\partial D^2) \to (X^0_+,L)$ be a holomorphic
disc in $X^0_+$ with boundary on $L$ whose Maslov index is 2.
The image of $u$ by the projection $p$ is a holomorphic disc in
$V^0\times\C\simeq (\C^*)^n\times\C$ with boundary on the product torus
$p(L)=S^1(r_1)\times\dots\times S^1(r_0)$. Thus, the first $n$ 
components of $p\circ u$ are constant by the maximum
principle, and we can write $p\circ u(z)=(x_1,\dots,x_n,r_0\gamma(z))$,
where $|x_1|=r_1$, \dots, $|x_n|=r_n$, and $\gamma:D^2\to\C$ maps the unit circle
to itself. Moreover, the Maslov index of $u$ is twice its intersection
number with $\tilde{V}$. Therefore $\gamma$ is a degree 1 map of the
unit disc to itself, i.e.\ a biholomorphism; so the choice of 
$(x_1,\dots,x_n)$ determines $u$ uniquely up to reparametrization.

We conclude that each point of $L$ lies on the boundary of a unique
Maslov index~2 holomorphic disc in $X^0_+$, namely the preimage by $p$
of a disc $\{\x\}\times D^2(r_0)$. These discs are easily 
seen to be regular, by reduction to the toric case \cite{cho-oh}; their symplectic area 
is $\lambda$ (by definition of the moment map $\mu_X$, see the beginning of 
\S \ref{ss:redspaces}), and their boundary represents the homology class
$\gamma_0\in H_1(L)$ (the orbit of the $S^1$-action on $X$).
Thus, their weight is $T^{\omega(u)}\nabla(\partial
u)=T^\lambda\nabla(\gamma_0)=w_{\alpha,0}$, which completes the proof.
\endproof

Lemma \ref{l:countw0} implies that the local coordinates $w_{\alpha,0}\in
\O(U_\alpha^\vee)$ glue to a globally defined regular function $w_0$ on
the mirror of $X^0$ (hence we drop $\alpha$ from the notation).

Next, we consider monomials in the remaining coordinates $\v_{\alpha}$. 
First, let $\sigma\in\Z^n$ be a primitive generator of a ray of the fan $\Sigma_V$,
and denote by $D^0_\sigma$ the open stratum of the corresponding toric 
divisor in $V$. We will presently see that the monomial $\v_{\alpha}^\sigma$ is
related to a weighted count of discs in the partial compactification 
$X'_\sigma$ of $X^0$ obtained by adding  $p^{-1}(D^0_\sigma\times\C)$:
\begin{equation}
  X'_\sigma=p^{-1}((V^0\cup D^0_\sigma)\times\C)\setminus
\tilde{V}\subset X.
\end{equation}
Let $\varpi\in\R$ be the constant such that the corresponding facet of 
$\Delta_V$ has equation $\langle \sigma,u\rangle+\varpi=0$, and let
$\alpha_{min}\in A$ be such that $\langle \sigma,\alpha_{min}\rangle$ 
is minimal.
\begin{lemma}\label{l:Wsigma}
Any point $(L,\nabla)$ of $U_\alpha^\vee$ $(\alpha\in A)$ defines
a weakly unobstructed object of $\F(X'_\sigma)$, with
\begin{equation}\label{eq:Wsigma}
W_{X'_\sigma}(L,\nabla)=(1+T^{-\epsilon}w_0)^{\langle \alpha-\alpha_{min},
\sigma\rangle} T^\varpi \v_\alpha^\sigma.
\end{equation}
\end{lemma}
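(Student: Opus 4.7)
The strategy follows the template of Lemma \ref{l:countw0}: classify the Maslov index 2 holomorphic discs in $X'_\sigma$ bounded by $L$, determine their areas and boundary classes, and sum the weighted contributions. Since the anticanonical divisor of $X'_\sigma$ equals $p^{-1}(D^0_\sigma\times\C)$, such a disc $u$ intersects this divisor with total multiplicity one. Projecting via $p$, we obtain a holomorphic disc in $V\times\C$ with image in $(V^0\cup D^0_\sigma)\times\C$ and boundary on the product torus $p(L)$. After a change of basis making $\sigma$ primitive in the first coordinate direction, so that $V^0\cup D^0_\sigma\simeq\C\times(\C^*)^{n-1}$, the maximum principle forces the components $x_2,\dots,x_n$ to be constant; the $x_1$-component is then a degree-one M\"obius map onto $D^2(r_1)$, and $y$ is a degree-$k$ finite Blaschke product for some $k\ge 0$. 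For $u$ to lift into $X'_\sigma$ (and thus avoid $\tilde V$), every zero of $y$ must occur at a preimage under $x_1$ of a root of the Laurent polynomial $f(\,\cdot\,,x_2^0,\dots,x_n^0)$ lying inside $D^2(r_1)$.

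The central combinatorial input is the number $m$ of such roots. By a Newton polygon (or argument principle) calculation applied to $f(\,\cdot\,,x_2^0,\dots,x_n^0)$, the nearly tropical structure of $H$ (Definition \ref{def:neartrop}) implies that when the reference point lies in the chamber $U_\alpha$ this count is exactly $m=\langle\alpha-\alpha_{min},\sigma\rangle$: geometrically, it counts how many times a half-line issuing in direction $-\sigma$ from $U_\alpha$ crosses the amoeba of the one-dimensional slice. For each $k\in\{0,\dots,m\}$ and each $k$-element subset of these roots one obtains a homotopy class $\beta_k\in\pi_2(X'_\sigma,L)$ of Maslov index 2. A direct count shows that once $(x_2,\dots,x_n)$ is pinned down by requiring $u(\mathrm{pt})=p_0$ and the reparametrization group is used to place the marked boundary point at $1\in\partial D^2$, the remaining freedom (M\"obius $x_1$ with $x_1(1)=x_1^0$, together with the phase of $y$ constrained by $y(1)=y^0$) is absorbed by the 2-dimensional stabilizer of $1$ in $PSU(1,1)$, yielding $n(L,\beta_k)=\binom{m}{k}$.

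The relation $[\omega_\epsilon]=p^*[\omega_{V\times\C}]-\epsilon[E]$ gives $\omega(\beta_k)=\langle\sigma,\zeta\rangle+\varpi+k(\lambda-\epsilon)$, since the projected disc is the Cho--Oh disc for the facet $D_\sigma\times\C$ (area $\langle\sigma,\zeta\rangle+\varpi$) together with a $k$-fold cover in the $y$-direction (area $k\lambda$), and the lift meets $E$ with multiplicity $k$. The boundary class is $\partial\beta_k=\gamma_\sigma+k\gamma_0$ with $\gamma_\sigma=\sum_i\sigma_i\gamma_i$. Substituting into \eqref{eq:chartalpha}, the weight of $\beta_k$ becomes $T^\varpi\v_\alpha^\sigma(T^{-\epsilon}w_0)^k$, and summing over $k$ produces $(1+T^{-\epsilon}w_0)^m T^\varpi\v_\alpha^\sigma$ as claimed.

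The main difficulty is twofold. First, the amoeba count $m=\langle\alpha-\alpha_{min},\sigma\rangle$ must be justified uniformly across the chamber $U_\alpha$ rather than only in the strict tropical limit; this is precisely where the \emph{nearly tropical} hypothesis on $H$ (coupled with the embedding of a standard symplectic neighborhood $U_H$ of size $\delta$) is essential, as it controls which roots of $f(\,\cdot\,,x_2^0,\dots,x_n^0)$ fall inside $D^2(r_1)$. Second, one must verify that the enumerated holomorphic discs exhaust the Maslov index 2 moduli and are regular, ruling out sphere bubbles and nodal configurations. Regularity follows from the product decomposition of the target by reducing to the standard Cho--Oh regularity for toric discs in each factor, while Assumption \ref{ass:nef} (which holds for $X'_\sigma$ under Assumption \ref{ass:affinecase}) prevents Maslov index zero bubbles from contributing spurious terms to the superpotential.
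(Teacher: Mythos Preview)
Your proposal is correct and follows essentially the same approach as the paper's proof: reduce via projection to $(V^0\cup D^0_\sigma)\times\C$, use the maximum principle to freeze $x_2,\dots,x_n$, classify the $y$-component as a Blaschke product whose zeros are constrained to lie among the $m=\langle\alpha-\alpha_{min},\sigma\rangle$ intersection points of the $x_1$-disc with $H$, and sum the resulting $\binom{m}{k}$ families weighted by $(T^{-\epsilon}w_0)^k T^\varpi\mathbf{v}_\alpha^\sigma$. The paper's argument differs only in presentation: it computes $m$ directly via a winding-number comparison between the dominant monomial $\mathbf{x}^\alpha$ near $L$ and the dominant monomials near $D^0_\sigma$ (rather than invoking the Newton polygon), and it treats the case where the $x_1$-disc meets $H$ non-transversely by a multiplicity/continuity argument, which you do not mention explicitly but which is routine.
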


\proof 
After performing dual monomial changes of coordinates on $V^0$ and on 
$U_\alpha^\vee$ (i.e., replacing the coordinates $(x_1,\dots,x_n)$
by $(\x^{\tau_1},\dots,\x^{\tau_n})$ where $\langle \sigma,\tau_i\rangle=
\delta_{i,1}$, and $(v_{\alpha,1},\dots,v_{\alpha,n})$ by
$(\v_\alpha^\sigma,\dots)$), we can reduce to the case where $\sigma=(1,0,\dots,0)$,
and $V^0\cup D^0_\sigma\simeq \C\times (\C^*)^{n-1}$.

With this understood, let $u:(D^2,\partial D^2)\to (X'_\sigma,L)$ be
a Maslov index 2 holomorphic disc with boundary on $L$. The composition of
$u$ with the projection $p$ is a holomorphic disc in $(V^0\cup
D^0_\sigma)\times\C\simeq \C\times (\C^*)^{n-1}\times\C$ with boundary
on the product torus $p(L)=S^1(r_1)\times\dots\times S^1(r_0)$.
Thus, all the components of $p\circ u$ except for the first and last ones
are constant by the maximum principle. Moreover, since the Maslov index
of $u$ is twice its intersection number with $D^0_\sigma$, the first
component of $p\circ u$ has a single zero, i.e.\ it is a biholomorphism
from $D^2$ to the disc of radius $r_1$. 
Therefore, up to reparametrization we
have $p\circ u(z)=(r_1z,x_2,\dots,x_n,r_0\gamma(z))$, where
$|x_2|=r_2,\dots,$ $|x_n|=r_n$, and $\gamma:D^2\to \C$ maps the unit
circle to itself.

A further constraint is given by the requirement that the image of $u$
be disjoint from $\tilde{V}$ (the proper transform of $V\times 0$). Thus, the last component $\gamma(z)$ is
allowed to vanish only when $(r_1z,x_2,\dots,x_n)\in H$, and its vanishing
order at such points is constrained as well. We claim that the intersection
number $k$ of the disc $\mathbb{D}=D^2(r_1)\times \{(x_2,\dots,x_n)\}$ with $H$ is equal
to $\langle \alpha-\alpha_{min},\sigma\rangle$. Indeed, with respect to the
chosen trivialization of $\O(H)$ over $V^0$, near $p_V(L)$ the dominating term 
in the defining section of $H$ is the monomial $\x^\alpha$, whose values
over the circle $S^1(r_1)\times\{(x_2,\dots,x_n)\}$ wind $\alpha_1=\langle
\alpha,\sigma\rangle$ times around the origin; whereas near $D_\sigma^0$
(i.e., in the chambers which are unbounded in the direction of $-\sigma$)
the dominating terms have winding number $\langle
\alpha_{min},\sigma\rangle$.  Comparing these winding numbers we obtain
that $k=\langle \alpha-\alpha_{min},\sigma\rangle$.

Assume first that $(x_2,\dots,x_n)$ are generic, in the sense that
$\mathbb{D}$ intersects $H$ transversely at $k$ distinct
points $(r_1a_i,x_2,\dots,x_n)$, $i=1,\dots,k$ (with $a_i\in D^2$). 
The condition that $u$ avoids $\tilde{V}$ implies that $\gamma$ 
is allowed to have at most simple zeroes at $a_1,\dots,a_k$.
Denote by $I\subseteq \{1,\dots,k\}$ the set of those $a_i$ at which
$\gamma$ does have a zero, and let
$$\gamma_I(z)=\prod_{i\in I} \frac{z-a_i}{1-\bar{a}_iz}.$$
Then $\gamma_I$ maps the unit circle to itself, and its zeroes in the
disc are the same as those of $\gamma$, so that $\gamma_I^{-1}\gamma$ is
a holomorphic function on the unit disc, without zeroes, and mapping the 
unit circle to itself, i.e.\ a constant map. Thus
$\gamma(z)=e^{i\theta}\gamma_I(z)$, and
\begin{equation}\label{eq:blaschke}
p\circ u(z)=(r_1z,x_2,\dots,x_n,r_0e^{i\theta}\gamma_I(z))
\end{equation} for some $I\subseteq \{1,\dots,k\}$ and
$e^{i\theta}\in S^1$. We conclude that there are $2^k$ holomorphic discs
of Maslov index 2 in $(X'_\sigma,L)$ whose boundary passes through a
given generic point of~$L$. It is not hard to check that these discs are
all regular, using e.g.\ the same argument as in the proof of Lemma 7 in
\cite{AuR6}. Succinctly: observing that $u$ does not intersect $\tilde{H}$, projection to $V$ decomposes (via a short exact
sequence) the Cauchy-Riemann operator for $u$ into 
a $\bar\partial$ operator on the 
trivial holomorphic line bundle with trivial real boundary condition 
(along the fibers of the projection), and the $\bar\partial$ operator for
the ``standard'' disc $\mathbb{D}$ in $\C\times (\C^*)^{n-1}$ (which itself splits into a direct sum of
line bundles and is easily checked to be surjective); this implies surjectivity.

When the disc $\mathbb{D}$ is not transverse to $H$,
we can argue in exactly the same manner, except that $a_1,\dots,a_k\in D^2$
are no longer distinct; and $\gamma$ may have a multiple zero at $a_i$ as 
long as its order of vanishing does not exceed the multiplicity of $(r_1a_i,
x_2,\dots,x_n)$
as an intersection of $\mathbb{D}$ with $H$. We still conclude that $p\circ
u$ is of the form \eqref{eq:blaschke}. These discs are not all distinct 
(or regular), but we can argue by continuity as follows. There are 
diffeomorphisms arbitrarily $C^\infty$-close to identity which fix a
neighborhood of $H$ and map $S^1(r_1)\times \{(x_2,\dots,x_n)\}$ to a nearby circle 
$S^1(r'_1)\times\{(x'_2,\dots,x'_n)\}$ contained in a generic fiber. The moduli space of
holomorphic discs with respect to the pullback of the standard complex
structure by such a diffeomorphism is canonically identified with the moduli
space of holomorphic discs for the standard complex structure with boundary
on the nearby generic fiber. This provides an explicit regularization of
the moduli space, and
we conclude that the enumeration of holomorphic discs is as in the
transverse case (i.e., discs which can be written in the form 
\eqref{eq:blaschke} in more than one way should be counted with a multiplicity equal
to the number of such expressions.)

All that remains is to calculate the weights \eqref{eq:weight} associated
to the holomorphic discs we have identified. 
Denote by $(\zeta_1,\dots,\zeta_n,\lambda)$ the affine coordinates of 
$\pi(L)\in U_\alpha$ introduced above, and consider a disc given by
\eqref{eq:blaschke} with $|I|=\ell\in\{0,\dots,k\}$. Then the relative
homology class represented by $p\circ u(D^2)$ in 
$\C\times(\C^*)^{n-1}\times\C\subset V\times\C$ is equal to
$[D^2(r_1)\times\{pt\}]+\ell [\{pt\}\times
D^2(r_0)]$. By elementary toric geometry, the symplectic area of the disc 
$D^2(r_1)\times\{pt\}$ with respect to the toric K\"ahler form
$\omega_{V\times\C}$ is equal to $\langle\sigma,\mu_V\rangle+\varpi=
\zeta_1+\varpi$, while that of $\{pt\}\times D^2(r_0)$ is
equal to $\lambda$. Thus, the symplectic area of the disc $p\circ u(D^2)$
with respect to $\omega_{V\times\C}$ is
$\zeta_1+\varpi+\ell\lambda$. The disc we are interested in, $u(D^2)\subset
X'_\sigma$, is the proper transform of $p\circ u(D^2)$ under the blowup map;
since its intersection number with the exceptional divisor $E$ is equal to 
$|I|=\ell$, we conclude that
\begin{equation}\label{eq:areaofdisc}
\textstyle\int_{D^2} u^*\omega_\epsilon=\Bigl(\int_{D^2} (p\circ u)^*\omega_{V\times\C}
\Bigr)-\ell\epsilon =\zeta_1+\varpi+\ell(\lambda-\epsilon).
\end{equation}
On the other hand, the degree of $\gamma_{I|S^1}:S^1\to S^1$ is equal to
$|I|=\ell$, so in $H_1(L,\Z)$ we have $[u(S^1)]=\gamma_1+\ell \gamma_0$.
Thus the weight of $u$ is
$$T^{\omega_\epsilon(u)}\nabla(\partial
u)=T^{\zeta_1+\varpi+\ell(\lambda-\epsilon)}\nabla(\gamma_1)\nabla(\gamma_0)^\ell
=(T^{-\epsilon} w_0)^\ell T^\varpi v_{\alpha,1}.$$
Summing over the $\binom{k}{\ell}$ families of discs with $|I|=\ell$ for
each $\ell=0,\dots,k$, we find that $$W_{X'_\sigma}(L,\nabla)=
\sum_{\ell=0}^k \tbinom{k}{\ell} (T^{-\epsilon}w_0)^\ell\,T^\varpi
v_{\alpha,1}=(1+T^{-\epsilon}w_0)^k T^\varpi v_{\alpha,1}.$$
\vskip-1.5em
\endproof

Next we extend Lemma \ref{l:Wsigma} to the case of general
monomials in the coordinates $\mathbf{v}_\alpha$.
Let $\sigma$ be {\em any}\/ primitive element of $\Z^n$, and denote again by
$\alpha_{min}$ an element of $A$
such that $\langle \alpha_{min},\sigma\rangle$ is minimal.
Denote by $V'_\sigma=V^0\cup D^0_\sigma$ 
the toric partial compactification of $V^0$ obtained
by adding a single toric divisor $D^0_\sigma$ in the direction of the ray
$-\sigma$. The hypersurface $H^0$ admits a natural partial compactification
$H'_\sigma$ inside~$V'_\sigma$. 

We claim that $H'_\sigma$ is smooth for
$\tau$ sufficiently small in \eqref{eq:Ht}. Indeed, rescaling $f_\tau$ by
a factor of $\x^{-\alpha_{min}}$ if necessary, we can assume without loss of
generality that
$\langle \alpha_{min},\sigma\rangle=0$. Then $f_\tau$ extends to a regular function on
$V'_\sigma$, whose restriction to $D^0_\sigma$ is again a maximally
degenerating family of Laurent polynomials, associated to the regular polyhedral
decomposition $\mathcal{P}\cap \sigma^\perp$ of the convex hull of
$A\cap \sigma^\perp$. This implies that for sufficiently small $\tau$
the restriction of $f_\tau$ to $D^0_\sigma$ vanishes transversely; the
smoothness of $H'_\sigma$ follows.

By blowing up $V'_\sigma\times \C$ along
$H'_\sigma\times 0$ and removing the proper transform of $V'_\sigma\times
0$, we obtain a partial compactification $X'_\sigma$ of $X^0$.
While $X'_\sigma$ does not necessarily embed into $X$, we can equip 
$V'_\sigma$ (resp.\ $X'_\sigma$) with a toric (resp.\ $S^1$-invariant) 
K\"ahler form which agrees with $\omega_V$ (resp.\ $\omega_\epsilon$) 
everywhere outside of an arbitrarily small neighborhood of the 
compactification divisor. 

Denote by $L\subset X^0$ a smooth fiber of $\pi$ which lies in the region where
the K\"ahler forms agree (so that $L$ is Lagrangian in $X'_\sigma$ as well).

\begin{lemma}\label{l:Wsigma2}
The Maslov index 0 holomorphic discs bounded by $L$ inside $X'_\sigma$
are all contained in $X^0$ and described by Proposition \ref{prop:unobstructed}.

Moreover, if $L$ is tautologically unobstructed in $X^0$ and 
lies over the chamber $U_\alpha$, then
the points $(L,\nabla)\in U_\alpha^\vee$ define
weakly unobstructed objects of $\F(X'_\sigma)$, with 
\begin{equation}\label{eq:Wsigma2}
W_{X'_\sigma}(L,\nabla)=(1+T^{-\epsilon}w_0)^{\langle \alpha-\alpha_{min},
\sigma\rangle} T^\varpi \v_\alpha^\sigma
\end{equation}
for some $\varpi\in\R$.
\end{lemma}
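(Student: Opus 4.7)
The plan is to reduce Lemma \ref{l:Wsigma2} to Lemma \ref{l:Wsigma} by a toric monomial change of coordinates, after disposing of the Maslov index $0$ part by positivity of intersection. First, for the Maslov index $0$ claim: the anticanonical divisor of the partial compactification $X'_\sigma$ is precisely the complement $X'_\sigma\setminus X^0$, which consists of the strict transform of $D^0_\sigma\times\C$ (minus the proper transform of $D^0_\sigma\times 0$, which has already been deleted). Since $D^0_\sigma$ is a complex hypersurface in $X'_\sigma$, any holomorphic disc in $X'_\sigma$ bounded by $L$ with Maslov index $0$ has algebraic intersection number zero with $X'_\sigma\setminus X^0$, and so by positivity of intersection must be contained in $X^0$. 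Proposition \ref{prop:unobstructed} then gives the classification.

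For the second part, after performing the dual monomial change of coordinates $(x_1,\dots,x_n)\mapsto (\x^{\tau_1},\dots,\x^{\tau_n})$ with $\langle\sigma,\tau_i\rangle=\delta_{i,1}$ (an element of $GL(n,\Z)$, which sends product tori to product tori and trivializes the toric geometry in the direction of $\sigma$), we may assume $\sigma=(1,0,\dots,0)$. Then $V'_\sigma\cong \C\times(\C^*)^{n-1}$ with $D^0_\sigma=\{x_1=0\}$, and the picture near $L$ is identical to that in the proof of Lemma \ref{l:Wsigma}: $L$ projects to a product torus $S^1(r_1)\times\cdots\times S^1(r_0)$ in $V'_\sigma\times\C$ (this uses that $L$ lies in a tautologically unobstructed chamber, so by Proposition \ref{prop:pistandard} the K\"ahler form near $L$ is toric). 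Given a Maslov index $2$ disc $u$ in $(X'_\sigma,L)$, the maximum principle forces the middle $n-1$ components of $p\circ u$ to be constant, the first component to be a biholomorphism onto $D^2(r_1)$, and the last component to be $r_0 e^{i\theta}\gamma_I(z)$ for some Blaschke product $\gamma_I$ indexed by a subset $I$ of the intersection points of $D^2(r_1)\times\{(x_2,\dots,x_n)\}$ with $H'_\sigma$, exactly as in Lemma \ref{l:Wsigma}.

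The winding-number computation for the number $k$ of such intersection points goes through verbatim and gives $k=\langle\alpha-\alpha_{min},\sigma\rangle$: near $p_V(L)$ the dominant term in $f$ has weight $\alpha$ and contributes winding number $\langle\alpha,\sigma\rangle$ around $D^0_\sigma$, whereas in the chambers unbounded in the direction of $-\sigma$ (adjacent to $D^0_\sigma$) the dominant monomials have winding $\langle\alpha_{min},\sigma\rangle$. The weight of a disc with $|I|=\ell$ is then $(T^{-\epsilon}w_0)^\ell T^\varpi v_{\alpha,1}$, where $\varpi$ is determined by the symplectic area of $D^2(r_1)\times\{pt\}$ with respect to the chosen toric K\"ahler form on $V'_\sigma$ (this is the only place where the constant $\varpi$ enters, and it depends on the choice of toric K\"ahler form on $V'_\sigma$ rather than on an intrinsic facet of $\Delta_V$). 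Summing the $\binom{k}{\ell}$ contributions over $\ell=0,\dots,k$ yields the formula \eqref{eq:Wsigma2}.

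The main obstacle I anticipate is purely bookkeeping: checking that the local model of $X'_\sigma$ near $L$ is symplectically the same as that of $X'_\sigma$ in Lemma \ref{l:Wsigma}, so that areas and holonomies computed in the coordinates $\v_\alpha$ on $U_\alpha^\vee$ agree with those read off from the Blaschke formula. Since $L$ lies in the tautologically unobstructed region (where the K\"ahler form is standard toric by Proposition \ref{prop:pistandard}), and $X'_\sigma$ is equipped with an $S^1$-invariant K\"ahler form agreeing with $\omega_\epsilon$ outside an arbitrarily small neighborhood of the compactification divisor, we may shrink that neighborhood to lie outside $\pi^{-1}(U_\alpha)$ and the area computation \eqref{eq:areaofdisc} carries over unchanged, with $\zeta_1+\varpi$ replaced by $\zeta_1+\varpi$ for the new constant $\varpi$.
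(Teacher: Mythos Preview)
Your proposal is correct and follows essentially the same approach as the paper's own proof: positivity of intersection with the compactification divisor forces Maslov index~0 discs into $X^0$ (the paper additionally notes that $X'_\sigma$ is affine so Assumption~\ref{ass:nef} holds), and the Maslov index~2 classification is reduced to the argument of Lemma~\ref{l:Wsigma}, with the only change being that the constant $\varpi$ now depends on the chosen toric K\"ahler form on $V'_\sigma$. The paper's proof is simply more terse, invoking Lemma~\ref{l:Wsigma} by reference rather than repeating the Blaschke-product analysis.
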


\proof The Maslov index of a disc in $X'_\sigma$ with boundary
on $L$ is twice its intersection number with the compactification divisor,
and Assumption \ref{ass:nef} is satisfied (in fact $X'_\sigma$ is affine).
Thus all Maslov index 0 holomorphic discs are contained in the open stratum
$X^0$, and Proposition \ref{prop:unobstructed} holds. (Since
$L$ lies away from the compactification divisor, the symplectic
area of these discs remains the same as for $\omega_\epsilon$.)

Thus, whenever $L$ lies over a chamber $U_\alpha$ it does not bound any 
holomorphic discs of Maslov index zero or less in $X'_\sigma$, and the 
Maslov index 2 discs can be classified exactly
as in the proof of Lemma \ref{l:Wsigma}. The only difference is that,
since we evaluate the symplectic areas of these discs with respect to
the K\"ahler form on $X'_\sigma$ rather than $X$, the constant 
term $\varpi$ in the area formula \eqref{eq:areaofdisc} now depends
on the choice of the toric K\"ahler form on $V'_\sigma$ near 
the compactification divisor.
\endproof

By Remark \ref{rmk:instcorrtrick} (see also Corollary
\ref{cor:potential_matched_by_gluing}), the expressions
\eqref{eq:Wsigma2} determine globally defined regular functions 
on the mirror of $X^0$.  Thus, we can use Lemma \ref{l:Wsigma2} to
determine the wall-crossing transformations between the affine
charts of the mirror. 

Consider two adjacent chambers
$U_\alpha$ and $U_\beta$ separated by a wall of potentially obstructed fibers of $\pi$, 
i.e.\ assume that $\alpha,\beta\in A$ are connected by an edge in the
polyhedral decomposition $\mathcal{P}$. Then we have:

\begin{proposition}\label{prop:gluing}
The instanton-corrected gluing between the coordinate charts 
$U_\alpha^\vee$ and $U_\beta^\vee$ preserves the coordinate $w_0$, and
matches the remaining coordinates via
\begin{equation}\label{eq:gluingsigma}
\v_\alpha^\sigma=(1+T^{-\epsilon} w_0)^{\langle \beta-\alpha,\sigma
\rangle}\v_\beta^\sigma
\qquad \text{for all } \sigma\in\Z^n.
\end{equation}
\end{proposition}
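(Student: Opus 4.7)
The plan is to deduce the gluing formulas directly from the disc-counting results already established. The key input is that, by the wall-crossing principle stated in Remark \ref{rmk:instcorrtrick} and made precise in Corollary \ref{cor:potential_matched_by_gluing}, any regular function on the mirror constructed as a superpotential for a partial compactification of $X^0$ must transform consistently across the wall separating $U_\alpha^\vee$ and $U_\beta^\vee$. So the strategy is to exhibit enough such functions to pin down the gluing.

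First I would show that $w_0$ is preserved. The partial compactification $X_+^0 = X^0 \cup \tilde V^0$ is intrinsic (it does not depend on any choice of chamber), so by Lemma \ref{l:countw0} combined with Corollary \ref{cor:potential_matched_by_gluing}, the function $W_{X_+^0}$ is globally defined on the mirror, and its local expressions $w_{\alpha,0}$ and $w_{\beta,0}$ must agree on the overlap. This justifies dropping the subscript and writing a single coordinate $w_0$.

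Next, for the transformation of the remaining coordinates, I would apply Lemma \ref{l:Wsigma2} for each primitive $\sigma \in \Z^n$. The partial compactification $X'_\sigma$ likewise does not depend on the chamber, and all fibers over $B^{reg}$ are tautologically unobstructed in $X^0$ for the same reason as before, so Corollary \ref{cor:potential_matched_by_gluing} applies and $W_{X'_\sigma}$ defines a global regular function on the mirror. In the chart $U_\alpha^\vee$ it is given by $(1+T^{-\epsilon}w_0)^{\langle \alpha-\alpha_{min},\sigma\rangle}T^\varpi \mathbf{v}_\alpha^\sigma$, and in $U_\beta^\vee$ by the analogous expression with $\alpha$ replaced by $\beta$ (the constants $\alpha_{min}$ and $\varpi$ depend only on $\sigma$ and on the choice of K\"ahler form on $X'_\sigma$, so they are common to both charts). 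Equating the two and cancelling the $T^\varpi$ and the powers involving $\alpha_{min}$ yields
\[
\mathbf{v}_\alpha^\sigma = (1+T^{-\epsilon}w_0)^{\langle \beta-\alpha,\sigma\rangle}\mathbf{v}_\beta^\sigma,
\]
as claimed. Specializing to $\sigma = e_i$ for $i=1,\dots,n$ recovers the wall-crossing transformation on each individual coordinate $v_{\alpha,i}$, and consistency for arbitrary $\sigma$ is automatic from the multiplicative structure.

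The main obstacle, and the reason the proof is not merely a formal manipulation, is the input from Corollary \ref{cor:potential_matched_by_gluing}: one must know that the instanton-corrected gluing is precisely the one that identifies quasi-isomorphic weakly unobstructed objects of $\F(X'_\sigma)$, so that matching values of $\m_0$ forces the coordinate change. This is where the Floer-theoretic content of wall-crossing enters; by contrast, the actual Maslov index $2$ disc count in Lemma \ref{l:Wsigma2} is elementary once one reduces to the toric situation via the projection $p$. A secondary point worth verifying is that the same fiber $L$ is simultaneously weakly unobstructed in $X_+^0$ and in each $X'_\sigma$ so that all superpotentials are defined on the same moduli space; this follows because the K\"ahler form on $X'_\sigma$ can be chosen to agree with $\omega_\epsilon$ on a neighborhood of $L$, and the obstruction $\m_0$ only sees discs through $L$.
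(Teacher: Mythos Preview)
Your proposal is correct and follows essentially the same route as the paper's proof: both use Lemma~\ref{l:countw0} to fix $w_0$, then Lemma~\ref{l:Wsigma2} together with the matching of superpotentials under wall-crossing (Lemma~\ref{l:potential_matched_by_gluing}/Corollary~\ref{cor:potential_matched_by_gluing}) to pin down the transformation of $\mathbf{v}_\alpha^\sigma$. The paper makes one point slightly more explicit than you do: since $X^0_+$ and $X'_\sigma$ satisfy Assumption~\ref{ass:nef}, the Maslov index~$0$ discs bounded by the $L_t$ in these partial compactifications are exactly those in $X^0$, so the wall-crossing transformation computed in any of them is literally the same map---this is what licenses using the superpotentials of $X^0_+$ and $X'_\sigma$ to determine the gluing of the mirror of $X^0$.
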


\proof
Let $\{L_t\}_{t\in [0,1]}$ be a path among smooth fibers of $\pi$, 
with $L_0$ and $L_1$ tautologically
unobstructed and lying over the chambers $U_\alpha$ and $U_\beta$
respectively. We consider the partial compactifications $X^0_+$ and
$X'_\sigma$ of $X^0$ introduced in Lemmas \ref{l:countw0}--\ref{l:Wsigma2};
in the case of $X'_\sigma$ we choose the K\"ahler form to agree with
$\omega_\epsilon$ over a large open subset which contains the path $L_t$,
so as to be able to apply Lemma \ref{l:Wsigma2}. 

Since these partial compactifications satisfy
Assumption \ref{ass:nef}, the moduli spaces of Maslov index 0 holomorphic 
discs bounded by the Lagrangians $L_t$ in $X^0_+$, $X'_\sigma$, and $X^0$
are the same, and the corresponding wall-crossing transformations 
are identical (see Appendix \ref{sec:moduli-objects-fukay}).
Noting that the expressions \eqref{eq:WX0+} and \eqref{eq:Wsigma2} are
manifestly convergent over the whole completions $(\K^*)^{n+1}$ of
$U_\alpha^\vee$ and $U_\beta^\vee$, we appeal to Lemma
\ref{l:potential_matched_by_gluing}, and conclude that these expressions
for the superpotentials $W_{X^0_+}$
and $W_{X'_\sigma}$ over the chambers $U_\alpha^\vee$ and $U_\beta^\vee$
match under the wall-crossing transformation. Thus $w_0$ is preserved,
and for primitive $\sigma\in\Z^n$ the monomials $\v_\alpha^\sigma$ and 
$\v_\beta^\sigma$ are related by \eqref{eq:gluingsigma}. (The case of
non-primitive $\sigma$ follows obviously from the primitive case.)
\endproof

This completes the proof of Theorem \ref{thm:conicbundle}. Indeed,
the instanton-corrected gluing maps \eqref{eq:gluingsigma} coincide 
with the coordinate change formulas \eqref{eq:gluing} between the affine charts 
for the toric variety $Y$ introduced
in \S \ref{ss:mirror}. Therefore, the SYZ mirror of $X^0$ embeds inside
$Y$, by identifying the completion of the local chart $U_\alpha^\vee$ 
with the subset of $Y_\alpha$ where $w_0$ is non-zero. 
It follows that the SYZ mirror of $X^0$ is the subset of $Y$ where $w_0$ is non-zero,
namely $Y^0$.

\section{Proof of Theorem \ref{thm:main}} \label{s:generalcase}

\noindent
We now turn to the proof of Theorem \ref{thm:main}. We begin with an
elementary observation:

\begin{lemma}\label{l:fano}
If Assumption \ref{ass:affinecase} holds, then every rational curve 
$C\simeq \PP^1$ in $X$ 
satisfies $D\cdot C=c_1(X)\cdot C>0$; so in particular Assumption 
\ref{ass:nef} holds.
\end{lemma}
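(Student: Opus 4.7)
The equality $D\cdot C = c_1(X)\cdot C$ follows from $D$ being an anticanonical divisor on $X$, which is witnessed by the nowhere-vanishing holomorphic $(n+1)$-form $\Omega = p^*\Omega_{V\times\C}$ on $X^0 = X\setminus D$. For the positivity, the plan is to rewrite $c_1(X)$ in a form that isolates the content of Assumption \ref{ass:affinecase}. The blow-up formula gives $c_1(X) = p^*c_1(V\times\C) - E = p_V^*c_1(V) - E$, using $c_1(\C)=0$. Since $V\times 0$ is the principal divisor of the coordinate function $y$ on $\C$, pulling back yields the linear equivalence $\tilde V + E = p^*(V\times 0) \sim 0$ on $X$, i.e.\ $E\sim -\tilde V$. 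Combining these and applying the projection formula,
$$c_1(X)\cdot C = c_1(V)\cdot (p_V)_*C + \tilde V\cdot C.$$

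Next I would split into cases according to $p_V(C)$. Suppose first that $p_V(C)$ is a point. Then $C$ lies inside a fiber of $p_V$; away from $H\times 0$ these fibers are affine lines, while above a point of $H\times 0$ the fiber is $\C \cup \PP^1$, where the $\PP^1$ is a fiber of $E\to H$. The only possibility for a compact rational curve is therefore that $C$ itself is a fiber of $E\to H$. For such a $C$, $(p_V)_*C = 0$, while $\tilde V\cdot C = 1$ because $\tilde V\cap E$ is a section of the $\PP^1$-bundle $E\to H$ (the locus $(u\!:\!v) = (1\!:\!0)$ corresponding to the $\mathcal L$-subbundle). Hence $c_1(X)\cdot C = 1 > 0$ in this case.

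Suppose instead that $\bar C_V := p_V(C)$ is a curve. Then $\bar C_V$ is an irreducible rational curve in $V$, and Assumption \ref{ass:affinecase} applies to it. If $C\not\subset \tilde V$, then $\tilde V\cdot C \geq 0$ while $(p_V)_*C = d\,[\bar C_V]$ with $d\geq 1$, giving $c_1(X)\cdot C \geq d\cdot c_1(V)\cdot \bar C_V > 0$. If $C\subset \tilde V$, I identify $\tilde V$ with $V$ via $p_V$ (a local calculation in the chart $y = x_1 v$ exhibits $\tilde V = \{v=0\}$ mapping isomorphically to $V\times 0$, since $H\times 0$ is a Cartier divisor in $V\times 0$) and compute $N_{\tilde V/X} = \mathcal O_V(-H)$, either from $E\sim -\tilde V$ together with the restriction identity $E|_{\tilde V} = \tilde V\cap E = H$, or equivalently via adjunction on $\tilde V$. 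Then $\tilde V\cdot C = -H\cdot C$, and the displayed formula yields $c_1(X)\cdot C = c_1(V)\cdot C - H\cdot C > 0$ directly from Assumption \ref{ass:affinecase}. The only mild obstacle is bookkeeping around the three divisors $\tilde V$, $E$, $\tilde D_V$ and the identifications at points of $H\times 0$; once the relation $E\sim -\tilde V$ and the identification of $N_{\tilde V/X}$ are in place, the case analysis is routine and no deeper input is required.
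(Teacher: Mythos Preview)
Your proof is correct and arrives at the same core identity as the paper, namely $c_1(X)\cdot C = c_1(V)\cdot (p_V)_*C + \tilde V\cdot C$. The paper obtains this (in effect) by writing $D = \tilde V \cup p^{-1}(D_V\times\C)$ and using $[p^{-1}(D_V\times\C)] = p_V^*c_1(V)$, while you obtain it from the blow-up formula together with the linear equivalence $E\sim -\tilde V$ coming from the principal divisor $\mathrm{div}(y)$.

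The genuine difference is in the case decomposition. The paper first applies the maximum principle to the projection $y:X\to\C$ to localize any rational curve to a slice $p^{-1}(V\times\{y\})$, and then splits into the cases $y\neq 0$, $C\subset\tilde V$, and $C\subset E\setminus\tilde V$. You instead split according to whether $p_V(C)$ is a point or a curve, and in the latter case whether $C\subset\tilde V$. Your decomposition is slightly more economical: the single observation $\tilde V\cdot C\ge 0$ for $C\not\subset\tilde V$ (positivity of intersection) handles at once the paper's cases $y\neq 0$ and $C\subset E$ with nonconstant projection, and it avoids any analytic input. The paper's route, on the other hand, makes the geometry of where $C$ sits more explicit, which may be useful background for the disc-counting arguments that follow in \S\ref{s:generalcase}.
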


\proof 
$c_1(X)=p_V^*c_1(V)-[E]$, where $p_V$ is the projection to $V$ and
$E=p^{-1}(H\times 0)$ is the exceptional divisor. Consider a rational curve
$C$ in $X$ (i.e., the image of a nonconstant holomorphic map from $\PP^1$
to $X$), and denote by $C'=p_V(C)$ the rational curve in $V$ obtained by
projecting $C$ to $V$.
Applying the maximum principle to the projection to the last
coordinate $y\in \C$, we conclude that $C$ is contained either in
$p^{-1}(V\times 0)=\tilde{V}\cup E$, or in $p^{-1}(V\times \{y\})$ for
some nonzero value of $y$. 

When $C\subset p^{-1}(V\times \{y\})$ for $y\neq 0$, the curve
$C$ is disjoint from $E$ and its projection $C'$ is nonconstant, 
so $c_1(X)\cdot [C]=c_1(V)\cdot [C']>0$ by Assumption
\ref{ass:affinecase}. 

When $C$ is contained in $\tilde{V}$, the curve $C'$ is again nonconstant,
and since the normal bundle of $\tilde{V}$ in $X$ is $\mathcal{O}(-H)$, we
have $c_1(X)\cdot [C]=c_1(V)\cdot [C']-[H]\cdot [C']$, which is positive by 
Assumption \ref{ass:affinecase}.

Finally, we consider the case where $C$ is contained in $E$ but not
in $\tilde{V}$. Then $$c_1(X)\cdot [C]=[D]\cdot [C]=[\tilde{V}]
\cdot[C]+[p^{-1}(D_V)]\cdot[C]=[\tilde{V}]\cdot [C]+c_1(V)\cdot [C'].$$
The first term is non-negative by positivity of intersection; and by
Assumption \ref{ass:affinecase} the
second one is positive unless $C'$ is a constant curve, and non-negative
in any case. However $C'$ is constant only when $C$ is
(a cover of) a fiber of the $\PP^1$-bundle $p_{|E}:E\to H\times 0$;
in that case $[\tilde{V}]\cdot[C]>0$, so $c_1(X)\cdot [C]>0$ in all cases.
\endproof

As explained in \S \ref{ss:syz2}, this implies that the tautologically unobstructed fibers
of $\pi:X^0\to B$ remain weakly unobstructed in $X$, and that the SYZ mirror
of $X$ is just $Y^0$ (the SYZ mirror of $X^0$) equipped with a superpotential 
$W_0$ which counts Maslov index 2 holomorphic discs bounded by the fibers of $\pi$.
Indeed, the conclusion of Lemma \ref{l:fano} implies that any component which is a sphere contributes at least 2 to the Maslov index of a stable
genus 0 holomorphic curve bounded by a fiber of $\pi$. Thus, Maslov index 0
configurations are just discs contained in $X^0$, and Maslov index 2
configurations are discs intersecting $D$ transversely in a single
point.

Observe that each Maslov index 2 holomorphic disc intersects exactly
one of the components of the divisor $D$. Thus, the superpotential $W_0$ can be
expressed as a sum over the components of $D=\tilde{V}\cup
p^{-1}(D_V\times\C)$, in which
each term counts those discs which intersect a particular component.
It turns out that the necessary calculations have been carried out in
the preceding section: Lemma \ref{l:countw0} describes the contribution
from discs which only hit $\tilde{V}$, and Lemma
\ref{l:Wsigma} describes the contributions from discs which hit the
various components of $p^{-1}(D_V\times\C)$. Summing these, and
using the notations of \S \ref{ss:mirror}, we obtain
that, for any point $(L,\nabla)$ of $U_\alpha^\vee$ ($\alpha\in A$),
$$W_0(L,\nabla)=w_{\alpha,0}+\sum_{i=1}^r
(1+T^{-\epsilon}w_0)^{\langle \alpha-\alpha_i,\sigma_i\rangle}
T^{\varpi_i}\mathbf{v}_\alpha^{\sigma_i}=w_0+\sum_{i=1}^r w_i.$$
Hence $W_0$ is precisely the leading-order superpotential \eqref{eq:W0}.
This completes the proof of Theorem \ref{thm:main}.

\begin{remark}\label{rmk:signs}
In the proofs of Lemmas \ref{l:countw0} and \ref{l:Wsigma} we have not discussed in any detail the
orientations of moduli spaces of discs, which determine the signs of 
the various terms appearing in the superpotential. The fact that those
are all positive follows from two ingredients.

The first is that, for a standard product torus in a toric variety, equipped with the standard
spin structure, the contributions of the various families of Maslov index~2
holomorphic discs to the superpotential are all positive. See \cite{Cho-spin}
for a detailed calculation in the case of the Clifford torus.
The fact that all the signs are the same is not surprising, since
a monomial change of variables can be used to reduce to a single example,
namely the family of discs $D^2\times \{pt\}$ bounded by a product torus 
in $\C\times (\C^*)^n$ equipped with the standard spin structure.
The same argument also applies to the discs in Lemma \ref{l:countw0} since
those can also be reduced to the toric case.

The second ingredient is a comparison of the orientations of moduli spaces
of discs in $V$ and their lifts to $X$ (as in Lemma \ref{l:Wsigma}). 
A short calculation shows that, for the standard spin structure, the orientation of the moduli 
space of lifted discs in $X$ agrees with that induced by the orientation of
the moduli space of discs in $V$ and the natural orientation of the orbits
of the $S^1$-action. See the proof of Corollary 8 in \cite{AuR6} for a
similar argument. The positivity of the signs in Lemma \ref{l:Wsigma}
follows.
\end{remark}

\begin{remark}\label{rmk:borderline}
When Assumption \ref{ass:affinecase} does not hold, the
SYZ mirror of $X$ differs from $(Y^0,W_0)$, since the enumerative geometry
of discs is modified by the presence of stable genus 0
configurations of total Maslov index 0 or 2.
A borderline case that remains fairly easy is when the strict inequality in
Assumption \ref{ass:affinecase} is relaxed to
$$c_1(V)\cdot C\ge \max(0,H\cdot C).$$
(This includes the situation where $H$ is a Calabi-Yau hypersurface in a toric
Fano variety as an important special case.)

In this case, Assumption \ref{ass:nef} still holds, so the mirror of $X$
remains $Y^0$; the only modification is that the superpotential should
also count the contributions of configurations consisting of a Maslov index
2 disc together with one or more rational curves satisfying $c_1(X)\cdot
C=0$. Thus, we now have $$W=(1+c_0)w_0+(1+c_1)w_1+\dots+(1+c_r)w_r,$$
where $c_0,\dots,c_r\in \Lambda$ are constants (determined by the genus 0
Gromov-Witten theory of $X$), with $\mathrm{val}(c_i)>0$.
\end{remark}



\section{From the blowup $X$ to the hypersurface $H$} \label{s:backtoH}

The goal of this section is to prove Theorem \ref{cor:main}. As a first step, we establish:

\begin{theorem}\label{thm:FScompact}
Under Assumption \ref{ass:affinecase}, the $B$-side Landau-Ginzburg model 
$(Y,W_0)$ is SYZ mirror to the $A$-side Landau-Ginzburg
model $(X, W^\vee=y)$ $($with the K\"ahler form $\omega_\epsilon)$.
\end{theorem}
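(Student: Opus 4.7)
The plan builds on Theorem~\ref{thm:main} — which identifies $(Y^0,W_0)$ as the SYZ mirror of $X$ — by adapting the procedure of Example~\ref{ex:FScompact} in a fiberwise manner to account for the extra divisor $D_Y=Y\setminus Y^0=\{w_0=0\}$. The guiding principle is that the Landau--Ginzburg superpotential $W^\vee=y$ allows admissible non-compact Lagrangians in $\mathcal{F}(X,W^\vee)$, and bounding cochains on these Lagrangians produce precisely the new points parametrizing $D_Y$. Because $W_0$ is already a regular function on all of $Y$ by Definition~\ref{def:LGmirror}, only the extension of the moduli of objects needs to be established; the matching of the A-side superpotential on $D_Y$ is then a disc count entirely analogous to that of Theorem~\ref{thm:main}.

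\textbf{Construction of admissible Lagrangians.} First I would construct the relevant admissible non-compact Lagrangians. Away from the exceptional divisor $E$, the projection $p$ identifies $X$ with $V\times\C$, and $W^\vee=y$ coincides with the coordinate on the $\C$-factor. Given a chamber $U_\alpha$ of $B^{reg}$ and a fiber $L=\pi^{-1}(\xi,\lambda)$ over $U_\alpha$, Proposition~\ref{prop:pistandard} yields a decomposition $L\cong L'\times S^1(r)$ under the product structure, where $L'\subset V^0$ is a product torus over the corresponding chamber of $\R^n\setminus\Pi'$ and $S^1(r)\subset\C$ is a circle of suitable radius. Replacing $S^1(r)$ by the admissible arc $L_\infty\subset\C$ of Example~\ref{ex:FScompact} (going from $+\infty$ back to $+\infty$ while enclosing $y=0$ once) yields an admissible non-compact Lagrangian $\tilde L=L'\times L_\infty\subset V^0\times\C$, which lifts via $p^{-1}$ to $(X,W^\vee)$ since $L'$ is disjoint from $H$.

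\textbf{Floer identification and parametrization of $D_Y$.} The key step is to apply Example~\ref{ex:FScompact} in a fiberwise manner. Using the product structure together with the surgery formula of \cite{FO3surgery} in the $\C$-direction, the pair $(\tilde L,b)$ with bounding cochain $b$ in the component of $\mathrm{HF}^1(\tilde L,\tilde L)$ coming from $\mathrm{HF}^1(L_\infty,L_\infty)\otimes H^0(L')\cong\K$ is Floer-isomorphic to a compact fiber $(\pi^{-1}(\xi,\lambda'),\nabla)$ for appropriate $\lambda'$ and $\nabla$ determined by $(b,L',\nabla')$. Matching symplectic areas on the two sides, incorporating the Maslov~$0$ disc of area $|\lambda'-\epsilon|$ from Proposition~\ref{prop:unobstructed}, shows that the bounding-cochain parameter $b$ corresponds, under this identification, to the algebraic coordinate $w_0$ on $Y$ (the $T^\epsilon$ shift in $w_0=T^\epsilon(v_0-1)$ being exactly the area of the Maslov~$0$ disc on the fiber side). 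Consequently, as $b$ ranges over $\K$, the moduli of $(\tilde L,b)$ covers a neighborhood of $D_Y$: for $b\ne0$ we recover points of $Y^0$ already accounted for by Theorem~\ref{thm:main}, while the limit $b=0$ yields genuinely new objects parametrizing $D_Y$. Varying $(L',\nabla')$ over the full SYZ moduli of $V^0$ traces out the entire $n$-dimensional hypersurface $D_Y$.

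\textbf{Main obstacle.} To conclude, the matching of the A-side superpotential on $D_Y$ must be verified: the Maslov~$2$ holomorphic discs bounded by $\tilde L$ should contribute $w_1+\dots+w_r$, consistent with $W_0|_{D_Y}=w_1+\dots+w_r$ (since $w_0$ vanishes on $D_Y$). Away from $E$, such discs decompose fiberwise, the $L_\infty$ factor bounding no Maslov~$2$ disc in $\C$ by admissibility, and the $L'$ factor contributing exactly the toric terms by arguments parallel to Lemmas~\ref{l:Wsigma}--\ref{l:Wsigma2}. The main obstacle is making this fiberwise decomposition precise near the exceptional divisor, where the product structure $V\times\C$ fails; this requires carefully tracking the moduli of Maslov $\le 2$ discs bounded by $\tilde L$ in the blown-up geometry. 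Assumption~\ref{ass:affinecase} via Lemma~\ref{l:fano} ensures these moduli remain compact and transverse, so that the wall-crossing formulas of Proposition~\ref{prop:gluing} and the surgery identification extend consistently across the boundary, allowing the analytic structure of the extended moduli to match the toric structure of $Y$ near $D_Y$.
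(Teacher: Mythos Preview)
Your approach is essentially the same as the paper's, but you have introduced a spurious complication. The ``main obstacle'' you identify---tracking Maslov~$\le 2$ discs near the exceptional divisor---never arises in the paper's argument, because the admissible arc $a_0\subset\C$ (your $L_\infty$) is chosen to stay outside $B_\delta$, the projection to $\C$ of the support of $\chi$ (cf.\ Property~\ref{ass:chi}). With this choice, $\tilde L=p^{-1}(L'\times a_0)$ lies entirely in the region where $\omega_\epsilon=p^*\omega_{V\times\C}$ and $p$ is a symplectomorphism; the blown-up geometry is simply not seen by $\tilde L$ or by the discs it bounds.

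The superpotential computation is then immediate. By the maximum principle applied to $W^\vee=y$, any holomorphic disc bounded by $\tilde L$ in $X$ lies in $p^{-1}(V\times\{y\})$ for a single $y\in a_0$. Since $y\notin B_\delta$, this fiber is a standard copy of $(V,\omega_V)$, and the disc has boundary on the product torus $L'$. Thus the Maslov index~2 discs are exactly the toric discs of Cho--Oh \cite{cho-oh}, contributing $\sum_{i=1}^r T^{\varpi_i}\mathbf{v}_\alpha^{\sigma_i}$. Since $w_0=0$ on $D_Y$, the gluing formulas \eqref{eq:gluing} collapse to $\mathbf{v}_\alpha^{\sigma_i}=\mathbf{v}_{\alpha_i}^{\sigma_i}$, so this matches $W_0|_{D_Y}$ in every chart.

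Your discussion of bounding cochains and the area $|\lambda'-\epsilon|$ of Maslov~0 discs is also more elaborate than needed. The paper simply observes that $\tilde L$ is the limit of the compact fibers $p^{-1}(L'\times S^1(r))$ as $r\to\infty$, hence corresponds to $w_0=T^\lambda\nabla(\gamma_0)\to 0$; the analytic structure near this point comes directly from Example~\ref{ex:FScompact}. No Maslov~0 disc enters the story for $\tilde L$ itself, since $\tilde L$ is tautologically unobstructed in $X^0$ (again by the maximum principle, as $p_V\circ u$ must be constant and $L'$ avoids $H$).
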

\noindent
(Recall that $y$ is the coordinate on the second factor of $V\times\C$.)

\proof[Sketch of proof]
This result follows from Theorem \ref{thm:main} by the same considerations
as in Example \ref{ex:FScompact}. Specifically, equipping $X$ with the
superpotential $W^\vee=y$ enlarges its Fukaya category by adding
admissible non-compact Lagrangian submanifolds, i.e., properly
embedded Lagrangian submanifolds of $X$ whose image under $W^\vee$ is
only allowed to tend to infinity in the direction of the positive real
axis; in other terms, the $y$ coordinate is allowed to be unbounded, but
only in the positive real direction.  

Let $a_0\subset \C$ be a properly
embedded arc which connects $+\infty$ to itself by passing around the
origin, encloses an infinite amount of area, and stays away from
the projection to $\C$ of the support of the cut-off function $\chi$ used
to construct $\omega_\epsilon$.
Then we can supplement the family of Lagrangian tori in $X^0$
constructed in \S \ref{s:fibrations} by considering product Lagrangians
of the form $L=p^{-1}(L'\times a_0)$, where $L'$ is
an orbit of the $T^n$-action on $V$. Indeed, by Proposition
\ref{prop:pistandard}, away from the exceptional divisor 
the fibers of $\pi:X^0\to B$ are lifts to $X$ of
product tori $L'\times S^1(r)\subset V\times\C$. For large enough $r$,
the circles $S^1(r)$ can be deformed by Hamiltonian isotopies in $\C$ to
simple closed curves that approximate $a_0$ as $r\to \infty$; moreover,
the induced isotopies preserve the tautological unobstructedness in $X^0$ of the
fibers of $\pi$ which do not intersect $p^{-1}(H\times\C)$. In this sense,
$p^{-1}(L'\times a_0)$ is naturally a limit of the tori
$p^{-1}(L'\times S^1(r))$ as $r\to \infty$.  The analytic structure near this point
is obtained by equation (\ref{eq:localchart}), which naturally extends 
as in Example \ref{ex:FScompact}.

To be more specific, let $L'=\mu_V^{-1}(\zeta_1,\dots,\zeta_n)$ for $(\zeta_1,\dots,\zeta_n)$
a point in the component of $\Delta_V\setminus \mu_V(U_H)$ corresponding
to the weight $\alpha\in A$, and equip $L=p^{-1}(L'\times a_0)$ with a
local system $\nabla\in \hom(\pi_1(L),U_{\K})$. The maximum principle
implies that any holomorphic disc bounded by $L$ in $X^0$ must 
be contained inside a fiber of the projection to $V$ (see the proof of 
Proposition \ref{prop:unobstructed}). Thus $L$ is tautologically unobstructed in $X^0$,
and $(L,\nabla)$ defines an object of the Fukaya category $\F(X^0,W^\vee)$,
and a point in some partial compactification of the coordinate chart
$U_\alpha^\vee$ considered in \S \ref{s:opencase}. 
Denoting by $\gamma_1,\dots,\gamma_n$ the standard basis of $H_1(L)\simeq
H_1(L')$ given by the various $S^1$ factors, in the coordinate chart
\eqref{eq:chartalpha} the object $(L,\nabla)$ corresponds to
$$(v_{\alpha,1},\dots,v_{\alpha,n},w_{\alpha,0})=
\left(T^{\zeta_1}\nabla(\gamma_1),\dots,T^{\zeta_n}\nabla(\gamma_n),0\right).$$
Thus, equipping $X^0$ with the superpotential $W^\vee$ extends
the moduli space of objects under consideration from
$Y^0=Y\setminus w_0^{-1}(0)$ to $Y$.

Under Assumption \ref{ass:affinecase}, $(L,\nabla)$ remains a weakly unobstructed
object of the Fukaya category $\F(X,W^\vee)$. We now study the families of
Maslov index 2 holomorphic discs bounded by $L$ in $X$, in order to
determine the corresponding value of the superpotential and show that it
agrees with \eqref{eq:W0}. Under projection to the $y$ coordinate, any
holomorphic disc $u:(D^2,\partial D^2)\to (X,L)$ maps to a holomorphic
disc in $\C$ with boundary on the arc $a_0$, which is necessarily constant;
hence the image of $u$ is contained inside $p^{-1}(V\times \{y\})$ for some
$y\in a_0$. Moreover, inside the toric variety $p^{-1}(V\times \{y\})\simeq V$ 
the holomorphic disc $u$ has boundary
on the product torus~$L'$. 

Thus, the holomorphic discs bounded by $L$
in $X$ can be determined by reduction to the toric case of $(V,L')$.
For each toric divisor of $V$ there is a family of Maslov index 2
discs which intersect it transversely at a single
point and are disjoint from all the other toric divisors;
these discs are all regular, and exactly one of them passes
through each point of $L$ \cite{cho-oh}. The discs which intersect the toric
divisor corresponding to a facet of $\Delta_V$ with equation
$\langle \sigma,\cdot\rangle+\varpi=0$ have area 
$\langle \sigma,\zeta\rangle+\varpi$ and weight $T^\varpi
\mathbf{v}_\alpha^\sigma$. Summing over all facets of $\Delta_V$, we
conclude that
\begin{equation}\label{eq:WFS}
W_0(L,\nabla)=\sum_{i=1}^r T^{\varpi_i}\mathbf{v}_\alpha^{\sigma_i}.
\end{equation}
Moreover, because $w_0=0$ at the point $(L,\nabla)$, the coordinate
transformations \eqref{eq:gluing} simplify to
$\mathbf{v}_{\alpha_i}^{\sigma_i}=\mathbf{v}_{\alpha}^{\sigma_i}$.
Thus the expression \eqref{eq:WFS} agrees with \eqref{eq:W0}.
\endproof
\begin{remark}
In order to fill the details of this sketch, we would need a sufficient development of Fukaya categories of $A$-side Landau-Ginzburg models in order to verify the existence of the analytic charts at infnity. The most straightforward way to do this is to introduce non-compact Lagrangians which are mirror to the powers of an ample line bundle on $Y$, and check that (i) these Lagrangians generate the Fukaya category and (ii) when $r$ is sufficiently large, the product Lagrangian $L'\times S^1(r)\subset V\times\C$ defines a module over the Floer cochains of this generating family  which is equivalent to the one associated to the product of $L'$ with an admissible arc in $\C$ equipped with a bounding cochain which is a multiple of a degree $1$ generator coming from a self-intersection at infinity.
\end{remark}

Our next observation is that $W^\vee:X\to\C$ has a particularly simple
structure. The following statement is a direct consequence of the
construction:

\begin{proposition}\label{prop:morsebott}
$W^\vee=y:X\to\C$ is a Morse-Bott fibration, with $0$ as its only
critical value; in fact the singular fiber ${W^\vee}^{-1}(0)=\tilde{V}\cup E\subset X$
has normal crossing singularities along $\mathrm{crit}(W^\vee)=
\tilde{V}\cap E\simeq H$.
\end{proposition}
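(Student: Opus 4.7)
The plan is to verify each claim by direct computation in the explicit local charts on $X$ provided by its description as a hypersurface in $\PP(\mathcal{L}\oplus\mathcal{O})$ via equation \eqref{eq:Xblowup}. Since $p:X\to V\times\C$ is a biholomorphism away from $E$, and the projection $y:V\times\C\to\C$ is a submersion with smooth fibers $V\times\{c\}$ for $c\neq 0$, pulling back shows immediately that $W^\vee$ has no critical points on $X\setminus E$, and that every fiber ${W^\vee}^{-1}(c)$ with $c\neq 0$ is smooth. Hence the critical locus and the singular fiber are both contained in $E\cup\tilde V$, and any critical value must be~$0$.

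Next I would work in the affine chart $\{u\neq 0\}$, normalizing $u=1$, in which the defining equation $f(\x)v=yu$ becomes simply $y=f(\x)v$. This chart is isomorphic to $V\times\C$ with coordinates $(\x,v)$, and the blowup map $p$ is $(\x,v)\mapsto(\x,f(\x)v)$. In these coordinates the exceptional divisor $E$ corresponds to $\{f(\x)=0\}$ and the proper transform $\tilde V$ corresponds to $\{v=0\}$. The superpotential reads $W^\vee=f(\x)v$, so
\[
dW^\vee \;=\; v\,df+f(\x)\,dv,
\]
which vanishes precisely on $\{v=0\}\cap\{f(\x)=0\}$, using that $H$ is smooth so that $df\neq 0$ along $H$. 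This is exactly $\tilde V\cap E\simeq H$. A parallel computation in the complementary chart $\{v\neq 0\}$, where $X$ is realized as the smooth hypersurface $\{yu=f(\x)\}$ in $V\times\C^2$, shows that $dy$ remains surjective everywhere on $X$ in that chart (the critical locus lies entirely in $\{v=0\}$).

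To verify the Morse--Bott and normal crossing statements, I would pick a point $\x_0\in H$ and choose local holomorphic coordinates $(z_1,\dots,z_n)$ on $V$ near $\x_0$ with $f=z_1$, which is possible since $H$ is smooth. In the chart $\{u\neq 0\}$, the ambient coordinates become $(z_1,\dots,z_n,v)$ and
\[
W^\vee \;=\; z_1\,v.
\]
The critical locus is $\{z_1=v=0\}$, and the Hessian of $W^\vee$ transverse to this locus is the non-degenerate quadratic form with matrix $\bigl(\begin{smallmatrix}0&1\\1&0\end{smallmatrix}\bigr)$ in the coordinates $(z_1,v)$; this establishes the Morse--Bott property. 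Finally, the singular fiber is locally $\{z_1 v=0\}=\{z_1=0\}\cup\{v=0\}$, which meet transversely along $\{z_1=v=0\}$; globally these two local branches are precisely $E$ (cut out by $f=0$ in the chart) and $\tilde V$ (cut out by $v=0$), yielding the asserted simple normal crossing structure of ${W^\vee}^{-1}(0)=\tilde V\cup E$ along $\tilde V\cap E\simeq H$.

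There is no serious obstacle here: everything reduces to the local model $W^\vee=z_1 v$, which is the standard Morse--Bott/normal crossing model. The only point requiring minor care is to check that the critical locus identified in the chart $\{u\neq 0\}$ is truly global --- which follows because the complementary chart $\{v\neq 0\}$ misses $\tilde V$ entirely and contains no additional critical points, so no critical behavior is hidden at the section at infinity of the exceptional $\PP^1$-bundle.
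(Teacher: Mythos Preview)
Your proof is correct and is precisely the kind of direct verification the paper has in mind: the paper itself offers no detailed argument, simply stating that the proposition ``is a direct consequence of the construction.'' Your chart-by-chart computation in $\{u\neq 0\}$ and $\{v\neq 0\}$, reducing everything to the local model $W^\vee=z_1v$, is exactly how one makes this explicit; the only minor imprecision is that the chart $\{u\neq 0\}$ is globally the total space of $\mathcal{L}$ over $V$ rather than $V\times\C$, but since your analysis is local this does not affect anything.
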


\begin{remark}\label{rmk:deflate}
However, the K\"ahler form on $\mathrm{crit}(W^\vee)\simeq H$ is not that induced by
$\omega_V$, but rather that induced by the restriction of $\omega_\epsilon$,
which represents the cohomology class $[\omega_V]-\epsilon[H]$.
To compensate for this, in the proof of Theorem \ref{cor:main} we will actually replace
$[\omega_V]$ by $[\omega_V]+\epsilon[H]$.
\end{remark}

\noindent
Proposition \ref{prop:morsebott} allows us to relate the Fukaya category of $(X,W^\vee)$ to that of $H$,
using the ideas developed by Seidel in \cite{SeBook}, adapted to the
Morse-Bott case (see \cite{WW}).

\begin{remark}
Strictly speaking, the literature does not include any definition of the Fukaya category of a superpotential without assuming that it is a Lefschetz fibration.   The difficulty resides not in defining the morphisms and the compositions, but in defining the higher order products in a coherent way. These technical problems were resolved by Seidel in \cite{SeLef}, by introducing a method of defining Fukaya categories of Lefschetz fibration that generalizes in a straightforward way to the Morse-Bott case we are considering. This construction will be revisited in \cite{A-Se}.  As the reader will see, in the only example where we shall study such a Fukaya category, the precise nature of the construction of higher products will not enter.
\end{remark}

Outside of its critical locus, the Morse-Bott fibration $W^\vee$ carries 
a natural horizontal distribution given by the $\omega_\epsilon$-orthogonal
to the fiber. Parallel transport with respect to this distribution induces
symplectomorphisms between the smooth fibers; in fact, parallel transport
along the real direction is given by (a rescaling of) the Hamiltonian 
flow generated by $\mathrm{Im}\,W^\vee$, or equivalently, the gradient flow
of $\mathrm{Re}\,W^\vee$ (for the K\"ahler metric).

Given a Lagrangian submanifold $\ell\subset \mathrm{crit}(W^\vee)\simeq H$, 
parallel transport by the positive gradient flow of $\mathrm{Re}\,W^\vee$ 
yields an admissible Lagrangian {\em thimble} $L_\ell\subset X$
(topologically a disc bundle over~$\ell$). Moreover, any local system
$\nabla$ on $\ell$ induces by pullback a local system $\tilde\nabla$ on
$L_\ell$. However, there is a subtlety related to the nontriviality of
the normal bundle to $H$ inside $X$:

\begin{lemma}\label{l:twisting}
The thimble $L_\ell$ is naturally diffeomorphic to the restriction of
the complex line bundle $\mathcal{L}=\O(H)$ to $\ell\subset H$.
\end{lemma}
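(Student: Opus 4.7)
\medskip

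\noindent\textbf{Proof plan for Lemma \ref{l:twisting}.}
The plan is to reduce to a local Morse--Bott normal form for $W^\vee=y$ along $\mathrm{crit}(W^\vee)=\tilde{V}\cap E\simeq H$, identify the normal bundle of $H$ in $X$ with $\mathcal{L}\oplus\mathcal{L}^{-1}$ restricted to $H$, and exhibit the thimble as the graph of an antilinear map inside this normal bundle, so that projection to the first summand gives the desired diffeomorphism.

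First, I would compute $N_H X$. Since $\tilde V$ and $E$ are smooth divisors meeting transversely along $H$, we have a canonical splitting $N_HX\simeq N_H\tilde V\oplus N_HE$. The proper transform $\tilde V$ is canonically identified with $V$ (blowing up a Cartier divisor $H\times 0\subset V\times 0$ does not alter the ambient variety), so $N_H\tilde V\simeq N_HV=\mathcal{L}|_H$. On the other hand, by \eqref{eq:Xblowup}, $E=\PP(\mathcal{L}\oplus\mathcal{O})|_H$ and $H\hookrightarrow E$ is the section $(u:v)=(1:0)$, corresponding to the sub-line-bundle $\mathcal{L}\oplus 0\subset\mathcal{L}\oplus\mathcal{O}$; its normal bundle in $\PP(\mathcal{L}\oplus\mathcal{O})|_H$ is $\mathrm{Hom}(\mathcal{L},\mathcal{O})|_H=\mathcal{L}^{-1}|_H$. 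Hence $N_HX\simeq \mathcal{L}|_H\oplus\mathcal{L}^{-1}|_H$.

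Second, I would write the Morse--Bott normal form of $W^\vee$. Choose local holomorphic coordinates $(x_1,\dots,x_n)$ on $V$ near a point of $H$ in which the defining section is $f=x_1$; then the chart $\{u\neq 0\}$ of $X$ admits local coordinates $(x_1,\dots,x_n,v)$ with $y=x_1 v$, so $W^\vee=x_1v$. The coordinates $x_1$ and $v$ represent fiber coordinates on the summands $\mathcal{L}|_H$ and $\mathcal{L}^{-1}|_H$ of $N_HX$, and the pairing $\mathcal{L}\otimes\mathcal{L}^{-1}\simeq\mathcal{O}$ makes $x_1v$ a globally defined $\C$-valued function on the total space of $N_HX$, which is the leading approximation of $W^\vee$ in a tubular neighborhood of $H$.

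Third, I would describe the thimble inside this normal-bundle model. For the standard model $W=x_1v$ on $\C^2$ with its standard K\"ahler form, an immediate check shows that $\Theta=\{v=\bar x_1\}$ is a real Lagrangian plane on which $W=|x_1|^2\in[0,\infty)$ and which converges to the origin along the negative gradient flow of $\Re W$; this is the thimble over the positive real axis. Fixing a Hermitian metric on $\mathcal{L}$ provides a fiberwise antilinear bundle isomorphism $\mathcal{L}|_H\to\mathcal{L}^{-1}|_H$, $s\mapsto\bar s$, and the fiberwise graphs of this map over $\ell$ assemble into a smooth submanifold $\Theta_\ell$ of $N_HX|_\ell$. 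By construction, projection to the first factor yields a diffeomorphism $\Theta_\ell\stackrel{\sim}{\to}\mathcal{L}|_\ell$.

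Finally, I would identify $L_\ell$ with $\Theta_\ell$ up to extension by gradient flow. Under the tubular neighborhood identification, $L_\ell$ agrees with $\Theta_\ell$ near $\ell$: both are Lagrangian, map to $[0,\infty)$ under $W^\vee$, and have $\ell$ as their limit under the negative gradient flow of $\Re W^\vee$, which locally pins them down uniquely. Away from $H$, the flow of $\nabla\Re W^\vee$ is a free, proper $\R$-action transverse to the level sets of $W^\vee$, so it extends $\Theta_\ell\cap\{W^\vee<\epsilon_0\}$ to all of $L_\ell$ by radial stretching along the fibers. The smooth type of $L_\ell$ as a bundle over $\ell$ is therefore determined by its germ along $\ell$, yielding the desired diffeomorphism $L_\ell\simeq\mathcal{L}|_\ell$.

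The main obstacle I anticipate is the fourth step: making precise that the global structure of $L_\ell$ is controlled by the local normal form, including the fact that the antilinear identification and the gradient-flow extension do not introduce any non-trivial monodromy in going out to infinity along the thimble. The preceding normal-bundle computation is the genuinely geometric input; the rest is a standard Morse--Bott/parallel-transport argument.
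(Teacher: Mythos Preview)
Your proposal is correct and follows essentially the same approach as the paper's proof: identify $N_HX\simeq\mathcal{L}\oplus\mathcal{L}^{-1}$, reduce to the local model $W=xy$ on $\C^2$ where the thimble is $\{(x,\bar x)\}$, and project to the $\mathcal{L}$ summand. Your treatment is in fact more detailed than the paper's---you justify the normal bundle splitting more carefully and worry about the global extension via gradient flow, whereas the paper is content to establish the diffeomorphism on a neighborhood of $\ell$ (implicitly relying on the fact that $L_\ell$ is a disc bundle whose smooth type is determined by its germ along the zero section).
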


\proof
First note that, for the Lefschetz fibration $f(x,y)=xy$ on $\C^2$
equipped with its standard K\"ahler form, the thimble associated
to the critical point at the origin is $\{(x,\bar{x}),\ x\in\C\}\subset \C^2$.
Indeed, parallel transport preserves
the quantity $|x|^2-|y|^2$, so that the thimble consists of the points
$(x,y)$ where $|x|=|y|$ and $xy\in \R_{\ge 0}$, i.e.\ $y=\bar{x}$.
In particular, the thimble projects diffeomorphically onto either of the two
$\C$ factors (the two projections induce opposite orientations).

Now we consider the Morse-Bott fibration $W^\vee:X\to\C$. The normal bundle
to the critical locus $\mathrm{crit}\,W^\vee=\tilde{V}\cap E\simeq H$ is isomorphic 
to $\mathcal{L}\oplus \mathcal{L}^{-1}$ (where
$\mathcal{L}$ is the normal bundle to $H$ inside $\tilde{V}$, while 
$\mathcal{L}^{-1}$ is its normal bundle inside $E$). Moreover,
$W^\vee$ is locally given by the 
product of the fiber coordinates on the two line subbundles. 
The local calculation then shows that, by projecting to either subbundle,
a neighborhood of $\ell$ in $L_\ell$ can be identified diffeomorphically
with a neighborhood of the zero section in either $\mathcal{L}_{|\ell}$
or $\mathcal{L}^{-1}_{|\ell}$.
\endproof

\noindent
Lemma \ref{l:twisting} implies that, even when $\ell\subset H$ is spin, $L_\ell\subset X$ need 
not be spin; indeed, $w_2(TL_\ell)=w_2(T\ell)+w_2(\mathcal{L}_{|\ell})$.
Rather, $L_\ell$ is {\em relatively spin}, i.e.\ its second Stiefel-Whitney
class is the restriction of the {\em background class} $s\in H^2(X,\Z/2)$
Poincar\'e dual to $[\tilde{V}]$ (or equivalently to $[E]$).
Hence, applying the thimble construction to an object of the Fukaya 
category $\F(H)$ does not determine an object
of $\F(X,W^\vee)$, but rather an object of the $s$-twisted Fukaya category
$\F_s(X,W^\vee)$ (we shall verify in Proposition \ref{prop:m0shift} that thimbles are
indeed weakly unobstructed objects of this category). 

\begin{remark}\label{rmk:unobstr_thimbles}
While it has not appeared in the literature, the notion of weak 
unobstructedness of an admissible Lagrangian $L$ is a straightforward 
generalization of the case of closed Lagrangians. 
There is a Floer-theoretic $A_\infty$-structure on the ordinary 
cohomology of $L$, and a natural $A_\infty$-homomorphism from the
ordinary cohomology of $L$ equipped with this $A_\infty$-structure
to the endomorphisms of $L$ as an object of the Fukaya category 
of the potential. This homomorphism is not necessarily an isomorphism,
but it is always unital and preserves the curvature $\m_0$. 
We say that $L$ is \emph{weakly unobstructed} if the curvature 
is a multiple of the unit in $H^0(L)$. 
In the case of thimbles, radial parallel transport allows one to 
lift Maurer-Cartan elements and bounding cochains from an arbitrarily 
small neighborhood of the critical fiber to the total space. This implies
that an admissible thimble which bounds no
holomorphic disc of Maslov index less than $2$ in a neighborhood of
the critical fiber is weakly unobstructed; and the curvature is then the 
product of the unit with the count of Maslov index $2$ discs passing 
through a generic point near the critical fiber.
\end{remark}


\begin{corollary}\label{cor:functor}
Under Assumption \ref{ass:affinecase}, there is a fully faithful $A_\infty$-functor from the Fukaya category $\F(H)$ to
$\F_s(X,W^\vee)$, which at the level of objects maps $(\ell,\nabla)$ to the 
thimble $(L_\ell,\tilde\nabla)$.
\end{corollary}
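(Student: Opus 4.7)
The plan is to follow the Morse-Bott analogue of Seidel's construction for Lefschetz fibrations \cite{SeLef, SeBook}. Near the critical locus $H \simeq \tilde{V} \cap E$, Proposition \ref{prop:morsebott} together with Lemma \ref{l:twisting} provides a normal form: a neighborhood of $H$ in $X$ is symplectomorphic to a neighborhood of the zero section in $\mathcal{L} \oplus \mathcal{L}^{-1}$, with $W^\vee$ given in these coordinates by the product $(s,t) \mapsto st$ of the fiber coordinates. In this local model, the thimble $L_\ell$ over a Lagrangian $\ell \subset H$ is given by $\{(s,\bar s)\}$ in each fiber over $\ell$, exactly as in the calculation of the proof of Lemma \ref{l:twisting}.

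First I would define the functor on morphism spaces. Equip $X$ with a wrapping Hamiltonian $h(|W^\vee|)$ with $h$ linear of small positive slope at infinity, so that admissible Lagrangians get pushed in the direction of positive $\mathrm{Re}\,W^\vee$. Given two thimbles $L_{\ell_0}$ and $L_{\ell_1}$, after perturbation the time-one flow displaces $L_{\ell_1}$ at infinity, and all intersection points of $L_{\ell_0}$ with the perturbed $L_{\ell_1}$ are concentrated in a small neighborhood of $H$; in the local model $\{st\}$ these intersections are in canonical bijection with intersections $\ell_0 \cap \ell_1$ inside $H$, since the normal directions contribute exactly one transverse intersection each. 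This gives a canonical isomorphism on underlying chain groups, and together with the pullback identification $\tilde\nabla_i = \pi^* \nabla_i$ (where $\pi : L_\ell \to \ell$ is the projection from Lemma \ref{l:twisting}), matches holonomy contributions.

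The key analytic step is to show that the Floer differential (and higher $A_\infty$ operations) on thimbles reduce to those on $H$. The argument is an integrated maximum principle applied to the function $\mathrm{Re}\,W^\vee$: since the admissibility condition forces $W^\vee$ to tend to $+\infty$ along the non-compact ends of the thimbles, any Floer strip between intersection points near $H$ must remain in a neighborhood of $H$. In the local Morse-Bott model, standard SFT-style analysis (as in \cite{SeBook, SeLef, WW}) decomposes such strips as a strip in $H$ with boundary on $\ell_0, \ell_1$ together with a rigid contribution from the normal $\mathcal{L} \oplus \mathcal{L}^{-1}$ direction, the latter being essentially the unique holomorphic cap determined by the endpoints. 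Thus the Floer complex of thimbles is identified with $CF^*(\ell_0, \ell_1)$ computed in $H$, and the same argument applied to discs with $d+1$ boundary components identifies the higher products.

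The main obstacle is keeping track of signs: the normal bundle direction contributes a relative Pin/spin twist which is precisely measured by the obstruction $w_2(\mathcal{L}|_\ell)$ appearing in Lemma \ref{l:twisting}. This is exactly compensated for by working in the $s$-twisted Fukaya category $\F_s(X, W^\vee)$ with background class $s$ Poincaré dual to $[\tilde V]$, so that the Koszul-sign contributions from the normal cap cancel the relative spin discrepancy and the induced $A_\infty$-structure coincides with that of $\F(H)$ on the nose. Once this sign matching is verified, the functor is a quasi-isomorphism on hom spaces by construction, hence fully faithful; weak unobstructedness of the thimbles is separately recorded as Proposition \ref{prop:m0shift}. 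A secondary technical point, subsumed in the cited Morse-Bott extension \cite{WW, A-Se} of Seidel's framework, is the coherent choice of perturbation data needed to define all higher products simultaneously.
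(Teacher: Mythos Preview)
Your overall strategy is sound, but there is a genuine gap at the localization step. The maximum principle applied to $W^\vee$ (or $\mathrm{Re}\,W^\vee$) only constrains the \emph{projection} of a holomorphic disc to~$\C$: with the thimbles perturbed so that their $W^\vee$-images are half-lines meeting only at the origin, it forces the disc into the singular fiber $(W^\vee)^{-1}(0)=\tilde V\cup E$. It does \emph{not} force the disc into a tubular neighborhood of the critical locus $H=\tilde V\cap E$. The singular fiber is a union of two divisors, each a copy of a variety of the same dimension as $V$, and a priori a disc with boundary on $\ell_0\cup\ell_1\subset H$ could roam freely inside $\tilde V$ or inside $E$. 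Your appeal to ``SFT-style analysis in the local Morse-Bott model'' is therefore unjustified: you have not yet shown that the disc lives where the local model $\mathcal L\oplus\mathcal L^{-1}$ with $W^\vee=st$ is valid.

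The paper closes this gap by a second, quite elementary argument that you are missing entirely. Once the disc is known to lie in $\tilde V$ (say), its boundary lies in $\ell_0\cup\ell_1\subset H=f^{-1}(0)\subset\tilde V$. Viewing $f$ (after dividing by a reference section) as a meromorphic function on $\tilde V$, one has a holomorphic disc on whose boundary $f$ vanishes identically; hence $f\equiv 0$ on the disc, i.e.\ the disc lies in $H$. The same argument applies to the component $E$. No local normal form or gluing analysis is needed: the discs contributing to the Floer operations in $(X,W^\vee)$ are literally the same discs as in $H$, counted with the same multiplicities. You should also note where Assumption~\ref{ass:affinecase} enters: via Lemma~\ref{l:fano}, it rules out stable configurations containing sphere components (each such sphere would contribute at least $2$ to the Maslov index), which is needed to ensure that only honest discs appear.
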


\proof[Sketch of proof] Let $\ell_1,\ell_2$ be two Lagrangian submanifolds of
$\mathrm{crit}(W^\vee)\simeq H$,
assumed to intersect transversely (otherwise transversality is achieved
by  Hamiltonian perturbations, which may
be needed to achieve regularity of holomorphic discs in any case),
and denote by $L_1,L_2\subset X$ the corresponding thimbles. (For simplicity
we drop the local systems from the notations; we also postpone the
discussion of relatively spin structures until further below). 

Recall that $\hom_{\F_s(X,W^\vee)}(L_1,L_2)$ is defined by
perturbing $L_1,L_2$ to Lagrangians $\tilde{L}_1,\tilde{L}_2$ whose images
under $W^\vee$ are half-lines which intersect transversely and such that
the first one lies above the second one near infinity; so for example,
fixing a small angle $\theta>0$,
we can take $\tilde{L}_1$ (resp.~$\tilde{L}_2$) to be the Lagrangian 
obtained from $\ell_1$ (resp.~$\ell_2$) by the gradient flow of 
$\mathrm{Re}(e^{-i\theta}\,W^\vee)$
(resp.\ $\mathrm{Re}(e^{i\theta}\,W^\vee)$).
(A more general approach would be to perturb the holomorphic curve
equation by a Hamiltonian vector field generated by a suitable rescaling
of the real part of $W^\vee$, instead of perturbing the Lagrangian boundary 
conditions; in our case
the two approaches are equivalent.)

We now observe that $\tilde{L}_1$ and $\tilde{L}_2$ intersect transversely,
with all intersections lying in the singular fiber ${W^\vee}^{-1}(0)$,
and in fact $\tilde{L}_1\cap \tilde{L}_2=\ell_1\cap \ell_2$.
Thus, $\hom_{\F(H)}(\ell_1,\ell_2)$ and $\hom_{\F_s(X,W^\vee)}(L_1,L_2)$ 
are naturally isomorphic. Moreover, the maximum principle applied to the
projection $W^\vee$ implies that all holomorphic discs bounded by the
(perturbed) thimbles in $X$ are contained in $(W^\vee)^{-1}(0)=\tilde{V}
\cup E$ (and hence their boundary lies on $\ell_1\cup \ell_2\subset H
\subset \tilde{V}\cup E$).

After quotienting by a suitable reference section, we can view the defining 
section of $H$ as a meromorphic function on $\tilde{V}$, with $f^{-1}(0)=H$.
Since $f=0$ at the boundary, and since a meromorphic function on the disc
which vanishes at the boundary is everywhere zero, any holomorphic disc 
in $\tilde{V}$ with boundary in $\ell_1\cup \ell_2$ must lie entirely
inside $f^{-1}(0)=H$. By the same argument, any holomorphic disc in $E$ with
boundary in $\ell_1\cup \ell_2$ must stay inside $H$ as well. Finally, Lemma
\ref{l:fano} implies that stable curves with both disc and sphere components
cannot contribute to the Floer differential (since each sphere component 
contributes at least 2 to the total Maslov index).

This implies that the Floer differentials on $\hom_{\F(H)}(\ell_1,\ell_2)$
and $\hom_{\F_s(X,W^\vee)}(L_1,L_2)$ count the same holomorphic discs.
The same argument applies to Floer products and higher structure maps.

To complete the proof it only remains to check that the orientations 
of the relevant moduli spaces of discs agree.
Recall that a relatively spin structure on a Lagrangian submanifold
$L$ with background class $s$ is the same thing as a stable trivialization of the
tangent bundle of $L$ over its 2-skeleton, i.e.\ a trivialization of
$TL_{|L^{(2)}}\oplus E_{|L^{(2)}}$, where $E$ is a vector bundle over the ambient manifold
with $w_2(E)=s$; such a stable trivialization in turn determines
orientations of the moduli spaces of holomorphic discs with boundary on $L$
(see \cite[Chapter 8]{FO3book}, noting that the definition of spin structures
in terms of stable trivializations goes back to Milnor \cite{Milnor}).

In our case, we are considering discs in
$H$ with boundary on Lagrangian submanifolds $\ell_i\subset H$, and the given spin structures on
$\ell_i$ determine orientations of the moduli spaces for the
structure maps in $\F(H)$. If we consider the same holomorphic discs in
the context of the thimbles $L_i\subset X$, the spin structure of $\ell_i$
does not induce a spin structure on
$TL_i\simeq T\ell_i\oplus \mathcal{L}_{|\ell_i}$ (what would be needed
instead is
a relatively spin structure on $\ell_i$ with background class
$w_2(\mathcal{L}_{|H})$). On the other hand, the normal bundle to $H$ inside
$X$, namely $\mathcal{L}\oplus \mathcal{L}^{-1}$, is an $SU(2)$-bundle and
hence has a canonical isotopy class of trivialization over the 2-skeleton.
Thus, the spin structure on $\ell_i$ induces a trivialization of
$TL_i\oplus \mathcal{L}^{-1}$ over the 2-skeleton of $L_i$, i.e.\ a relative
spin structure on $L_i$ with background class $w_2(\mathcal{L}^{-1}_{|L_i})=
s_{|L_i}$. Furthermore, because $w_2(\mathcal{L}\oplus \mathcal{L}^{-1})=0$,
stabilizing by this rank 2 bundle does not affect the orientation of the
moduli space of discs \cite[Proposition 8.1.16]{FO3book}. Hence the
structure maps of $\F(H)$ and $\F_s(X,W^\vee)$ involve the same moduli
spaces of holomorphic discs, oriented in the same manner, which completes
the proof.
\endproof
\begin{remark}
 The reason the above is only a sketch of proof is that the construction of the two Fukaya categories requires choices of perturbations,  and we have not discussed how to arrange for these choices to yield the same answer. A model for such arguments in a related situation is provided by Seidel in \cite[Section (14c)]{SeBook}. 
\end{remark}

Implicit in the statement of Corollary \ref{cor:functor} is the fact that,
if $(\ell,\nabla)$ is weakly unobstructed in $\F(H)$, then $(L_\ell,
\tilde\nabla)$ is weakly unobstructed in $\F_s(X,W^\vee)$. In our setting, the values of the superpotentials for objects of $\F(H)$ and their images in $\F_s(X, W^\vee)$ differ
by an additive constant $\delta$. This constant is easiest to determine if we assume
that $V$ is affine:

\begin{proposition}\label{prop:m0shift}
Under the assumption that $V$ is affine, 
the functor of Corollary~\ref{cor:functor} increases the value of 
the superpotential by 
$\delta=T^\epsilon$.
\end{proposition}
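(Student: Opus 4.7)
The plan is to classify the Maslov index $2$ stable discs in $X$ bounded by $L_\ell$ and identify them as either ``horizontal'' discs inside $H$ contributing $W^H(\ell,\nabla)$, or sphere-bubble configurations involving a $\PP^1$-fiber of $E\to H$ contributing $T^\epsilon$. First, after the admissibility perturbation used in Corollary \ref{cor:functor}, $W^\vee(L_\ell)$ is a ray in $\R_{\ge 0}\subset\C$, so for any holomorphic disc $u:(D^2,\partial D^2)\to(X,L_\ell)$ the composition $W^\vee\circ u$ is a holomorphic function on $D^2$ with boundary values in $\R_{\ge 0}$; Schwarz reflection and the open mapping theorem then force it to be constant, placing $u$ in a single fiber of $W^\vee$. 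I would compute in the critical fiber $(W^\vee)^{-1}(0)=\tilde V\cup E$: honest (non-bubbling) discs contained in either $\tilde V$ or $E$ are forced into $H=\tilde V\cap E$ by the meromorphic-function argument from the proof of Corollary \ref{cor:functor} (the defining section $f$ is meromorphic on $\tilde V$ and vanishes on $\ell$, while the fiber coordinate $v$ on the $\PP^1$-bundle $E=\PP(\mathcal{L}|_H\oplus\O)$ vanishes on $\tilde V\cap E\supset\ell$; the maximum principle on $|f|$ and $|v|$ forces the image of $u$ into the common zero locus $H$). Since adjunction gives $K_H=(K_V+H)|_H$ and $c_1(X)|_{\tilde V}=(c_1(V)-[E])|_{\tilde V}=c_1(V)-[H]$, for a disc in $H\subset \tilde V\cong V$ the Maslov index in $X$ matches the Maslov index in $H$; such discs therefore contribute exactly $W^H(\ell,\nabla)\cdot e_{L_\ell}$ under the Morse-Bott identification $\hom(L_\ell,L_\ell)\cong C^*(\ell)$ used in Corollary \ref{cor:functor}.

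The remaining $T^\epsilon$ I expect to come from a single family of sphere-bubble configurations, namely the $\PP^1$-fibers $F_p$ of $E\to H$ for $p\in\ell$, each attached to a ghost (constant) disc at the unique point $F_p\cap L_\ell=\{(p,0,(1:0))\}$ where $F_p$ meets $\tilde V$. From $c_1(X)=p_V^*c_1(V)-[E]$, $p_V(F_p)=\{p\}$, and $[E]\cdot[F_p]=-1$ one computes $c_1(X)\cdot[F_p]=1$, so the configuration has Maslov index $2$; its symplectic area is $\epsilon$ by the construction of $\omega_\epsilon$, and its constant boundary contributes trivial holonomy. The moduli space of these configurations is naturally identified with $\ell$ via $p\mapsto F_p$, and the boundary evaluation map is the zero-section inclusion $\ell\hookrightarrow L_\ell$, which under the Morse-Bott isomorphism represents the unit in $H^0(\ell)$; hence this family contributes exactly $T^\epsilon\cdot e_{L_\ell}$.

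It remains to rule out any other contributions. The affineness of $V$ implies that neither $V$ nor the smooth fibers $(W^\vee)^{-1}(y_0)\cong V$ for $y_0\ne 0$ contain any rational curves, so every sphere component of a stable disc bounded by $L_\ell$ must lie in $E$; a degree computation in the $\PP^1$-bundle $E\to H$, combined with genericity of $\ell$ with respect to any rational curves in $H$, restricts the Maslov index $2$ sphere bubbles meeting $L_\ell$ to the fiber class $[F_p]$ with $p\in\ell$. Combining the three observations yields $m_0(L_\ell,\tilde\nabla)=(W^H(\ell,\nabla)+T^\epsilon)\cdot e_{L_\ell}$, which is the claimed shift. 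The main obstacle I anticipate is precisely this last step --- a rigorous classification of the stable sphere bubbles in $E$ and the Morse-Bott transversality analysis for the bubble moduli space --- which amounts to a version of Biran's superpotential formula for disc-bundle Lagrangians adapted to the Morse-Bott fibration $W^\vee$.
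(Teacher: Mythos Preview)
Your outline is correct but chooses the harder of two routes. Both arguments begin by observing (via the maximum principle, equivalently your Schwarz reflection) that holomorphic discs bounded by $L_\ell$ lie in level sets of $W^\vee$. The paper then evaluates at a \emph{generic} point of $L_\ell$, hence in a smooth fiber $(W^\vee)^{-1}(y_0)\cong V$ with $y_0>0$, where $L_\ell^{y_0}=L_\ell\cap(W^\vee)^{-1}(y_0)$ is a circle bundle over $\ell$ lying on the boundary of a size-$\epsilon$ tubular neighborhood of $H$ in $V$. Affineness of $V$ is used to apply the maximum principle to the defining function $f$ and confine all such discs to that neighborhood; a local product model $H\times\C$ then reduces the Maslov-$2$ classification to two honest, regular families: discs in $(H,\ell)$ with constant normal coordinate, contributing $W^H(\ell,\nabla)$, and the normal discs $D^2(\epsilon)$ with constant $H$-coordinate, contributing $T^\epsilon$. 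No sphere bubbles or virtual counts enter. You instead compute in the singular fiber $(W^\vee)^{-1}(0)=\tilde V\cup E$, where the normal disc has degenerated to the ghost-disc-plus-exceptional-$\PP^1$ configuration; the paper mentions this picture only as a consistency check, noting that these exceptional spheres are the $y\to 0$ limits of the area-$\epsilon$ normal discs. Your route works in principle, but as you rightly flag, it trades an elementary disc count for a stable-map computation at a non-generic evaluation point---exactly the Morse--Bott transversality and bubble-classification burden that the paper's choice of generic $y_0>0$ is designed to avoid.
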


\proof[Sketch of proof]
Consider a weakly unobstructed object $(\ell,\nabla)$ of $\F(H)$ and
the corresponding thimble $L_\ell\subset X$.
Holomorphic discs bounded by $L_\ell$ in $X$
are contained in the level sets of $W^\vee=y$ (by the
maximum principle).  By Remark \ref{rmk:unobstr_thimbles}, we only need
to study the moduli spaces of such discs for small values of $y$.

For $y>0$, the intersection $L_\ell^y$ of $L_\ell$ with
$(W^\vee)^{-1}(y)\simeq V$ is a circle bundle over $\ell$, lying in the
boundary of a standard symplectic tubular neighborhood of size $\epsilon$
of $H$ in $(W^\vee)^{-1}(y)$ equipped with the restriction of $\omega_\epsilon$. 
Indeed, as $y\to 0$, the fibers of $W^\vee$
degenerate to the normal crossing divisor $\tilde{V}\cup E$. Symplectic parallel
transport identifies the standard disc bundle
$E\setminus (\tilde{V}\cap E)\simeq H\times D^2(\epsilon)$ inside
$(W^\vee)^{-1}(0)$ with a standard symplectic neighborhood $U^y$ of $H$ inside
$(W^\vee)^{-1}(y)$ for $y>0$. The boundary of $U^y$ (a trivial $S^1$-bundle
over $H$) consists of all points in $(W^\vee)^{-1}(y)$ whose parallel 
transport converges to $\tilde{V}\cap E\simeq H$ as $y\to 0$, and in particular it contains
$L_\ell^y$.

However, while the restriction of $\omega_\epsilon$ to
$(W^\vee)^{-1}(y)\simeq V$ is cohomologous to $\omega_V$ for all $y>0$ and
agrees with it pointwise for $y$ sufficiently large, the actual forms differ
near $H$ for small $y$. 
Under the identification $(W^\vee)^{-1}(y)\simeq V$,
the neighborhoods $U^y$ are small tubular neighborhoods of $H$,
%
%
increasing in size along a suitably normalized 
gradient flow of $|f|$ as $y$ increases,
and agreeing with a standard $\omega_V$-neighborhood of $H$ of size
$\epsilon$
for $y\gg \epsilon^{1/2}$.

Using that $V$ is affine, $H$ is the vanishing locus of the globally defined holomorphic function
$f$, and the maximum principle applied to $f$ implies that, for small enough
$y$ (or for all $y$ if $\epsilon$ is small enough), all holomorphic discs bounded by $L_\ell^y$
in $V$ lie in a neighborhood ${U'}^y$ of $H$ (possibly larger than $U^y$). 

The complex structure on the neighborhood ${U'}^y$
of $H$ in $V$ is not biholomorphic to the standard product complex structure 
on a domain in $H\times\C$, but agrees with it along
$H$. Thus, for small enough $y$, an arbitrarily $C^\infty$-small perturbation of the 
almost-complex structure on $V$ 
(preserving the holomorphicity of $f$) ensures the existence of a holomorphic
projection map $\pi_H:{U'}^y\to H$, without affecting counts of holomorphic
discs; without loss of generality, we can further
assume that $\pi_H$ maps $L_\ell^y$ to $\ell$ as an $S^1$-bundle, with
$|f|$ constant in the $S^1$ fiber over each point of $\ell$.

Holomorphic discs with boundary on $L^y_\ell$ can then be classified by
using the projection to $H$. The Maslov index of a disc $u:D^2\to
(V,L_\ell^y)$ (with image contained in ${U'}^y$) is the sum of 
the Maslov index of $\pi_H\circ u$ and twice the intersection number of $u$
with $H$. Thus, the weak unobstructedness of $\ell$ in $H$ implies that of
$L_\ell^y$, and there are two types of Maslov index 2 discs to consider:
\begin{itemize}
\item $\pi_H \circ u$ is a Maslov index 2 disc in $H$, and $u$ avoids $H$;
\item $\pi_H \circ u$ is constant, and $u$ intersects $H$ transversely once.
\end{itemize}

In the first case, we observe that, given a point $\hat{p}\in L_\ell^y$, 
each holomorphic disc $v:D^2\to (H,\ell)$ through $p=\pi_H(\hat{p})$ has a 
unique lift $u$ through $\hat{p}$ that avoids $H$.
Indeed, $v$ determines the value of $\log |f|$ along
the boundary of the disc $u$; the (unique) harmonic extension of this function to the
entire disc can be expressed as the real part of some holomorphic function $g$,
unique up to a pure imaginary additive constant. We then find
that necessarily $f\circ u=\exp(g)$ up to some constant
factor which is determined by requiring that the marked point map to 
$\hat{p}$. This, together with $\pi_H\circ u=v$, determines $u$.
Recalling that $L_\ell^y$ lives on the boundary of a standard symplectic
neighborhood of $H$, and using that $u$ is disjoint from $H$, we further observe that the symplectic area of $u$ in
$(W^\vee)^{-1}(y)$ is equal to that of $v$ in $H$, and the holonomy of
$\tilde\nabla$ along the boundary of $u$ equals that of $\nabla$ along the
boundary of $v$. Moreover, the same argument as in the proof of Corollary
\ref{cor:functor} shows that the orientations of the moduli spaces match.
Thus, the total contribution of all these discs
corresponds exactly to the superpotential in $\F(H)$.

In the second case, denoting $\pi_H\circ u=p\in \ell$, by construction
$L_\ell^y$ intersects $\pi_H^{-1}(p)$ in a circle which bounds a disc of
symplectic area $\epsilon$, and $u$ necessarily maps $D^2$ biholomorphically
onto this disc. These small discs of size $\epsilon$ in the normal slices 
to $H$ are regular, and contribute positively to the superpotential: indeed,
their deformation theory splits into that of constant discs in $H$ 
and that of the standard disc in the complex plane with boundary 
on a circle with the trivial spin structure (the triviality of the spin
structure is due to the twist by the background class $s$).
Thus, these discs are responsible 
for the additional term $T^\epsilon$ in the superpotential for $L_\ell$.

For the sake of completeness, we also consider the case $y=0$, where the
intersection of $L_\ell$ with $(W^\vee)^{-1}(0)=\tilde{V}\cup E$ is 
simply $\ell$. The argument in the proof of Corollary \ref{cor:functor}
then shows that holomorphic discs bounded by $\ell$ in $\tilde{V}\cup E$
lie entirely within $H$; however, there is a nontrivial contribution of
Maslov index 2 configurations consisting of a constant disc together with
a rational curve contained in $E$, namely the $\PP^1$ fiber 
of the exceptional divisor over a point of $\ell\subset H$. (These
exceptional spheres are actually the limits of the area $\epsilon$ discs
discussed above as $y\to 0$).
\endproof

\begin{remark}\label{rmk:m0shift}
The assumption that $V$ is affine can be weakened somewhat: for
Proposition \ref{prop:m0shift} to hold it is sufficient to assume that 
the minimal Chern number of
a rational curve contained in $\tilde{V}$ is at least 2. When this
assumption does not hold, the discrepancy $\delta$ between the two
superpotentials includes additional contributions from the enumerative
geometry of rational curves of Chern number 1 in $\tilde{V}$.
\end{remark}

\begin{remark}
The $A_\infty$-functor from $\F(H)$ to $\F_s(X,W^\vee)$ is induced by
a Lagrangian correspondence in the product $H\times X$, namely the set
of all $(p,q)\in H\times X$ such that parallel transport of
$q$ by the gradient flow of $-\mathrm{Re}\,W^\vee$ converges to $p\in
\mathrm{crit}\,W^\vee$. This Lagrangian correspondence is admissible
with respect to $\mathrm{pr}_2^* W^\vee$, and weakly unobstructed with 
$\m_0=\delta$. While the Ma'u-Wehrheim-Woodward construction of $A_\infty$-functors
from Lagrangian correspondences \cite{MWW} has not yet been developed in the
setting considered here, it is certainly the right conceptual framework
in which Corollary \ref{cor:functor} should be understood.
\end{remark}

By analogy with the case of Lefschetz fibrations \cite{SeBook}, it is
expected that the Fukaya category of a Morse-Bott fibration is generated
by thimbles, at least under the assumption that the Fukaya category of
the critical locus admits a resolution of the diagonal. The argument is
expected to be similar to that in \cite{SeBook}, except in the Morse-Bott
case the key ingredient becomes the long
exact sequence for fibered Dehn twists \cite{WW}.
Thus, it is reasonable to expect that the $A_\infty$-functor of
Corollary \ref{cor:functor} is in fact
a quasi-equivalence.

Similar statements are also expected to hold for the {\em wrapped Fukaya
category} of $H$ and the {\em partially wrapped Fukaya category} of
$(X,W^\vee)$ (twisted by $s$); however, this remains speculative, as the latter category 
has not been suitably constructed yet.


In any case, Corollary \ref{cor:functor} and Proposition \ref{prop:m0shift}
motivate the terminology
introduced in Definition \ref{def:SYZ-mirror}.

\proof[Proof of Theorem \ref{cor:main}]
While Theorem \ref{thm:FScompact} provides an SYZ mirror to the
Landau-Ginzburg model $(X,W^\vee)$, in light of the above discussion
several adjustments are necessary in order to arrive at a generalized SYZ
mirror to $H$.
\begin{enumerate}
\item As noted in Remark \ref{rmk:deflate}, the restriction of
$\omega_\epsilon$ to $\mathrm{crit}(W^\vee)$ does not agree with the restriction of $\omega_V$ to $H$.
To remedy this, in our main construction $V$ should be equipped with a K\"ahler
form in the class $[\omega_V]+\epsilon [H]$ rather than $[\omega_V]$.
This ensures that the critical locus of $W^\vee$ is indeed isomorphic to
$H$ equipped with the restriction of the K\"ahler form $\omega_V$.
\item In light of Corollary \ref{cor:functor}, the A-side Landau-Ginzburg model
$(X,W^\vee)$ should be twisted by the background class $s=PD([\tilde{V}])\in 
H^2(X,\Z/2)$. Namely, the tori we consider in our main argument should be
viewed as objects of $\F_s(X,W^\vee)$ rather than $\F(X,W^\vee)$. This
modifies the sign conventions for counting discs and hence 
the mirror superpotential.
\item By Proposition \ref{prop:m0shift}, the additive constant
$\delta=T^\epsilon$ should be subtracted from the superpotential, since
the natural $A_\infty$-functor from $\F(H)$ to $\F_s(X,W^\vee)$ increases
$\m_0$ by that amount.
\end{enumerate}
Thus, the mirror space remains the toric variety $Y$, but the
superpotential is no longer
\begin{equation}\label{eq:W0again}
W_0=w_0+\sum_{i=1}^r T^{\varpi_i}
\mathbf{v}_{\alpha_i}^{\sigma_i};
\end{equation}
we now make explicit how each of the above changes affects the potential.

Replacing $[\omega_V]$ by $[\omega_V]+\epsilon[H]$ amounts to changing the
equations of the facets of the moment polytope $\Delta_V$ from
$\langle \sigma_i,\cdot\rangle+\varpi_i=0$ to
$\langle \sigma_i,\cdot\rangle+\varpi_i+\epsilon \lambda(\sigma_i)=0$
(where $\lambda:\Sigma_V\to\R$ is the piecewise linear function defining
$\mathcal{L}=\O(H)$). Accordingly, each exponent $\varpi_i$ in \eqref{eq:W0again}
should be changed to $\varpi_i+\epsilon\lambda(\sigma_i)$.

Next, we twist by the background class $s=PD([\tilde{V}])$, and view the
tori studied in Section \ref{s:opencase} as objects of $\F_s(X,W^\vee)$
rather than $\F(X,W^\vee)$. Specifically, $s$ lifts to a class
in $H^2(X,L;\Z/2)$ (dual to $[\tilde{V}]\in H_{2n}(X\setminus L)$),
and twisting the standard
spin structure by this lift of $s$ yields a relatively spin structure on $L$.
By \cite[Proposition 8.1.16]{FO3book}, this twist affects the signed
count of holomorphic discs in a given class $\beta\in \pi_2(X,L)$ by a
factor of $(-1)^k$ where $k=\beta\cdot [\tilde{V}]$. Recall
from \S \ref{s:generalcase} that, of the various families of holomorphic
discs that contribute to the superpotential, the only ones
that intersect $\tilde{V}$ are those described by Lemma \ref{l:countw0};
thus the only effect of the twisting by the background class $s$ is to
change the first term of $W_0$ from $w_0$ to $-w_0$.

Finally, we subtract $\delta=T^\epsilon$ from the superpotential, and find that 
the appropriate superpotential to consider on $Y$ is given by
$$W'_0=-T^\epsilon-w_0+\sum_{i=1}^r T^{\varpi_i+\epsilon\lambda(\sigma_i)}
\mathbf{v}_{\alpha_i}^{\sigma_i}=-T^\epsilon v_0+\sum_{i=1}^r
T^{\varpi_i} T^{\epsilon\lambda(\sigma_i)} \mathbf{v}_{\alpha_i}^{\sigma_i}.$$
Finally, recall from \S \ref{ss:mirror} that the weights of the toric monomials
$v_0$ and $\mathbf{v}_{\alpha_i}^{\sigma_i}$ are respectively
$(0,1)$ and $(-\sigma_i,\lambda(\sigma_i))\in \Z^n\oplus \Z$. Therefore, 
a rescaling of the last coordinate  by a factor of $T^\epsilon$
changes $v_0$ to $T^\epsilon v_0$ and $\mathbf{v}_{\alpha_i}^{\sigma_i}$ to
$T^{\epsilon\lambda(\sigma_i)}\mathbf{v}_{\alpha_i}^{\sigma_i}$.
This change of variables eliminates the dependence on $\epsilon$ (as one
would expect for the mirror to $H$) and replaces $W'_0$ by the simpler expression
$$-v_0+\sum_{i=1}^r T^{\varpi_i} \mathbf{v}_{\alpha_i}^{\sigma_i},$$
which is exactly $W_0^H$ (see Definition \ref{def:LGmirror}).
\endproof

\begin{remark}
Another way to produce an $A_\infty$-functor from the Fukaya category 
of $H$ to that of $X$ (more specifically, the idempotent closure
of $\F_s(X)$) is the following construction considered by Ivan Smith in 
\cite[Section 4.5]{Smith}. 

Given a Lagrangian submanifold $\ell\subset H$,
first lift it to the boundary of the $\epsilon$-tubular neighborhood of
$H$ inside $V$, to obtain a Lagrangian submanifold $C_\ell\subset V$ 
which is a circle bundle 
over $\ell$; then, identifying $V$ with the reduced space
$X_{red,\epsilon}=\mu_X^{-1}(\epsilon)/S^1$, lift $C_\ell$ to
$\mu_X^{-1}(\epsilon)$ and ``spin'' it by the $S^1$-action, to obtain
a Lagrangian submanifold $T_\ell\subset X$ which is a $T^2$-bundle over
$\ell$. Then $T_\ell$ formally splits into a direct sum $T_\ell^+\oplus T_\ell^-$;
the $A_\infty$-functor is constructed by mapping $\ell$ to either summand.

The two constructions are equivalent: in $\F_s(X,W^\vee)$ 
the summands $T_\ell^\pm$
are isomorphic to the thimble $L_\ell$ (up to a shift).
One benefit of Smith's construction 
is that, unlike $L_\ell$, the Lagrangian submanifold $T_\ell$ is entirely
contained inside $X^0$, which makes its further study amenable to
$T$-duality arguments involving $X^0$ and $Y^0$.
\end{remark}

\section{The converse construction} \label{s:converse}

As a consequence of Theorem \ref{thm:conicbundle}, the mirror $Y^0$ of $X^0$
can be defined as a variety not only over the Novikov field, but also over the
complex numbers.
In this section, we impose the maximal degeneration condition (cf. Definition \ref{def:maxdegeneration}) which implies that $Y^0$ is smooth. 
We then reverse our viewpoint from the preceding discussion:
treating $T$ as a numerical parameter and equipping $Y^0$ with a K\"ahler form, we shall reconstruct $X^0$ (as an analytic space that also happens to be defined over
complex numbers) as an SYZ mirror.
Along the way, we also obtain another perspective on how compactifying $Y^0$ to the toric variety $Y$ amounts to equipping
$X^0$ with a superpotential. We omit any discussion of $Y$ or $Y^0$ equipped with $A$-side Landau-Ginzburg models, which would require a deeper understanding of the corresponding Fukaya categories. 

(Note: many of the results in this section were
also independently obtained by Chan, Lau and Leung \cite{ChanLauLeung}.)

To begin our construction, observe that $Y^0=Y\setminus w_0^{-1}(0)$ carries
a natural $T^n$-action,
given in the coordinates introduced in \S \ref{ss:mirror} by
$$(e^{i\theta_1},\dots,e^{i\theta_n})\cdot (v_{\alpha,1},\dots,v_{\alpha,n},
v_{\alpha,0})=(e^{i\theta_1}v_{\alpha,1},\dots,e^{i\theta_n}v_{\alpha,n},
v_{\alpha,0}).$$ This torus is a subgroup of the $(n+1)$-dimensional torus
which acts on the toric variety $Y$, namely
the stabilizer of the regular function
$w_0=-T^\epsilon+T^\epsilon v_0$.

We equip $Y^0$ with a $T^n$-invariant K\"ahler form $\omega_Y$. To make
things concrete, take $\omega_Y$ to be
the restriction of a complete toric K\"ahler form on $Y$,
with moment polytope $$\Delta_Y=\{(\xi,\eta)\in \R^n\oplus \R\,|
\,\eta\ge \varphi(\xi)=\max_{\alpha\in A} (\langle \alpha,\xi\rangle-
\rho(\alpha))\}$$ (cf.\ \eqref{eq:DeltaY}).
We denote by $\tilde{\mu}_Y:Y\to \R^{n+1}$ the moment map for the
$T^{n+1}$-action on $Y$, and by
$\mu_Y:Y^0\to \R^n$ the moment map for the $T^n$-action on
$Y^0$. Observing that $\mu_Y$ is obtained from $\tilde\mu_Y$ by restricting
to $Y^0$ and projecting to the first $n$ components, the critical locus
of $\mu_Y$ is the union of all codimension 2 toric strata, and
the set of critical values of $\mu_Y$ 
is precisely the tropical hypersurface $\Pi_0\subset\R^n$ defined by
$\varphi$. Finally, we also equip $Y^0$ with
the $T^n$-invariant holomorphic $(n+1)$-form given in each chart by 
$$\Omega_Y=d\log v_{\alpha,1}\wedge\dots
\wedge d\log v_{\alpha,n}\wedge d\log w_0.$$
Note that this holomorphic volume form scales with $\epsilon$. 
\begin{lemma}\label{l:piY}
The map $\pi_Y=(\mu_Y,|w_0|):Y^0\to B_Y=\R^n\times \R_+$ defines a
$T^n$-invariant special Lagrangian torus fibration on $Y^0$. Moreover,
$\pi_Y^{-1}(\xi,r)$ is singular if and only if $(\xi,r)\in 
\Pi_0\times\{T^\epsilon\}$, and
obstructed if and only if $r=T^\epsilon$.
\end{lemma}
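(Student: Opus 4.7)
The plan is to reduce each of the three assertions to a 2-dimensional statement via Hamiltonian reduction by the $T^n$-action, exploiting the $T^n$-invariance of $w_0$ and $\Omega_Y$. Since $v_0$ is the $T^n$-invariant monomial of which $w_0$ is a function, the map $\pi_Y$ factors through the symplectic reduction, so each verification becomes a computation on a 2-dimensional surface.

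\textbf{Lagrangian and special Lagrangian property.} The $T^n$-action on $(Y^0,\omega_Y)$ is Hamiltonian with moment map $\mu_Y$. For $\xi$ a regular value, $\mu_Y^{-1}(\xi)$ is coisotropic with null foliation given by $T^n$-orbits, and the symplectic reduction $R_\xi=\mu_Y^{-1}(\xi)/T^n$ is a smooth 2-dimensional symplectic surface onto which $w_0$ descends as a holomorphic function. The level set $\{|w_0|=r\}\subset R_\xi$ is $1$-dimensional, hence isotropic, and its preimage in $Y^0$ is exactly $\pi_Y^{-1}(\xi,r)$, which is therefore Lagrangian of dimension $n+1$. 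Contracting $\Omega_Y$ with the fundamental vector fields of the $T^n$-action produces a form which descends, up to a nonzero constant, to $d\log w_0$ on $R_\xi$; on the circle $\{|w_0|=r\}$, one has $d\log|w_0|=0$, so $d\log w_0=i\,d\arg w_0$ has constant imaginary phase, and the special Lagrangian condition lifts back to $Y^0$.

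\textbf{Singular fibers.} Because $w_0$ is $T^n$-invariant, $d\pi_Y$ annihilates all $T^n$-orbit directions, so $\pi_Y$ fails to be a submersion exactly where $\mu_Y$ does. The critical locus of $\mu_Y$ is the union of codimension $\ge 2$ toric strata of $Y$; these all lie in the toric boundary $\{v_0=0\}$, since every ray $(-\alpha,1)$ of $\Sigma_Y$ pairs positively with the weight $(0,\ldots,0,1)$ of $v_0$. Hence $w_0=-T^\epsilon$ and $|w_0|=T^\epsilon$ on the entire critical locus, whose image under $\mu_Y$ is precisely $\Pi_0$ (the projection of the codimension $\ge 2$ faces of $\Delta_Y$). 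Every point of $\Pi_0$ lies in the image of some codimension-2 stratum, so the singular values of $\pi_Y$ form exactly $\Pi_0\times\{T^\epsilon\}$.

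\textbf{Obstructed fibers.} Suppose $u:(D^2,\partial D^2)\to(Y^0,L)$ is holomorphic with $L=\pi_Y^{-1}(\xi,r)$. Then $v_0\circ u$ is holomorphic on $D^2$, never takes the value $1$ (since $u$ lands in $Y^0=Y\setminus\{v_0=1\}$), and satisfies $|v_0\circ u-1|=T^{-\epsilon}r$ on $\partial D^2$. Applying the maximum modulus principle to both $v_0\circ u-1$ and its reciprocal forces $v_0\circ u$ to be a constant $c$ with $|c-1|=T^{-\epsilon}r$. If $c\neq 0$, then $u$ maps into $\{v_0=c\}$, which is disjoint from the toric boundary $\{v_0=0\}$ and therefore contained in the open torus orbit $(\K^*)^{n+1}$ of $Y$; on $\partial D^2$ the moduli $|v_{\alpha,i}\circ u|$ are determined by $\xi$ and $|c|$ via the toric moment map, hence constant, and a second max-modulus argument on each non-vanishing $v_{\alpha,i}\circ u$ shows that $u$ itself is constant. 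Thus non-constant discs require $c=0$, which forces $r=T^\epsilon$; conversely, for $r=T^\epsilon$ a smooth fiber $L$ meets the toric boundary $\{v_0=0\}$, where Cho-Oh type discs of the reduced toric geometry supply explicit non-constant discs with boundary on $L$. The main obstacle is the double max-modulus step on $\{v_0=c\}$, which reduces to the standard enumeration of toric discs in $(\K^*)^{n+1}$.
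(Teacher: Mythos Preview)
Your argument is essentially the same as the paper's: reduction to the quotient surface for the special Lagrangian property, identification of the singular locus with the codimension~$\ge 2$ toric strata, and the maximum principle applied to $w_0$ (equivalently $v_0-1$) to confine discs to a single level set, followed by the observation that product tori in $(\K^*)^n$ bound no discs. The paper phrases the last two steps in terms of the projection $w_0:Y^0\to\C^*$ (whose fibers are $(\C^*)^n$ except over $-T^\epsilon$), while you work directly in the open torus orbit of $Y$; these are equivalent.

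One sentence needs repair. You write: ``Because $w_0$ is $T^n$-invariant, $d\pi_Y$ annihilates all $T^n$-orbit directions, so $\pi_Y$ fails to be a submersion exactly where $\mu_Y$ does.'' The premise does not yield the conclusion. What is true is that $\mathrm{rank}\,d\pi_Y\le\mathrm{rank}\,d\mu_Y+1$, so critical points of $\mu_Y$ are critical for $\pi_Y$; for the converse you need that at a regular point of $\mu_Y$ the function $|w_0|$ has nonzero differential on the $2$-dimensional reduced tangent space. This holds because (as you already set up) $w_0$ descends to a holomorphic coordinate on $R_\xi\simeq\C$ with $w_0\neq 0$ on $Y^0$, so $|w_0|$ has no critical points there. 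Once you insert that line, the argument is complete.
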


This fibration is analogous to some of the examples considered in 
\cite{GrossTop,GrossSlag,CBM1,CBM2}; see also Example
3.3.1 in \cite{Au2}.

The statement that $\pi_Y^{-1}(\xi,r)$ is special Lagrangian follows
immediately from the observation that $\Omega_Y$ descends to the holomorphic
1-form $d\log w_0$ on the reduced space
$\mu_Y^{-1}(\xi)/T^n\simeq \C^*$; thus the circle $|w_0|=r$ is special
Lagrangian in the reduced space, and its lift to $\mu_Y^{-1}(\xi)$ is
special Lagrangian in $Y^0$.

A useful way to think of these tori is to consider the projection of $Y^0$
to the coordinate $w_0$, whose fibers are all isomorphic to $(\C^*)^n$ except for
$w_0^{-1}(-T^\epsilon)=v_0^{-1}(0)$ which is the union of all toric strata in $Y$.
In this projection, $\pi_Y^{-1}(\xi,r)$ fibers over the circle of
radius $r$ centered at the origin, and intersects each of the fibers
$w_0^{-1}(re^{i\theta})$ in a standard product torus (corresponding to
the level $\xi$ of the moment map).
In particular, $\pi_Y^{-1}(\xi,r)$ is singular precisely when
$r=T^\epsilon$ and $\xi\in \Pi_0$.

By the maximum principle, any holomorphic disc in $Y^0$ bounded by 
$\pi_Y^{-1}(\xi,r)$ must lie entirely within a fiber of the projection
to $w_0$. Since the regular fibers of $w_0$ are isomorphic to $(\C^*)^n$,
inside which product tori do not bound any nonconstant holomorphic discs,
$\pi_Y^{-1}(\xi,r)$ is tautologically unobstructed for $r\neq T^\epsilon$. When
$r=T^{\epsilon}$,
$\pi_Y^{-1}(\xi,r)$ intersects one of the components of 
$w_0^{-1}(-T^\epsilon)$ (i.e.\ one of the toric divisors of $Y$)
in a product torus, which bounds various families of holomorphic discs as well as
configurations consisting of holomorphic discs and rational curves in the
toric strata. This completes the proof of Lemma \ref{l:piY}.
\medskip

The maximum principle applied to $w_0$ also implies that every rational curve in $Y$ 
is contained in $w_0^{-1}(-T^\epsilon)$ (i.e.\ the union of all toric
strata), hence disjoint from the anticanonical divisor $w_0^{-1}(0)$,
and thus satisfies $c_1(Y)\cdot C=0$; in fact
$Y$ is a toric Calabi-Yau variety. So Assumption \ref{ass:nef} holds,
and partially compactifying $Y^0$ to $Y$ does not modify the enumerative 
geometry of Maslov index 0 discs bounded by the fibers of $\pi_Y$.
Hence the SYZ mirror of $Y$ is just the mirror of $Y^0$ equipped with
an appropriate superpotential, and we determine both at the same time.

The wall $r=T^\epsilon$ divides the fibration $\pi_Y:Y^0\to B_Y$ into 
two chambers; accordingly, the SYZ mirror of $Y^0$ (and $Y$) is constructed by
gluing together two coordinate charts $U'$ and $U''$ via a transformation which accounts
for the enumerative geometry of discs bounded by the potentially obstructed fibers
of $\pi_Y$. We now define coordinate systems for both charts and determine
the superpotential (for the mirror of $Y$) in terms of those 
coordinates. For notational consistency and to avoid confusion, we now 
denote by $\tau$ (rather than $T$) the Novikov parameter recording
areas with respect to $\omega_Y$.

We start with the chamber $r>T^\epsilon$, over which the fibers of $\pi_Y$ 
can be deformed into product tori in $Y$ (i.e., orbits of the
$T^{n+1}$-action) by a Hamiltonian isotopy that does not intersect
$w_0^{-1}(-T^\epsilon)$ (from the perspective of the projection to $w_0$,
the isotopy amounts simply to deforming the circle of radius $r$ centered at
$0$ to a circle of the appropriate radius centered at $-T^\epsilon$).

Fix a reference fiber $L^0=\pi_Y^{-1}(\xi^0,r^0)$,
where $\xi^0\in \R^n$ and $r^0>T^\epsilon$, and choose 
a basis $(\gamma_1,\dots,\gamma_n,\gamma'_0)$ of $H_1(L^0,\Z)$, where
$-\gamma_1,\dots,-\gamma_n$ correspond to the factors of the $T^n$-action 
on~$L^0$, and $-\gamma'_0$ corresponds to an orbit of the 
last $S^1$ factor of $T^{n+1}$ acting on a product torus $\tilde\mu_Y^{-1}(
\xi^0,\eta^0)$ which is Hamiltonian isotopic to $L^0$ in $Y$.
(The signs are motivated by consistency with the
notations used for $X^0$.)

A point of the chart $U'$ mirror to the chamber $\{r>T^\epsilon\}$ corresponds
to a pair $(L,\nabla)$, where $L=\pi_Y^{-1}(\xi,r)$ is a fiber of $\pi_Y$
(with $r>T^\epsilon$), Hamiltonian isotopic to a product torus 
$\tilde\mu_Y^{-1}(\xi,\eta)$ in $Y$, and $\nabla\in\hom(\pi_1(L),U_{\K})$.
We rescale the coordinates given by \eqref{eq:localchart} to eliminate the
dependence on the base point $(\xi^0,r^0)$, i.e.\ we identify $U'$ with
a domain in $(\K^*)^{n+1}$ via
\begin{equation}\label{eq:chart1}
(L,\nabla)\mapsto (x'_1,\dots,x'_n,z')=\left(\tau^{-\xi_1}\/\nabla(\gamma_1),
\dots,\tau^{-\xi_n}\/\nabla(\gamma_n),\tau^{-\eta}\,\nabla(\gamma'_0)\right).
\end{equation}
(Compare with \eqref{eq:localchart}, noting that
$-\xi_i=-\xi_i^0+\int_{\Gamma_i}\omega_Y$ and
$-\eta=-\eta^0+\int_{\Gamma'_0}\omega_Y$.)

\begin{lemma}\label{l:WY1}
In the chart $U'$, the superpotential for the mirror to $Y$ is given by
\begin{equation}\label{eq:WY1}
W^\vee(x'_1,\dots,x'_n,z')=\sum_{\alpha\in A} (1+\kappa_\alpha) \tau^{\rho(\alpha)}
{x'_1}^{\alpha_1}\dots {x'_n}^{\alpha_n} {z'}^{-1},
\end{equation}
where $\kappa_\alpha\in \K$ are constants with $\mathrm{val}(\kappa_\alpha)>0$.
\end{lemma}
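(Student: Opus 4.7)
The plan is to reduce to the standard toric setting via a Hamiltonian isotopy. First I would show that, for $r>T^\epsilon$, the fiber $L=\pi_Y^{-1}(\xi,r)$ is Hamiltonian isotopic in $Y$ to the product torus $L^{\mathrm{prod}}=\tilde\mu_Y^{-1}(\xi,\eta)$ (with $\eta$ the moment coordinate corresponding to the point $z'$ of \eqref{eq:chart1}), via an isotopy supported away from both the anticanonical divisor $w_0^{-1}(0)$ and the toric boundary $w_0^{-1}(-T^\epsilon)=v_0^{-1}(0)$. Indeed, viewed in projection to the $w_0$-coordinate, both tori fiber over simple closed curves in $\mathbb{C}$ that enclose both of the points $0$ and $-T^\epsilon$, and such curves are isotopic in $\mathbb{C}\setminus\{0,-T^\epsilon\}$; lifting this to $Y$ gives the desired isotopy. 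Since the isotopy avoids the toric boundary of $Y$, it preserves Maslov index $2$ disc counts, so the superpotential at $(L,\nabla)$ agrees with that at $L^{\mathrm{prod}}$ equipped with the corresponding local system.

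Next I would invoke the superpotential formula for product tori in toric manifolds, due to Cho-Oh in the Fano case and Fukaya-Oh-Ohta-Ono \cite{FO3toric} in general. The basic Maslov index $2$ disc classes $\beta_\alpha$ bounded by $L^{\mathrm{prod}}$ are indexed by the primitive ray generators $(-\alpha,1)$ of $\Sigma_Y$ for $\alpha\in A$, and satisfy
\begin{equation*}
\omega_Y(\beta_\alpha)=\eta-\langle\alpha,\xi\rangle+\rho(\alpha),\qquad
\partial\beta_\alpha=\alpha_1\gamma_1+\cdots+\alpha_n\gamma_n-\gamma'_0,
\end{equation*}
where the signs in $\partial\beta_\alpha$ reflect the convention in \eqref{eq:chart1} that $-\gamma_i$ and $-\gamma'_0$ are the positively oriented orbits of the $T^{n+1}$-action. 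Substituting $\nabla(\gamma_i)=\tau^{\xi_i}x'_i$ and $\nabla(\gamma'_0)=\tau^{\eta}z'$, a direct computation shows that the weight $\tau^{\omega_Y(\beta_\alpha)}\nabla(\partial\beta_\alpha)$ equals $\tau^{\rho(\alpha)}{x'_1}^{\alpha_1}\cdots{x'_n}^{\alpha_n}{z'}^{-1}$, with all dependence on the position of $(\xi,\eta)$ in the moment polytope cancelling out as expected. Summing over $\alpha\in A$ recovers the leading terms of \eqref{eq:WY1}.

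The main obstacle is controlling the correction factors $\kappa_\alpha$. Since $Y$ is a (possibly non-compact) toric Calabi-Yau, each class $\beta_\alpha$ may admit stable-map representatives obtained by attaching Maslov index $0$ sphere bubbles; by the maximum principle applied to $w_0$, any such bubble is contained in the toric boundary $w_0^{-1}(-T^\epsilon)$ and hence has strictly positive symplectic area with respect to $\omega_Y$. The virtual perturbation machinery of \cite{FO3toric} extracts from these configurations well-defined open Gromov-Witten contributions, packaged as constants $\kappa_\alpha\in\K$; the positivity of bubble areas immediately yields $\mathrm{val}(\kappa_\alpha)>0$, which is the only feature required for the gluing computations to follow.
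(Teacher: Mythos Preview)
Your proposal is correct and follows essentially the same approach as the paper's own proof: both Hamiltonian isotope $L$ to a standard product torus in $Y$ without crossing the toric divisors (the paper phrases this as deforming the circle of radius $r$ centered at $0$ to one centered at $-T^\epsilon$, just as you do via the $w_0$-projection), then invoke the Cho--Oh classification of Maslov index $2$ discs on product tori to obtain the leading terms, and finally absorb the contributions of stable configurations with rational curve bubbles in the toric boundary into the constants $\kappa_\alpha$ of positive valuation. The computations of $\omega_Y(\beta_\alpha)$, $\partial\beta_\alpha$, and the resulting weight match the paper's exactly.
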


\proof
Consider a point $(L,\nabla)\in U'$, where $L=\pi_Y^{-1}(\xi,r)$ is
Hamiltonian isotopic to 
the product torus $L'=\tilde\mu_Y^{-1}(\xi,\eta)$ in $Y$.
As explained above, the isotopy can be performed without intersecting 
the toric divisors of $Y$, i.e.\ without wall-crossing; therefore, 
the isotopy provides a cobordism between the moduli spaces of Maslov 
index 2 holomorphic discs bounded by $L$ and $L'$ in $Y$.

It is well-known that the families of Maslov index 2 holomorphic discs 
bounded by the standard product torus $L'$ in the toric manifold $Y$ are
in one-to-one correspondence with the codimension 1 toric strata of $Y$.
Namely, for each codimension 1 stratum, there is a unique family of
holomorphic discs which intersect this stratum transversely at a single
point and do not intersect any of the other strata. Moreover, every
point of $L'$ lies on the boundary of exactly
one disc of each family, and these discs are all regular
\cite{cho-oh} (see also \cite[\S 4]{Au1}).

The toric divisors of $Y$, or equivalently the facets of $\Delta_Y$, are in 
one-to-one correspondence with the elements of $A$.
The symplectic area of a Maslov index 2 holomorphic disc in $(Y,L')$ which
intersects the divisor corresponding to $\alpha\in A$ (and whose class we
denote by $\beta_\alpha$)
is equal to the
distance from the point $(\xi,\eta)$ to that facet of $\Delta_Y$, namely 
$\eta-\langle \alpha,\xi\rangle+\rho(\alpha)$, whilst the boundary of the
disc represents the class $\partial \beta_\alpha=\sum
\alpha_i\gamma_i-\gamma'_0\in H_1(L',\Z)$.
The weight associated to such a disc is therefore
$$z_{\beta_\alpha}(L',\nabla)=\tau^{\eta-\langle \alpha,\xi\rangle+\rho(\alpha)}
\nabla(\gamma_1)^{\alpha_1}\dots\nabla(\gamma_n)^{\alpha_n}
\nabla(\gamma'_0)^{-1}=\tau^{\rho(\alpha)} {x'_1}^{\alpha_1}\dots
{x'_n}^{\alpha_n}z'^{-1}.$$
Using the isotopy between $L$ and $L'$,
we conclude that the contributions of Maslov index 2 holomorphic
discs in $(Y,L)$ to the superpotential $W^\vee$ add up to
$$\sum_{\alpha\in A} z_{\beta_\alpha}(L,\nabla)=\sum_{\alpha\in A} \tau^{\rho(\alpha)} {x'_1}^{\alpha_1}\dots
{x'_n}^{\alpha_n}z'^{-1}.$$

However, the superpotential $W^\vee$ also includes contributions
from (virtual) counts of stable genus 0 configurations of discs and rational 
curves of total Maslov index~2. These configurations
consist of a single Maslov index 2
disc (in one of the above families)
together with one or more rational curves contained in the toric
divisors of $Y$ (representing a total class $C\in H_2(Y,\Z)$). 
The enumerative invariant $n(L,\beta_\alpha+C)$ giving the
(virtual) count of such configurations whose boundary passes through a 
generic point of $L$ can be understood in terms of genus 0 Gromov-Witten 
invariants of suitable partial compactifications of $Y$ (see e.g.\ 
\cite{ChanLauLeung}). However, all that matters to us is the general
form of the corresponding terms of the superpotential. Since
the rational components contribute a multiplicative factor
$\tau^{[\omega_Y]\cdot C}$ to the weight, we obtain that
$$W^\vee=\sum_{\alpha\in A}\,\Bigl(1+\sum_{\substack{C\in H_2(Y,\Z)\\
[\omega_Y]\cdot C>0}} n(L,\beta_\alpha+C)\, \tau^{[\omega_Y]\cdot C}\Bigr)
\,\tau^{\rho(\alpha)} {x'_1}^{\alpha_1}\dots {x'_n}^{\alpha_n}z'^{-1},$$
which is of the expected form \eqref{eq:WY1}.
\endproof

Next we look at the other chart $U''$, which corresponds to the
chamber $r<T^\epsilon$ of the fibration $\pi_Y$. Fix again a reference
fiber $L^0=\pi_Y^{-1}(\xi^0,r^0)$, where $\xi^0\in\R^n$ and
$r^0<T^\epsilon$, and choose a basis $(\gamma_1,\dots,\gamma_n,\gamma''_0)$
of $H_1(L^0,\Z)$, where $-\gamma_1,\dots,-\gamma_n$ correspond to the
factors of the $T^n$-action on $L^0$, and $\gamma''_0$ can be represented
by a loop in $L^0$ over which $w_0$ runs counterclockwise around the circle
of radius $r^0$ while $v_{\alpha,1},\dots,v_{\alpha,n}\in \R_+$ (for some
arbitrary choice of $\alpha$). Note that the fibration $w_0:Y\to\C$ is
trivial over the disc of radius $r^0$; in fact the coordinates
$(w_0,v_{\alpha,1},\dots,v_{\alpha,n})$ (for any $\alpha$) give a biholomorphism from the
subset $\{|w_0|\le r^0\}$ of $Y$ to $D^2(r^0)\times(\C^*)^n$. Then
$\gamma''_0$ can be characterized as the unique element of $H_1(L^0,\Z)$
which arises as the boundary of a section of $w_0:Y\to\C$ over 
the disc of radius $r^0$; we denote by $\beta_0$ the relative homotopy 
class of this section. A point of $U''$ corresponds to a pair
$(L,\nabla)$ where $L=\pi_Y^{-1}(\xi,r)$ is a fiber of $\pi_Y$ (with
$r<T^\epsilon$), and $\nabla\in \hom(\pi_1(L),U_{\K})$. As before, we rescale
the coordinates given by \eqref{eq:localchart} to eliminate the dependence
on the base point $(\xi^0,r^0)$, i.e.\ we identify $U''$ with a domain in
$(\K^*)^{n+1}$ via
\begin{equation}\label{eq:chart2}
(L,\nabla)\mapsto (x''_1,\dots,x''_n,y'')=\left(\tau^{-\xi_1}\nabla(\gamma_1),
\dots,\tau^{-\xi_n}\nabla(\gamma_n),\tau^{[\omega_Y]\cdot\beta_0}\,
\nabla(\gamma''_0)\right).
\end{equation}

\begin{lemma}\label{l:WY2}
In the chart $U''$, the superpotential for the mirror to $Y$ is given by
\begin{equation}\label{eq:WY2}
W^\vee(x''_1,\dots,x''_n,y'')=y''.
\end{equation}
\end{lemma}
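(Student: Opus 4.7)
The plan is to enumerate the Maslov index $2$ holomorphic discs bounded by a generic fiber $L = \pi_Y^{-1}(\xi, r)$ in the chamber $r < T^\epsilon$, to check that they form a single regular family with no sphere-bubble corrections, and then to identify the weight of a single disc with $y''$.

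I would start with the maximum principle. For any holomorphic disc $u:(D^2,\partial D^2)\to(Y,L)$, the composition $w_0\circ u$ is a holomorphic disc with boundary on $|w_0| = r$, so $|w_0 \circ u| \le r < T^\epsilon$, and in particular the image of $u$ avoids the toric boundary $w_0^{-1}(-T^\epsilon)$ and lies in the open stratum of $Y$. The open stratum sits inside any single affine chart $Y_\alpha$, in which each coordinate $v_{\alpha,i}\circ u$ ($i\ge 1$) is a nowhere vanishing holomorphic function on $D^2$ whose modulus is constant on $\partial D^2$ (because $L$ is $T^n$-invariant); since $\log|v_{\alpha,i}\circ u|$ is then harmonic with constant boundary value, $v_{\alpha,i}\circ u$ is a constant $v_{\alpha,i}^\star\in\K^*$. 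Thus $u$ is determined by the $v_{\alpha,i}^\star$ and by the map $w_0 \circ u : D^2 \to \{|w_0|\le r\}$. The holomorphic volume form $\Omega_Y$ has only a simple pole along $w_0^{-1}(0)$ and no zeros on $Y$, so the Maslov index of $u$ equals $2\,(u \cdot w_0^{-1}(0))$; Maslov index $2$ therefore forces $w_0 \circ u$ to have degree $1$ and hence, after reparametrization, to be a biholomorphism $D^2 \simeq \{|w_0|\le r\}$. The Maslov index $2$ discs bounded by $L$ are precisely the sections of $w_0$ over $\{|w_0|\le r\}$ with boundary on $L$, parametrized by $L$ under boundary evaluation, and their regularity follows from the corresponding local computation in a trivial toric fibration (as in \cite{cho-oh}).

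Next I would rule out bubbling. Every rational curve $C\subset Y$ has $w_0$ constant along $C$ by the maximum principle on $\PP^1$, and since the fibers of $w_0$ over $c \ne -T^\epsilon$ are isomorphic to $(\K^*)^n$ (which contains no rational curves), necessarily $C\subset w_0^{-1}(-T^\epsilon)$, i.e.\ $C$ lies in the toric boundary. The discs constructed above are confined to $\{|w_0|\le r\} \subset Y\setminus w_0^{-1}(-T^\epsilon)$, hence disjoint from every such $C$; in particular, no stable configuration of total Maslov index $2$ with a sphere component can be attached to them, so the superpotential count receives no further contributions.

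Finally, to identify the weight with $y''$, I would observe that under a Lagrangian isotopy from the reference fiber $L^0 = \pi_Y^{-1}(\xi^0,r^0)$ to $L$ within the chamber $r<T^\epsilon$, a section of $w_0$ over $\{|w_0|\le r^0\}$ deforms continuously into a section over $\{|w_0|\le r\}$ bounded by $L$. Hence $[u] \in \pi_2(Y,L)$ corresponds to $\beta_0$ and $\partial u \in H_1(L)$ corresponds to $\gamma''_0$ under the natural identification; the weight \eqref{eq:weight} of $u$ is therefore $\tau^{[\omega_Y]\cdot\beta_0}\,\nabla(\gamma''_0) = y''$ by the definition \eqref{eq:chart2}, and summing over the single family yields $W^\vee = y''$. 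The point that most deserves careful verification is the confinement argument of the first step: it is the strict inequality $r < T^\epsilon$ that permits the chartwise application of the maximum principle and forces $u$ to be a section of $w_0$; once this is secured, the rest is a deformation-theoretic identification that is essentially tautological from the definitions of $\beta_0$, $\gamma''_0$ and $y''$.
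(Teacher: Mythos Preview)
Your overall strategy matches the paper's elementary approach, but there is a genuine gap in the first step. You claim that $|v_{\alpha,i}\circ u|$ is constant on $\partial D^2$ ``because $L$ is $T^n$-invariant.'' $T^n$-invariance of $L$ only tells you that $|v_{\alpha,i}|$ is constant along each $T^n$-orbit inside $L$; it does not force $|v_{\alpha,i}|$ to be constant on all of $L$, which is $(n+1)$-dimensional. Concretely, $L=\{\mu_Y=\xi,\ |w_0|=r\}$, and as $\arg w_0$ varies along the circle $|w_0|=r$ the modulus $|v_0|=|1+T^{-\epsilon}w_0|$ varies. For a general toric K\"ahler form $\omega_Y$ the first $n$ components of the full moment map $\tilde\mu_Y$ depend on all of $|v_{\alpha,1}|,\dots,|v_{\alpha,n}|,|v_0|$, so fixing $\mu_Y=\xi$ while letting $|v_0|$ vary does not pin down $|v_{\alpha,i}|$. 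Without $|v_{\alpha,i}|$ constant on $L$, your harmonic-function argument collapses and you cannot conclude that $v_{\alpha,i}\circ u$ is constant.

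The paper confronts exactly this issue. After the same confinement step (maximum principle on $w_0$, so discs lie in $\{|w_0|\le r\}\cong D^2\times(\C^*)^n$ and represent multiples of $\beta_0$), it reduces to showing $n(L,\beta_0)=1$. For this it offers two routes: either invoke Seidel's results on sections of Lefschetz fibrations, or \emph{deform $\omega_Y$ within its cohomology class} so that for some $\xi^0$ the level set $\mu_Y^{-1}(\xi^0)$ is cut out in a chart $Y_\alpha$ by $|v_{\alpha,1}|=\rho_1,\dots,|v_{\alpha,n}|=\rho_n$; then your maximum-principle argument applies to the fiber $L^0=\pi_Y^{-1}(\xi^0,r^0)$, and the conclusion propagates to all fibers in the chamber by deformation invariance of $n(L,\beta_0)$. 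Your proof becomes correct if you insert this deformation step (or otherwise justify why, for your chosen $\omega_Y$, the level sets of $\mu_Y$ are standard in a chart). The remaining parts of your argument---ruling out sphere bubbles via $C\subset w_0^{-1}(-T^\epsilon)$ disjoint from $\{|w_0|\le r\}$, and identifying the weight with $y''$---are fine.
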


\proof
By the maximum principle applied to the projection to $w_0$, any holomorphic
disc bounded by $L=\pi_Y^{-1}(\xi,r)$ in $Y$ must be contained in the subset 
$\{|w_0|\le r\}\subset Y$, which is diffeomorphic to $D^2\times
(\C^*)^n$. Thus, for topological reasons, any holomorphic disc bounded by
$L$ must represent a multiple of the class $\beta_0$. Since
the Maslov index is equal to twice the intersection number with
$w_0^{-1}(0)$, Maslov index 2 discs are holomorphic sections of
$w_0:Y\to\C$ over the disc of radius $r$, representing $\beta_0$.

The formula \eqref{eq:WY2} now follows from the claim that
the number of such sections passing through a given point
of $L$ is $n(L,\beta_0)=1$.
This can be viewed as an enumerative problem
for holomorphic sections of a trivial Lefschetz
fibration with a Lagrangian boundary condition, easily answered by 
applying the powerful methods of \cite[\S 2]{SeLES}. 
An alternative,
more elementary approach is to deform $\omega_Y$ among toric K\"ahler 
forms in its cohomology class to ensure that, for some $\xi^0\in\R^n$,
$\mu_Y^{-1}(\xi^0)$ is given in one of the coordinate charts
$Y_\alpha$ of \S \ref{ss:mirror} by equations of the form
$|v_{\alpha,1}|=\rho_1$, \dots, $|v_{\alpha,n}|=\rho_n$.
(In~fact, many natural choices for $\omega_Y$ cause this property to
hold immediately.) When this property holds, the maximum principle applied to
$v_{\alpha,1},\dots,v_{\alpha,n}$ implies that the holomorphic 
Maslov index 2 discs bounded by $L^0=\pi_Y^{-1}(\xi^0,r^0)$ are given by letting
$w_0$ vary in the disc of radius $r^0$ while the other coordinates
$v_{\alpha,1},\dots,v_{\alpha,n}$ are held constant. All these discs are
regular, and there is precisely one disc passing through each point of
$L^0$. It follows that $n(L^0,\beta_0)=1$. This completes the proof,
since the invariant $n(L^0,\beta_0)$ is not affected by the 
deformation of $\omega_Y$ to the special case we have considered, and 
the value of $n(L,\beta_0)$ is the same for all the fibers of $\pi_Y$ over
the chamber $r<T^\epsilon$.
\endproof

We can now formulate and prove the main result of this section:

\begin{theorem}\label{thm:converse}
The rigid analytic manifold
\begin{equation}\label{eq:XX0}
\mathcal{X}^0=\{(x_1,\dots,x_n,y,z)\in (\K^*)^n\times\K^2\,|\,yz=
\tilde{f}(x_1,\dots,x_n)\},\end{equation} 
where $\tilde{f}(x_1,\dots,x_n)=
\sum\limits_{\alpha\in A} (1+\kappa_\alpha)\tau^{\rho(\alpha)}x_1^{\alpha_1}
\dots x_n^{\alpha_n}$, 
is SYZ mirror to $(Y^0,\omega_Y)$. 

Moreover, the $B$-side Landau-Ginzburg model
$(\mathcal{X}^0,W^\vee=y)$ is SYZ mirror to $(Y,\omega_Y)$.
\end{theorem}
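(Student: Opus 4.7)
The plan is to glue the two coordinate charts $U'$ and $U''$ of Lemmas \ref{l:WY1} and \ref{l:WY2} via the wall-crossing transformation across the wall $r = T^\epsilon$, and identify the result with the conic bundle $\mathcal{X}^0$.

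First, I would check that the first $n$ coordinates in the two charts agree. The cycles $\gamma_1,\dots,\gamma_n$ are canonically the orbits of the $T^n$-factors acting on $Y^0$, so they extend without monodromy across the wall; likewise $\mu_Y$ is a globally defined affine coordinate on $B_Y$. We may therefore identify $x_i := x_i' = x_i''$ on either side of the wall. The nontrivial content of the gluing is concentrated in the relation between the remaining coordinates $z'$ and $y''$.

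Next, the wall-crossing transformation between $z'$ and $y''$ is determined by the requirement (see Corollary \ref{cor:potential_matched_by_gluing}) that the superpotential $W^\vee$ glue to a single regular function on the mirror. Equating \eqref{eq:WY1} and \eqref{eq:WY2} forces $y'' z' = \tilde{f}(x_1,\dots,x_n)$. Setting $y = y''$ and $z = z'$, this is exactly the defining equation of $\mathcal{X}^0 \subset (\K^*)^n \times \K^2$. The chart $U'$ corresponds to the open subset $\{z \ne 0\}$ of $\mathcal{X}^0$ and $U''$ to a neighborhood of $\{y = 0\}$, and the completion procedure of \S\ref{ss:syz1} (together with the extension principle of Example \ref{ex:FScompact} applied to both the $y = 0$ and $z = 0$ boundary loci) recovers all of $\mathcal{X}^0$. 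Under this identification the superpotential pulls back to $y$ on $U''$ tautologically, and to $\tilde{f}(x)/z = y$ on $U'$, proving the Landau-Ginzburg statement. The statement for $Y^0$ without superpotential follows because $(Y^0,\omega_Y)$ is obtained from $(Y,\omega_Y)$ by removing the anticanonical divisor $w_0^{-1}(0)$, and on the mirror side the underlying analytic space $\mathcal{X}^0$ is unchanged.

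The main technical obstacle will be justifying the wall-crossing formula directly rather than deducing it from superpotential matching. A direct approach would analyze Maslov index zero disc bubbling on the wall $r = T^\epsilon$: the relevant discs are contained in the components of $w_0^{-1}(-T^\epsilon)$ (the toric divisors of $Y$), and each component, labeled by a facet of $\Delta_Y$ and hence by an element $\alpha \in A$, contributes an instanton correction of the form \eqref{eq:instcorr} whose weight is $\tau^{\rho(\alpha)} x^\alpha$ up to corrections from rational-curve bubbling that supply the factors $(1+\kappa_\alpha)$. Summing these contributions over $\alpha \in A$ would reproduce the relation $y'' z' = \tilde{f}(x)$ monomial-by-monomial, but carrying out this enumerative argument rigorously (including the virtual contributions from stable configurations with rational components in the toric boundary) is where the bulk of the technical work resides; invoking the matching-of-superpotentials principle circumvents it.
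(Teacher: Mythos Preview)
Your approach is the same as the paper's: compute the superpotential in each chamber and use Corollary \ref{cor:potential_matched_by_gluing} to pin down the gluing. However, your argument that $x'_i = x''_i$ has a gap. The fact that $\gamma_1,\dots,\gamma_n$ extend without monodromy and that $\mu_Y$ is globally defined only shows that the \emph{naive} gluing is the identity on these coordinates; it does not rule out instanton corrections. Per \eqref{eq:instcorr}, wall-crossing multiplies $x_i$ by $h(z_\beta)^{C_\beta\cdot\gamma_i}$, and you must argue that $C_\beta\cdot\gamma_i = 0$ for every contributing class $\beta$. The paper does this in two ways: either observe that every Maslov index $0$ configuration lies in $w_0^{-1}(-T^\epsilon)$, so each $C_\beta$ is supported on the single $T^n$-orbit $L\cap w_0^{-1}(-T^\epsilon)$, into which each $\gamma_i$ can be homotoped, forcing the intersection number to vanish; or apply the very superpotential-matching trick you use for $y''z'$ to an auxiliary partial compactification $Y'_i$ (with moment polytope $\Delta_Y\cap\{\xi_i\le K\}$), whose superpotential computes to $\tau^K x_i$ in both chambers.

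A smaller point: your invocation of Example \ref{ex:FScompact} for the completion to $y=0$ and $z=0$ is off target. That example concerns enlarging the Fukaya category by admissible non-compact Lagrangians when the A-side carries a superpotential, which is the mechanism relevant to the \emph{second} assertion (passing from $Y^0$ to $Y$). The inclusion of the loci $y=0$ and $z=0$ in the mirror of $Y^0$ itself is just the analytic completion of the local charts $U'$ and $U''$ to $(\K^*)^n\times\K$, exactly as in \S\ref{s:opencase}; no admissible Lagrangians enter.
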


\proof
The two charts $U'$ and $U''$ are glued to each other by a coordinate
transformation which accounts for the Maslov index 0 holomorphic discs
bounded by the potentially obstructed fibers of $\pi_Y$. There are many families of
such discs, all contained in $w_0^{-1}(-T^{\epsilon})=v_0^{-1}(0)$.
However we claim that the first $n$ coordinates of the charts
\eqref{eq:chart1} and \eqref{eq:chart2} are not affected by these instanton
corrections, so that the gluing satisfies $x''_1=x'_1,\dots,x''_n=x'_n$.

One way to argue is based on the observation that all Maslov index 0
configurations are contained in $w_0^{-1}(-T^\epsilon)$.
Consider as in \S \ref{ss:syz1} a Lagrangian isotopy
$\{L_t\}_{t\in [0,1]}$ between fibers of $\pi_Y$ in the two chambers
(with $L_{t_0}$ the only potentially obstructed fiber), and
the cycles $C_\alpha=\mathrm{ev}_*[\mathcal{M}_1(\{L_{t_0}\},\alpha)]
\in H_{n-1}(L_{t_0})$ corresponding to the various classes $\alpha\in
\pi_2(Y,L_t)$ that may contain Maslov index 0 configurations. The fact that
each $C_\alpha$ is supported on $L_{t_0}\cap w_0^{-1}(-T^\epsilon)$
implies readily that $C_\alpha\cdot \gamma_1=\dots=C_\alpha\cdot\gamma_n=0$.
Since the overall gluing transformation is given by a composition of 
elementary transformations of the type \eqref{eq:instcorr}, the first $n$ coordinates are not affected.

By Corollary \ref{cor:potential_matched_by_gluing}, 
a more down-to-earth way to see that the 
gluing preserves $x''_i=x'_i$ ($i=1,\dots,n$) is to
consider the partial
compactification $Y'_i$ of $Y^0$ given by the moment polytope $\Delta_Y\cap
\{\xi_i\le K\}$ for some constant $K\gg 0$ (still removing $w_0^{-1}(0)$
from the resulting toric variety). From the perspective of the
projection $w_0:Y^0\to\C^*$, this simply amounts to a toric partial
compactification of each fiber, where the generic fiber $(\C^*)^n$ 
is partially compactified along the $i$-th factor to $(\C^*)^{n-1}\times
\C$. The Maslov index 2 holomorphic discs bounded by $L=\pi_Y^{-1}(\xi,r)$
inside $Y'_i$ are contained in the fibers of $w_0$ by the maximum principle;
requiring that the boundary of the disc pass through a given point $p\in L$
(where we assume $w_0\neq -T^\epsilon$),
we are reduced to the fiber of $w_0$ containing $p$, which $L$ intersects
in a standard product torus $(S^1)^n\subset (\C^*)^{n-1}\times \C$ (where
the radii of the various $S^1$ factors depend on $\xi$).
Thus, there is
exactly one Maslov index 2 holomorphic disc in $(Y'_i,L)$ 
through a generic point $p\in L$ (namely a disc over which all coordinates
except the $i$-th one are constant). The superpotential is equal to 
the weight of this disc, i.e.\ $\tau^{K-\xi_i}\,\nabla(\gamma_i)$, 
which can be rewritten as $\tau^K x'_i$ if $r>T^\epsilon$, and
$\tau^K x''_i$ if $r<T^\epsilon$. Comparing these two expressions,
we see that the gluing between $U'$ and $U''$ identifies $x'_i=x''_i$.

The gluing transformation between the coordinates $y''$ and $z'$ is
more complicated, but is now determined entirely by a comparison between 
\eqref{eq:WY1} and \eqref{eq:WY2}: since the two formulas for $W^\vee$ must 
glue to a regular function on
the mirror, $y''$ must equal the right-hand side of
\eqref{eq:WY1}, hence
$$y''z'=\sum_{\alpha\in A}(1+\kappa_\alpha)\tau^{\rho(\alpha)}{x'_1}^{\alpha_1}
\dots {x'_n}^{\alpha_n}=\tilde{f}(x'_1,\dots,x'_n).$$
This completes the proof of the theorem.
\endproof

The first part of Theorem \ref{thm:converse} is a statement of SYZ mirror
symmetry in the opposite direction from Theorem~\ref{thm:conicbundle}; the
two results taken together relate the symplectic topology and algebraic geometry of
the spaces $X^0$ and $Y^0$ to each other. 
More precisely, we would like to treat $\tau$ as a fixed complex number and
view the mirror of $(Y^0, \omega_Y)$ as a complex manifold. 
The convergence of the function $\tilde{f}$ depends only on that of the
constants $\kappa_\alpha$, which is unknown in general but holds in practice
for a number of examples (see \cite{ChanLauLeung} and other work by the same authors).
Even when convergence is not an issue, the result reveals the need for care in constructing the mirror map: while our main construction is
essentially independent of the coefficients $c_\alpha$ appearing in
\eqref{eq:Ht} (which do not affect the symplectic geometry of $X^0$),
the direction considered here requires the complex structure of $X^0$ 
to be chosen carefully to match with the K\"ahler class $[\omega_Y]$,
specifically we have to take $c_\alpha=1+\kappa_\alpha$.

The second part of Theorem \ref{thm:converse} gives a mirror symmetric
interpretation of the partial compactification of $Y^0$ to $Y$, in terms
of equipping $X^0$ with the superpotential $W^\vee=y$. From the 
perspective of our main construction (viewing $X^0$ as a symplectic
manifold and $Y^0$ as its SYZ mirror), we saw the same phenomenon
in Section \ref{s:backtoH}.


\section{Examples} \label{s:examples}

\subsection{Hyperplanes and pairs of pants}\label{ss:pants}
We consider as our first example the (higher dimensional) pair of pants 
$H$ defined by the
equation 
\begin{equation}\label{eq:pairofpants}
x_1+\dots+x_n+1=0\end{equation}
in $V=(\C^*)^n$. (The case $n=2$ corresponds
to the ordinary pair of pants; in general $H$ is the complement of $n+1$
hyperplanes in general position in $\CP^{n-1}$.) 

The tropical polynomial corresponding to \eqref{eq:pairofpants} 
is $\varphi(\xi)=\mathrm{max}(\xi_1,\dots,\xi_n,0)$;
the polytope $\Delta_Y$ defined by \eqref{eq:DeltaY} is
equivalent via $(\xi_1,\dots,\xi_n,\eta)\mapsto
(\eta-\xi_1,\dots,\eta-\xi_n,\eta)$ to the orthant $(\R_{\ge
0})^{n+1}\subset\R^{n+1}$. Thus $Y\simeq \C^{n+1}$. In terms of the
coordinates $(z_1,\dots,z_{n+1})$ of $\C^{n+1}$, the monomial $v_0$ is given by
$v_0=z_1\dots z_{n+1}$. Thus, in this example our main results are:

\begin{enumerate}
\item the open Calabi-Yau manifold
$Y^0=\C^{n+1}\setminus \{z_1\dots z_{n+1}=1\}$ is SYZ mirror to
the conic bundle $X^0=\{(x_1,\dots,x_n,y,z)\in (\C^*)^n\times \C^2\,|\,
yz=x_1+\dots+x_n+1\}$;\smallskip
\item the $B$-side Landau-Ginzburg model
$(Y^0,W_0=-T^\epsilon+T^\epsilon\,z_1\dots z_{n+1})$ is SYZ mirror to
the blowup $X$ of $(\C^*)^n\times \C$ along $H\times 0$, where
$$H=\{(x_1,\dots,x_n)\in (\C^*)^n\,|\, x_1+\dots+x_n+1=0\};$$
\item the $B$-side Landau-Ginzburg model $(\C^{n+1},W_0^H=-z_1\dots z_{n+1})$ 
is a generalized SYZ mirror of $H$.
\end{enumerate}

\noindent
The last statement in particular has been verified in the sense of
homological mirror symmetry by Sheridan \cite{Sheridan}; see also
\cite{AAEKO} for a more detailed result in the case $n=2$ (the usual pair 
of pants).

If instead we consider the same equation \eqref{eq:pairofpants} to 
define (in an affine chart) a hyperplane
$H\simeq\CP^{n-1}$ inside $V=\CP^n$, with a K\"ahler form such that
$\int_{\CP^1}\omega_V=A$, then our main result becomes that the
$B$-side Landau-Ginzburg model consisting of 
$Y^0=\C^{n+1}\setminus \{z_1\dots z_{n+1}=1\}$ equipped with the
superpotential $$W_0=-T^\epsilon+
T^\epsilon z_1\dots z_{n+1}+z_1+\dots+z_n+T^{A} z_{n+1}$$ is SYZ mirror to
the blowup $X$ of $\CP^n\times\C$ along $H\times 0\simeq \CP^{n-1}\times 0$.

Even though $\CP^{n-1}$ is not affine, Theorem \ref{cor:main} still holds
for this example if we assume that $n\ge 2$, by Remark \ref{rmk:m0shift}. In
this case, the mirror we obtain for $\CP^{n-1}$ (viewed as a hyperplane
in $\CP^n$) is the $B$-side Landau-Ginzburg model
$$(\C^{n+1}, W_0^H=-z_1\dots z_{n+1}+z_1+\dots+z_n+T^A z_{n+1}).$$
Rewriting the superpotential as $$W_0^H=z_1+\dots+z_n+z_{n+1}(T^A-z_1\dots
z_n)=\tilde{W}(z_1,\dots,z_n)+z_{n+1}\,g(z_1,\dots,z_n)$$ makes it apparent that this $B$-side Landau-Ginzburg model is equivalent
(e.g.\ in the sense of Orlov's generalized Kn\"orrer periodicity \cite{Orlov})
to the $B$-side Landau-Ginzburg model consisting of $g^{-1}(0)=\{(z_1,\dots,z_n)\in
\C^n\,|\,z_1\dots z_n=T^A\}$ equipped with the superpotential $\tilde{W}=
z_1+\dots+z_n$, which is the classical toric mirror of $\CP^{n-1}$.

\subsection{ALE spaces}\label{ss:ALE}
Let $V=\C$, and let $H=\{x_1,\dots,x_{k+1}\}\subset \C^*$ consist of
$k+1$ points, $k\ge 0$, with $|x_1|\ll \dots\ll |x_{k+1}|$ (so that the defining
polynomial of $H$, $f_{k+1}(x)=(x-x_1)\dots(x-x_{k+1})\in \C[x]$, is near the
tropical limit).

The conic bundle $X^0=\{(x,y,z)\in \C^*\times\C^2\,|
\,yz=f_{k+1}(x)\}$ is the complement of the regular conic $x=0$ 
in the $A_k$-Milnor fiber $$X'=\{(x,y,z)\in \C^3\,|\, yz=f_{k+1}(x)\}.$$
In fact, $X'$ is the main space of interest here, rather than its open
subset $X^0$ or its partial compactification $X$ (note that $X'=X\setminus
\tilde{V}$). However the mirror of $X'$
differs from that of $X$ simply by excluding the term $w_0$ (which
accounts for those holomorphic discs that intersect $\tilde{V}$) from the
mirror superpotential. 

The tropical polynomial $\varphi:\R\to \R$ corresponding to $f_{k+1}$ is a 
piecewise linear function whose slope takes the successive integer values
$0,1,\dots,k+1$. Thus the toric variety $Y$ determined by the
polytope $\Delta_Y=\{(\xi,\eta)\in\R^2\,|\,
\eta\ge \varphi(\xi)\}$ is the
resolution of the $A_k$ singularity $\{st=u^{k+1}\}\subset \C^3$.
The $k+2$ edges of $\Delta_Y$ correspond to the
toric strata of $Y$, namely the proper transforms of the
coordinate axes $s=0$ and $t=0$ and the $k$ rational $(-2)$-curves
created by the resolution.
Specifically, $Y$ is covered by $k+1$ affine coordinate charts $U_{\alpha}$
 with coordinates $(s_\alpha=v_{\alpha,1},t_\alpha=v_{\alpha+1,1}^{-1})$,
$0\le \alpha\le k$; denoting the toric coordinate $v_{\alpha,0}$ by $u$, 
equation \eqref{eq:toriccharts} becomes $s_\alpha t_\alpha=u$,
and the regular functions $s=s_0,t=t_k,u\in \O(Y)$ satisfy the relation
$st=u^{k+1}$.

Since $w_0=-T^\epsilon+T^\epsilon v_0=-T^\epsilon+T^\epsilon u$,
the space $Y^0$ is the complement of the curve $u=1$ inside
$Y$. With this understood, our main results become:
\begin{enumerate}
\item
the complement $Y^0$ of the curve $u=1$ in the resolution $Y$ of
the $A_k$ singularity $\{st=u^{k+1}\}\subset\C^3$ is SYZ mirror to
the complement $X^0$ of the curve $x=0$ in the Milnor fiber
$X'=\{(x,y,z)\in \C^3\,|\,yz=f_{k+1}(x)\}$ of the $A_k$ singularity;
\smallskip
\item
the $B$-side Landau-Ginzburg model $(Y^0,W_0=s)$ is SYZ mirror to $X'$;
\smallskip
\item
the Landau-Ginzburg models $(Y,W_0=s)$ and $(X',W^\vee=y)$ are SYZ mirror
to each other.
\end{enumerate}

\noindent
These results show that the oft-stated mirror symmetry relation
between the smoothing and the resolution of the $A_k$ singularity
(or, specializing to the case $k=1$, between the affine quadric
$T^*S^2$ and the total space of the line bundle $\O(-2)\to \PP^1$) 
needs to be corrected either by removing smooth curves from each
side, or by equipping both sides with superpotentials.

One final comment that may be of interest to symplectic geometers is that
$W_0=s$ vanishes to order $k+1$ along the $t$ coordinate axis, and to
orders $1,2,\dots,k$ along the exceptional curves of the resolution. The higher derivatives of the superpotential encode information about the $A_\infty$-products on the Floer cohomology of the Lagrangian torus fiber of the SYZ fibration, and the
high-order vanishing of $W_0$ along the toric divisors of $Y^0$ indicates
that the $A_k$ Milnor fiber contains Lagrangian tori whose Floer 
cohomology is isomorphic to the usual cohomology of $T^2$ as an algebra, 
but carries non-trivial $A_\infty$-operations. (See also \cite{LM} for
related considerations.)

\begin{corollary}
For\/ $\alpha\in\{2,\dots,k+1\}$, let $r\in \R_+$ be such that exactly\/
$\alpha$ of the points $x_1,\dots,x_{k+1}$ satisfy $|x_i|<r$. Then the 
Floer cohomology of the Lagrangian torus 
$T_r=\{(x,y,z)\in X'\,|\ |x|=r,\ |y|=|z|\}$ in the $A_k$ Milnor fiber
$X'$, equipped with a suitable spin structure, is\/ $\mathrm{HF}^*(T_r,T_r)\simeq
H^*(T^2;\Lambda)$, equipped with an $A_\infty$-structure for which the
generators $a,b$ of\/ $\mathrm{HF}^1(T_r,T_r)$ satisfy the relations
$\m_2(a,b)+\m_2(b,a)=0$; $\m_i(a,\dots,a)=0$ for all $i$; 
$\m_i(b,\dots,b)=0$ for $i\le \alpha-1$; and $\m_\alpha(b,\dots,b)\neq 0$.
\end{corollary}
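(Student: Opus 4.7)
My plan is to translate the computation of $A_\infty$-operations on $HF^*(T_r,T_r)$ to the mirror side, using the SYZ mirror correspondence of statement (3) above which identifies $X'$ with the Landau-Ginzburg model $(Y,W_0=s)$. Under this correspondence, the torus $T_r$, equipped with the suitable spin structure and an appropriately adjusted (generically non-unitary) local system that renders it an unobstructed object of $\F(X')$, corresponds to a point $p_0$ on the critical locus of $W_0$, and the $A_\infty$-structure on $HF^*(T_r,T_r)$ is encoded by the jet of $W_0$ at $p_0$.

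Since $r$ lies in the chamber $U_\ell$ (the one in which $\ell$ of the points $x_i$ are inside the circle $|x|=r$), the point $p_0$ lies in the affine chart $Y_\ell$, with coordinates $(s_\ell,u)$ in which $W_0=u^\ell s_\ell$. The critical locus of $W_0$ is then the divisor $\{u=0\}$. With the spin structure chosen so that discs carrying $j$ exceptional $\PP^1$-bubbles pick up a sign $(-1)^j$, the weighted count of Maslov-index-$2$ discs bounded by $T_r$ takes the form $T^{\zeta_\ell}\nabla(\gamma_1)(1-T^{\lambda-\epsilon}\nabla(\gamma_0))^\ell$, so the unobstructedness condition $W_0(T_r,\nabla)=0$ becomes $T^{\lambda-\epsilon}\nabla(\gamma_0)=1$, thereby specifying the required local system and locating $p_0$ at coordinates $(T^{\zeta_\ell}\nabla(\gamma_1),0)$.

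Parameterizing deformations away from $p_0$ by $\nabla(\gamma_0)=T^{\epsilon-\lambda}e^b$ and $\nabla(\gamma_1)=e^a$, where $a,b$ are generators of $HF^1(T_r,T_r)\simeq H^1(T^2;\Lambda)$, the signed superpotential takes the simple form
\[
W_0(T_r,\nabla) = T^{\zeta_\ell}\, e^a\, (1-e^b)^\ell.
\]
The elementary identity $(1-e^b)^\ell = (-b)^\ell+O(b^{\ell+1})$ shows that the Taylor expansion in $b$ starts at order exactly $\ell$, so $\m_k(b,\dots,b)=0$ for $k<\ell$ and $\m_\ell(b,\dots,b)=(-1)^\ell T^{\zeta_\ell}\neq 0$; since the factor $e^a$ is merely multiplied against the identically-vanishing-at-$b=0$ series $(1-e^b)^\ell$, every pure $a^k$ coefficient vanishes, yielding $\m_i(a,\dots,a)=0$ for all $i$; and for $\ell\geq 2$ the coefficient of $ab$ in the product is zero (as $(1-e^b)^\ell$ has no linear term in $b$), giving $\m_2(a,b)+\m_2(b,a)=0$.

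The main obstacle is justifying the dictionary between the jet of the mirror superpotential at a critical point and the $A_\infty$-operations on the mirror brane. In the toric setting this is a theorem of Cho, but in our setup one must either invoke the general framework of SYZ mirror symmetry set up in Section~\ref{s:syz} or perform the computation directly on the symplectic side by enumerating Maslov-index-$2$ holomorphic discs in $(X',T_r)$ with multiple boundary marked points. The direct enumeration proceeds analogously to Lemma~\ref{l:Wsigma}: a disc with $j$ exceptional bubbles has boundary class $\gamma_1+j\gamma_0$, area $\zeta_\ell+j(\lambda-\epsilon)$, comes in $\binom{\ell}{j}$ combinatorial families, and contributes with sign $(-1)^j$ from the chosen spin structure; the key combinatorial fact that $(1-\mu)^\ell$ vanishes to order exactly $\ell$ at $\mu=1$ then forces the precise vanishing pattern of the $\m_k(b,\dots,b)$ stated in the corollary.
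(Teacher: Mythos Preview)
Your proposal is correct and follows the same approach the paper takes. The paper does not give a detailed proof of this corollary; it simply observes in the preceding paragraph that the jet of the mirror superpotential $W_0=s$ at a point of its critical locus encodes the $A_\infty$-products on the Floer cohomology of the corresponding SYZ fiber (citing Cho for the toric analogue), and notes that $s=u^\ell s_\ell$ vanishes to order exactly $\ell$ along the divisor $D_\ell$. Your argument fills in the details of this outline---identifying the spin-structure twist, writing out the superpotential explicitly as $T^{\zeta_\ell}e^a(1-e^b)^\ell$, and extracting the relevant Taylor coefficients---which the paper leaves implicit.

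One small imprecision worth cleaning up: you introduce a possibly non-unitary local system to force $T^{\lambda-\epsilon}\nabla(\gamma_0)=1$, whereas the corollary concerns $T_r$ with no local system at all. This is harmless because the condition $|y|=|z|$ defining $T_r$ places it (up to Hamiltonian isotopy, and for a suitable choice of $S^1$-invariant K\"ahler form) at the level $\lambda=\epsilon$ of the moment map, so that with the twisted spin structure the \emph{trivial} local system already lands on the critical locus $u=0$. It would be cleaner to say this directly rather than to adjust $\nabla$.
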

\begin{proof}
The condition $|x|=r$ implies that the torus $T_r$ yields a point in the
chamber $U_{\alpha}$, while the condition that $|y|=|z|$ implies that it lies on the critical
locus of $W_0$: this shows that $T_{r}$ is a critical point of $W_0$ of order $\alpha+1$.   

By a construction which is standard in the toric case (see \cite{cho}), the restriction of
$W_0$ to a chart of $Y$ modeled after a domain in $H^{1}(T_{r},\Lambda^{*})$ (identified with $(\Lambda^*)^2$ by choosing the basis
$(a,b)$) agrees with the map
\begin{equation}
(\exp(\lambda_a),\exp(\lambda_b))  \mapsto  \sum_{k} \m_k(\lambda_a a + \lambda_b b , \ldots, \lambda_a a + \lambda_b b) .
\end{equation}
Choosing $a$ to correspond to the generator which vanishes on loops whose projection to $\C$ is constant, the result follows immediately.
\end{proof}
\subsection{Plane curves}\label{ss:planecurves}
For $p,q\ge 2$, consider a smooth Riemann surface $H$ of genus $g=(p-1)(q-1)$
embedded in $V=\PP^1\times\PP^1$, defined as the zero set of a 
suitably chosen polynomial of bidegree 
$(p,q)$.
(The case of a genus 2 curve of bidegree $(3,2)$ was used in 
\S\ref{s:notations} to illustrate the general construction, see Examples
\ref{ex:genus2} and \ref{ex:genus2part2}.)

Namely, in affine coordinates $f$ is given by
$$f(x_1,x_2)=\sum_{a=0}^p \sum_{b=0}^q c_{a,b} \tau^{\rho(a,b)}
x_1^a x_2^b,$$ where $c_{a,b}\in\C^*$ are arbitrary,
$\rho(a,b)\in \R$ satisfy a suitable convexity condition, and $\tau\ll 1$.
The corresponding tropical
polynomial \begin{equation}\label{eq:tropical_genusg}
\varphi(\xi_1,\xi_2)=\max \{ a\xi_1+b\xi_2-\rho(a,b)\,|\,
0\le a\le p,\ 0\le b\le q\}
\end{equation} defines a tropical curve $\Pi_0\subset \R^2$;
see Figure \ref{fig:Pi0-genus2}.
We also denote by $H'$, resp.\ $H^0$, the genus $g$ curves with $p+q$ (resp.\
$2(p+q)$)
punctures obtained by intersecting $H$ with the affine subset
$V'=\C^2\subset V$, resp.\ $V^0=(\C^*)^2$.

The polytope $\Delta_Y=\{(\xi_1,\xi_2,\eta)\,|\,\eta\ge
\varphi(\xi_1,\xi_2)\}$  has $(p+1)(q+1)$ facets, corresponding to the 
regions where a particular term in \eqref{eq:tropical_genusg} realizes
the maximum. Thus the 3-fold $Y$ has $(p+1)(q+1)$ irreducible toric divisors $D_{a,b}$
($0\le a\le p$, $0\le b\le q$) (we label each divisor by the weight of the dominant
monomial).
The moment polytopes for these divisors are exactly the components of
$\R^2\setminus \Pi_0$, and the tropical curve $\Pi_0$ depicts the moment
map images of the codimension 2 strata where they intersect 
(a configuration of $\PP^1$'s and $\mathbb{A}\!^1$'s); see Figure
\ref{fig:W0Hcrit} left (and compare with Figure \ref{fig:Pi0-genus2} right).

The leading-order superpotential $W_0$ of Definition \ref{def:LGmirror} consists of five terms: 
$w_0=-T^\epsilon+T^\epsilon v_0$, where $v_0$ is the toric monomial of weight $(0,0,1)$,
which vanishes with multiplicity 1 on each of the toric divisors $D_{a,b}$; 
and four terms $w_1,\dots,w_4$ corresponding to the facets of $\Delta_V$.
Up to constant factors, $w_1$ is the toric monomial with weight $(-1,0,0)$, 
which vanishes with multiplicity
$a$ on $D_{a,b}$; $w_2$ is the toric monomial with weight
$(0,-1,0)$, vanishing with multiplicity $b$ on $D_{a,b}$;
$w_3$ is the monomial with weight $(1,0,p)$, with multiplicity $(p-a)$ on
$D_{a,b}$; and $w_4$ is the monomial with weight $(0,1,q)$, with
multiplicity $(q-b)$ on $D_{a,b}$ (compare Example \ref{ex:genus2part2}).

Our main results for the open curve $H^0\subset V^0=(\C^*)^2$ are the following:
\begin{enumerate}
\item the complement $Y^0$ of $w_0^{-1}(0)\simeq (\C^*)^2$
in the toric 3-fold $Y$ is SYZ mirror to the conic bundle
$X^0=\{(x_1,x_2,y,z)\in (\C^*)^2\times \C^2\,|\,yz=f(x_1,x_2)\}$;
\item the $B$-side Landau-Ginzburg model $(Y^0,w_0)$ is SYZ mirror to
the blowup of $(\C^*)^2\times\C$ along $H^0\times 0$;
\item the $B$-side Landau-Ginzburg model $(Y,-v_0)$ is a generalized SYZ 
mirror to the open genus $g$ curve $H^0$.
\end{enumerate}

\noindent
The $B$-side Landau-Ginzburg models $(Y^0,w_0)$ and $(Y,-v_0)$ have regular fibers
isomorphic to $(\C^*)^2$, while the singular fiber
$w_0^{-1}(-T^\epsilon)=v_0^{-1}(0)$ is the union of all the toric divisors
$D_{a,b}$. In particular, the singular fiber consists of $(p+1)(q+1)$ toric surfaces
intersecting pairwise along a configuration of $\PP^1$'s and 
$\mathbb{A}\!^1$'s (the 1-dimensional strata of $Y$), themselves
intersecting at triple points (the 0-dimensional strata of $Y$); the
combinatorial structure of the trivalent configuration of $\PP^1$'s and 
$\mathbb{A}\!^1$'s is exactly given by the tropical curve $\Pi_0$.
(See Figure \ref{fig:W0Hcrit} left).

If we partially compactify to $V'=\C^2$, then we get:
\begin{enumerate}
\item[(2')] the $B$-side Landau-Ginzburg model $(Y^0,w_0+w_1+w_2)$ is SYZ mirror to
the blowup of $\C^3$ along $H'\times 0$;
\item[(3')] the $B$-side Landau-Ginzburg model $(Y,-v_0+w_1+w_2)$ is mirror to $H'$.
\end{enumerate}

Adding $w_1+w_2$ to the superpotential results in a partial smoothing of the
singular fiber; namely, the singular fiber is now the union of the 
toric surfaces $D_{a,b}$ where $a>0$ and $b>0$ (over which $w_1+w_2$
vanishes identically) and a single noncompact surface $S'\subset Y$, 
which can be thought of as a smoothing (or partial smoothing)
of $S'_0=\left(\bigcup_a
D_{a,0}\right) \cup \left(\bigcup_b D_{0,b}\right)$.

By an easy calculation in the toric affine charts of $Y$,
the critical locus of $W_{H'}=-v_0+w_1+w_2$ (i.e.\
the pairwise intersections of components of $W_{H'}^{-1}(0)$ and the
possible self-intersections of $S'$) is again a union of 
$\PP^1$'s and $\mathbb{A}\!^1$'s  meeting at triple points;
the combinatorics of this configuration is obtained from the planar graph
$\Pi_0$ (which describes the critical locus of $W_{H^0}=-v_0$) by deleting
all the unbounded edges in the directions of $(-1,0)$ and $(0,-1)$, 
then inductively collapsing the bounded edges that connect to univalent 
vertices and merging the edges that meet at bivalent vertices (see
Figure \ref{fig:W0Hcrit} middle); this
construction can be understood as a sequence of ``tropical modifications''
applied to the tropical curve $\Pi_0$.

\begin{figure}
\setlength{\unitlength}{4mm}\psset{unit=\unitlength}
\begin{picture}(8.5,8.5)(-4.25,-4.25)
\psline(-4.25,-3)(-3,-3)
\psline(-4.25,0)(-2,0)
\psline(-3,-4.25)(-3,-3)
\psline(-1,-4.25)(-1,-2)
\psline(1,-4.25)(1,-1)
\psline(4.25,3)(3,3)
\psline(4.25,0)(2,0)
\psline(3,4.25)(3,3)
\psline(1,4.25)(1,2)
\psline(-1,4.25)(-1,1)
\pspolygon(-2,-2)(-1,-2)(0,-1)(0,1)(-1,1)(-2,0)
\psline(-3,-3)(-2,-2)
\pspolygon(2,2)(1,2)(0,1)(0,-1)(1,-1)(2,0)
\psline(3,3)(2,2)
\put(-4,-3.8){\small \makebox(0,0)[cc]{$D_{00}$}}
\put(-2,-3.5){\small \makebox(0,0)[cc]{$D_{10}$}}
\put(0,-2.7){\small \makebox(0,0)[cc]{$D_{20}$}}
\put(2.5,-2){\small \makebox(0,0)[cc]{$D_{30}$}}
\put(4,3.8){\small \makebox(0,0)[cc]{$D_{32}$}}
\put(2,3.5){\small \makebox(0,0)[cc]{$D_{22}$}}
\put(0,2.7){\small \makebox(0,0)[cc]{$D_{12}$}}
\put(-2.5,2){\small \makebox(0,0)[cc]{$D_{02}$}}
\put(-3.2,-1.5){\small \makebox(0,0)[cc]{$D_{01}$}}
\put(-1,-0.5){\small \makebox(0,0)[cc]{$D_{11}$}}
\put(1,0.5){\small \makebox(0,0)[cc]{$D_{21}$}}
\put(3.2,1.5){\small \makebox(0,0)[cc]{$D_{31}$}}
\end{picture}
\qquad
\begin{picture}(8.5,8.5)(-4.25,-4.25)
\psline(4.25,3)(3,3)
\psline(4.25,0)(2,0)
\psline(3,4.25)(3,3)
\psline(1,4.25)(1,2)
\psline(-1,4.25)(-1,1)
\pscurve(0,-1)(-1,-2)(-2,-2)(-2,0)(-1,1)
\psline(0,-1)(0,1)(-1,1)
\psline(2,0)(2,2)(1,2)(0,1)(0,-1)
\pscurve(0,-1)(1,-1)(2,0)
\psline(3,3)(2,2)
\put(4,3.8){\small \makebox(0,0)[cc]{$D_{32}$}}
\put(2,3.5){\small \makebox(0,0)[cc]{$D_{22}$}}
\put(0,2.7){\small \makebox(0,0)[cc]{$D_{12}$}}
\put(-1,-0.5){\small \makebox(0,0)[cc]{$D_{11}$}}
\put(1,0.5){\small \makebox(0,0)[cc]{$D_{21}$}}
\put(3.2,1.5){\small \makebox(0,0)[cc]{$D_{31}$}}
\put(-2.5,2){\small \makebox(0,0)[cc]{$S'$}}
\put(2.5,-2){\small \makebox(0,0)[cc]{$S'$}}
\end{picture}
\qquad
\begin{picture}(8.5,8.5)(-4.25,-4.25)
\pscurve(0,-1)(-1,-2)(-2,-2)(-2,0)(-1,1)(0,1)
\pscurve(0,1)(1,2)(2,2)(2,0)(1,-1)(0,-1)
\psline(0,-1)(0,1)
\put(-1,-0.5){\small \makebox(0,0)[cc]{$D_{11}$}}
\put(1,0.5){\small \makebox(0,0)[cc]{$D_{21}$}}
\put(-2.5,2){\small \makebox(0,0)[cc]{$S$}}
\put(2.5,-2){\small \makebox(0,0)[cc]{$S$}}
\end{picture}
\caption{The singular fibers of the mirrors to 
$H^0=H\cap(\C^*)^2$ (left) and $H'=H\cap \C^2$ (middle), and of the 
leading-order terms of the mirror to $H$ (right). Here $H$ is a genus 2
curve of bidegree $(3,2)$ in $\PP^1\times\PP^1$.}
\label{fig:W0Hcrit}
\end{figure}
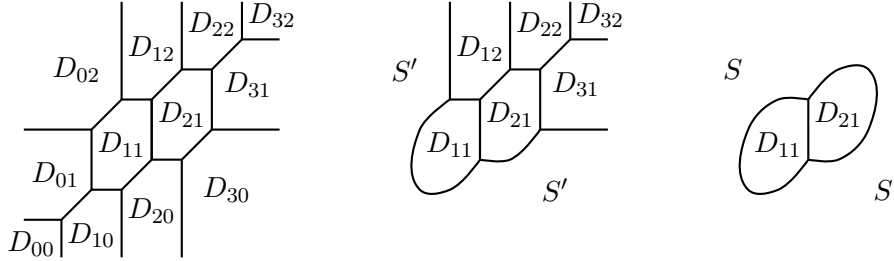

The closed genus $g$ curve $H$ does not satisfy Assumption
\ref{ass:affinecase}, so our main results do not apply to it.
However, it is instructive to consider the leading-order mirrors
$(Y^0,W_0)$ to the blowup $X$ of $\PP^1\times\PP^1\times\C$ along $H\times
0$ and $(Y,W_0^H)$ to the curve $H$ itself. Indeed, in this case the 
additional instanton corrections (i.e., virtual counts of
configurations that include exceptional rational curves in $\tilde{V}$)
are expected to only have a mild effect on the mirror:
specifically, they should not affect the 
topology of the critical locus, but merely deform it in a way that can
be accounted for by corrections to the mirror map. We will return to this
question in a forthcoming paper.

The zero set of the leading-order superpotential $W_0^H=-v_0+w_1+w_2+w_3+w_4$ 
is the union of the compact toric surfaces $D_{a,b}$, $0<a<p$, $0<b<q$, with
a single noncompact surface $S\subset Y$, which can be thought of as a
smoothing (or partial smoothing) of the union $S_0$ of the noncompact toric 
divisors of $Y$. (There may also be new critical points which do not lie
over $0$; we shall not discuss them.)

Here again, an easy calculation in the toric affine charts shows that
the singular locus of $(W_0^H)^{-1}(0)$ (i.e.,
the pairwise intersections of components and the
possible self-intersections of $S$) forms a configuration of $3g-3$
$\PP^1$'s meeting at triple points.
Combinatorially, this configuration is obtained from the planar graph
$\Pi_0$ by deleting all the unbounded edges,
then inductively collapsing the bounded edges that connect to univalent 
vertices and merging the edges that meet at bivalent vertices (see
Figure~\ref{fig:W0Hcrit} right); this
can be understood as a sequence of tropical modifications
turning $\Pi_0$ into a closed genus $g$ tropical curve (i.e.,
a trivalent graph without unbounded edges).

(The situation is slightly different when $p=q=2$ and $g=1$: in this
case $(W_0^H)^{-1}(0)=D_{1,1}\cup S$, and the critical locus $D_{1,1}
\cap S$ is a smooth elliptic curve. In this case, the higher instanton
corrections are easy to analyze, and simply amount to rescaling the
first term $-v_0$ of the superpotential by a multiplicative factor which encodes certain
genus 0 Gromov-Witten invariants of $\PP^1\times\PP^1$.)

\section{Generalizations} \label{s:generalizations}
In this section we mention (without details) a couple of straightforward
generalizations of our construction.

\subsection{Non-maximal degenerations}\label{ss:nonmaximal}

In our main construction we have assumed that the hypersurface $H\subset
V$ is part of a maximally degenerating family $(H_\tau)_{\tau\to 0}$
(see Definition \ref{def:maxdegeneration}). This was
used for two purposes: (1) to ensure that, for each weight
$\alpha\in A$, there exists a connected component of $\R^n\setminus
\mathrm{Log}(H)$ over which the corresponding monomial in the defining
equation \eqref{eq:Ht} dominates all other terms, and (2) to ensure that
the toric variety $Y$ associated to the polytope \eqref{eq:DeltaY} is smooth.

(Note that the regularity of $\mathcal{P}$ also ensures the
smoothness of $H$ throughout, and of $H'_\sigma$ in the discussion before 
Lemma \ref{l:Wsigma2}; without the regularity assumption,
smoothness can still be achieved by making generic choices of the 
coefficients $c_\alpha$ in \eqref{eq:Ht}.)

In general, removing the assumption of maximal degeneration, some of the terms in the tropical polynomial
$$\varphi(\xi)=\max\,\{\langle \alpha,\xi\rangle-\rho(\alpha)\,|\,\alpha\in A\}$$
may not achieve the maximum under any circumstances; denote by $A_{red}$ the
set of those weights which do achieve the maximum for some value of $\xi$.
Equivalently, those are exactly the vertices of the polyhedral decomposition $\mathcal{P}$
of $\mathrm{Conv}(A)$ induced by the function $\rho:A\to\R$.
Then the elements of $A\setminus A_{red}$ do not give rise to connected components of the complement
of the tropical curve, nor to facets of $\Delta_Y$, and should be discarded altogether. 
Thus, the main difference with the maximal degeneration case is that
the rays of the fan $\Sigma_Y$ are the
vectors $(-\alpha,1)$ for $\alpha\in A_{red}$, and
the toric variety $Y$ is usually singular.

Indeed, the construction of the Lagrangian torus fibration $\pi:X^0\to B$ proceeds
as in \S \ref{s:fibrations}, and the arguments in Sections \ref{s:fibrations}
to \ref{s:generalcase} remain valid, the only difference being that only the
weights $\alpha\in A_{red}$ give rise to chambers $U_\alpha$ of tautologically unobstructed
fibers of $\pi$, and hence to affine coordinate charts $U_\alpha^\vee$ for the
SYZ mirror $Y^0$ of $X^0$. Replacing $A$ by $A_{red}$ throughout the arguments
addresses this issue. 

The smooth mirrors obtained from maximal degenerations are 
crepant resolutions of the singular mirrors obtained from non-maximal ones.
Starting from a non-maximal polyhedral decomposition
$\mathcal{P}$, the various ways in which it can be refined to a regular
decomposition correspond to different choices of resolution.
We give a few examples.

\begin{example}
Revisiting the example of the $A_k$-Milnor fiber considered in
\S \ref{ss:ALE}, we now consider the case where the roots of the polynomial
$f_{k+1}$ satisfy $|x_1|=\dots=|x_{k+1}|$, for example
$f_{k+1}(x)=x^{k+1}-1$, which gives
$$X'=\{(x,y,z)\in\C^3\,|\,yz=x^{k+1}-1\}.$$
Then the tropical polynomial $\varphi:\R\to\R$ is
$\varphi(\xi)=\max(0,(k+1)\xi)$, and the polytope $\Delta_Y=\{(\xi,\eta)
\in\R^2\,|\,\eta\ge \varphi(\xi)\}$ determines the singular toric variety
$\{st=u^{k+1}\}\subset\K^3$, i.e.\ the $A_k$ singularity, rather than
its resolution as previously.

Geometrically, the Lagrangian torus fibration $\pi$ normally consists
of $k+2$ chambers, depending on how many of the roots of $f_{k+1}$ lie
inside the projection of the fiber to the $x$ coordinate plane. In the
case considered here, all the walls lie at $|x|=1$, and the fibration
$\pi$ only consists of two chambers ($|x|<1$ and $|x|>1$). 

In fact, $\Z/(k+1)$ acts freely on $X_k^0=\{(x,y,z)\in
\C^*\times\C^2\,|\,yz=x^{k+1}-1\}$, making it an unramified cover of
$X^0_0=\{(\hat{x},y,z)\in\C^*\times\C^2\,|\,yz=\hat{x}-1\}\simeq \C^2\setminus
\{yz=-1\}$ via the map $(x,y,z)\mapsto (x^{k+1},y,z)$. The Lagrangian tori we consider on $X^0_k$ are simply the
preimages of the SYZ fibration on $X^0_0$,
which results in the mirror being the quotient of the mirror of $X^0_0$
(namely, $\{(\hat{s},\hat{t},u)\in\K^3\,|\,\hat{s}\hat{t}=u,\ u\neq 1\}$) by a 
$\Z/(k+1)$-action (namely $\zeta\cdot(\hat{s},\hat{t},u)=(\zeta
\hat{s},\zeta^{-1}\hat{t},u)$).
As expected, the quotient is nothing other than
$Y_k^0=\{(s,t,u)\in\K^3\,|\,st=u^{k+1},\ u\neq 1\}$ (via the map
$(\hat{s},\hat{t},u)\mapsto (\hat{s}^{k+1},\hat{t}^{k+1},u)$).
\end{example}

\begin{example}
The higher-dimensional analogue of the previous example is that of
Fermat hypersurfaces in $(\C^*)^n$ or in $\CP^n$. Let $H$ be the Fermat
hypersurface in $\CP^n$ given by the equation $\sum X_i^d=0$ in
homogeneous coordinates, i.e.\ $x_1^d+\dots+x_n^d+1=0$ in 
affine coordinates, and let $X$ be the blowup of $\CP^n\times\C$ at
$H\times 0$. In this case, the open Calabi-Yau manifold $X^0$ is
$$X^0=\{(x_1,\dots,x_n,y,z)\in (\C^*)^n\times\C^2\,|\,yz=x_1^d+\dots+
x_n^d+1\}.$$
The tropical polynomial corresponding to $H$ is $\varphi(\xi_1,\dots,\xi_n)=
\max(d\xi_1,\dots,d\xi_n,0)$, which is highly degenerate. Thus the
toric variety $Y$ associated to the polytope $\Delta_Y$ given by 
\eqref{eq:DeltaY} is singular, in fact it can be described as
$$Y=\{(z_1,\dots,z_{n+1},v)\in\K^{n+2}\,|\,z_1\dots z_{n+1}=v^d\},$$
which can be viewed as the quotient of $\K^{n+1}$ by the diagonal action of $(\Z/d)^n$
(multiplying all coordinates by roots of unity but preserving their
product), via the map $(\tilde{z}_1,\dots,\tilde{z}_{n+1})\mapsto
(\tilde{z}_1^d,\dots,\tilde{z}_{n+1}^d,\tilde{z}_1\dots\tilde{z}_{n+1})$.
As in the previous example, this is consistent with the observation that
$X^0$ is a $(\Z/d)^n$-fold cover of the conic bundle considered in 
\S \ref{ss:pants}, where $(\Z/d)^n$ acts diagonally by multiplication on the
coordinates $x_1,\dots,x_n$.

(As usual, considering a maximally degenerating family of hypersurfaces 
of degree $d$ instead of a Fermat hypersurface would yield a crepant resolution of $Y$.)

By Theorem \ref{cor:main}, the affine Fermat hypersurface $H^0=H\cap (\C^*)^n$
is mirror to the singular $B$-side Landau-Ginzburg model $(Y,W_0^H=-v)$ or, in other terms, 
the quotient of $(\K^{n+1},\tilde{W}_0^H=-\tilde{z}_1\dots\tilde{z}_{n+1})$
by the action of $(\Z/d)^n$, which is consistent with 
\cite{Sheridan}. 

Furthermore, by Remark \ref{rmk:m0shift} the theorem also
applies to projective Fermat hypersurfaces of degree $d<n$ in $\CP^n$.
Setting $a=\frac{1}{n+1}\int_{\CP^1}\omega_{\CP^n}$, and placing the 
barycenter of the moment polytope of $\CP^n$ at the origin, 
we find that
$$\left(Y,W_0^H=-v+T^{a}(z_1+\dots+z_{n+1})\right)$$
is mirror to $H$ (for $d<n$; otherwise this is only the leading-order
approximation to the mirror). Equivalently, this can be viewed as the
quotient of 
$$\left(\K^{n+1},\tilde{W}_0^H=-\tilde{z_1}\dots\tilde{z}_{n+1}+
T^{a}(\tilde{z}_1^d+\dots+\tilde{z}_{n+1}^d)\right)$$
by the action of
$(\Z/d)^n$, which is again consistent with Sheridan's work.
\end{example}

\begin{example}
We now revisit the example considered in \S \ref{ss:planecurves},
where we found the mirrors of nearly tropical plane curves of
bidegree $(p,q)$
to be smooth toric 3-folds (equipped with suitable superpotentials)
whose topology is determined by the
combinatorics of the corresponding tropical plane curve $\Pi_0$ (or dually, 
of the regular triangulation $\mathcal{P}$ of the rectangle
$[0,p]\times [0,q]$).

A particularly simple way to modify the combinatorics is to ``flip'' a
pair of adjacent triangles of $\mathcal{P}$ whose union is a unit parallelogram;
this affects the toric 3-fold 
$Y$ by a flip. This operation can be implemented by a continuous deformation of the
tropical curve $\Pi_0$ in which the length of a bounded edge shrinks to
zero, creating a four-valent vertex, which is then resolved by creating
a bounded edge in the other direction and increasing its length. The
intermediate situation where $\Pi_0$ has a 4-valent vertex corresponds to
a non-maximal degeneration where $\mathcal{P}$ is no longer a maximal 
triangulation of $[0,p]\times [0,q]$, instead containing a single 
parallelogram of unit area; the mirror toric variety $Y$ then acquires 
an ordinary double point singularity. The two manners in which the
four-valent vertex of the tropical curve can be deformed to a pair of
trivalent vertices connected by a bounded edge then amount to the two
small resolutions of the ordinary double point, and differ by a flip.
\end{example}

\subsection{Hypersurfaces in abelian varieties} 
As suggested to us by Paul Seidel, the methods we use to study
hypersurfaces in toric varieties can also be applied to the case of
hypersurfaces in abelian varieties. For simplicity, we only discuss
the case of abelian varieties $V$ which can be viewed as quotients of
$(\C^*)^n$ (with its standard K\"ahler form) by the action of a real 
lattice $\Gamma_B\subset \R^n$, where
$\gamma\in\Gamma_B$ acts by $(x_1,\dots,x_n)\mapsto (e^{\gamma_1} x_1,\dots,
e^{\gamma_n} x_n)$. In other terms, the logarithm map identifies
$V$ with the product $T_B\times T_F$ of two real Lagrangian tori, the
``base'' $T_B=\R^n/\Gamma_B$ and the ``fiber'' $T_F=i\R^n/(2\pi\Z)^n$
(which corresponds to the orbit of a $T^n$-action).

Since the $T^n$-action on $V$ is not Hamiltonian, there is no globally
defined $\R^n$-valued moment map. However, there is an analogous map which
takes values in a real torus, namely the quotient of $\R^n$ by the
lattice spanned by the periods of $\omega_V$ on $H_1(T_B)\times H_1(T_F)$;
due to our choice of the standard K\"ahler form on $(\C^*)^n$, this
period lattice is simply $\Gamma_B$, and the ``moment map'' is the logarithm
map projecting from $V$ to the real torus $T_B=\R^n/\Gamma_B$.

A tropical hypersurface $\Pi_0\subset T_B$ can be thought of as the image of a
$\Gamma_B$-periodic tropical hypersurface $\tilde{\Pi}_0\subset \R^n$ under the natural
projection $\R^n\to \R^n/\Gamma_B=T_B$. Such a tropical hypersurface occurs
naturally as the limit of the amoebas (moment map images) of a degenerating
family of hypersurfaces $H_\tau$ inside the degenerating family of abelian
varieties $V_\tau$ ($\tau\to 0$) corresponding to rescaling the lattice $\Gamma_B$ by
a factor of $|\log\tau|$. (We keep the K\"ahler class
$[\omega_V]$ and its period lattice $\Gamma_B$ constant by rescaling the
K\"ahler form of $(\C^*)^n$ by an appropriate factor, so that the moment map
is given by the base $\tau$ logarithm map, $\mu_V=\Log_\tau:V_\tau\to T_B$.)
As in \S \ref{s:notations} we call $H_\tau\subset V_\tau$ ``nearly
tropical'' if its amoeba $\Pi_\tau=\Log_\tau(H_\tau)\subset T_B$ is contained in a tubular 
neighborhood of the tropical hypersurface $\Pi_0$; we place ourselves
in the nearly tropical setting, and elide $\tau$ from the notation.

Concretely, the hypersurface $H$ is defined by a section of a
line bundle $\mathcal{L}\to V$ whose pullback to $(\C^*)^n$ is trivial;
$\mathcal{L}$ can be viewed as the quotient of $(\C^*)^n\times\C$ by
$\Gamma_B$, where $\gamma\in\Gamma_B$ acts by 
\begin{equation}\label{eq:thetatriv}
\gamma_\#:(x_1,\dots,x_n,v)\mapsto
(\tau^{-\gamma_1}x_1,\dots,\tau^{-\gamma_n}x_n,\tau^{\kappa(\gamma)}\mathbf{x}^{\lambda(\gamma)}
v),
\end{equation}
where $\lambda\in\hom(\Gamma_B,\Z^n)$ is a homomorphism determined by the Chern class $c_1(\mathcal{L})$
(observe that $\hom(\Gamma_B,\Z^n)\simeq H^1(T_B,\Z)\otimes H^1(T_F,\Z)
\subset H^2(V,\Z)$), and $\kappa:\Gamma_B
\to\R$ satisfies a cocycle-type condition in order to make
\eqref{eq:thetatriv} a group action. A basis of sections of $\mathcal{L}$ is
given by the theta functions
\begin{equation}\label{eq:theta}\vartheta_\alpha(x_1,\dots,x_n)=
\sum_{\gamma\in\Gamma_B} \gamma_\#^*(\mathbf{x}^\alpha),\qquad
\alpha\in \Z^n/\lambda(\Gamma_B).
\end{equation}
(Note: for $\gamma\in \Gamma_B$, $\vartheta_\alpha$ and
$\vartheta_{\alpha+\lambda(\gamma)}$ actually differ by a constant factor.)
The defining section $f$ of $H$ is a finite linear combination of
these theta functions; equivalently, its lift to $(\C^*)^n$ can be viewed
as an infinite Laurent series of the form \eqref{eq:Ht}, invariant under
the action \eqref{eq:thetatriv} (which forces the set of weights
$A$ to be $\lambda(\Gamma_B)$-periodic.)
We note that the corresponding tropical function $\varphi:\R^n\to\R$ is also
$\Gamma_B$-equivariant, in the sense that $\varphi(\xi+\gamma)=
\varphi(\xi)+\langle\lambda(\gamma),\xi\rangle-\kappa(\gamma)$ for all
$\gamma\in\Gamma_B$.

Let $X$ be the blowup of $V\times\C$ along $H\times 0$, equipped with
an $S^1$-invariant K\"ahler form $\omega_\epsilon$ such that the fibers 
of the exceptional divisor have area $\epsilon>0$ (chosen sufficiently
small).
Denote by $\tilde{V}$ the proper transform of 
$V\times 0$, and let $X^0=X\setminus \tilde{V}$. Then $X^0$ carries an
$S^1$-invariant Lagrangian torus fibration $\pi:X^0\to B=T_B\times \R_+$,
constructed as in \S \ref{s:fibrations} by assembling fibrations on
the reduced spaces of the $S^1$-action. This allows us to determine
SYZ mirrors to $X^0$ and $X$ as in \S \ref{s:opencase} and \S
\ref{s:generalcase}. 

The construction can be understood either directly at the level of 
$X$ and $X^0$, or by viewing the whole process as a $\Gamma_B$-equivariant
construction on the cover $\tilde{X}$, namely the blowup of 
$(\C^*)^n\times\C$ along $\tilde{H}\times 0$, where $\tilde{H}$ is the
preimage of $H$ under the covering map $q:(\C^*)^n\to (\C^*)^n/\Gamma_B=V$.
The latter viewpoint makes it easier to see that the enumerative geometry
arguments from the toric case extend to this setting.

As in the toric case, each weight $\bar\alpha\in \bar{A}:=A/\lambda(\Gamma_B)$ 
determines a connected component of the complement $T_B\setminus \Pi_0$ of
the tropical hypersurface $\Pi_0$, and hence a chamber
$U_{\bar\alpha}\subset B^{reg}\subset B$ over which the fibers of $\pi$ are
tautologically unobstructed. Each of these determines an affine coordinate chart 
$U_{\bar\alpha}^\vee$ for the SYZ mirror of $X^0$, and these charts are
glued to each other via coordinate transformations of the form
\eqref{eq:gluing}. 

Alternatively, we can think of the mirror as a quotient by $\Gamma_B$ of
a space built from an infinite collection of charts $U_\alpha^\vee$, $\alpha\in A$,
where each chart $U_\alpha^\vee$ has coordinates $(v_{\alpha,1},\dots,
v_{\alpha,n},w_0)$, glued together by \eqref{eq:gluing}. Specifically, for each element
$\gamma=(\gamma_1,\dots,\gamma_n)\in\Gamma_B$, we identify $U_\alpha^\vee$ with
$U_{\alpha+\lambda(\gamma)}^\vee$ via the map
\begin{equation}\label{eq:mirroraction}
\gamma^\vee_\#:(v_{\alpha,1},\dots,v_{\alpha,n},w_0)\in U_\alpha^\vee\mapsto
(T^{\gamma_1}v_{\alpha,1},\dots,T^{\gamma_n}v_{\alpha,n},w_0)\in
U_{\alpha+\lambda(\gamma)}^\vee,
\end{equation}
where the multiplicative factors $T^{\gamma_i}$ account for the amount
of symplectic area separating the different lifts to $\tilde{X}$ of a
given fiber of $\pi$.

Setting $v_0=1+T^{-\epsilon}w_0$, we can again view the SYZ mirror $Y^0$
of $X^0$ as the complement of the hypersurface $w_0^{-1}(0)=v_0^{-1}(1)$
in a ``locally toric'' variety $Y$ covered (outside of codimension 2 strata)
by local coordinate charts $Y_\alpha=(\K^*)^n\times\K$ ($\alpha\in
A$) glued together by \eqref{eq:toriccharts} and identified under the
action of $\Gamma_B$. Namely, for all $\alpha,\beta\in A$ and $\gamma\in
\Gamma_B$ we make the identifications
\begin{eqnarray}\label{eq:abeliancharts1}
(v_{1},\dots,v_{n},v_0)\in Y_\alpha&\sim&
(v_0^{\alpha_1-\beta_1}v_{1},\dots,v_0^{\alpha_n-\beta_n}v_{n},
v_0)\in Y_\beta,\\
\label{eq:abeliancharts2}
(v_{1},\dots,v_{n},v_0)\in Y_\alpha
&\sim& (T^{\gamma_1}v_{1},\dots,T^{\gamma_n}v_{n},v_0)\in
Y_{\alpha+\lambda(\gamma)}.
\end{eqnarray}
Finally, the abelian variety $V$ is aspherical, and any
holomorphic disc bounded by $\pi^{-1}(b)$, $b\in B^{reg}$ must be entirely
contained in a fiber of the projection to $V$, so that the only
contribution to the superpotential is $w_0$ (as in the case of hypersurfaces
in $(\C^*)^n$). With this understood, our main results become:
\begin{theorem}\label{thm:mainabelian}
Let $H$ be a nearly tropical hypersurface in an abelian variety
$V$, let $X$ be the blowup of $V\times\C$ along $H\times 0$, and let
$Y$ be as above. Then:
\begin{enumerate}
\item $Y^0=Y\setminus w_0^{-1}(0)$ is SYZ mirror to $X^0=X\setminus
\tilde{V}$;
\item the $B$-side Landau-Ginzburg model $(Y^0,w_0)$ is SYZ mirror to $X$;
\item the $B$-side Landau-Ginzburg model $(Y,-v_0)$ is generalised SYZ mirror to $H$.
\end{enumerate}
\end{theorem}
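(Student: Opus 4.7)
The plan is to run the arguments of Sections \ref{s:opencase}--\ref{s:backtoH} $\Gamma_B$-equivariantly on the universal cover $\tilde{X}$ (the blowup of $(\C^*)^n\times\C$ along $\tilde{H}\times 0$), whose local geometry is identical to the toric setting and whose descent to $X$ is controlled by the translation action of $\Gamma_B$. The key structural facts I would exploit throughout are that $V$ is aspherical, so any holomorphic disc in $V$ bounded by a Lagrangian torus lifts to $(\C^*)^n$ with boundary on a (deformed) product torus, and that $V$ contains no rational curves, so Assumption \ref{ass:affinecase} is vacuous and the conclusion of Lemma \ref{l:fano} still holds.

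For part (1), I would first lift the construction of Section \ref{s:fibrations} verbatim to obtain a $\Gamma_B$-invariant Lagrangian fibration on $\tilde{X}^0$ and descend it to $\pi:X^0\to B=T_B\times\R_+$. Because $\pi_2((\C^*)^n,T^n)=0$, the proof of Proposition \ref{prop:unobstructed} carries over to show that the simple holomorphic discs bounded by a fiber $\pi^{-1}(\xi,\lambda)$ in $X^0$ are exactly those described there, so the tautologically unobstructed fibers partition into chambers $U_\alpha$ indexed by $\alpha\in A$, with $\Gamma_B$ acting by $\alpha\mapsto \alpha+\lambda(\gamma)$. The local coordinate charts $U_\alpha^\vee$ of \eqref{eq:chartalpha} and the instanton-corrected wall-crossings \eqref{eq:gluing} between adjacent chambers are local to a neighborhood of each wall, so Lemmas \ref{l:countw0}, \ref{l:Wsigma}, \ref{l:Wsigma2} and Proposition \ref{prop:gluing} apply unchanged (after pulling back to $\tilde{X}$ where needed). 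The identifications \eqref{eq:abeliancharts2} then simply record the relative symplectic areas $T^{\gamma_i}$ between different lifts to $\tilde{X}^0$ of a single fiber of $\pi$, and quotienting by $\Gamma_B$ identifies the resulting glued space with $Y^0$.

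Parts (2) and (3) would follow the templates of Sections \ref{s:generalcase} and \ref{s:backtoH} respectively. For (2), every rational curve in $X$ is a $\PP^1$-fiber of the exceptional divisor $E\to H\times 0$ (since $V$ is aspherical and the $y$-coordinate obeys a maximum principle), and each satisfies $c_1(X)\cdot C=1$, so Assumption \ref{ass:nef} holds; because $V$ has no toric divisors, only the $w_0$-term of Lemma \ref{l:countw0} contributes to the superpotential, giving $W_0=w_0$. For (3), Proposition \ref{prop:morsebott} continues to describe $W^\vee=y$ as a Morse-Bott fibration with critical locus $\tilde{V}\cap E\simeq H$ and normal bundle $\mathcal{L}\oplus\mathcal{L}^{-1}$, so the thimble construction of Corollary \ref{cor:functor} produces an $A_\infty$-functor $\F(H)\to \F_s(X,W^\vee)$, and the $\delta=T^\epsilon$ shift of Proposition \ref{prop:m0shift} survives by Remark \ref{rmk:m0shift} because $\tilde{V}\simeq V$ contains no rational curves at all. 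The background-class twist flips $+w_0$ to $-w_0$ and the rescaling $v_0\mapsto T^\epsilon v_0$ eliminates the $\epsilon$-dependence, leaving $W_0^H=-v_0$ as required.

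The main obstacle is that the defining section $f$ of $H$ is not a global function but a section of the nontrivial line bundle $\mathcal{L}$, so the maximum-principle arguments underlying the proofs of Lemma \ref{l:Wsigma2} and Proposition \ref{prop:m0shift}---which in the affine toric case use $f$ to confine holomorphic discs to a neighborhood of $H$---need to be recast. The cleanest workaround is to pull back to the universal cover $(\C^*)^n$, where $f$ lifts to the $\Gamma_B$-quasi-periodic Laurent series \eqref{eq:Ht} of theta-type, and to use the nearly tropical hypothesis to reduce, in each chamber, to the single dominant monomial so that the toric-case estimates transfer directly. The remaining analytical inputs (Gromov compactness, regularity of Maslov index 2 discs, and coherent gluing of wall-crossing transformations with the $\Gamma_B$-action) are local and survive the passage from toric to abelian ambient space without essential change.
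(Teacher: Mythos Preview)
Your proposal is correct and follows essentially the same approach as the paper, which itself only sketches the argument in a couple of paragraphs: work $\Gamma_B$-equivariantly on the cover $\tilde{X}$, use asphericity of $V$ to reduce the disc enumeration to the toric case (so only the $w_0$ term survives), and observe that Assumption \ref{ass:affinecase} holds vacuously since $V$ contains no rational curves. You go further than the paper in explicitly flagging the one genuine wrinkle---that $f$ is a section of a nontrivial line bundle rather than a global function, so the auxiliary partial compactifications of Lemma \ref{l:Wsigma2} and the maximum-principle step in Proposition \ref{prop:m0shift} require care---and your proposed fix (lift to $(\C^*)^n$, where the theta series $\tilde f$ is a global holomorphic function, and use near-tropicality to control the relevant monomials locally) is the right one.
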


\noindent
Note that, unlike Theorems \ref{thm:main} and \ref{cor:main}, this result
holds without
any restrictions: when $V$ is an abelian variety, Assumption
\ref{ass:affinecase}
always holds and there are never any higher-order instanton corrections. 
On the other hand, the statement of part (3) implicitly uses the
properties of Fukaya categories of Landau-Ginzburg models whose proofs are sketched in Section \ref{s:backtoH}
(whereas parts (1) and (2) rely only on familiar versions of the Fukaya category).

The smooth fibers of $-v_0:Y\to\K$ (or equivalently up to a reparametrization,
$w_0:Y^0\to\K^*$) are all abelian varieties, in fact quotients of
$(\K^*)^n$ (with coordinates $\mathbf{v}=(v_1,\dots,v_n)$) by the identification
$$\mathbf{v}^m\sim v_0^{\langle \lambda(\gamma),m\rangle} T^{\langle
\gamma,m\rangle} \mathbf{v}^m\qquad\text{for all }m\in\Z^n\ \text{and}\ \gamma\in
\Gamma_B,$$
while the singular fiber is a union of toric varieties 
$$v_0^{-1}(0)=\bigcup_{\bar\alpha\in
\bar{A}} D_{\bar\alpha}$$ glued (to each other or to themselves) along toric
strata. The moment polytopes for the toric varieties
$D_{\bar\alpha}$
are exactly the components of  $T_B\setminus \Pi_0$, and the tropical
hypersurface $\Pi_0$ depicts the moment map images of the codimension 2
strata of $Y$ along which they intersect.

\begin{example}
When $H$ is a set of $n$ points on an elliptic curve $V$, we find that
the fibers of $-v_0:Y\to\K$ are a family of elliptic curves, all smooth
except $v_0^{-1}(0)$ which is a union of $n$ $\PP^1$'s forming a cycle
(in the terminology of elliptic fibrations, this is known as an $I_n$
fiber). In this case the superpotential $-v_0$ has $n$ isolated critical
points, all lying in the fiber over zero, as expected.
\end{example}

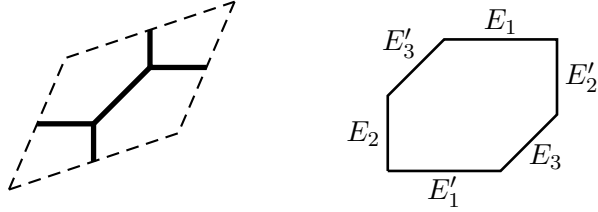
\begin{figure}
\setlength{\unitlength}{5mm}\psset{unit=\unitlength}
\begin{picture}(6,5)(-1.25,-0.75)
\psline[linewidth=2pt](-0.5,1)(1,1)(1,0)
\psline[linewidth=2pt](1,1)(2.5,2.5)
\psline[linewidth=2pt](2.5,3.5)(2.5,2.5)(4,2.5)
\psline[linestyle=dashed](-1.25,-0.75)(0.25,2.75)(4.75,4.25)
\psline[linestyle=dashed](-1.25,-0.75)(3.25,0.75)(4.75,4.25)
\end{picture}
\qquad\qquad
\begin{picture}(5,4)(-0.5,-0.5)
\psline[linewidth=1pt](0,0)(0,2)(1.5,3.5)(4.5,3.5)(4.5,1.5)(3,0)(0,0)
\put(-0.2,1){\makebox(0,0)[rc]{\small $E_2$}}
\put(4.7,2.5){\makebox(0,0)[lc]{\small $E'_2$}}
\put(1.5,-0.15){\makebox(0,0)[ct]{\small $E'_1$}}
\put(3,3.65){\makebox(0,0)[cb]{\small $E_1$}}
\put(0.75,3){\makebox(0,0)[rb]{\small $E'_3$}}
\put(3.75,0.7){\makebox(0,0)[lt]{\small $E_3$}}
\end{picture}
\caption{A tropical genus 2 curve on the 2-torus (left); the singular 
fiber of the mirror Landau-Ginzburg model is the quotient of the toric
Del Pezzo surface shown (right) by identifying $E_i\sim E'_i$.}
\label{fig:genus2abelian}
\end{figure}

\begin{example}
Now consider the case where $H$ is a genus 2 curve embedded in an abelian surface $V$ 
(for example its Jacobian torus). The tropical genus 2 curve $\Pi_0$ is a
trivalent graph on the 2-torus $T_B$ with two vertices and three edges,
see Figure~\ref{fig:genus2abelian} left. Since $T_B\setminus \Pi_0$ is
connected, the singular fiber $v_0^{-1}(0)$ of the mirror $B$-side Landau-Ginzburg
model is irreducible. Specifically,
it is obtained from the toric Del Pezzo
surface shown in Figure \ref{fig:genus2abelian} right, i.e.\ $\PP^2$ 
blown up in 3 points, by identifying each exceptional curve $E_i$ with the 
``opposite'' exceptional curve $E'_i$ (the proper
transform of the line through the two other points).
Thus the critical locus of the superpotential
is a configuration of three rational curves $E_1=E'_1$, $E_2=E'_2$,
$E_3=E'_3$ intersecting at two triple points. (Compare with \S
\ref{ss:planecurves}: the mirrors are very different, but
the critical loci are the same).
\end{example}


\section{Complete intersections}\label{ss:completeint}

In this section we explain (without details) how to extend our main results to
the case of complete intersections in toric varieties (under a
suitable positivity assumption for rational curves, which always holds
in the affine case).

\subsection{Notations and statement of the results}

Let $H_1,\dots,H_d$ be smooth nearly tropical hypersurfaces in a toric
variety $V$ of dimension $n$, in general position. We denote by $f_i$
the defining equation of $H_i$, a section of a line bundle $\mathcal{L}_i$
which can be written as a Laurent polynomial \eqref{eq:Ht} 
in affine coordinates $\mathbf{x}=(x_1,\dots,x_n)$; by $\varphi_i:\R^n\to\R$ the corresponding
tropical polynomial; and by $\Pi_i\subset\R^n$ the tropical
hypersurface defined by $\varphi_i$. (To ensure smoothness of the mirror, 
it is useful to assume that
the tropical hypersurfaces $\Pi_1,\dots,\Pi_d$ intersect transversely,
though this assumption is actually not necessary).

We denote by $X$ the blowup of $V\times\C^d$ along the $d$ codimension 2
subvarieties $H_i\times \C^{d-1}_i$, where $\C^{d-1}_i=\{y_i=0\}$ 
is the $i$-th coordinate hyperplane in $\C^d$. (The blowup is smooth since
the subvarieties $H_i\times \C^{d-1}_i$ intersect transversely). Explicitly, $X$ can be a
described as a smooth submanifold of the total space of the $(\PP^1)^d$-bundle
$\prod_{i=1}^d \PP(\mathcal{L}_i\oplus \O)$ over $V\times\C^d$, 
\begin{equation}
\label{eq:Xci}
X=\{(\mathbf{x},y_1,\dots,y_d,(u_1\!:\!v_1),\dots,(u_d\!:\!v_d))\,|\,
f_i(\mathbf{x})v_i=y_iu_i\ \forall i=1,\dots,d\}.
\end{equation}
Outside of the union of the hypersurfaces $H_i$, the fibers of the 
projection $p_V:X\to V$ obtained by composing the blowup map $p:X\to
V\times\C^d$ with projection to the first factor are isomorphic to $\C^d$;
above a point which
belongs to $k$ of the $H_i$, the fiber consists of $2^k$ components, each of which is
a product of $\C$'s and $\PP^1$'s.

The action of $T^d=(S^1)^d$ on $V\times\C^d$ by rotation on the last $d$
coordinates lifts to $X$; we equip $X$ with a $T^d$-invariant K\"ahler form
for which the exceptional $\PP^1$ fibers of the $i$-th exceptional divisor
have area $\epsilon_i$ (where $\epsilon_i>0$ is chosen small enough).
As in \S \ref{ss:blowup}, we arrange for the K\"ahler form on $X$ to
coincide with that on $V\times\C^d$ away from the exceptional divisors.
We denote by $\mu_X:X\to\R^d$ the moment map.

The dense open subset $X^0\subset X$ over which we can construct an SYZ fibration
is the complement of the proper transforms of the toric strata of
$V\times\C^d$; it can be viewed as an iterated conic bundle over the
open stratum $V^0\simeq (\C^*)^n\subset V$, namely
\begin{equation}
\label{eq:X0ci}
X^0\simeq \{(\mathbf{x},y_1,\dots,y_d,z_1,\dots,z_d)\in V^0\times
\C^{2d}\,|\, y_iz_i=f_i(\mathbf{x})\ \forall i=1,\dots,d\}.
\end{equation}

Consider the polytope $\Delta_Y\subseteq \R^{n+d}$ defined by
\begin{equation}
\label{eq:DeltaYci}
\Delta_Y=\{(\xi,\eta_1,\dots,\eta_d)\in \R^n\oplus\R^d\,|\,\eta_i\ge
\varphi(\xi_i)\ \forall i=1,\dots,d\},
\end{equation}
and let $Y$ be the corresponding toric variety. For $i=1,\dots,d$, denote by
$v_{0,i}$ the monomial with weight $(0,\dots,0,1,\dots,0)$ (the $(n+i)$-th
entry is $1$), and set
\begin{equation}
\label{eq:w0ici}
w_{0,i}=-T^{\epsilon_i}+T^{\epsilon_i}v_{0,i}.
\end{equation}

Denote by $A$ the set of connected components of $\R^n\setminus
(\Pi_1\cup\dots\cup \Pi_d)$, and index each component by the tuple of
weights $\vec\alpha=(\alpha^1,\dots,\alpha^d)\in
\Z^{n\times d}$ corresponding to the dominant monomials of
$\varphi_1,\dots,\varphi_d$ in that component.
Then for each $\vec\alpha\in A$ we have a coordinate
chart $Y_{\vec{\alpha}}\simeq (\K^*)^n\times\K^d$ with coordinates
$\mathbf{v}_{\vec\alpha}=(v_{\vec{\alpha},1},\dots,v_{\vec\alpha,n})
\in(\K^*)^n$ and $(v_{0,1},\dots,v_{0,d})\in\K^d$, where the monomial
$\mathbf{v}_{\vec{\alpha}}^m=v_{\vec\alpha,1}^{m_1}\dots v_{\vec\alpha,n}^{m_n}$
is the toric monomial with weight $(-m_1,\dots,-m_n,\langle \alpha^1,m\rangle,
\dots,\langle\alpha^d,m\rangle)\in\Z^{n+d}$. These charts glue via
\begin{equation}
\label{eq:gluingci}
\mathbf{v}_{\vec\alpha}^m=\left(\prod_{i=1}^d{(1+T^{-\epsilon_i} w_{0,i})}^{\langle
\beta^i-\alpha^i,m\rangle}\right)\,\mathbf{v}_{\vec\beta}^m.
\end{equation}

Denoting by $\sigma_1,\dots,\sigma_r\in \Z^n$ the primitive generators of
the rays of the fan $\Sigma_V$, and writing the moment polytope of $V$ in 
the form \eqref{eq:Delta_V}, for
$j=1,\dots,r$ we define
\begin{equation}
\label{eq:wjci}
w_j=T^{\varpi_j}\mathbf{v}_{\vec{\alpha}_{min}(\sigma_j)}^{\sigma_j},
\end{equation}
where $\vec{\alpha}_{min}(\sigma_j)\in A$ is chosen so that all $\langle
\sigma_j,\alpha^i\rangle$ are minimal. In other terms,
$\mathbf{v}_{\vec{\alpha}_{min}(\sigma_j)}^{\sigma_j}$ is the toric
monomial with weight
$(-\sigma_j,\lambda_1(\sigma_j),\dots,\lambda_d(\sigma_j))\in \Z^{n+d}$,
where $\lambda_1,\dots,\lambda_d:\Sigma_V\to\R$ are the piecewise linear
functions defining $\mathcal{L}_i=\O(H_i)$.

Finally, define $Y^0$ to be the subset of $Y$ where
$w_{0,1},\dots,w_{0,d}$ are all non-zero, and define the leading-order
superpotentials
\begin{equation}
\label{eq:W0ci}
W_0=w_{0,1}+\dots+w_{0,d}+w_1+\dots+w_r=\sum_{i=1}^d (-T^{\epsilon_i}+
T^{\epsilon_i}v_{0,i})\ +\ \sum_{i=1}^r
T^{\varpi_j}\mathbf{v}_{\vec{\alpha}_{min}(\sigma_j)}^{\sigma_j},
\end{equation}
\begin{equation}
\label{eq:W0Hci}
W_0^H=-v_{0,1}-\dots-v_{0,d}+w_1+\dots+w_r=
-\sum_{i=1}^d v_{0,i}\ +\ \sum_{i=1}^r
T^{\varpi_j}\mathbf{v}_{\vec{\alpha}_{min}(\sigma_j)}^{\sigma_j}.
\end{equation}
With this understood, the analogue of Theorems
\ref{thm:main}--\ref{thm:conicbundle} is the following

\begin{theorem}\label{thm:mainci} With the above notations:
\begin{enumerate}
\item $Y^0$ is SYZ mirror to the iterated conic bundle $X^0$;
\item assuming that all rational curves in $X$ have positive Chern
number (e.g.\ when $V$ is affine), the $B$-side Landau-Ginzburg model $(Y^0,W_0)$ is
SYZ mirror to~$X$;
\item assuming that $V$ is affine, the $B$-side Landau-Ginzburg model $(Y,W_0^H)$
is a generalized SYZ mirror to the complete intersection $H_1\cap\dots\cap H_d\subset V$.
\end{enumerate}
\end{theorem}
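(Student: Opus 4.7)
The plan is to run the arguments of Sections \ref{s:fibrations}--\ref{s:backtoH} in parallel for each of the $d$ hypersurfaces, with the single $S^1$-action replaced by the $T^d$-action rotating the $y_i$-coordinates. First I would construct an $S^1$-invariant K\"ahler form $\omega_{\vec\epsilon}$ on $X$ as in \S \ref{ss:blowup} by summing $d$ copies of the cut-off $\partial\bar\partial$ construction, one supported near each exceptional divisor, chosen so that the projection $p:X\to V\times\C^d$ is a symplectomorphism away from these neighborhoods. The moment map $\mu_X:X\to\R^d$ now has reduced spaces $X_{red,\vec\lambda}\simeq V$ equipped with K\"ahler forms in the class $[\omega_V]-\sum_i\max(0,\epsilon_i-\lambda_i)[H_i]$; a straightforward iteration of Lemma \ref{l:rectifyomegared} yields diffeomorphisms $\phi_{\vec\lambda}$ to toric models $\omega'_{V,\vec\lambda}$, and composing with $\Log$ produces the desired $T^d$-invariant Lagrangian torus fibration $\pi:X^0\to B=\R^n\times\R_+^d$, with singular locus $B^{sing}=\bigcup_i \Pi'_i\times\{\lambda_i=\epsilon_i\}$.

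Next I would classify holomorphic discs bounded by fibers of $\pi$, generalizing Proposition \ref{prop:unobstructed}. Using that $\pi_2(V^0,L')=0$ and the maximum principle applied to $p_V$, every such disc lies in a fiber of $p_V$; above a point $\x\notin\bigcup H_i$ the fiber is $(\C^*)^d$, which forces discs to be constant, while above $\x\in H_i\setminus\bigcup_{j\neq i}H_j$ one obtains a one-parameter family of Maslov index $0$ discs in the $i$-th exceptional direction, of area $|\lambda_i-\epsilon_i|$ and boundary class $\pm\gamma_{0,i}$. Chambers of $B^{reg}$ are indexed by $\vec\alpha\in A$, and each determines a coordinate chart $U^\vee_{\vec\alpha}\simeq (\K^*)^n\times\K^d$ with coordinates $(v_{\vec\alpha,1},\dots,v_{\vec\alpha,n},w_{0,1},\dots,w_{0,d})$, defined as in \eqref{eq:chartalpha} by exponentiated symplectic areas and holonomies.

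To determine the instanton-corrected gluings I would imitate Lemmas \ref{l:countw0}, \ref{l:Wsigma}, and \ref{l:Wsigma2}, but in the iterated setting. Partially compactifying $X^0$ by adjoining $\tilde V_i^0$ in the $i$-th factor gives $W_{X^0_{+,i}}(L,\nabla)=w_{0,i}$, showing each $w_{0,i}$ is global. For a primitive $\sigma\in\Z^n$, compactifying in the $\sigma$-direction gives a space $X'_\sigma$ whose Maslov index $2$ discs are obtained by combining the toric disc in $V'_\sigma$ with Blaschke-type factors from each hypersurface independently; the count splits as a product, yielding
\[
W_{X'_\sigma}(L,\nabla)=\prod_{i=1}^d (1+T^{-\epsilon_i}w_{0,i})^{\langle \alpha^i-\alpha^i_{min},\sigma\rangle}\,T^\varpi\mathbf{v}_{\vec\alpha}^\sigma.
\]
Matching these expressions across walls via Corollary \ref{cor:potential_matched_by_gluing} forces the wall-crossing maps \eqref{eq:gluingci}, identifying the SYZ mirror of $X^0$ with $Y^0$ and proving (1). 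Summing the contributions from $\tilde V_1,\dots,\tilde V_d$ and from the toric divisors of $V$ yields the superpotential \eqref{eq:W0ci}, proving (2) once one verifies, as in Lemma \ref{l:fano}, that the positive Chern number hypothesis rules out nontrivial stable Maslov index $0$ and $2$ configurations.

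For (3) I would view $W^\vee=(y_1,\dots,y_d):X\to\C^d$ as a Morse--Bott-type model whose critical locus along the most degenerate stratum $\vec y=0$ is the complete intersection $H=H_1\cap\dots\cap H_d$, then build thimbles $L_\ell\subset X$ over Lagrangians $\ell\subset H$ by simultaneous positive gradient flow in all $d$ real directions. The normal bundle to $H$ in $X$ splits as $\bigoplus_i(\mathcal{L}_i\oplus \mathcal{L}_i^{-1})$, so each thimble carries a relative spin structure with background class $s=\mathrm{PD}([\tilde V_1]+\dots+[\tilde V_d])$, and a functorial $A_\infty$-embedding $\F(H)\hookrightarrow \F_s(X,W^\vee)$ is defined exactly as in Corollary \ref{cor:functor}, using the maximum principle componentwise to confine holomorphic strips to $\bigcap_i(W^\vee_i)^{-1}(0)$. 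The superpotential shift becomes $\delta=\sum_i T^{\epsilon_i}$, each term accounting for small normal discs of area $\epsilon_i$ in the $i$-th slice near $H$; combining this with the twist by $s$ (which negates each of the $w_{0,i}$) and the compensating rescaling $[\omega_V]\mapsto [\omega_V]+\sum\epsilon_i[H_i]$ and $v_{0,i}\mapsto T^{\epsilon_i}v_{0,i}$ yields $W_0^H$. The hardest step will be the multi-directional Morse--Bott analysis underlying (3): writing down the correct admissibility and perturbation scheme for the $d$-parameter Fukaya category of $(X,W^\vee)$, checking that parallel transport from $H$ along all $d$ radial directions simultaneously produces an unobstructed Lagrangian with the claimed relative spin structure, and verifying that the contributions of small normal discs in the various directions add independently in $\m_0$ rather than interacting, all require a careful extension of the machinery of \cite{SeLef,WW} to the Morse--Bott case of higher-codimension critical loci.
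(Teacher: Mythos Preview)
Your treatment of parts (1) and (2) matches the paper's sketch essentially line for line: the $T^d$-invariant fibration on $X^0$, the classification of Maslov index~0 discs fiberwise over $V^0$, and the two lemmas computing $w_{0,i}$ and the product formula for $W_{X'_\sigma}$ are exactly what the paper does.

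For part (3), you take a harder route than necessary. You set $W^\vee=(y_1,\dots,y_d):X\to\C^d$ and then face the problem of building a $d$-parameter Fukaya category, with multi-directional admissibility, perturbations, and Morse--Bott analysis for a higher-codimension critical locus --- difficulties you correctly flag at the end. The paper avoids all of this by taking the \emph{single} $\C$-valued superpotential $W^\vee=y_1+\dots+y_d:X\to\C$. This function already has Morse--Bott critical locus equal to $H_1\cap\dots\cap H_d$ (the intersection of the proper transform of $V\times 0$ with all $d$ exceptional divisors), so the entire argument of Section~\ref{s:backtoH} applies directly with no new foundational work. The thimble construction, the relative spin structure with background class $s$ dual to the sum of the exceptional divisors, the sign flips $w_{0,i}\mapsto -w_{0,i}$, and the additive shift $\delta=\sum_i T^{\epsilon_i}$ from small normal discs all go through exactly as you describe --- but within the already-established single-superpotential framework rather than a hypothetical $d$-parameter one. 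The ``hardest step'' you identify simply does not arise.
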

\noindent
As in Theorem \ref{thm:mainabelian}, part (3) of this theorem relies on 
the expected properties of Fukaya categories of Landau-Ginzburg models.

\begin{remark}
Denoting by $X_i$ the blowup of $V\times\C$ at $H_i\times 0$ and by $X_i^0$
the corresponding conic bundle over $V^0$, the space $X$ (resp.\ $X^0$) is
the fiber product of $X_1,\dots,X_d$ (resp.\ $X^0_1,\dots,X^0_d$) with
respect to the natural projections to $V$. This perspective explains many
of the geometric features of the construction.
\end{remark}

\subsection{Sketch of proof} The argument proceeds along the same lines as
for the case of hypersurfaces, of which it is really a straightforward
adaptation. We outline the key steps for the reader's convenience.

As in \S \ref{s:fibrations}, a key observation to be made about the
$T^d$-action on $X$ is that the reduced spaces
$X_{red,\lambda}=\mu_X^{-1}(\lambda)/T^d$ ($\lambda\in\R_{\ge 0}^d$) are
all isomorphic to $V$ via the projection $p_V$ (though the K\"ahler forms
may differ near $H_1\cup\dots\cup H_d$). This allows us to build a (singular)
Lagrangian torus fibration
$$\pi:X^0\to B=\R^n\times (\R_+)^d$$
(where the second component is the moment map) by assembling standard 
Lagrangian torus fibrations on the reduced spaces. The singular fibers of
$\pi$ correspond to the points of $X^0$ where the $T^d$-action is not free;
therefore $$B^{sing}=\bigcup_{i=1}^d \ \Pi'_i\times
\{(\lambda_1,\dots,\lambda_d)\,|\,\lambda_i=\epsilon_i\},$$
where $\Pi'_i\subset \R^n$ is essentially the amoeba of $H_i$.
The potentially obstructed fibers of $\pi:X^0\to B$ are precisely those that intersect
$p_V^{-1}(H_1\cup\dots\cup H_d)$, and for each $\vec\alpha\in A$ we have
an open subset $U_{\vec\alpha}\subset B$ of tautologically unobstructed fibers which
project under $p$ to standard product tori in $V^0\times\C^d$.

Each of the components $U_{\vec\alpha}\subset B$ determines an affine
coordinate chart $U_{\vec\alpha}^\vee$ in the SYZ mirror to $X^0$.
Namely, for $b\in U_{\vec\alpha}\subset B$, the Lagrangian torus 
$L=\pi^{-1}(b)\subset X^0$ is the preimage by $p$ of a
standard product torus in $V\times\C^d$. Denoting by 
$(\zeta_1,\dots,\zeta_n,\lambda_1,\dots,\lambda_d)\in \Delta_V\times\R_+^d$ 
the corresponding value of the moment map of $V\times\C^d$, and by $(\gamma_1,\dots,\gamma_n,
\gamma_{0,1},\dots,\gamma_{0,d})$ the natural basis of $H_1(L,\Z)$, we equip
$U_{\vec\alpha}^\vee$ with the coordinate system
\begin{multline}\label{eq:affinechartci}
(L,\nabla)\mapsto (v_{\vec\alpha,1},\dots,v_{\vec\alpha,n},w_{0,1},
\dots,w_{0,d})\\:=\left(T^{\zeta_1}\nabla(\gamma_1),\dots,T^{\zeta_n}
\nabla(\gamma_n),T^{\lambda_1}\nabla(\gamma_{0,1}),\dots,T^{\lambda_d}
\nabla(\gamma_{0,d})\right).
\end{multline}
For $b\in U_{\vec\alpha}$, the Maslov index 2 holomorphic discs bounded by 
$L=\pi^{-1}(b)$ in $X$ can 
be determined explicitly as in \S \ref{s:opencase}, by projecting to
$V\times\C^d$. Specifically, these
discs intersect the proper transform of exactly one of the toric divisors
transversely in a single point, and there are two cases:

\begin{lemma}
For any $i=1,\dots,d$, $L$ bounds a
unique family of Maslov index $2$ holomorphic discs in $X$ which 
intersect the proper transform of\/ $V\times\C^{d-1}_i=\{y_i=0\}$ transversely
in a single point; the images
of these discs under $p$ are contained in lines parallel to the $y_i$ 
coordinate axis, and their contribution to the superpotential is $w_{0,i}$. 
\end{lemma}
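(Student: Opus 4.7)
The plan is to adapt the proof of Lemma \ref{l:countw0} to the iterated conic bundle setting, taking full advantage of the fact that the projection $p:X\to V\times\C^d$ identifies every fiber $L=\pi^{-1}(b)$ with $b\in U_{\vec\alpha}\subset B^{reg}$ with a standard product torus in $V^0\times\C^d\simeq(\C^*)^n\times\C^d$, and that $\omega_\epsilon=p^*\omega_{V\times\C^d}$ away from a small neighborhood of the exceptional divisors (this is the direct analogue of Proposition \ref{prop:pistandard} in the present setup). The guiding principle throughout is that holomorphic discs bounded by such a product torus are severely constrained by the maximum principle applied to the ambient holomorphic coordinates.

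Fix $i\in\{1,\dots,d\}$ and let $u:(D^2,\partial D^2)\to(X,L)$ be a Maslov index $2$ holomorphic disc whose image meets the proper transform of $V\times\C^{d-1}_i$ transversely in a single point. Writing $p(L)=\prod_j S^1(r_j)\times\prod_k S^1(r_{0,k})$, the composition $p\circ u$ is a holomorphic map into $(\C^*)^n\times\C^d$ with boundary on this product torus. Since the Maslov index of $u$ equals twice its algebraic intersection number with the proper transform of the toric anticanonical divisor of $V\times\C^d$, the hypothesis on $u$ forces $p\circ u$ to avoid $\{x_k=0,\infty\}$ for every $k$ and $\{y_j=0\}$ for every $j\neq i$. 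The maximum principle applied to each of the holomorphic functions $x_1,\dots,x_n$ and $y_j$ ($j\neq i$) then shows that all of these coordinates are constant along $p\circ u$. The remaining $y_i$-component is a holomorphic map $\gamma:D^2\to\C$ sending $\partial D^2$ to $S^1(r_{0,i})$ and having a single simple zero; hence $\gamma$ is a biholomorphism onto $D^2(r_{0,i})$, unique up to reparametrization. Thus through each point of $L$ there passes exactly one disc in this family, and regularity follows by reduction to the toric calculation of \cite{cho-oh} in the $y_i$-factor. Computing the weight is then immediate: by definition of the moment map, the symplectic area of $u$ equals $\lambda_i$, while $\partial u=\gamma_{0,i}\in H_1(L,\Z)$, so the weight is $T^{\lambda_i}\nabla(\gamma_{0,i})=w_{0,i}$ in the coordinates of \eqref{eq:affinechartci}.

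The one step requiring genuine care, and the only place where the positivity hypothesis of Theorem \ref{thm:mainci}(2) will enter, is the exclusion of more exotic Maslov index $2$ configurations meeting the proper transform of $\{y_i=0\}$ once and avoiding the remainder of $D$: namely, stable genus $0$ configurations whose disc component has Maslov index $0$ and to which a sphere bubble of Maslov index $2$ is attached. This is handled exactly as in Section \ref{s:generalcase}: the complete-intersection analogue of Lemma \ref{l:fano} (immediate when $V$ is affine) ensures that every rational curve in $X$ has positive Chern number, so any sphere component contributes at least $2$ to the total Maslov index; combined with the fact that Maslov index $0$ discs bounded by $L$ are described by the straightforward analogue of Proposition \ref{prop:unobstructed}, this rules out bubbling in the relevant moduli space and leaves only the family of discs identified above.
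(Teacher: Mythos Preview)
Your proof is correct in spirit and follows exactly the approach the paper intends: the paper itself leaves this lemma to the reader with an explicit pointer to Lemma~\ref{l:countw0}, and you have carried out that adaptation faithfully, including the additional remarks on sphere bubbling needed for the superpotential count.

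One step should be reordered. You assert that the Maslov index hypothesis forces $p\circ u$ to avoid $\{y_j=0\}$ for $j\neq i$, and then apply the maximum principle to all coordinates at once. But avoiding the proper transform $\tilde V_j$ does not by itself imply that $p\circ u$ misses $\{y_j=0\}$, since $p^{-1}(\{y_j=0\})=\tilde V_j\cup E_j$ and the exceptional divisor $E_j$ is not a component of $D$; the Maslov index count places no constraint on $[u]\cdot E_j$. The correct order is: first show the $x$-coordinates are constant (maximum principle in $(\C^*)^n$, exactly as in Lemma~\ref{l:countw0}), so that $p\circ u$ lies in $\{\x_0\}\times\C^d$ for some $\x_0\in L'=p_V(L)$. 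Since $b\in U_{\vec\alpha}$, the torus $L'$ is disjoint from every $H_j$, hence $\x_0\notin H_j$ and the fiber $p_V^{-1}(\x_0)$ meets no exceptional divisor. Now $u$ avoiding $\tilde V_j$ genuinely forces $y_j\circ p\circ u$ to be nonvanishing, and the maximum principle gives constancy. With this adjustment your argument is complete.
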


\begin{lemma}
For any
$j=1,\dots,r$, denote by $D_{\sigma_j}$ the toric divisor in $V$ associated
to the ray $\sigma_j$ of the fan $\Sigma_V$, and let $k_i=\langle 
\alpha^i-\alpha^i_{min}(\sigma_j),\sigma_j\rangle$ $(i=1,\dots,d)$. Then
$L$ bounds $2^{k_1+\dots+k_d}$ families of
Maslov index $2$ holomorphic discs in $X$ which 
intersect the proper transform of\/ $D_{\sigma_j}\times\C^d$ transversely
in a single point (all of which have the same projections to $V$),
and their total contribution to the superpotential is
$$\left(\prod_{i=1}^d (1+T^{-\epsilon_i} w_{0,i})^{k_i}\right)\,T^{\varpi_i}
\mathbf{v}_{\vec\alpha}^{\sigma_j}.$$
\end{lemma}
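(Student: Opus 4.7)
The plan is to adapt the argument of Lemma \ref{l:Wsigma} to the iterated conic bundle situation, treating the $d$ conic factors independently. As a preliminary reduction, I perform a dual monomial change of coordinates on $V^0$ and on $U_{\vec\alpha}^\vee$ so that $\sigma_j=(1,0,\dots,0)$, which turns $V^0\cup D^0_{\sigma_j}$ into $\C\times(\C^*)^{n-1}$ and makes $\mathbf{v}_{\vec\alpha}^{\sigma_j}$ the first coordinate $v_{\vec\alpha,1}$. I then consider a partial compactification $X'_{\sigma_j}$ of $X^0$ obtained by adding $p^{-1}(D^0_{\sigma_j}\times\C^d)$, which inherits an $S^1$-invariant K\"ahler form and satisfies the appropriate nefness condition.

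Next I classify the Maslov index $2$ discs $u\colon(D^2,\partial D^2)\to(X'_{\sigma_j},L)$. Composing with $p$ gives a disc in $\C\times(\C^*)^{n-1}\times\C^d$ with boundary on a standard product torus of radii $(r_1,\dots,r_n,r_{0,1},\dots,r_{0,d})$. The maximum principle forces all coordinates indexed $2,\dots,n$ to be constant, and forces the first coordinate to be a biholomorphism onto the disc of radius $r_1$ (since the Maslov index equals twice the intersection number with the compactification divisor). Writing $p\circ u(z)=(r_1z,x_2,\dots,x_n,r_{0,1}\gamma_1(z),\dots,r_{0,d}\gamma_d(z))$, each $\gamma_i\colon D^2\to\C$ maps $S^1$ to $S^1$ and can have zeros only where $(r_1z,x_2,\dots,x_n)\in H_i$, since the proper transform of $V\times\C^{d-1}_i$ must not be met (that would change the homology class). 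An elementary winding-number argument, identical to the one in Lemma \ref{l:Wsigma}, shows that for generic $(x_2,\dots,x_n)$ the slice disc $\mathbb{D}$ meets $H_i$ transversely in exactly $k_i=\langle\alpha^i-\alpha^i_{min}(\sigma_j),\sigma_j\rangle$ points, obtained from the difference of winding numbers of $\x^{\alpha^i}$ over the relevant circles. Therefore each $\gamma_i$ is, up to a unit constant, a Blaschke product of degree $|I_i|$ for some subset $I_i\subseteq\{1,\dots,k_i\}$; the discs degenerate consistently with this description on the non-transverse locus. This yields precisely $2^{k_1+\dots+k_d}$ families, all with the same projection to $V$, and all regular by the usual toric argument of \cite{cho-oh} applied factor by factor.

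The weight computation is then a direct bookkeeping. A disc with $|I_i|=\ell_i$ represents, in $V\times\C^d$, the class $[D^2(r_1)\times\{pt\}]+\sum_i \ell_i[\{pt\}\times D^2(r_{0,i})]$, with intersection number $\ell_i$ with the $i$-th exceptional divisor. Toric computation as in \eqref{eq:areaofdisc} gives the symplectic area
\begin{equation*}
\zeta_1+\varpi_j+\sum_{i=1}^d \ell_i(\lambda_i-\epsilon_i),
\end{equation*}
while the boundary class in $H_1(L,\Z)$ is $\gamma_1+\sum_i \ell_i\gamma_{0,i}$, so using \eqref{eq:affinechartci} the weight becomes $\prod_i(T^{-\epsilon_i}w_{0,i})^{\ell_i}\cdot T^{\varpi_j}v_{\vec\alpha,1}$. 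Summing over $(\ell_1,\dots,\ell_d)$ with multiplicity $\prod_i\binom{k_i}{\ell_i}$ and applying the multivariate binomial theorem gives the asserted product $\prod_{i=1}^d(1+T^{-\epsilon_i}w_{0,i})^{k_i}\cdot T^{\varpi_j}\mathbf{v}_{\vec\alpha}^{\sigma_j}$.

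The main technical point is the independence of the $d$ conic factors: one must verify that the Blaschke-product classification in each $y_i$-direction is truly independent of the others, i.e.\ that no unexpected bubbling occurs where a disc slides simultaneously into several exceptional divisors with Maslov index cancellations. This is where the positivity assumption on rational curves (and the fact that the minimal Maslov index of a non-constant disc is $2$, achieved only by configurations intersecting a single toric or exceptional component) is needed to rule out contributions from stable configurations with sphere bubbles, just as in the hypersurface case. Once this regularity statement is in place, the rest of the proof is the elementary calculation outlined above.
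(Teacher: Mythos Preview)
Your proposal is correct and follows exactly the approach the paper intends: the paper itself states that the proof is ``essentially identical to those of Lemmas \ref{l:countw0} and \ref{l:Wsigma}, and left to the reader,'' and you have carried out precisely that adaptation, treating the $d$ conic factors independently via the Blaschke-product argument. Your write-up in fact supplies more detail than the paper does; the only minor imprecision is the phrase ``that would change the homology class'' when explaining why $\gamma_i$ can only vanish over $H_i$---the sharper reason is that hitting the proper transform of $V\times\C^{d-1}_i$ would raise the Maslov index above $2$ (or equivalently, such a disc would leave the partial compactification $X'_{\sigma_j}$).
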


\noindent
The proofs are essentially identical to those of
Lemmas \ref{l:countw0} and \ref{l:Wsigma}, and left to the reader.
As in \S \ref{s:opencase}, the first lemma implies that the coordinates
$w_{0,i}$ agree on all charts $U_{\vec\alpha}^\vee$, and the second one
implies that the coordinates $v_{\vec\alpha,i}$ transform according
to \eqref{eq:gluingci}. The first two statements in Theorem \ref{thm:mainci}
follow.

The last statement in the theorem follows from equipping $X$ with the
superpotential $W^\vee=y_1+\dots+y_d:X\to\C$, which has Morse-Bott
singularities along the intersection of the proper transform of $V\times 0$
with the $d$ exceptional divisors, i.e.\ $\mathrm{crit}(W^\vee)\simeq
H_1\cap\dots\cap H_d$. As in \S \ref{s:backtoH}, the nontriviality of
the normal bundle forces us to twist the Fukaya category of $(X,W^\vee)$
by a background class $s\in H^2(X,\Z/2)$, in this case Poincar\'e dual to 
the sum of the exceptional divisors (or equivalently to the sum of the
proper transforms of the toric divisors $V\times \C^{d-1}_i$). The
thimble construction then provides a fully faithful $A_\infty$-functor
from $\F(H_1\cap\dots\cap H_d)$ to $\F_s(X,W^\vee)$. The twisting
affects the superpotential by changing the signs of the terms $w_{0,1},
\dots,w_{0,d}$. Moreover, the thimble functor modifies 
the value of the superpotential by an additive constant, which equals
$T^{\epsilon_1}+\dots+T^{\epsilon_d}$ when $V$ is affine (the $i$-th term 
corresponds to a family of small discs of area $\epsilon_i$ in the normal
direction to $H_i$). Putting everything together, the result follows
by a straightforward adaptation of the arguments in \S \ref{s:backtoH}.

\appendix

\section{Moduli of objects in the Fukaya category} \label{sec:moduli-objects-fukay}

\subsection{General theory}
\label{sec:general-theory}

Let $L$ be an embedded spin Lagrangian of vanishing Maslov class in the K\"ahler manifold $X^0=X\setminus D$, where $D$ is an anticanonical divisor which satisfies Assumption \ref{ass:nef}. We begin with a brief overview of the results of \cite{Fcyclic}, which in part implement the constructions of \cite{FO3book} in the setting of de Rham cohomology.

For each positive real number $E$, Fukaya defines a \emph{curved} $A_{\infty}$ structure on the de Rham cochains with coefficients in $\Lambda_{0}/ T^{E}$, which we denote by
\begin{equation*}
  \Omega^{*}(L; \Lambda_{0}/ T^{E} \Lambda_0) \equiv \Omega^{*}(L; \R) \otimes_{\R} \Lambda_{0}/ T^{E}\Lambda_0.
\end{equation*}
The operations are obtained from the moduli space of holomorphic discs in $X^0=X \setminus D$ with boundary on $L$, whose energy is bounded by $E$. 
By induction, one obtains an unbounded sequence of real numbers $E_i$, together with formal diffeomorphisms on
$\Omega^{*}(L; \Lambda_{0}/ T^{E_{i}} \Lambda_0)$
which pull back the $A_{\infty}$ structure constructed from discs of energy bounded by $E_i$ to the projection of the  $A_{\infty}$ structure on $    \Omega^{*}(L; \Lambda_{0}/ T^{E_{i+1}} \Lambda_{0}) $ modulo $T^{E_{i}}$. After applying such a formal diffeomorphism, we may therefore assume that the $A_{\infty}$ map
\begin{equation*}
 \Omega^{*}(L; \Lambda_{0}/ T^{E_{i+1}} \Lambda_{0}) \to  \Omega^{*}(L; \Lambda_{0}/ T^{E_{i}} \Lambda_{0}) 
\end{equation*}
is defined by projection of coefficient rings. Taking the inverse limit over $E_i$, we obtain an $A_{\infty}$ structure on
$\Omega^{*}(L; \Lambda_0)$.
By passing to the canonical model (i.e. applying a filtered version of the homological perturbation lemma \cite{Kad}), we can reduce this $A_{\infty}$ structure to $H^{*}(L; \Lambda_0)$.

Fukaya checks that any class $b \in H^{1}(L; U_{\Lambda})$ defines a deformed $A_{\infty}$ structure on the cohomology. In particular, there is a subset
\begin{equation*}
\hat{ \mathcal{Y}}_{L} \subset H^{1}(L; U_{\Lambda})
\end{equation*}
consisting of elements for which this $A_{\infty}$ structure has vanishing curvature (i.e.  solutions to the Maurer-Cartan equation). Gauge transformations \cite[Section 4.3]{FO3book}  define an equivalence relation on this set; we call the quotient  the \emph{moduli space of simple objects supported on $L$,} which we denote $\mathcal{Y}_{L} $. 
\begin{remark}
 The original formalism of Fukaya, Oh, Ohta, and Ono \cite{FO3book} considered deformation classes corresponding to $b \in H^{1}(L; \Lambda_+)$, called bounding cochains, which  via exponentiation $\Lambda_+ \to 1+\Lambda_+ $ can also be reinterpreted as local systems. As noted in the discussion following Theorem 1.2 of \cite{Fcyclic}, there are inclusions $1+ \Lambda_+  \subset U_{\Lambda} \subset \Lambda^{*}$, and the original construction of Floer cohomology can be generalised to all unitary local systems using an idea of Cho. 
\end{remark}

The invariance statement of Floer cohomology \cite[Theorem 14.1-14.3]{FO3book} asserts that $\mathcal{Y}_{L} $  does not depend on the choice of auxiliary data (e.g almost-complex structure) in the following sense: let $ \mathcal{Y}^{1}_{L} $ and $ \mathcal{Y}^{2}_{L}  $ denote the moduli spaces for different choices of auxiliary structures.  A homotopy between the auxiliary data  induces an isomorphism
\begin{equation} \label{eq:invariance_Floer}
   \mathcal{Y}^{1}_{L} \cong \mathcal{Y}^{2}_{L}
\end{equation}
which is invariant under homotopies of homotopies. 
\begin{assumption} \label{ass:trivial_differential}
The $A_{\infty}$ structure on $ H^{*}(L; \Lambda_0)$ is isomorphic to the undeformed structure.
\end{assumption}
\begin{remark}
For most Lagrangians that we consider, this condition holds automatically because there is a choice of almost complex structure for which the Lagrangian bounds no holomorphic discs. 
\end{remark}

In this setting, the Maurer-Cartan equation vanishes identically, and the gauge equivalence relation is trivial. We therefore obtain an identification of the moduli space $\mathcal{Y}_{L} $ of simple objects of the Fukaya category supported on $L$ with its first cohomology with coefficients in $U_{\Lambda}$:
\begin{equation*}
 \mathcal{Y}_{L} \equiv H^{1}(L; U_{\Lambda}).
\end{equation*}

Let $L_{t}$ be a Hamiltonian path of Lagrangians in $X^0$ with vanishing Maslov class, and $J_{t}$ a family of almost complex structures on $X$ which we assume are fixed at infinity. 
We describe the isomorphism \eqref{eq:invariance_Floer} in the special situation which we consider in this paper. We first identify $H_1(L_0;\Z) \cong H_{1}(L_t; \Z)$ via the given path. A basis for this group yields an identification
\begin{equation*}
   (z_1, \ldots, z_n)  : H^{1}(L_0; U_{\Lambda}) \to   U_{\Lambda}^{n}.
\end{equation*}
\begin{assumption} \label{ass:all_curves_in_fixed_class}
For the family $(L_{t}, J_{t})$, all stable holomorphic discs represent multiples of a given relative homology class $\beta \in H_{2}(X, L_0; \Z)$.
\end{assumption}
The wall-crossing map is then of the form
\begin{equation} \label{eq:transformation_Hamiltonian_isotopy}
 z_{i} \mapsto h_i(z_{\beta}) z_i, 
\end{equation}
where $h_i$ is a power series with $\Q$ coefficients and leading order term equal to $1$, and $z_{\beta}$ denotes the monomial
$T^{\omega(\beta)} z^{[\partial \beta]}$.
Equation \eqref{eq:transformation_Hamiltonian_isotopy} can be extracted from the construction in Section 11 of \cite{Fcyclic}. For an explicit derivation, see \cite [Lemma 4.4]{Tu}: for bounding cochains, the transformation corresponds to adding a power series in $z_{\beta}$ with vanishing constant term, and Equation \eqref{eq:transformation_Hamiltonian_isotopy} follows by exponentiation.

By Proposition \ref{prop:gluing}, the following assumption holds in the geometric setting of the main theorem:
\begin{assumption} \label{ass:h-polynomial}
The power series $h_i$ is the expansion of a rational function in $z_{\beta}$.
\end{assumption}
In this case, the transformation in  Equation \eqref{eq:transformation_Hamiltonian_isotopy} converges away from the zeroes and poles of $h_i$. This is stronger than the general result proved by Fukaya namely that the transformation converges in an analytic neighbourhood of the unitary elements in $H^{1}(L; \Lambda^{*}) $.

In order to extend this construction to the non-Hamiltonian setting, we use the main construction of \cite{Fcyclic} which identifies the moduli space of simple objects supported on Lagrangians near $L$ (but not necessarily Hamiltonian isotopic to it) with an affinoid domain in $H^{1}(L; \Lambda^{*})$ in the sense of Tate. 

Given a path $\{ L_{t} \}_{t \in [0,1]}$ between Lagrangians $L_0$ and $L_1$ in which there is no wall crossing (e.g. so that no Lagrangian in the family bounds a holomorphic disc), the natural gluing map between these domains is obtained from the flux homomorphism 
\begin{equation*}
\Phi(\{ L_{t} \}) \in  H^{1}(L_0; \R)  
\end{equation*}
and the product on cohomology groups
\begin{equation*} 
 H^{1}(L_0; \R)  \times H^{1}(L_0; \Lambda^{*}) \to  H^{1}(L_0; \Lambda^{*})
\end{equation*}
induced by the map on coefficients $(\lambda, f) \mapsto T^{\lambda} f$. 
In the absence of wall crossing we identify $H^{1}(L_1; \Lambda^{*})$ with $H^1(L_0;\Lambda^*)$ via this rescaling map.

Given a general path between Lagrangians $L_0$ and $L_1$ (subject to
Assumptions \ref{ass:all_curves_in_fixed_class} and \ref{ass:h-polynomial}), this identification
is modified by the wall crossing formula given in Equation \eqref{eq:transformation_Hamiltonian_isotopy}, yielding a birational map
\begin{equation*}
  H^{1}(L_0; \Lambda^{*}) \dashrightarrow H^{1}(L_1; \Lambda^{*}),
\end{equation*}
defined away from a hypersurface.  We glue the moduli spaces of objects supported near $L_{0}$ and $L_{1}$ using this identification. 

\begin{remark} \label{rem:non-unitary_reduction}
The construction of a map for a Lagrangian path can be reduced to the case of Hamiltonian paths as follows: any path $(L_{t}, J)$ can be deformed, with fixed endpoints, to a path $(L_{t}', J_{t})$ which is a concatenation of paths for which the Lagrangian is constant and paths in which there is no wall-crossing. The desired  map is then obtained as a composition of the  wall-crossing maps for Hamiltonian paths  and the rescalings given by the flux homomorphism.

The idea for constructing the deformed path follows the main strategy for proving convergence in \cite{Fcyclic}.  Whenever $\epsilon$ is sufficiently small, there is a (compactly supported) diffeomorphism $\psi_{\epsilon}$ taking $L_{t}$ to $L_{t+\epsilon}$  which preserves the tameness of $J$. For tautological reasons, there is a path without wall-crossing from $(L_t,J)$ to $(L_{t+\epsilon}, J_{t+\epsilon})$ if $J_{t+\epsilon}$ is the pullback of $J$ by $\psi_{\epsilon}$. Interpolating between this pullback and $(L_{t+\epsilon},J)$, via pullbacks of $(L_{t+s},J)$, we then reach $(L_{t+\epsilon},J)$ via a path for which the Lagrangian is constant and Assumption \ref{ass:all_curves_in_fixed_class} remains satisfied.
\end{remark}

\begin{remark}\label{rmk:wcf_generalized}
(1) More generally, given a path from $L_0$ to $L_1$ that can 
be decomposed into finitely many sub-paths $\{L_t\}_{t\in [t_j,t_{j+1}]}$,
each satisfying Assumption \ref{ass:all_curves_in_fixed_class} for some 
relative class $\beta_j$, we again obtain a wall-crossing 
map
\begin{equation}\label{eq:wcf_birational}
H^{1}(L_0; \Lambda^{*}) \dashrightarrow H^{1}(L_1; \Lambda^{*})
\end{equation}
 by composing the maps associated to the various sub-paths.

(2) When all the classes $\beta_j$ have the same
boundary in $H_1(L_t,\Z)$ and the same symplectic areas, the monomials
$z_{\beta_j}$ are all equal and the birational transformation
\eqref{eq:wcf_birational} again takes
the form of Equation \eqref{eq:transformation_Hamiltonian_isotopy} up to 
rescaling of the coefficients.
\end{remark}

If we restrict attention to the smooth fibers of a Lagrangian torus fibration, we obtain an embedding of the moduli space  $\mathcal{Y}_{\pi}^{0}$  of all simple objects supported on such Lagrangians into the rigid analytic space
\begin{equation} \label{eq:analytic_space_glue_all_tori}
\coprod   H^{1}(L; \Lambda^{*})/\sim
\end{equation}
where the equivalence relation identifies points which correspond to each other under the birational wall-crossing transformations of Equation \eqref{eq:wcf_birational} induced by all paths among smooth fibres. 
It does not automatically follow from the above considerations that this quotient is a well-behaved (e.g.\ separated) analytic space, but in our case this will not be an issue.
By the invariance of Floer cohomology \cite[Theorem 14.1-14.3]{FO3book}, the transformations induced by homotopic paths are equal. The fact that these transformations should in general depend only on the homotopy class of the path in the space of all fibres (i.e.\ allowing fibres which are not necessarily embedded), is expected to follow as a consequence of forthcoming developments in the study of family Floer cohomology in the presence of singular fibres. In our main example, this independence will be manifest from Proposition \ref{prop:gluing}, and the quotient \eqref{eq:analytic_space_glue_all_tori} can easily be seen to be a smooth analytic space.

\begin{remark}
We can think of \eqref{eq:analytic_space_glue_all_tori}  as the natural (analytic) completion of $\mathcal{Y}_{\pi}^{0}$.  While the points of this completion do not necessarily correspond to unitary local systems on Lagrangians in $X^0$ with the given K\"ahler form, in good situations, they can be interpreted as Lagrangians in $X^0$ equipped with a completed K\"ahler form. Slightly strengthening Assumption \ref{ass:nef} by requiring that $X^0$ be the complement of a nef divisor, we can obtain such a completion by inflation along the divisor at infinity.
\end{remark}

It shall be convenient for our purposes to consider a completion which is obtained by gluing only finitely many charts. To this end, assume that $\{L_{t}\}_{t \in [0,1]}$ is a path of Lagrangians so that the wall-crossing map defines an embedding
\begin{equation}
   H^{1}(L_0; U_{\Lambda}) \hookrightarrow  H^{1}(L_1; \Lambda^{*}).
\end{equation}
In this case, the above construction yields that all elements of $\mathcal{Y}_{L_0} $ can be represented in Equation \eqref{eq:analytic_space_glue_all_tori} by elements of $H^{1}(L_1; \Lambda^{*})$. 

More generally, assume that $\{ L_\alpha\}_{\alpha \in A}$ is a collection of fibers with the property that for some fixed almost complex structure $J$,  any smooth fiber $L$ can be connected to some fiber $L_{\alpha}$ in our collection by a path such that the wall-crossing map defines an embedding $H^{1}(L; U_{\Lambda}) \to  H^{1}(L_\alpha; \Lambda^{*})$. We define
\begin{equation}
   \hat{\mathcal{Y}}_{\pi}^{0} \equiv \coprod_{\alpha \in A}  H^{1}(L_{\alpha}; \Lambda^{*})/\sim.
\end{equation}
\begin{lemma} \label{lem:finitely_many_charts}
There is a natural analytic embedding of
$\mathcal{Y}_{\pi}^{0}$ into $\hat{\mathcal{Y}}_{\pi}^{0}$.
\qed
\end{lemma}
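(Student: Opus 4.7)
The plan is to construct the embedding chart by chart using the hypothesis that every smooth fiber $L$ can be connected to some $L_\alpha$ by a path whose wall-crossing map embeds $H^1(L;U_\Lambda)$ into $H^1(L_\alpha;\Lambda^*)$, and then to verify well-definedness, injectivity, and analyticity by reduction to standard properties of the wall-crossing transformations developed in Section~\ref{sec:general-theory}.

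First, given a point of $\mathcal{Y}_{\pi}^0$ represented by $(L,\nabla)$ with $\nabla \in H^1(L;U_\Lambda)$, I choose $\alpha \in A$ and a path $\{L_t\}$ from $L$ to $L_\alpha$ as in the hypothesis, and define the map $F$ to send the class of $(L,\nabla)$ to the class in $\hat{\mathcal{Y}}_\pi^0$ of its image under the induced wall-crossing transformation into $H^1(L_\alpha;\Lambda^*)$. Well-definedness follows by considering two different choices $(\alpha,\gamma)$ and $(\alpha',\gamma')$: the concatenation of $\gamma'$ with the reverse of $\gamma$ is a path from $L_\alpha$ to $L_{\alpha'}$ whose wall-crossing map identifies the two candidate images, and this map is by construction one of the birational transformations in the equivalence relation $\sim$ defining $\hat{\mathcal{Y}}_\pi^0$. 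The injectivity of $F$ is the reverse implication: if two points of $\mathcal{Y}_{\pi}^0$ have identified images in $\hat{\mathcal{Y}}_\pi^0$, a chain of paths between the corresponding $L_\alpha$'s realizing the equivalence can be prepended and appended with the chosen paths $\gamma,\gamma'$ to produce a path between the original fibers under whose wall-crossing $\nabla$ matches $\nabla'$; by the description of $\mathcal{Y}_\pi^0$ as a subspace of \eqref{eq:analytic_space_glue_all_tori}, this already means the two points were equal.

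For analyticity, on a neighborhood of $(L,\nabla)$ we may fix a single path $\gamma$ to $L_\alpha$ and apply Remark~\ref{rem:non-unitary_reduction}: the associated wall-crossing map decomposes as a finite composition of rescalings by the flux homomorphism and monomial transformations of the form \eqref{eq:transformation_Hamiltonian_isotopy}. Each of these is a rational morphism between affinoid neighborhoods in $H^1(L;\Lambda^*)$ and $H^1(L_\alpha;\Lambda^*)$, defined wherever the unit factors $h(z_\beta)$ are invertible; the embedding hypothesis guarantees that this region contains an affinoid neighborhood of the image of $H^1(L;U_\Lambda)$. Hence $F$ is locally an analytic isomorphism onto its image, with inverse given by the reverse wall-crossing map, and consequently a global analytic embedding.

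The main technical subtlety will be checking that the local descriptions of $F$ on overlapping neighborhoods, which may use different choices of $\alpha$, patch together consistently as a single map into the quotient $\hat{\mathcal{Y}}_\pi^0$. This is, however, precisely the content of the well-definedness argument above: two local formulas differ by the composite of wall-crossing maps coming from a loop of paths between the two chosen $L_\alpha$'s, and every such composite lies in the equivalence $\sim$, so the local formulas descend to the same global map.
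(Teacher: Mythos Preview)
Your proposal is correct and spells out exactly the argument the paper leaves implicit: the lemma is stated with an immediate \qed and no proof in the paper, so there is no alternative approach to compare against. Your construction of the map via the assumed embedding $H^1(L;U_\Lambda)\hookrightarrow H^1(L_\alpha;\Lambda^*)$, together with the verification that different choices of $\alpha$ and path are reconciled by the very wall-crossing identifications defining $\sim$, is precisely the content the paper regards as following directly from the setup preceding the lemma.
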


Next, we study the moduli spaces of holomorphic discs in $X$ with boundary on a Lagrangian $L \subset X^0$ of vanishing Maslov class.  Since $D$ is an anticanonical divisor, stable holomorphic discs whose intersection number with $D$ is $1$ have Maslov index equal to $2$. Assumption \ref{ass:nef} implies that there are no discs of negative Maslov index, and that those of vanishing Maslov index are disjoint from $D$. For each unitary local system $\nabla$ on $L$, choice of almost complex structure $J$,  and action cutoff $E$ we obtain a $\Lambda_0/ T^{E} \Lambda_{0}$-valued de Rham cochain
\begin{equation}
  \sum_{\substack{\beta\in \pi_2(X,L)\\ \beta\cdot D=1}} z_\beta(L,\nabla)\, \mathrm{ev}_*[\mathcal{M}_1(L,\beta,J)]\in \Omega^{0}(L;\Lambda_0/ T^{E} \Lambda_{0})
\end{equation}
which is closed with respect to the Floer differential. Passing to the canonical model and to the inverse limit over $E$  we obtain a multiple of the unit in the self-Floer cohomology of $(L,\nabla)$:
\begin{equation}
  \m_0(L,\nabla,J)=W(L,\nabla,J)\,e_{L} \in H^{0}(L; \Lambda).
\end{equation}

Since the moduli spaces of discs of vanishing Maslov index in $X$ and in $X \setminus D$ agree, the invariance of Floer theory and in particular of the potential function \cite[Theorem B]{FO3book}, as extended to non-unitary local systems in \cite{Fcyclic}, implies that $W(L,\nabla,J)$ gives rise to a well-defined convergent function on $\mathcal{Y}_{\pi}^{0}$.  Because of this, we shall henceforth drop $J$ from the notation.  For non-unitary local systems, $W(L,\nabla)$ may not in general converge, so we have to impose this as an additional assumption. With this in mind, the proof of the following result follows from the unitary case by Remark \ref{rem:non-unitary_reduction}.
\begin{lemma} \label{l:potential_matched_by_gluing}
  If for each $\alpha \in A$, the map $ \nabla \mapsto W(L_\alpha,\nabla)$ converges on $H^{1}(L_{\alpha}; \Lambda^{*})$, then $W$ defines a regular function
  on $\hat{\mathcal{Y}}_{\pi}^{0}$. \qed
\end{lemma}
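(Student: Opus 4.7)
The plan is to verify two things: (i) on each chart $H^1(L_\alpha;\Lambda^*)$, the assignment $\nabla \mapsto W(L_\alpha,\nabla)$ defines a regular (rigid analytic) function, and (ii) these regular functions agree under the birational wall-crossing identifications \eqref{eq:wcf_birational} that define $\hat{\mathcal Y}_\pi^0$. Part (i) is immediate from the hypothesis: $W(L_\alpha,\nabla)$ is by construction a Novikov power series in the holonomy coordinates on $H^1(L_\alpha;\Lambda^*)$, and the hypothesis asserts exactly that this series converges on the entire chart. The content of the lemma is therefore part (ii).

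For (ii), I would invoke Remark \ref{rem:non-unitary_reduction} to decompose any path $\{L_t\}$ connecting $L_\alpha$ to $L_\beta$ into a concatenation of two elementary types of paths: (a) paths in which the Lagrangian is held constant while the almost complex structure varies (no wall-crossing), and (b) Hamiltonian isotopies, possibly crossing a wall of potentially obstructed fibers in a single homotopy class $\beta$ (so that Assumption \ref{ass:all_curves_in_fixed_class} applies). For type (a), the identification of charts is the pure coefficient rescaling coming from the flux homomorphism, while the moduli space of Maslov index $2$ discs in $(X,L)$ is invariant under compactly supported deformations of the complex structure by a standard cobordism argument -- here Assumption \ref{ass:nef} excludes Maslov index $\leq 0$ bubbling, so the signed count is well-defined and constant -- and hence $W$ is visibly preserved under the rescaling. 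For type (b), the wall-crossing transformation \eqref{eq:transformation_Hamiltonian_isotopy} is the one constructed in \cite{Fcyclic} (and recalled in the appendix), which is designed precisely so that $(L_0,\nabla_0)$ and its image $(L_1,\nabla_1)$ represent quasi-isomorphic objects of $\F(X^0)$. Because Assumption \ref{ass:nef} forces every stable Maslov index $0$ configuration bounded by the family $L_t$ to lie in $X^0 = X\setminus D$, the very same transformation identifies quasi-isomorphic objects in $\F(X)$ as well. Consequently the obstruction $\m_0$ transforms covariantly, and since $\m_0(L,\nabla) = W(L,\nabla)\,e_L$ is a multiple of the strict unit -- a quasi-isomorphism invariant -- the scalar $W$ agrees on the two sides.

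The argument so far yields the desired identity $W(L_\alpha,\nabla) = W(L_\beta,\nabla')$ when $\nabla$ is unitary, which is where Fukaya's quasi-isomorphism formalism applies directly. To propagate the equality from $H^1(L_\alpha;U_\Lambda)$ to the full chart $H^1(L_\alpha;\Lambda^*)$, I would use that both sides are, by the convergence hypothesis, regular functions on their respective rigid analytic charts, and that the wall-crossing map \eqref{eq:wcf_birational} is itself a composition of monomial and elementary birational transformations with coefficients in $\Lambda$. Their difference is thus a rigid analytic function vanishing on the unitary locus, and the standard uniqueness-of-analytic-continuation argument for convergent Novikov series forces it to vanish identically.

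The step I expect to be the main obstacle is the last one: making fully rigorous the passage from Fukaya's formal-algebraic identification of $\m_0$ (working with bounding cochains modulo $T^E$ and taking the inverse limit) to a term-by-term equality of the convergent analytic expressions given by the hypothesis. Concretely, one must check that the formal series computing $\m_0$ in each chart, which is the object controlled by the quasi-isomorphism argument, coincides with the convergent function $W(L_\alpha,\nabla)$ obtained by summing disc contributions in \eqref{eq:superpot}; this is essentially the statement that the canonical-model reduction and the geometric enumeration of Maslov index $2$ discs produce the same power series, which is standard but requires bookkeeping.
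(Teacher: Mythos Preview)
Your proposal is correct and follows the same approach as the paper, whose entire proof is a single sentence invoking the unitary case (handled by Fukaya's result, Theorem~1.2(4) of \cite{Fcyclic}) together with Remark~\ref{rem:non-unitary_reduction}. One minor point: your labeling of the two sub-path types inverts the roles in that remark---it is the constant-$L$, varying-$J$ pieces that carry the wall-crossing, while the flux rescaling comes from the no-wall-crossing pieces where both $L_t$ and $J_t$ move---but this does not affect the substance of your argument.
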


We record the following consequence:
\begin{corollary} \label{cor:potential_matched_by_gluing}
  If $(L_i,\nabla_i)$ and $(L_j, \nabla_j)$ are identified by a wall-crossing gluing map, then
  $W(L_i,\nabla_i) = W(L_j,\nabla_j)$.
 \qed
\end{corollary}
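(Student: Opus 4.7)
The proof is essentially an immediate consequence of Lemma \ref{l:potential_matched_by_gluing}. The plan is to observe that the wall-crossing gluing maps are precisely the identifications $\sim$ used to define the glued analytic space $\hat{\mathcal{Y}}_{\pi}^{0}$. Therefore, saying that $W$ extends to a regular function on $\hat{\mathcal{Y}}_{\pi}^{0}$ is equivalent to saying that the local expressions for $W$ on the various charts $H^{1}(L_{\alpha}; \Lambda^{*})$ agree under the wall-crossing identifications. The statement of the corollary is just a pointwise reformulation of this compatibility.

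More concretely, I would argue as follows. Given $(L_i,\nabla_i)$ and $(L_j,\nabla_j)$ that are identified under a wall-crossing map, they represent the same point $y \in \hat{\mathcal{Y}}_{\pi}^{0}$. By Lemma \ref{l:potential_matched_by_gluing}, the function $W$ is a single regular function on $\hat{\mathcal{Y}}_{\pi}^{0}$, so its value at $y$ is unambiguous; pulling back via the two chart inclusions then yields $W(L_i,\nabla_i) = W(y) = W(L_j,\nabla_j)$.

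There is essentially no obstacle here beyond what is already contained in Lemma \ref{l:potential_matched_by_gluing}; the conceptual content (that quasi-isomorphic weakly unobstructed objects must share the same value of $\m_0$, since this value is a Hochschild-cohomological invariant of the object) has already been absorbed into the statement that $W$ is well-defined on the glued space. One could also give a direct proof by appealing to Fukaya's invariance results: the wall-crossing map is induced by a Lagrangian isotopy which provides a quasi-isomorphism between $(L_i,\nabla_i)$ and $(L_j,\nabla_j)$ in the deformed Fukaya category $\F(X)$, and since the curvature term $\m_0 = W \cdot e_L$ is preserved under $A_\infty$-quasi-isomorphism of weakly unobstructed objects, the equality $W(L_i,\nabla_i) = W(L_j,\nabla_j)$ follows. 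Either formulation is a one-line argument once Lemma \ref{l:potential_matched_by_gluing} is in hand.
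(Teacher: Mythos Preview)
Your proposal is correct and matches the paper's approach: the corollary is marked with \qed\ immediately after Lemma~\ref{l:potential_matched_by_gluing}, indicating the authors regard it as a direct consequence, exactly as you argue. Your alternative formulation via preservation of $\m_0$ under $A_\infty$-quasi-isomorphism is also in the spirit of the surrounding discussion and requires no additional input.
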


\begin{remark}
Fukaya has announced that rank $1$ unitary local systems on immersed Lagrangians which are fibers of $\pi$ define a rigid analytic space which includes $ \hat{\mathcal{Y}}_{\pi}^{0}$ as an analytic subset. The general idea is to describe the nearby smooth fibers as the result of Lagrangian surgery, and understand the behaviour of holomorphic discs under such surgeries sufficiently explicitly to produce an analytic structure on this neighbourhood which can be seen to be compatible with the analytic structure on  $\hat{\mathcal{Y}}_{\pi}^{0}$.

We expect that, in the presence of a potential function, similar ideas can be applied to associate analytic charts to certain admissible non-compact Lagrangians arising as limits of smooth fibers. While we do not develop the general theory in this paper, Example \ref{ex:FScompact} explains how one can use equivalences in the Fukaya category (rather than surgery formulae) to produce the desired charts in the class of examples we encounter.
\end{remark}

\subsection{Convergence of the wall-crossing}
\label{sec:conv-wall-cross}

In this section, we verify that the assumptions of Lemma \ref{lem:finitely_many_charts} hold for the smooth fibers of the map
$\pi : X^{0} \to B$
introduced in Definition \ref{def:Lagr-torus-fibration}. Recall that the
moment map $\mu_X$ of the $S^1$-action descends to a natural map from
$B$ to $\R_+$; we write $X^0_\lambda=\mu_X^{-1}(\lambda)\cap X^0$.
If $\epsilon$ is the blowup parameter in the definition of $X$, then all 
fibers of $\pi$ contained in $X^{0}_{\lambda}$ are smooth whenever $\lambda \neq 
\epsilon$; and the smooth fibers in $X^{0}_{\epsilon}$ are exactly those whose image under 
the blowdown map 
$p : X^{0} \to V^0 \times \C$ is disjoint from $H \times \C$.

Assumption \ref{ass:trivial_differential} follows immediately from 
Proposition \ref{prop:unobstructed}  for all fibers of $\pi$ whose images 
under $p$ are disjoint from $H \times \C$, since these bound no holomorphic discs. 
In general, invariance of Floer cohomology shows that Assumption \ref{ass:trivial_differential}
is independent of the choice of almost complex structure. Moreover,
the identification of the $A_\infty$ structure obtained by deforming by an element in $H^{1}(L; \Lambda_+) $
with the deformed Floer theory for the associated local system in $ H^{1}(L; 1 + \Lambda_+)$ 
implies that  Assumption \ref{ass:trivial_differential} holds for the Floer theory of $L$ equipped with unitary local systems as well,
since an analytic function vanishing on $1 + \Lambda_+ $  must vanish on all of $U_{\Lambda}$. The same argument
shows that the $A_{\infty}$ structure on $L$ equipped with a non-unitary local system is also undeformed, as long as the valuation is sufficiently small. By Fukaya's work on Family Floer cohomology \cite{Fcyclic}, we conclude that the $A_{\infty}$ structure on a Lagrangian fibre $L'$ sufficiently close to $L$ is undeformed. Here, sufficiently close means that there is a diffeomorphism preserving the tameness of $J$ and moving $L$ to $L'$; in compact subsets of the space of smooth fibres, there are uniform bounds on the size of such neighbourhoods, so we conclude that the condition of having undeformed $A_{\infty}$ structure is open and closed among smooth fibres of $\pi$. Therefore, all smooth fibres of $\pi$ satisfy Assumption \ref{ass:trivial_differential}.

We next choose Lagrangians $\{ L_{\alpha} \}_{\alpha \in A}$, labelled by the monomials in the equation defining the hypersurface $H$. We require that $L_{\alpha}$ be contained in $X^{0}_{\epsilon}$, and that its projection to $B$ lie in the chamber $U_\alpha \subset B$ (see Definition \ref{def:chamber}).

\begin{lemma} \label{lem:conv-wall-cross}
Any smooth fiber $L$ of $\pi$ can be connected to some fiber $L_{\alpha}$ so that the wall-crossing map defines an embedding
\begin{equation}
       H^{1}(L; U_{\Lambda}) \to  H^{1}(L_{\alpha}; \Lambda^{*}).
\end{equation}
\end{lemma}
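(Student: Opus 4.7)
The plan is to construct an explicit path $\{L_t\}_{t\in[0,1]}$ from $L_0 = L$ to $L_1 = L_\alpha$ for a suitable $\alpha$, and to verify that the resulting wall-crossing transformation converges on the locus of unitary local systems. We distinguish two cases according to whether $L$ bounds holomorphic discs in $X^0$. If $L$ is tautologically unobstructed, then by Proposition \ref{prop:unobstructed} its projection is disjoint from $p^{-1}(H\times\C)$, and we may connect $L$ to $L_\alpha$ (for $\alpha$ the chamber accessible from $\pi(L)$ without meeting the amoeba neighborhood) by a path that avoids all potentially obstructed fibers. Applying Remark \ref{rem:non-unitary_reduction}, the associated gluing map is then pure flux rescaling by factors $T^{\mathrm{flux}_i}$, producing the obvious embedding of $U_\Lambda^n$ into $\Lambda^{*n}$.

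Otherwise $L$ meets $p^{-1}(H\times\C)$, and in particular the moment level $\lambda$ of $L$ satisfies $\lambda \neq \epsilon$, since a smooth fiber at level $\epsilon$ projects away from $H$. Connect $L$ to $L_\alpha$ by a concatenation of a horizontal path at fixed level $\lambda$ exiting the amoeba neighborhood into some chamber $U_\alpha$, followed by a vertical path through $U_\alpha$ to $L_\alpha \in X^0_\epsilon$; only the horizontal portion crosses walls. Subdivide it into finitely many Hamiltonian sub-paths each satisfying Assumption \ref{ass:all_curves_in_fixed_class} together with flux-only pieces, as in Remark \ref{rem:non-unitary_reduction}. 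By Proposition \ref{prop:unobstructed}, every simple holomorphic disc bounded by a wall fiber has Maslov index zero, boundary class $\pm\gamma_0$, and symplectic area exactly $|\lambda-\epsilon|$. Thus along each Hamiltonian sub-path the wall-crossing formula \eqref{eq:transformation_Hamiltonian_isotopy} takes the shape $z_i \mapsto h(z_\beta)^{C_\beta^i}\,z_i$ with $z_\beta = T^{|\lambda-\epsilon|}\,\nabla(\gamma_0)^{\pm1}$ and $h$ a power series over $\Q$ of constant term $1$.

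For any $\nabla \in H^1(L; U_\Lambda)$ the monomial $z_\beta$ has valuation exactly $|\lambda-\epsilon|>0$, so $h(z_\beta)$ converges absolutely in $\Lambda$ to a unit in $U_\Lambda$. Consequently every wall-crossing preserves the locus $U_\Lambda^n$, and the interspersed flux rescalings then translate the image into $\Lambda^{*n}$, defining a well-posed map $H^1(L; U_\Lambda) \to H^1(L_\alpha; \Lambda^*)$. Injectivity is inherited from the fact that each wall-crossing transformation admits an inverse of the same form on its domain of convergence and from the manifest bijectivity of flux rescaling. The principal obstacle in this argument is precisely the convergence of the power series factors $h(z_\beta)$ on the full unitary locus rather than merely on a formal neighborhood; this has been reduced to the strict positivity of $\omega(\beta)=|\lambda-\epsilon|$ guaranteed along the horizontal portion of the path by Proposition \ref{prop:unobstructed}.
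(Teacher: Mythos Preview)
Your proof is correct and follows essentially the same two-case strategy as the paper's own argument, with the same key observation that the strict positivity of $\mathrm{val}(z_\beta)=|\lambda-\epsilon|$ forces convergence of $h(z_\beta)$ on the unitary locus. One minor refinement: the Hamiltonian sub-paths need not literally satisfy Assumption~\ref{ass:all_curves_in_fixed_class} (a fiber can bound simple discs in several relative classes, one for each point where it meets $p^{-1}(H)$), but since Proposition~\ref{prop:unobstructed} guarantees they all share the same boundary class and symplectic area, Remark~\ref{rmk:wcf_generalized}(2) still gives the wall-crossing map in exactly the form you use---this is the route the paper takes.
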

\begin{proof}
  There are two cases to consider:

 {\bf Case 1:} Assume that the smooth fiber $L$ lies in $X^{0}_{\epsilon}$.
  Then $\pi_\epsilon(L)$ lies outside of the amoeba of $H$ 
  (cf.\ Equation \eqref{eq:Bsing}) and $L$ is tautologically unobstructed
  (cf.\ Proposition \ref{prop:unobstructed}). By Remark \ref{rmk:Bregretracts}, 
  the component of the complement of the amoeba over which $L$ lies
  determines a chamber $U_{\alpha}$, and $L$ can be connected to $L_\alpha$
  by a path of tautologically unobstructed fibers. 
  The absence of holomorphic discs in this region implies that there are no non-trivial walls, and hence that the map
  \begin{equation}
     H^{1}(L; \Lambda^{*}) \to  H^{1}(L_{\alpha}; \Lambda^{*})
  \end{equation}
is given simply by a rescaling of the coefficients (see the discussion following Equation \eqref{eq:transformation_Hamiltonian_isotopy}). This completes the argument in this case.

{\bf Case 2:} Assume that $L$ lies in $X^{0}_{\lambda}$, with $\lambda \neq \epsilon$. Choose a smooth fiber $L^{\lambda}_{\alpha}$ which is also contained in $X^{0}_{\lambda}$ and whose projection lies in some chamber $U_{\alpha}$, and consider the concatenation of a path from $L$ to $L^{\lambda}_{\alpha}$ via Lagrangians contained in $X^{0}_{\lambda}$ with a path from $L^{\lambda}_{\alpha}$ to $L_{\alpha}$ over the chamber $U_{\alpha}$. 
Since the map associated to the latter path is a simple rescaling as in the previous case, it suffices to show convergence of the wall-crossing map for the path
from $L$ to $L_\alpha^\lambda$. 

To this end, recall from Proposition \ref{prop:unobstructed} that the
simple holomorphic discs bounded by the Lagrangian torus fibers along the 
path all have the same area $|\lambda - \epsilon|$ and their boundaries
all represent the same homology class in $H_1(L_t,\Z)$. Thus, the monomials
$z_\beta=T^{\omega(\beta)}z^{[\partial \beta]}$ associated to their
homology classes are all equal, and
by Remark \ref{rmk:wcf_generalized}~(2)
the wall crossing map is of the form
\begin{equation}\label{eq:wcf_again}
    z_{i} \mapsto h_i(z_\beta) z_i, 
\end{equation}
where $h_i$ is a power series with coefficients in $\Q$ and leading order term
equal to~1. Whenever we evaluate at a point of $H^1(L;U_\Lambda)$,
the valuation of $z_\beta$ is $|\lambda-\epsilon|>0$, and so
$h_i(z_\beta)$ and its inverse both converge and take values in 
$U_\Lambda$. Thus the leading order term of
\eqref{eq:wcf_again} is identity, and the wall-crossing map defines an embedding
  \begin{equation*}
     H^{1}(L; U_{\Lambda}) \hookrightarrow  H^{1}(L_{\alpha}^{\lambda}; \Lambda^{*}).
  \end{equation*}
Composing this map with the rescaling isomorphism
induced by the flux homomorphism of a path over $U_{\alpha}$, we arrive at the desired result.
\end{proof}

\section{The geometry of the reduced spaces}\label{s:reducedappendix}

In this section we study in more detail the symplectic geometry of the
reduced spaces $X_{red,\lambda}=\mu_X^{-1}(\lambda)/S^1$ 
and prove Lemma \ref{l:rectifyomegared}.

Recall from \S \ref{ss:redspaces} that the moment map for the $S^1$-action
on $X$ is given by \eqref{eq:muX}, and that the only fixed
points apart from $\tilde{V}=\mu_X^{-1}(0)$ occur along $\tilde{H}$, which
lies in the level set $\mu_X^{-1}(\epsilon)$. Also recall that, for all
$\lambda>0$, the natural projection to $V$ (obtained by composing $p:X\to
V\times\C$ with projection to the first factor) yields a natural
identification of $X_{red,\lambda}$ with $V$.

We will exploit the toric structure of $V$ to construct families of
Lagrangian tori in $X_{red,\lambda}$ equipped with the reduced K\"ahler form
$\omega_{red,\lambda}$. The two obstacles are (1) the lack of
smoothness along $H$ at $\lambda=\epsilon$, and (2) the lack of
$T^n$-invariance near $H$.

We start with the first issue, giving a formula for $\omega_{red,\lambda}$
near $\tilde{H}$ and introducing an explicit family of smoothings.
Consider a small neighborhood of 
$\tilde{H}$ where, without loss of generality, we may assume that
$\chi\equiv 1$.

\begin{lemma} \label{l:omegared}
Identifying $X_{red,\lambda}$ with $V$ as above, 
where $\chi\equiv 1$ we have
\begin{equation}\label{eq:omegared}
\omega_{red,\lambda}=\omega_V-\max(0,\epsilon-\lambda)\,c_1(\mathcal{L})+d\alpha_{0,\lambda},
\end{equation}
where $c_1(\mathcal{L})=iF_\mathcal{L}/2\pi$ is the Chern form of the chosen Hermitian metric
on $\mathcal{L}$, and \begin{equation}\label{eq:singular_form}
\alpha_{0,\lambda}=\frac{\min(\lambda,\epsilon)\,d^c(|f(\x)|^2)}{
2\left(\sqrt{4\pi\epsilon |f(\x)|^2+(\lambda-\epsilon+\pi|f(\x)|^2)^2}+
\pi|f(\x)|^2+|\lambda-\epsilon|\right).}
\end{equation}
\end{lemma}

\proof Recall that away from $\tilde{V}$ we can write $X$ as a conic
bundle $f(\x)=yz$. Where $f\neq 0$ and $\chi\equiv 1$, the restriction of
$\omega_\epsilon$ to $\mu_X^{-1}(\lambda)$ is equal to
$$p_V^*\omega_V+d\left(\frac14 |y|^2 d^c(\log |y|^2)+
\frac{\epsilon}{4\pi}\frac{|z|^2}{1+|z|^2}d^c(\log |z|^2)\right).
$$
Since $d^c\log |y|^2+d^c\log |z|^2=d^c\log |f|^2$, using
\eqref{eq:muX_explicit} we can rewrite the 1-form in this expression as
either $$\frac14 |y|^2 d^c(\log
|f|^2)+\frac{\epsilon-\lambda}{4\pi}d^c(\log |z|^2)\quad\text{or}\quad
\frac{\epsilon}{4\pi}\frac{|z|^2}{1+|z|^2}d^c(\log
|f|^2)+\frac{\lambda-\epsilon}{4\pi} d^c(\log |y|^2).$$
Now $dd^c\log |y|^2=0$, whereas $dd^c\log |z|^2=-4\pi p_V^*c_1(\mathcal{L})$,
so we find that (still where $f\neq 0$ and $\chi\equiv 1$)
\begin{align}\label{eq:omegaeps_restricted}
(\omega_\epsilon)_{|\mu_X^{-1}(\lambda)}&=p_V^*\bigl(\omega_V+(\lambda-\epsilon)c_1(\mathcal{L})\bigr)
+d\left(\frac{d^c(|f(\x)|^2)}{4|z|^2}\right)\\
\notag &=p_V^*\omega_V+d\left(\frac{\epsilon}{4\pi}
\frac{d^c(|f(\x)|^2)}{|y|^2+|f(\x)|^2}\right).
\end{align}
The first expression makes sense wherever $z\neq 0$, in particular for
$\lambda<\epsilon$; the second one makes sense wherever $y\neq 0$, in
particular for $\lambda>\epsilon$. 
Solving \eqref{eq:muX_explicit} for $|y|$, we obtain
\begin{align*}
&2\pi|y|^2=\sqrt{4\pi\epsilon|f(\x)|^2+(\lambda-\epsilon+\pi|f(\x)|^2)^2}
-\pi|f(\x)|^2+(\lambda-\epsilon),\\
\text{and}\quad&
2\lambda|z|^2=\sqrt{4\pi\epsilon|f(\x)|^2+(\lambda-\epsilon+\pi|f(\x)|^2)^2}
+\pi|f(\x)|^2-(\lambda-\epsilon).
\end{align*}
Substituting into \eqref{eq:omegaeps_restricted} gives the desired
expression.
\endproof

We can smooth the singularity of $\omega_{red,\lambda}$ by considering
the modified K\"ahler forms given near $H$ by
$$\omega_{sm,\lambda}=\omega_V-\max(0,\epsilon-\lambda)\,c_1(\mathcal{L})+d\alpha_{\kappa,\lambda}$$
where $\kappa>0$ is an arbitrarily small constant, and 
\begin{equation}\label{eq:smoothed_form}
\alpha_{t,\lambda}=\frac{\min(\lambda,\epsilon)\,d^c(|f(\x)|^2)}{
2\left(\sqrt{4\pi\epsilon |f(\x)|^2+(\lambda-\epsilon+\pi|f(\x)|^2)^2+t^2\tilde\chi}+
\pi|f(\x)|^2+|\lambda-\epsilon|\right),}
\end{equation}
where $\tilde\chi=\tilde\chi(|f(\x)|,\lambda)$ is a suitable cut-off function
which equals 1 near $\tilde{H}$ and vanishes outside of the region where 
$\chi\equiv 1$. (We can also assume that $\tilde\chi$ vanishes whenever $\lambda$ is not close to $\epsilon$.)
We set $\omega_{sm,\lambda}=\omega_{red,\lambda}$ wherever $\chi\neq 1$.
Choosing $\kappa$ small enough ensures that $\omega_V-\max(0,\epsilon-\lambda)\,c_1(\mathcal{L})+d\alpha_{t,\lambda}$ is non-degenerate for all $t\in
[0,\kappa]$; it is then a K\"ahler form, because $\alpha_{t,\lambda}$ can be written as $d^c$ of some 
function of $|f(\x)|$.

The K\"ahler forms $\omega_{sm,\lambda}$ are all smooth, coincide
with $\omega_{red,\lambda}$ away from $H$ for all $\lambda$, and 
everywhere when $\lambda$ is not very close to $\epsilon$. Moreover,
$[\omega_{sm,\lambda}]=[\omega_{red,\lambda}]$ by construction, and
the dependence of $\omega_{sm,\lambda}$ on $\lambda$ is piecewise smooth.

Like $\omega_{red,\lambda}$, the K\"ahler form $\omega_{sm,\lambda}$ 
is not invariant under the given torus action, but there exist
toric K\"ahler forms in the same cohomology class. Such a K\"ahler form
$\omega'_{V,\lambda}$ can be constructed by averaging $\omega_{sm,\lambda}$
with respect to the standard $T^n$-action on $V$:
\begin{equation}\label{eq:omega'Vlambda}
\omega'_{V,\lambda}=\frac{1}{(2\pi)^n}\int_{g\in T^n}
g^*\omega_{sm,\lambda}\,dg.
\end{equation}
To see that the $T^n$-orbits are Lagrangian with respect to
$\omega'_{V,\lambda}$, we note that the pullback of $\omega_{sm,\lambda}$ 
to an orbit represents the trivial cohomology class, since the classes 
$[\omega_V]$ and $[H]$ are both trivial on a torus fibre. The pullback 
of $\omega'_{V,\lambda}$ is therefore also trivial in cohomology, but 
since it is invariant, it must vanish pointwise. This in turn implies that
the $T^n$-action not only preserves $\omega'_{V,\lambda}$ but in fact it is
Hamiltonian.

We now state again Lemma \ref{l:rectifyomegared} and give its proof:

\begin{lemma}
There exists a family of homeomorphisms $(\phi_{\lambda})_{\lambda\in
\R_+}$ of\/ $V$ such that: 
\begin{enumerate}
\item $\phi_\lambda$ preserves the toric divisor
$D_V\subset V$;
\item the restriction of $\phi_\lambda$ to $V^0$ is a 
diffeomorphism for $\lambda\neq \epsilon$, and a diffeomorphism outside of $H$
for $\lambda=\epsilon$;
\item $\phi_\lambda$ intertwines the reduced
K\"ahler form $\omega_{red,\lambda}$ and the toric K\"ahler form
$\omega'_{V,\lambda}$;
\item $\phi_\lambda=\mathrm{id}$ at every point whose $T^n$-orbit is
disjoint from the support of $\chi$;
\item $\phi_\lambda$ depends on $\lambda$ in a continuous manner, and
smoothly except at $\lambda=\epsilon$.
\end{enumerate}
\end{lemma}

\proof We proceed in two stages, obtaining $\phi_\lambda$ as the composition
of two maps $\phi_{sm,\lambda}$, taking $\omega_{red,\lambda}$ to $\omega_{sm,\lambda}$, and
$\phi_{avg,\lambda}$ taking $\omega_{sm,\lambda}$ to $\omega'_{V,\lambda}$,
each satisfying all the other conditions in the statement. The
arguments are quite similar in both cases; we start with the construction of 
$\phi_{avg,\lambda}$ (Steps 1--2), then proceed with $\phi_{sm,\lambda}$
(Steps 3--4).
\medskip

{\bf Step 1.} Let $\beta_\lambda=\omega_{sm,\lambda}-\omega'_{V,\lambda}$. Since
$\omega'_{V,\lambda}$ is $T^n$-invariant, for
$\theta\in \mathfrak{t}^n\simeq \R^n$ we have
\begin{align*}\exp(\theta)^*\omega_{sm,\lambda}-\omega_{sm,\lambda}=
\exp(\theta)^*\beta_\lambda-\beta_\lambda&=\int_0^1 \frac{d}{dt}
\left(\exp(t\theta)^*\beta_\lambda\right)\,dt\\&=d\left[\int_0^1 
\exp(t\theta)^*\left(\iota_{\theta_\#}\beta_\lambda\right)\,dt\right].
\end{align*}
Hence, averaging over all elements of $T^n$, we see that the 1-form
$$a'_\lambda=\frac{1}{(2\pi)^n}\int_{[-\pi,\pi]^n}\int_0^1 \exp(t\theta)^*
\left(\iota_{\theta_\#}\beta_\lambda\right)\,dt\,d\theta$$
satisfies $\omega'_{V,\lambda}-\omega_{sm,\lambda}=da'_\lambda$ (i.e.,
$da'_\lambda=-\beta_\lambda$).

Let $U\subset V$ be the orbit of the support of $\chi$ under the 
standard $T^n$-action on $X_{red,\lambda}\cong V$. Outside of $U$, 
the K\"ahler forms $\omega_{sm,\lambda}=\omega_{red,\lambda}$ are $T^n$-invariant, and
$\omega_{sm,\lambda}$ and $\omega'_{V,\lambda}$ coincide (in fact
they both coincide with $\omega_V$). Therefore, $\beta_\lambda$ is supported
in $U$, and consequently so is $a'_\lambda$.

Let $\omega'_{t,\lambda}=t\omega'_{V,\lambda}+(1-t)\omega_{sm,\lambda}$ (for $t\in
[0,1]$ these are
K\"ahler forms since $\omega'_{V,\lambda}$ and $\omega_{sm,\lambda}$ are
K\"ahler). Denote
by $v_t$ the vector field such that
$\iota_{v_t}\omega'_{t,\lambda}=-a'_\lambda$ and by $\psi_t$
the flow generated by $v_t$. Then by Moser's trick,
$$\frac{d}{dt}(\psi_t^*\omega'_{t,\lambda})=\psi_t^*\left(L_{v_t}\omega'_{t,\lambda}+\frac{d\omega'_{t,\lambda}}{dt}\right)=
\psi_t^*(d\iota_{v_t}\omega'_{t,\lambda}+da'_\lambda)=0,$$
so $\psi_t^*\omega'_{t,\lambda}=\omega_{sm,\lambda}$, and the time 1 flow
$\psi_1$ intertwines $\omega_{sm,\lambda}$ and
$\omega'_{V,\lambda}$ as desired. Moreover, because $a'_\lambda$ is supported in
$U$, outside of $U$ we have $\psi_t=\mathrm{id}$. However, it is not clear
that the flow preserves the toric divisors of $V$.
\medskip

{\bf Step 2.}
To remedy the issue with the flow not preserving the toric divisors, we modify $a'_\lambda$ in a neighborhood of $D_V$.
Let $f'_{\lambda,t}$ be a family of $C^1$ 
real-valued functions (with locally Lipschitz first derivatives), smooth on $V^0$, with the following properties:
\begin{itemize}
\item the support of $f'_{\lambda,t}$ is contained in the intersection of $U$ with a small
tubular neighborhood of $D_V$;
\item at every point $x\in D_V$, belonging to a toric stratum $S\subset V$,
\begin{equation}\label{eq:stratumcond}
\text{the 1-form $a'_\lambda+df'_{\lambda,t}$ vanishes on $(T_xS)^\perp$,} 
\end{equation}
where the orthogonal is with respect to $\omega'_{t,\lambda}$;
\item $f'_{\lambda,t}$ depends smoothly on $t$, and piecewise smoothly on
$\lambda$.
\end{itemize}
We construct $f'_{\lambda,t}$ by induction over toric strata of increasing 
dimension, successively constructing functions $f'_{\lambda,t,\le k}:V\to\R$ which 
satisfy \eqref{eq:stratumcond} for all strata of dimension at most~$k$ and
are smooth outside of strata of dimension $<k$.
We start by setting $f'_{\lambda,t,\le 0}=0$, which satisfies \eqref{eq:stratumcond}
at the fixed points of the torus action since they lie away from the support 
of $a'_\lambda$. 

Assume $f'_{\lambda,t,\le k}$ constructed, and consider a
stratum $S$ of dimension $k+1$. At each point $x\in S$, 
the restriction of $a'_\lambda+df'_{\lambda,t,\le k}$ to $(T_xS)^\perp$ 
is a real-valued linear form, vanishing whenever $x$ belongs to a
lower-dimensional stratum, and smooth outside of strata of dimension $<k$.
Let $f'^0_{\lambda,t,S}$ be a $C^{1}$ function on a neighborhood of $S$,
smooth outside of the strata of dimension $\le k$, which 
vanishes on $S$ and whose derivative in the normal directions at each point of $S$ satisfies
$(df'^0_{\lambda,t,S})_{|(T_xS)^\perp} = -(a'_\lambda+df'_{\lambda,t,\le k})_{|(T_xS)^\perp}.$
(For instance, identify a neighborhood of $S$ with a subset of its normal bundle in
a manner compatible with the toric structure, and take $f'^0_{\lambda,t,S}$ to
be linear in the fibers).

Let $\chi_S$ be a cut-off function with values in
$[0,1]$, defined and smooth outside of the strata of dimension $\le k$, 
equal to 1 at all points of a neighborhood of $S$ which are much closer to $S$ than to any other 
$(k+1)$-dimensional stratum, and with support disjoint from those of the
corresponding cut-off functions for all other $(k+1)$-dimensional strata.
Specifically, picking an auxiliary metric, we take $\chi_S$ to be the product of a 
standard smooth cut-off function supported in a tubular neighborhood of $S$ with
functions $\chi_{S/\Sigma}$ for all strata $\Sigma$ with $\dim \Sigma\ge
k+1$ and $\dim(\Sigma\cap S)\le k$,
chosen so that $\chi_{S/\Sigma}$ equals 1 except near $\Sigma$, where it depends 
on the ratio between distance to $S$ and distance to $\Sigma$, equals 1
at all points that lie much closer to $S$ than to $\Sigma$, and vanishes
at all points that lie closer to $\Sigma$ than to $S$. 

We note that near a lower-dimensional stratum $S'$, the norm of 
$d\chi_S$ is bounded by a constant over distance to $S'$.
We then set $f'_{\lambda,t,S}=\chi_S f'^0_{\lambda,t,S}$. By construction, this function is
smooth away from strata of dimension $\le k$. Moreover, near a lower-dimensional
stratum $S'$, $f'^0_{\lambda,t,S}$ is bounded by a constant multiple of 
distance to $S$ times distance to $S'$, so the regularity of $f'_{\lambda,t,S}$ is
indeed as desired.

By construction, $f'_{\lambda,t,\le k+1}=f'_{\lambda,t,\le k}+\sum_{\dim S=k+1}
f'_{\lambda,t,S}$ has the desired properties on all strata of dimension $\le k+1$. 
(Note that, since $a'_\lambda$ vanishes outside of $U$, so do the various functions 
we construct.) Finally, we let $f'_{\lambda,t}=f'_{\lambda,t,\le n-1}$.

We now use Moser's trick again, replacing $a'_\lambda$ by 
$\tilde{a}'_{t,\lambda}=a'_\lambda+df'_{\lambda,t}$. Namely, denote by
$\tilde{v}_{t,\lambda}$ the vector field such that
$\iota_{\tilde{v}_{t,\lambda}}\omega'_{t,\lambda}=-\tilde{a}_{t,\lambda}$.
This vector field is locally Lipschitz continuous along $D_V$, and smooth
on $V^0$; moreover, by construction it is supported in $U$ and, by
\eqref{eq:stratumcond}, tangent to
each stratum of $D_V$. 
We thus obtain $\phi_{avg,\lambda}$ with all the desired properties
by considering the time 1 flow generated by $\tilde{v}_{t,\lambda}$.
(Note: because we have assumed that $\omega_V$ defines a complete K\"ahler
metric on $V$, it is easy to check that even when $V$ is noncompact the
time 1 flow is well-defined.)

{\bf Step 3.} We now turn to the construction of $\phi_{sm,\lambda}$.
We interpolate between $\omega_{red,\lambda}$ and $\omega_{sm,\lambda}$ via
the family of K\"ahler forms $\omega_{t,\lambda}$, $t\in [0,\kappa]$, defined
by $$\omega_{t,\lambda}=\omega_V-\max(0,\epsilon-\lambda)\,c_1(\mathcal{L})+d\alpha_{t,\lambda}$$ where
$\chi\equiv 1$ (where $\alpha_{t,\lambda}$ is given by \eqref{eq:smoothed_form})
and $\omega_{t,\lambda}=\omega_{red,\lambda}$ wherever $\chi\neq 1$.

These K\"ahler forms are smooth whenever $t>0$ or $\lambda\neq \epsilon$. 
Let $a_{t,\lambda}$ be the 1-form with support contained in the region where $\chi\equiv 1$, 
and defined by $a_{t,\lambda}=d\alpha_{t,\lambda}/dt$ inside that region.
By construction, $d\omega_{t,\lambda}/dt=da_{t,\lambda}$.
We use Moser's trick again, and
denote by $v_{t,\lambda}$ the vector field such that 
$\iota_{v_{t,\lambda}}\omega_{t,\lambda}=-a_{t,\lambda}$.
This vector field vanishes outside of $U$, and is smooth except for
$t=0$ and $\lambda=\epsilon$, in which case it is singular along $H$.
We will momentarily check that the flow of $v_{t,\lambda}$
is well-defined even for $\lambda=\epsilon$; the time $\kappa$ flow
then intertwines $\omega_{red,\lambda}$ and $\omega_{sm,\lambda}$ as
desired, except it need not preserve the toric divisors of $V$, an issue
which we will address in Step 4 below.

Differentiating
\eqref{eq:smoothed_form} with respect to $t$, we have
\begin{equation}\label{eq:smoothing_1form}
a_{t,\lambda}=\frac{t\,\tilde{\chi}\min(\lambda,\epsilon)\,d^c(|f(\x)|^2)}{
2 \sqrt{\Phi}\bigl(\sqrt{\Phi}+\pi |f(\x)|^2+|\lambda-\epsilon|\bigr)^2},
\end{equation}
where 
\begin{equation}\label{eq:Phi}
\Phi=4\pi \epsilon |f(\x)|^2+(\lambda-\epsilon+\pi|f(\x)|^2)^2+t^2\tilde\chi.
\end{equation}
Taking the dual vector field, we find that
\begin{equation}\label{eq:smoothing_vect_field}
v_{t,\lambda}=\frac{t\,\tilde{\chi}\min(\lambda,\epsilon)\,\nabla^{t,\lambda}(|f(\x)|^2)}{
2 \sqrt{\Phi}\bigl(\sqrt{\Phi}+\pi |f(\x)|^2+|\lambda-\epsilon|\bigr)^2},
\end{equation}
where $\nabla^{t,\lambda}$ is the gradient with respect to the K\"ahler metric
determined by $\omega_{t,\lambda}$.

We restrict our attention to the neighborhood of $\tilde{H}$ where 
$\tilde{\chi}\equiv 1$, since it is clear that $v_{t,\lambda}$ is well-defined
and smooth everywhere else. To estimate the norm of $\nabla^{t,\lambda}(|f(\x)|^2)$,
we differentiate \eqref{eq:smoothed_form} to find that, in this region, 
\begin{multline}\label{eq:explicit_dalphat}
d\alpha_{t,\lambda}=\frac{2\min(\lambda,\epsilon)
\left(\pi(\epsilon+\lambda)|f|^2+(\lambda-\epsilon)^2+t^2+|\lambda-\epsilon|\sqrt\Phi\right)
d|f|\wedge d^c|f|}{\sqrt\Phi\bigl(\sqrt\Phi+\pi|f|^2+|\lambda-\epsilon|\bigr)^2}\\
-\frac{2\pi\min(\lambda,\epsilon)|f|^2\, c_1(\mathcal{L})}{\bigl(\sqrt\Phi+\pi|f|^2+|\lambda-\epsilon|\bigr)}.
\end{multline}
(Here we have used the fact that $dd^c|f|^2=-4\pi |f|^2c_1(\mathcal{L})+
4d|f|\wedge d^c|f|$.)

When $\lambda-\epsilon$ and $|f(\x)|^2$ are much smaller than $\epsilon$, we have
$\Phi\sim 4\pi\epsilon|f|^2+(\lambda-\epsilon)^2+t^2$. Estimating the
various terms in \eqref{eq:explicit_dalphat}, we find that the second term tends
to zero near $H$, while the leading-order part of the coefficient of $d|f|\wedge d^c|f|$ 
is bounded from below by $\epsilon/\sqrt\Phi$ (and from above by $4\epsilon/\sqrt\Phi$). Hence:
\begin{equation}\label{eq:formblowsup}
d\alpha_{t,\lambda}\gtrsim \frac{\epsilon}{\sqrt\Phi}\,d|f|\wedge d^c|f|.
\end{equation}
(where $\gtrsim$ means that the inequality holds up to lower-order terms.)
In more geometric terms, the K\"ahler metrics induced by $\omega_{t,\lambda}$
blow up in the normal direction to $H$, by an amount of the order of
$\epsilon/\sqrt\Phi$, while remaining well-behaved in the other directions.

This implies in turn that the norms of $d(|f(\x)|^2)$ and 
$\nabla^{t,\lambda}(|f(\x)|^2)$ with respect to the
K\"ahler metric $\omega_{t,\lambda}$ are
bounded by $2(\sqrt\Phi/\epsilon)^{1/2}|f(\x)|$;
and, more importantly, the norm
of $\nabla^{t,\lambda}(|f(\x)|^2)$ with respect to a suitable fixed auxiliary
metric is locally bounded by a constant multiple of $(\sqrt\Phi/\epsilon)|f(\x)|$.
Plugging into \eqref{eq:smoothing_vect_field}, we conclude that the norm of $v_{t,\lambda}$ (again with
respect to a smooth auxiliary metric) is bounded by a constant multiple
of $t|f(\x)|/\Phi\le t|f(\x)|/(t^2 + 4\pi\epsilon |f(\x)|^2)$, and hence 
uniformly bounded.
Thus, even though $v_{t,\lambda}$ itself is not continuous at $(t,\lambda,|f(\x)|)=(0,\epsilon,0)$,
its flow is well-defined and continuous even for $\lambda=\epsilon$, and depends continuously on~$\lambda$.

Geometrically, for $\lambda-\epsilon$ sufficiently small, near $H$ the
leading-order term in $v_{t,\lambda}$ points radially away from $H$, in
the same direction as the gradient of $|f(\x)|$ with respect to $\omega_V$,
and the time $t$ flow rescales the radial coordinate $r=|f(\x)|$ in a 
suitable manner. A complicated explicit formula for the leading-order term
of the rescaling can be obtained by comparing the K\"ahler areas of small 
discs in the direction normal to $H$; for example, for $\lambda=\epsilon$ 
one finds that the time $t$ 
flow maps points where $|f(\x)|=r_0$ to points where
$|f(\x)|^2\approx \frac12 r_0(r_0+(r_0^2+\frac{1}{\pi\epsilon}t^2)^{1/2}).$
\medskip

{\bf Step 4.} We now modify the flow constructed in Step 3 in order to arrange
for the toric divisors of $V$ to be preserved. We proceed as in Step 2,
i.e.\ we replace the 1-forms $a_{t,\lambda}$ used in Step 3 with 
$a_{t,\lambda}+df_{t,\lambda}$ for carefully constructed real-valued
functions $f_{t,\lambda}$, smooth on $V^0$ except for $(t,\lambda)=(0,\epsilon)$,
such that:
\begin{itemize}
\item the support of $f_{t,\lambda}$ is contained in the intersection of $U$ with a small
tubular neighborhood of $D_V$;
\item at every point $x\in D_V$, belonging to a toric stratum $S\subset V$,
\begin{equation}\label{eq:stratumcond2}
\text{the 1-form $a_{t,\lambda}+df_{t,\lambda}$ vanishes on $(T_xS)^\perp$,} 
\end{equation}
where the orthogonal is with respect to $\omega_{t,\lambda}$;
\item where it is smooth, $f_{t,\lambda}$ depends smoothly on $t$, and piecewise smoothly on
$\lambda$.
\end{itemize}
We construct $f_{t,\lambda}$ inductively to satisfy \eqref{eq:stratumcond2}
on toric strata of increasing dimension, by exactly the same method as in
Step 2. The main new difficulty is that we need to control the behavior
of $f_{t,\lambda}$ near $H$ for $(t,\lambda)$ close to $(0,\epsilon)$.

We begin with a geometric digression.
Fix a collection of smooth foliations $\F_S$ of neighborhoods of $H\cap S$ in
$V$ for all toric strata $S\subset V$, with the following properties:
\begin{itemize}
\item each leaf of $\mathcal{F}_S$ intersects $S$ transversely at a single
point;
\item $|f|$ is constant on the leaves; in particular the leaves through
$H\cap S$ are contained in $H$;
\item given two strata $S'\subset S$, the leaves of $\mathcal{F}_{S'}$ are
unions of leaves of $\mathcal{F}_S$.
\item given two strata $S$ and $\Sigma$ which intersect transversely along a stratum
$S'=S\cap \Sigma$, the leaves of $\mathcal{F}_S$ through $S'$ foliate
$\Sigma$.
\end{itemize}

\noindent
The existence of $\mathcal{F}_S$ with these properties follows from
the transversality of $H$ to all toric strata. Indeed, near a
$k$-dimensional stratum $S'$ and away from all lower-dimensional strata, 
consider a toric chart of the form $(\C^*)^k\times \C^{n-k}$, and
modify the first $k$ coordinates (in a $C^\infty$ manner) so that, near
$H$, $|f|$ only depends on these coordinates, without changing the remaining 
$n-k$ coordinates. 
Each stratum $S\supset S'$ is then defined by the vanishing of a certain 
subset of the last $n-k$ coordinates; we choose the leaves of $\F_S$ to be
given by letting these coordinates vary and fixing all others.
(More globally, start from a collection of toric charts identifying
neighborhoods of strata with toric vector bundles over them, and modify the
bundle structures compatibly along $H$ so that $|f|$ is constant in the
fibers and the strata containing a given one remain given by distinguished
sub-bundles.)

Henceforth, unless stated otherwise,
all estimates (on distances, derivatives, etc.) are with respect to a fixed reference metric (independent
of $t$ and $\lambda$), rather than the metric $g_{t,\lambda}$ determined
by $\omega_{t,\lambda}$; and the notation $O(\dots)$ means that an 
inequality holds up to a constant factor which is uniformly bounded independently
of $t$ and $\lambda$ over any compact subset of $V$. 

Recall that $\omega_{t,\lambda}$ blows up (by a factor of the order of
$\epsilon/\sqrt\Phi$, cf.\ \eqref{eq:formblowsup}) in the directions
transverse to the complex hyperplane field
$$\Theta=\mathrm{Ker}\,(d|f|)\cap \mathrm{Ker}\,(d^c|f|).$$
In what follows, we will often have better estimates on derivatives along $\Theta$ than on
arbitrary derivatives. We will call derivatives of order $(\ell,m)$, denoted
by $D^{(\ell,m)}(\dots)$, the derivatives of order $\ell+m$ along $\ell$
vector fields tangent to $\Theta$ and $m$ arbitrary vector fields. 
Since the hyperplane distribution $\Theta$ is not 
integrable, estimates on higher derivatives in the direction of $\Theta$ 
only make sense up to lower-order derivatives along the level sets of $|f|$;
however, the curvature of $\Theta$ is $O(|f|^2)$, and the estimates we
will obtain below on derivatives of order $(\ell+2,m)$ will generally be
no better than $O(|f|^2)$ times the bounds on derivatives of order 
$(\ell,m+1)$.

Along a stratum $S$, denote by $\pi^S_{t,\lambda}:TV_{|S}\to TS^\perp$
the orthogonal projection (with respect to $\omega_{t,\lambda})$.
Because $S$ is transverse to $H$, and hence to $\Theta$ near $H$, 
the behavior of $\omega_{t,\lambda}$ in the directions transverse to
$\Theta$ implies that,
near $H\cap S$, the $\omega_{t,\lambda}$-orthogonal to $S$
becomes nearly tangent to $\Theta$ for $(t,\lambda)$ close to $(0,\epsilon)$.
Specifically, near $H\cap S$, the maximum angle (with respect to a fixed
reference metric) between a unit vector in 
$TS^\perp$ and $\Theta$ is $O(\epsilon^{-1}\sqrt\Phi)$. Thus, denoting by
$(\pi^S_{t,\lambda})^{\parallel}$ and $(\pi^S_{t,\lambda})^\bot$ the components 
of $\pi^S_{t,\lambda}$ along $\Theta$ and its orthogonal for
the reference metric,
pointwise we have $(\pi^S_{t,\lambda})^{\bot}=O(\epsilon^{-1}\sqrt\Phi)$.
This in turns implies that
$$\left|d^c(|f|^2)\circ
\pi^S_{t,\lambda}\right|=O\bigl(\epsilon^{-1}|f|\sqrt\Phi\bigr).$$ 

Along the level sets of $|f|$, the coefficient of $d|f|\wedge d^c|f|$ in
\eqref{eq:explicit_dalphat} remains constant, and so the geometric behavior
of the $\omega_{t,\lambda}$-orthogonals $TS^\perp$ can be controlled
uniformly. In particular, the derivatives along $\Theta$ of 
$(\pi^S_{t,\lambda})^{\bot}$  are bounded by $O(\sqrt\Phi)$ to all orders.
On the other hand, the variation of \eqref{eq:explicit_dalphat} in the directions
transverse to the level sets of $|f|$ implies that each derivative in those directions
worsens the bounds by a factor of $1/\sqrt\Phi$. We conclude that 
$D^{(\ell,m)}((\pi^S_{t,\lambda})^\bot)=O(\Phi^{(1-m)/2})$.
Meanwhile, by a similar reasoning,
$D^{(\ell,m)}((\pi^S_{t,\lambda})^\parallel)=O(\Phi^{-m/2})$.

These estimates on $\pi^S_{t,\lambda}$ (and the inequality
$|f|\le (\Phi/4\pi\epsilon)^{1/2}$) in turn imply that
$$D^{(\ell,m)}\left(d^c(|f|^2)\circ
\pi^S_{t,\lambda}\right)=O\left(\Phi^{(2-m)/2}\right).$$
Thus, the 1-form $a_{t,\lambda}$ from Step 3 satisfies
$$\left|a_{t,\lambda}\circ \pi^S_{t,\lambda}\right|=
\frac{t\tilde\chi \min(\lambda,\epsilon)\,\bigl|d^c(|f|^2)\circ
\pi^S_{t,\lambda}\bigr|}{
2\sqrt\Phi\bigl(\sqrt\Phi+\pi|f|^2+|\lambda-\epsilon|\bigr)^2}=O\left(\frac{t|f|}{\Phi}\right)
=O\left(\frac{t}{\sqrt\Phi}\right)$$
$$\text{and}\quad D^{(\ell,m)}\left( a_{t,\lambda}\circ \pi^S_{t,\lambda}\right)=
O\left(\frac{t}{\Phi^{(m+1)/2}}\right).$$

We now return to our main construction. Starting with $f_{\lambda,t,\le
0}=0$ as before, assume that we have already constructed $f_{\lambda,t,\le k}$, supported
in a neighborhood of the intersection of $H$ with the toric strata of dimension $\le k$, in such 
a way that \eqref{eq:stratumcond2} holds for all strata of dimension $\le
k$. We further require that, away from all strata of dimension $\le k-1$,
resp.\ near a stratum $S'$ of dimension $\le k-1$ (and assuming $S'$ is the
closest such stratum),
\begin{equation}\label{eq:stratumcond_der}
D^{(\ell,m)}(f_{t,\lambda,\le k})=O\left(\frac{t}{\Phi^{(m+1)/2}}\right),
\quad \text{resp.}\ 
O\left(\frac{t}{\Phi^{(m+1)/2}}\mathrm{dist}_{S'}^{\min(0,2-\ell-m)}\right),
\end{equation}
where $\mathrm{dist}_{S'}$ is the distance to $S'$ with respect to the fixed
reference metric.

Let $S$ be a stratum of dimension $k+1$. 
The above estimates on the derivatives of $\pi^S_{t,\lambda}$,
together with \eqref{eq:stratumcond_der}, imply that
at any point of $S$ which lies away
from the strata of dimension $\le k-1$, 
resp.\ near (and closest to) such 
a stratum $S'$,
\begin{equation}\label{eq:stratumcond_der2}
D^{(\ell,m)}\left((a_{t,\lambda}+df_{t,\lambda,\le k})\circ
\pi^S_{t,\lambda}\right)=O\left(\frac{t}{\Phi^{(m+1)/2}}\right),
\ \ \text{resp.}\ 
O\left(\frac{t}{\Phi^{(m+1)/2}}\mathrm{dist}_{S'}^{\min(0,1-\ell-m)}\right)\!.
\end{equation}
(Note that, while the quantity in \eqref{eq:stratumcond_der2} involves
an additional derivative of $f_{t,\lambda,\le k}$, the extra factor of
$\Phi^{-1/2}$ when this derivative is taken in a direction transverse to
$\Theta$ is offset by the factor of $\Phi^{1/2}$ in the estimates
for the transverse component of $\pi^S_{t,\lambda}$.)

Near a stratum $S'\subset S$ with $\dim S'\le k$, 
condition \eqref{eq:stratumcond2} for $f_{t,\lambda,\le k}$ along $S'$
implies that
$(a_{t,\lambda}+df_{t,\lambda,\le k})\circ \pi^S_{t,\lambda}$
vanishes along $S'$. Since $\Theta$ is transverse to
$S'$, \eqref{eq:stratumcond_der2} for $(\ell,m)=(1,0)$ in turn 
implies that, at all points of $S$ which lie near $S'$,
\begin{equation}\label{eq:stratumcond_der3a}
\bigl|(a_{t,\lambda}+df_{t,\lambda,\le k})\circ \pi^S_{t,\lambda}\bigr|=
O\Bigl(\frac{t\,\mathrm{dist}_{S'}}{\sqrt\Phi}\Bigr),
\end{equation}
Meanwhile, since the distance to the nearest $k$-dimensional stratum is 
no greater than the distance to the nearest lower-dimensional stratum, 
the bounds in the second part of \eqref{eq:stratumcond_der2} also hold 
when $\dim S'=k$. Hence, at any point of $S$ which lies near (and closest)
to a stratum $S'\subset S$ of dimension $\le k$,
\begin{equation}\label{eq:stratumcond_der3}
D^{(\ell,m)}\left((a_{t,\lambda}+df_{t,\lambda,\le k})\circ
\pi^S_{t,\lambda}\right)=
O\left(\frac{t}{\Phi^{(m+1)/2}}\mathrm{dist}_{S'}^{1-\ell-m}\right)\!.
\end{equation}

Define a function $f^0_{\lambda,t,S}$ on a neighborhood of the given
$(k+1)$-dimensional stratum $S$,
smooth outside of the leaves of $\F_S$ through strata of dimension $\le k-1$ (and $H$ if $(\lambda,t)=(\epsilon,0)$), 
which vanishes on $S$ and whose derivative at each point of $S$ satisfies
\begin{equation}\label{eq:df0ltS}
df^0_{\lambda,t,S}=-(a_{t,\lambda}+df_{\lambda,t,\le k})\circ \pi^S_{t,\lambda}.
\end{equation}
Specifically, we identify the leaves of $\F_S$ with open subsets in the
fibers of the normal bundle to $S$, and take $f^0_{\lambda,t,S}$ to be 
linear in the fibers. We then define $f_{\lambda,t,S}=\chi_S f^0_{\lambda,t,S}$, where
$\chi_S$ is the same cut-off function as in Step 2.

By construction, $f^0_{t,\lambda,S}=O(t\,\mathrm{dist}_S/\sqrt\Phi)$.
Moreover, using \eqref{eq:stratumcond_der3a}, along the leaf of $\F_S$
through a point $x\in S$ which lies near a lower-dimensional stratum $S'$ we have
$f^0_{t,\lambda,S}=O(t\,\mathrm{dist}_{S'}(x)\,\mathrm{dist}_S/\sqrt\Phi)$.

The derivative of $f^0_{\lambda,t,S}$ along the leaves of $\F_S$ is
the constant extension of \eqref{eq:df0ltS} along $\F_S$; whereas its
derivative in the directions transverse to $\F_S$ is a cross-term which
grows linearly with distance to $S$ and involves the dependence of
\eqref{eq:df0ltS} on the point of $S$. Moreover,
the leaves of $\F_S$ are tangent to the level sets of $|f|$ near $H$, and
hence nearly tangent to $\Theta$: the maximum angle between vectors
in $T\F_S$ and $\Theta$ is $O(|f|)$.  It then follows from
\eqref{eq:stratumcond_der2} that, away from $(k-1)$-dimensional strata,
\begin{equation}\label{eq:est_f0}
D^{(\ell,m)}(f^0_{\lambda,t,S})=O\left(\frac{t}{\Phi^{(m+1)/2}}\right).
\end{equation}
Meanwhile, along the leaf of $\F_S$ through a point $x\in S$ which lies
near (and closest to) a stratum $S'\subset S$ with $\dim S'\le k$,
\eqref{eq:stratumcond_der3} implies that
$$D^{(\ell,m)}(f^0_{\lambda,t,S})=O\left(\frac{t}{\Phi^{(m+1)/2}}
\left(\mathrm{dist}_{S'}(x)^{2-\ell-m}+
\mathrm{dist}_{S'}(x)^{1-\ell-m}\,\mathrm{dist}_{S}(\cdot)\right)\right)\,.$$
The leaf of $\F_S$ through $x$ locally stays close to a
leaf through $S'$, which by construction is contained in some other stratum
of $D_V$. In particular, as soon as the distance to $S$ is sufficiently
large compared to $\mathrm{dist}_{S'}(x)$, points on the leaf through $x$ lie 
closer to some other stratum $\Sigma$ of dimension $\ge k+1$ (and not
containing $S$) than to $S$, and so the cut-off function $\chi_S$ vanishes 
identically.  Thus, over the support of $\chi_S$, $\mathrm{dist}_{S'}(\cdot)$
and $\mathrm{dist}_{S'}(x)$ are within bounded factors of each other. Since
$\mathrm{dist}_S\le \mathrm{dist}_{S'}$, we conclude that, at all points
of the support of $\chi_S$ which lie near (and closest to) $S'$,
\begin{equation}\label{eq:est_f0_near_S'}
D^{(\ell,m)}(f^0_{\lambda,t,S})=O\left(\frac{t}{\Phi^{(m+1)/2}}
\mathrm{dist}_{S'}^{2-\ell-m}\right)\,.
\end{equation}

Now we observe that the derivatives of the cut-off function $\chi_S$ are
$O(1)$ away from strata of dimension $\le k$, and near a stratum
$S'\subset S$ of dimension $\le k$ the derivatives of order $r$ are
$O(1/\mathrm{dist}_{S'}^{r})$. Thus, \eqref{eq:est_f0} and
\eqref{eq:est_f0_near_S'} imply that away from $k$-dimensional strata,
resp.\ near (and closest to) $S'\subset S$ with $\dim S'\le k$,
\begin{equation}\label{eq:est_fS}
D^{(\ell,m)}(f_{\lambda,t,S})=O\left(\frac{t}{\Phi^{(m+1)/2}}\right),
\quad \text{resp.}\ 
O\left(\frac{t}{\Phi^{(m+1)/2}} \mathrm{dist}_{S'}^{2-\ell-m}\right)\,.
\end{equation}

We now set $$f_{t,\lambda,\le k+1}=f_{t,\lambda,\le
k}+\textstyle\sum\limits_{\dim S=k+1}
f_{t,\lambda,S}.$$ 
By construction, $f_{t,\lambda,\le k+1}$ is supported 
in a neighborhood of the intersection of $H$ with the strata of dimension 
at most $k+1$, and satisfies
\eqref{eq:stratumcond2} for all strata of dimension $\le k+1$. 
Indeed, by \eqref{eq:est_fS}, $df_{t,\lambda,S}$
vanishes along strata of dimension $\le k$, so \eqref{eq:stratumcond2}
continues to hold for those; whereas, over the interior of the
stratum $S$, $df_{t,\lambda,S}=df^0_{t,\lambda,S}$, and the contributions from other
$(k+1)$-dimensional strata vanish. 

Moreover, $f_{t,\lambda,\le k+1}$ satisfies the estimate
\eqref{eq:stratumcond_der} (with $k+1$ instead of $k$), as needed for the
induction to proceed. Indeed, this follows immediately from the estimates
\eqref{eq:stratumcond_der} for $f_{t,\lambda,\le k}$ (note that the second
estimate also holds near
$k$-dimensional strata, since the distance to the nearest $k$-dimensional
stratum is no greater than that to the nearest lower-dimensional stratum),
and \eqref{eq:est_fS} for $f_{t,\lambda,S}$.

Thus, we can indeed carry out the construction of $f_{t,\lambda,\le k}$ with the desired
properties by induction on $k$. Finally, we let $f_{t,\lambda}=f_{t,\lambda,\le n-1}$.

As a consequence of the estimates \eqref{eq:est_fS} on individual terms,
$f_{t,\lambda}$ is $C^1$ with locally Lipschitz first derivatives, and
smooth on $V^0$, except along $H$ for $(t,\lambda)=(0,\epsilon)$.
By construction, it is supported in the intersection of $U$ with
a neighborhood of $D_V$, and
satisfies \eqref{eq:stratumcond2} for all toric strata. 

By \eqref{eq:stratumcond_der}, $|df_{t,\lambda}|=O(t/\Phi)$, while 
$|{df_{t,\lambda}}_{|\Theta}|=O(t/\sqrt\Phi)$.

Because the K\"ahler form $\omega_{t,\lambda}$
blows up like $\epsilon/\sqrt\Phi$ in the directions transverse to $\Theta$,
we conclude that the Hamiltonian vector field of $f_{t,\lambda}$ with 
respect to $\omega_{t,\lambda}$ is bounded by $O(t/\sqrt\Phi)$ (again
with respect to the fixed reference metric), hence locally uniformly
bounded. (Recall that $\sqrt\Phi\ge t$ wherever $\tilde\chi\equiv 1$,
while the other terms are bounded below wherever $\tilde\chi<1$.)
Moreover, the regularity of $f_{t,\lambda}$ implies that this vector
field is locally Lipschitz continuous, and smooth on $V^0$, except along $H$ for
$(t,\lambda)=(0,\epsilon)$.

Combining this with the outcome of Step 3, we find that
the vector field $\tilde{v}_{t,\lambda}$ defined by
$\iota_{\tilde{v}_{t,\lambda}}\omega'_{t,\lambda}=-a_{t,\lambda}-df_{t,\lambda}$
is smooth on $V^0$ (and locally Lipschitz continuous along $D_V$),
except along $H$ for $(t,\lambda)=(0,\epsilon)$, and
its norm (again with respect to a smooth reference metric) is bounded
by $O(t/\sqrt{\Phi})$, hence locally uniformly bounded.
Thus, even though $\tilde{v}_{t,\lambda}$ is not continuous along $H$
for $(t,\lambda)=(0,\epsilon)$,
its flow is well-defined and continuous even for $\lambda=\epsilon$.
We then obtain $\phi_{sm,\lambda}$ with all the desired properties by
considering the time $\kappa$ flow generated by $\tilde{v}_{t,\lambda}$.
\endproof

\end{document}